\numberwithin{equation}{section}
\theoremstyle{plain}
\newtheorem{theorem}{Theorem}[section] 
\newtheorem{lemma}[theorem]{Lemma}
\newtheorem{definition}[theorem]{Definition}
\newtheorem{corollary}[theorem]{Corollary}
\newtheorem{remark}[theorem]{Remark}
\newtheorem{proposition}[theorem]{Proposition}
\theoremstyle{definition}
\newtheorem{example}[theorem]{Example}
\newcommand{\ee}{\varepsilon}
\newcommand{\bN}{\mathbb{N}}
\newcommand{\cT}{\mathcal{T}}
\newcommand{\cN}{\mathcal{N}}
\newcommand{\cF}{\mathcal{F}}
\newcommand{\cC}{\mathcal{C}}
\newcommand{\cS}{\mathcal{S}}
\newcommand{\cB}{\mathcal{B}}
\newcommand{\bT}{\mathbb{T}}
\newcommand{\cK}{\mathcal{K}}
\newcommand{\la}{\langle}
\newcommand{\cL}{\mathcal{L}}
\newcommand{\ra}{\rangle}
\newcommand{\cH}{\mathcal{H}}
\newcommand{\cA}{\mathcal{A}}
\newcommand{\bofh}{\cB(\cH)}
\newcommand{\cP}{\mathcal{P}}
\newcommand{\cM}{\mathcal{M}}
\newcommand{\cU}{\mathcal{U}}
\newcommand{\bC}{\mathbb{C}}
\newcommand{\bR}{\mathbb{R}}
\newcommand{\cR}{\mathcal{R}}
\newcommand{\id}{\text{id}}
\newcommand{\bZ}{\mathbb{Z}}
\newcommand{\cJ}{\mathcal{J}}
\newcommand{\cE}{\mathcal{E}}
\newcommand{\tr}{\text{tr}}
\newcommand{\Tr}{\text{Tr}}
\newcommand{\spn}{\text{span }}
\newcommand{\mb}[1]{\mathbf{#1}}
\title[The quantum-to-classical graph homomorphism game]{The quantum-to-classical graph homomorphism game}
\author{Michael Brannan, Priyanga Ganesan and Samuel J. Harris}
\address{Department of Mathematics\\ 
	Texas A\&M University \\
	College Station, TX 77840\\
	USA} 
\email[]{mbrannan@tamu.edu, priyanga.g@tamu.edu, sharris@tamu.edu}
\begin{document}

\begin{abstract}
Motivated by non-local games and quantum coloring problems, we introduce a graph homomorphism game between quantum graphs and classical graphs. This game is naturally cast as a ``quantum-classical game"--that is, a non-local game of two players involving quantum questions and classical answers. This game generalizes the graph homomorphism game between classical graphs. We show that winning strategies in the various quantum models for the game is an analogue of the notion of non-commutative graph homomorphisms due to D. Stahlke \cite{Sta16}. Moreover, we present a game algebra in this context that generalizes the game algebra for graph homomorphisms given by J.W. Helton, K. Meyer, V.I. Paulsen and M. Satriano \cite{HMPS19}. We also demonstrate explicit quantum colorings of all quantum complete graphs, yielding the surprising fact that the algebra of the $4$-coloring game for a quantum graph is always non-trivial, extending a result of \cite{HMPS19}.
\end{abstract}

\maketitle

In recent years, the theory of non-local games has risen to a level of great prominence in quantum information theory and related parts of physics and mathematics.  In quantum information theory, non-local games provide a convenient framework in which one can exhibit the advantages of using quantum entanglement as a resource to accomplish certain tasks.  In physics, non-local games are intimately tied to the study Tsirelson's correlation sets and Bell's work on local hidden variable models \cite{Ts93}.  Within mathematics, the theory of non-local games has led to some spectacular developments in the field of operator algebras.  Most notable here is the work of Junge-Navascues-Palazuelos-Perez-Garcia-Scholz-Werner \cite{J+}, T. Fritz \cite{Fr12} and N. Ozawa \cite{Oz13} connecting the Connes-Kirchberg conjecture to Tsirelson's correlation sets in quantum information.  Very recently, Ji-Natarajan-Vidick-Wright-Yuen \cite{J+20} used non-local games to provide a counterexample to the Connes-Kirchberg conjecture.  Another recent and quite remarkable application of non-local games in mathematics is the work of Man$\check{\text{c}}$inska-Roberson \cite{MR19} which uses a non-local game, called the graph isomorphism game, to provide a quantum interpretation of pairs of graphs that admit the same number of homomorphisms from planar graphs.  

The general setup of a (classical input, classical output) two player non-local game is given in terms of a tuple $\mathcal G = (I, O, V)$, where $I$ and $O$ are finite sets and $V:O \times O \times I \times I \to \{0,1\}$ is a predicate function which determines the rules of the game.  The game is played by two cooperating players, Alice and Bob, and a verifier (Referee).  Each round proceeds by the verifier (randomly) selecting a pair of questions $(x,y) \in I \times I$ and sending $x$ to Alice and $y$ to Bob.  Alice and Bob then respond with answers $(a,b) \in O \times O$.  The verifier declares the round won if $V(a,b,x,y) = 1$ and declares it lost if $V(a,b,x,y) = 0$.  The term {\it non-local} refers to the fact that during each round, Alice and Bob are spatially separated and are unable to communicate; neither Alice nor Bob knows which questions/answers the other received/returned.  This non-locality of $\mathcal G$ makes winning each round of the game (with high probability) generally very difficult.  It is in these scenarios that ``quantum strategies'' (which make use of some shared entangled resource between Alice and Bob) can allow the players to drastically improve their performance by better correlating their behaviors.

In this paper, we are mainly interested in a non-local game called the graph homomorphism game and certain extensions of it.
The graph homomorphism game is a well studied example of a non-local game \cite{MR14, TT20, PSSTW16,HMPS19}.  This game is described by a pair of finite simple graphs $G,H$, with input set $I = V(G)$ (the vertex set of $G$) and output set $O = V(H)$.  The goal of Alice and Bob in this game is to convince the referee that there exists a homomorphism $G \to H$.  In particular, the rules of the game are determined by the following two requirements: 
\begin{enumerate}
\item
Alice and Bob's answers must be {\it synchronous}, meaning that if they receive the same vertex $x \in V(G)$, then they must return the same vertex $a \in V(H)$.
\item
If the referee supplies an edge $(x,y) \in E(G)$ to Alice and Bob, then they must respond with an edge $(a,b) \in E(H)$.
\end{enumerate}
The graph homomorphism game (in particular, the special case of the graph coloring game) has led to many developments in the operator algebraic aspects of non-local games.  A particular notion of interest here is the notion of a {\it synchronous} non-local game and synchronous strategies for such games \cite{HMPS19}.  Winning strategies for synchronous games turn out to be completely described in terms of traces on a certain $\ast$-algebra associated to the game, bringing to bear many powerful operator algebraic techniques in the theory of non-local games.  

Within information theory (both quantum and classical) graph theory plays a central role, appearing quite naturally in the theory of zero-error communication in the form of confusability graphs of noisy communications channels.  If the channel at hand is classical, the confusability graph is a finite simple graph on the input alphabet whose edges indicate which letters can be confused after passing through the channel.  If the channel is genuinely quantum, it was shown in \cite{DSW13} that the role of the confusability graph in this case must be played by more general structure called a {\it quantum graph}. Quantum graphs are an operator space generalization of classical graphs, which have emerged in different disguises in
operator systems theory, non-commutative topology and quantum information theory. Traditionally, a quantum graph is viewed as an operator system that serves as a quantum generalization of the adjacency matrix. It was first introduced in \cite{DSW13} for studying a zero-error channel capacity problem and arose independently in the study of quantum relations \cite{weaver1, weaver 2} around the same time. An alternate approach was used in \cite{MRV18} to define a quantum graph using a quantum adjacency matrix acting on a finite-dimensional C$^\ast$-algebra which plays the role of functions on the vertex set. Both these perspectives are shown to be essentially equivalent \cite[Theorem 7.7]{MRV18} and offer different advantages and perspectives.

In the present work,  motivated by several recent works extending the notion of chromatic number from graphs to the setting of quantum graphs \cite{Sta16,PT15, PSSTW16, KM17}, our aim is to develop a non-local game that captures the coloring problem for quantum graphs.  To this end, we study homomorphisms from quantum graphs to  classical graphs, using a non-local game with quantum inputs and classical outputs. The inputs are quantum inputs, in the sense that the referee initializes the state space $\bC^n \otimes \bC^n$, where Alice has access to the left copy and Bob has access to the right copy of $\bC^n$. Alice and Bob are allowed to share a(n entanglement) resource space $\cH$ in some prepared state $\psi$. After receiving the input $\varphi$ on $\bC^n \otimes \bC^n$, they can perform measurements on the triple tensor product $\bC^n \otimes \cH \otimes \bC^n$, and respond to the referee with classical outputs based on their measurements.

The winning strategies for this game give rise to a notion of quantum graph homomorphism that is closely related to several notions of quantum graph homomorphism in the literature \cite{Sta16, MRV18, weaver2}.
We also construct a game $\ast$-algebra for this and show that this game algebra extends the game algebra for graph homomorphisms given in \cite{HMPS19}. Further, we consider the coloring game for quantum graphs and study the associated chromatic numbers.  We show interesting extensions of classical results in this framework.  In particular, we use unitary error basis tools to show that every quantum graph admits a finite chromatic number in the quantum model (but not necessarily the local model), and the fact that every quantum graph is 4-colorable in the algebraic model.

The organization of the paper is as follows: Section \ref{sec: general theory} develops the general theory of quantum input-classical output correlations and the various quantum models which give rise to such correlations.  Here we also introduce and study the universal operator system $\mathcal Q_{n,c}$ associated to such correlations and its C$^\ast$-envelope.  In section \ref{sec: sync cor}, we introduce a generalization of synchronous correlations to our quantum framework.  In particular, we establish in this section characterizations of synchronous correlations in terms of tracial states on $C^{\ast}$-algebras, and we also establish an extension of the well-known equality of the quantum and quantum-spatial correlation sets for synchronous correlations, extending a result of \cite{KPS18}.   In section \ref{sec: afd cor} we consider the structure of quantum approximate correlations in our context, extending the result of \cite{KPS18} by identifying  synchronous quantum approximate correlations with the closure of the synchronous quantum correlations.  In section \ref{sec: game} we define the homomorphism game from quantum graphs to classical graphs and study the corresponding winning strategies and game $\ast$-algebra. Finally, in section \ref{sec: coloring}, we study the coloring problem for quantum graphs, demonstrating explicit colorings of all quantum graphs in the $q$-model with the help of some quantum teleportation-like schemes, as well as extending classical results on algebraic colorings to this framework.

\subsection*{Acknowledgements}  The authors are grateful to Marius Junge, Vern Paulsen, Ivan Todorov, Lyudmila Turowska, Nik Weaver, and Andreas Winter for fruitful discussions related to the content of this paper.  We are especially grateful to Ivan Todorov and Lyudmila Turowska, who shared with us an early draft of their preprint \cite{TT20}, which independently obtains some of the results in this paper.   MB and PG were partially supported by NSF Grant DMS-2000331 and a T3 Grant from Texas A\&M University.  SH was partially supported by an NSERC postdoctoral fellowship.  

\section{Quantum Input, Classical Output Correlations} \label{sec: general theory}

In this section, we develop some general theory on non-local games with quantum questions and classical answers. These have already been used in the two-output context of quantum XOR games \cite{RV15,Ha17}.

To motivate things, first recall that in the classical setup of $n$ classical input, $c$ classical output two-player non-local games, the main objects of study are the bipartite correlation sets $C(n,c) \subset \bR^{n^2c^2}$ which model the players' behavior.  Namely, any element $P = (p(a,b|x,y))_{\substack{1 \le a,b \le c\\ 1 \le x,y  \le n}} \in C(n,c)$ specifies the probability $p(a,b|x,y)$  that the players Alice and Bob return answers $a$ and $b$ (respectively), given that they received questions $x$ and $y$ (respectively).  The correlations  (behaviors) $P \in C(n,c)$ that are physically relevant are the ones that can be realized by a {\it (quantum) strategy}, that is, by Alice and Bob performing joint measurements on a quantum mechanical system prepared in some  initial state.  Mathematically, a quantum strategy amounts to the data of two finite-dimensional Hilbert spaces $\cH_A$ and $\cH_B$, and families of positive operator-valued measure (POVMs) $\{P_1^x,...,P_c^x\}$ on $\cH_A$, $\{Q_1^y,...,Q_c^y\}$ on $\cH_B$, and a state $\chi \in \cH_A \otimes \cH_B$.  From this data, one obtains a correlation $P \in C(n,c)$ via the formula 
\[
p(a,b|x,y) = \langle \chi| P^x_a \otimes Q^y_b| \chi\rangle.
\]

The subset of all correlations obtainable from quantum strategies as above is dented by $C_q(n,c)$.  In a similar manner, one can define other classes of correlations (local, quantum spatial, quantum approximate, quantum commuting) that are built from of the corresponding classes of strategies.  (See, for example, \cite{KPS18} for a review of all of these models.)

Our goal now is to develop the analogous notion of the correlation set $C(n,c)$ and its various subclasses arising from quantum strategies.  The main idea is quite simple -- in order to allow for quantum questions, we replace the question set $[n] \times [n]$ with the set of quantum states on the bipartite system $\bC^n \otimes \bC^n$.  In the following, our approach is somewhat backwards, in that we first define the different strategies associated to a two-player scenario with quantum questions (on $\bC^n \otimes \bC^n$) and classical answers in $\{1,2,...,c\}$.  Afterwards, we consider the associated correlations.  For our purposes it is easiest to begin with the quantum (i.e., finite-dimensional tensor product) strategies.

A \textbf{quantum strategy}, or a {\bf $q$-strategy}, is given by two finite-dimensional Hilbert spaces $\cH_A$ and $\cH_B$, a POVM $\{P_1,...,P_c\}$ on $\bC^n \otimes \cH_A$, a POVM $\{Q_1,...,Q_c\}$ on $\cH_B \otimes \bC^n$, and a state $\chi \in \cH_A \otimes \cH_B$.

A \textbf{quantum spatial strategy}, or a {\bf $qs$-strategy}, is given in the same way as a $q$-strategy, except that we no longer assume that $\cH_A$ and $\cH_B$ are finite-dimensional.

A \textbf{quantum commuting strategy}, or a {\bf $qc$-strategy}, is given by a single  Hilbert space $\cH$, a POVM $\{P_1,...,P_c\}$ on $\bC^n \otimes \cH$, a POVM $\{Q_1,...,Q_c\}$ on $\cH \otimes \bC^n$, and a state $\chi \in \cH$, with the property that $(P_a \otimes I_n)(I_n \otimes Q_b)=(I_n \otimes Q_b)(P_a \otimes I_n)$ for all $a,b$.

\begin{remark}
	It is helpful to understand the above commutation condition in terms of block matrices. For $1 \leq a \leq c$, one may write $P_a=(P_{a,ij}) \in M_n(\bofh)$ with $P_{a,ij} \in \bofh$. Similarly, we may write $Q_b=(Q_{b,k\ell}) \in M_n(\bofh)$ with $Q_{b,k\ell} \in \bofh$. With this in mind, the above commutation relation is easily seen to be equivalent to the requirement that $[P_{a, ij}, Q_{b, kl}] = 0 \in \bofh$ for each $a,b, i,j,k,l$. (See, e.g., \cite{CLP17} and \cite{HP17}.)
\end{remark} 

Finally, in view of the above remark, we define a \textbf{local strategy}, or a \textbf{classical strategy}, as a quantum commuting strategy with the property that the set of operators  $P_{a,ij}$ and $Q_{b,k\ell}$ generate a commutative $C^*$-algebra.

Suppose now that the referee initializes $\bC^n \otimes \bC^n$ in the state $\varphi$. For a quantum strategy, the probability that Alice outputs $a$ and Bob outputs $b$ is given by
$$p(a,b|\varphi)=\la (P_a \otimes Q_b)(\varphi \odot  \chi),\varphi \odot \chi \ra,$$
where by $\varphi \odot \chi$ we mean the (permuted) state in $\bC^n \otimes (\cH_A \otimes \cH_B) \otimes \bC^n$ rather than on $\bC^n \otimes \bC^n \otimes (\cH_A \otimes \cH_B)$. For a quantum commuting strategy, we simply replace $\cH_A \otimes \cH_B$ with $\cH$ and $(P_a \otimes Q_b)$ with $(P_a \otimes I_n)(I_n \otimes Q_b)$. We note that this definition of the probability of outputs can easily be extended to other (e.g. mixed) states in $\bC^n \otimes \bC^n$ that may not be included in the definition of the game. This is because the probabilities corresponding to Alice and Bob's strategy are encoded entirely in the \textit{correlation} associated to their strategy. The \textbf{correlation} associated to the strategy $(P_1,...,P_c,Q_1,...,Q_c,\chi)$ with $n$-dimensional quantum inputs and $c$ classical outputs is given by the tuple
$$X:=(X^{(a,b)}_{(i,j),(k,\ell)})=\left( (\la(P_{a,ij} \otimes Q_{b,k\ell})\chi,\chi\ra)_{i,j,k,\ell} \right)_{a,b} \in (M_n \otimes M_n)^{c^2},$$
in the case when the entanglement resource space for Alice and Bob is of the form $\cH_A \otimes \cH_B$. In the case when their resource space is a single Hilbert space $\cH$, we replace $P_{a,ij} \otimes Q_{b,k\ell}$ with $P_{a,ij}Q_{b,k\ell}$.

We will let $\mathcal{Q}_q(n,c)$ be the set of all correlations of this form that arise from quantum strategies. In other words,
$$\mathcal{Q}_q(n,c)=\{ (\la (P_{a,ij} \otimes Q_{b,k\ell})\chi,\chi \ra)_{\substack{1 \leq i,j,k,\ell \leq n, \\ 1 \leq a,b \leq c}} \subseteq (M_n \otimes M_n)^{c^2},$$
where $\cH_A$ and $\cH_B$ are finite-dimensional Hilbert spaces; $P_{a,ij} \in \cB(\cH_A)$ are such that $P_a=(P_{a,ij}) \in M_n(\cB(\cH_A))$ is positive with $\sum_{a=1}^c P_a=I$; $Q_{b,k\ell} \in \cB(\cH_B)$ are such that $Q_b=(Q_{b,k\ell}) \in M_n(\cB(\cH_B))$ are positive with $\sum_{b=1}^c Q_b=I$, and $\chi \in \cH_A \otimes \cH_B$ is a state.

Similarly, we will let $\mathcal{Q}_{qs}(n,c)$ be the set of all quantum spatial correlations (where $\cH_A$ and $\cH_B$ may not be finite-dimensional), and we let $\mathcal{Q}_{qc}(n,c)$ be the set of all quantum commuting correlations of the above form (where we replace the tensor product space $\cH_A \otimes \cH_B$ with a single Hilbert space $\cH$, and $P_{a,ij} \otimes Q_{b,k\ell}$ with $P_{a,ij}Q_{b,k\ell}$). Keeping the analogy with the sets $C_t(n,k)$ corresponding to classical inputs, we will also define $\mathcal{Q}_{qa}(n,c)$ as the closure of $\mathcal{Q}_q(n,c)$ in the norm topology. Lastly, we define $\mathcal{Q}_{loc}(n,c)$ as the set of all quantum commuting correlations where $C^*(\{P_{a,ij},Q_{b,k\ell}: 1 \leq a,b \leq c, \, 1 \leq i,j,k,\ell \leq n\})$ is a commutative $C^*$-algebra.

Since each of the correlation sets above are defined in terms of POVMs, an argument involving direct sums shows that $\mathcal{Q}_t(n,c)$ is convex for all $t \in \{loc,q,qs,qa,qc\}$. Moreover, $\mathcal{Q}_{qa}(n,c)$ is closed (by definition) and an application of Theorem \ref{theorem: characterizing qc} shows that $\mathcal{Q}_{qc}(n,c)$ is closed. Similarly, Proposition \ref{proposition: loc is closed} shows that $\mathcal{Q}_{loc}(n,c)$ is closed.

Next, we define a universal operator system that encodes the above correlation sets. Define $\mathcal{Q}_{n,c}$ as the universal operator system generated by $c$ sets of $n^2$ entries $q_{a,ij}$ with the property that the matrix $Q_a=(q_{a,ij})$ is positive in $M_n(\mathcal{Q}_{n,c})$ for each $1 \leq a \leq c$ and $\sum_{a=1}^c Q_a=I_n$. The correlations above are directly related to states on certain operator system tensor products of $\mathcal{Q}_{n,c}$. For these results, we will use some facts about $\mathcal{Q}_{n,c}$ and its $C^*$-envelope. For convenience, we define $\mathcal{P}_{n,c}$ to be the universal unital $C^*$-algebra generated by $c$ sets of $n^2$ entries $p_{a,ij}$ such that $P_a=(p_{a,ij})$ is an orthogonal projection in $M_n(\mathcal{P}_{n,c})$ for each $1 \leq a \leq c$ and $\sum_{a=1}^c P_a=I_n$. Similarly, we define $\mathcal{B}_{n,c}$ as the universal unital $C^*$-algebra generated by $n^2$ entries $u_{ij}$ with the property that $U=(u_{ij}) \in M_n(\cB_{n,c})$ is a unitary of order $c$. The latter algebra is an obvious quotient of the Brown algebra $\cB_n$, which is the universal $C^*$-algebra generated by the entries of an $n \times n$ unitary. The algebra $\cB_n$ first appeared in \cite{Br81}.

Our goal is to show a quantum-classical version of the disambiguation theorems; that is, we will show that all correlations in $\mathcal{Q}_t(n,c)$ can be achieved using projection-valued measures (PVMs) instead of the more general notion of POVMs. First, we will show that POVMs in our context dilate to PVMs.

\begin{proposition}
\label{proposition: dilate POVM to PVM}
Let $\cH$ be a Hilbert space, and let $\{Q_a\}_{a=1}^c$ be a POVM in $\bofh$. Then there is a PVM $\{P_a\}_{a=1}^c$ in $M_{c+1}(\bofh)$ such that, if $E_{11}$ is the first diagonal matrix unit in $M_{c+1}$, then $(E_{11} \otimes I_{\cH})P_a(E_{11} \otimes I_{\cH})=Q_a$ for all $1 \leq a \leq c$.
\end{proposition}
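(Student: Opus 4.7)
The plan is to apply Naimark's dilation theorem with a small adjustment so that the dilated PVM lives in $M_{c+1}(\cB(\cH))$ and the dilation is realized by compression to the $(1,1)$ corner. Concretely, first I would build an offset Naimark-type isometry
\[
V:\cH\to\cH^{\oplus(c+1)},\qquad V\xi \;=\; 0\oplus Q_1^{1/2}\xi\oplus Q_2^{1/2}\xi\oplus\cdots\oplus Q_c^{1/2}\xi,
\]
which really is an isometry since $V^*V=\sum_{a=1}^c Q_a=I_\cH$. For $1\leq a\leq c$ set $\tilde P_a := E_{a+1,a+1}\otimes I_\cH\in M_{c+1}(\cB(\cH))$; these are pairwise orthogonal projections and a direct check gives $V^*\tilde P_a V=Q_a$.

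Next I would absorb the unused first coordinate into one of the projections. Because $E_{11}\otimes I_\cH$ is orthogonal to every $\tilde P_a$, the choices
\[
P'_1 := (E_{11}\otimes I_\cH)+\tilde P_1,\qquad P'_a := \tilde P_a\quad(2\leq a\leq c),
\]
assemble a PVM $\{P'_a\}_{a=1}^c$ in $M_{c+1}(\cB(\cH))$. Since the first coordinate of $V\xi$ vanishes, $V^*(E_{11}\otimes I_\cH)V=0$, and hence $V^*P'_a V = Q_a$ for every $a$.

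Finally, rotate the picture so that $V$ is replaced by the canonical first-coordinate embedding $\iota:\cH\to\cH^{\oplus(c+1)}$, $\iota\xi=\xi\oplus 0$. Both $V(\cH)$ and $\iota(\cH)$ are isometric copies of $\cH$ inside $\cH^{\oplus(c+1)}$, and their orthogonal complements have matching Hilbert-space dimension $c\dim\cH$ (interpreted as cardinal arithmetic when $\cH$ is infinite-dimensional), so there exists a unitary $U\in\cB(\cH^{\oplus(c+1)})$ with $U\iota=V$. I then define $P_a := U^*P'_a U$ for $1\leq a\leq c$. Conjugation by $U$ preserves the PVM structure, and
\[
\iota^*P_a\iota = (U\iota)^*P'_a(U\iota) = V^*P'_a V = Q_a,
\]
which, via $\iota\iota^* = E_{11}\otimes I_\cH$, is equivalent to $(E_{11}\otimes I_\cH)P_a(E_{11}\otimes I_\cH)=E_{11}\otimes Q_a$, i.e.\ the desired identity under the standard identification of $Q_a$ with the $(1,1)$ corner of $M_{c+1}(\cB(\cH))$.

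The only point requiring any care is the existence of the unitary $U$ extending $\iota\mapsto V$; this is elementary once one verifies that the two orthogonal complements have the same Hilbert-space dimension. Everything else is a routine block-matrix computation in the spirit of Naimark's dilation.
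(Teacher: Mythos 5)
Your argument is correct, and it is close in spirit to the paper's, but the two proofs differ in one structural point worth noting. The paper also starts from the Naimark isometry built out of the $Q_a^{1/2}$, but then writes down the extending unitary explicitly as a $2\times 2$ block Halmos dilation $U=\begin{pmatrix} V & \sqrt{I-VV^*} \\ 0 & -V^* \end{pmatrix}$, so that the PVM $P_a=U^*(E_{aa}\otimes I)U$ (with the last two projections merged) can be computed in closed form and the $(1,1)$-corner read off directly. You instead place the isometry in the last $c$ coordinates and appeal to an abstract existence argument for a unitary identifying $\iota(\cH)$ with $V(\cH)$; this works, but the cardinality step needs one more sentence than you gave it: the inequality $\dim V(\cH)^{\perp}\ge\dim\cH$ is what makes the cardinal arithmetic close when $\cH$ is infinite-dimensional, and it holds precisely because you arranged for $V$ to have vanishing first coordinate, so $\cH\oplus 0\oplus\cdots\oplus 0\subseteq V(\cH)^{\perp}$. (Without that offset, an isometric embedding of $\cH$ into $\cH^{\oplus(c+1)}$ need not have complement of the right size, as the unilateral shift shows.) The trade-off is that the paper's explicit unitary avoids any dimension bookkeeping and produces concrete formulas, which is convenient for the later SOT$^*$-convergence arguments in the RFD proof, whereas your version is shorter to state at the cost of the existence-of-unitary lemma. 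Both yield the same dilation.
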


\begin{proof}
We define $V=\begin{pmatrix} Q_1^{\frac{1}{2}} \\ \vdots \\ Q_c^{\frac{1}{2}} \end{pmatrix} \in M_{c,1}(\bofh)$. Then $V$ is an isometry, so
$$U=\begin{pmatrix} V & \sqrt{I-VV^*} \\ 0 & -V^* \end{pmatrix} \in M_{c+1}(\bofh)$$
is a unitary. Define $P_a=U^*(E_{aa} \otimes I_{\cH})U$ for $1 \leq a \leq c-1$, and define $P_c=U^*((E_{cc}+E_{c+1,c+1}) \otimes I_{\cH})U$. Then $\{P_a\}_{a=1}^c$ is a PVM in $M_{c+1}(\bofh)$. Write $U=(U_{k\ell})_{k,\ell=1}^{c+1}$ where each $U_{k\ell} \in \bofh$. The $(1,1)$ entry of $P_a$ is given by
$$(P_a)_{11}=U_{a1}^*U_{a1}=(Q_a^{\frac{1}{2}})(Q_a^{\frac{1}{2}})=Q_a,$$
as desired.
\end{proof}

As a result of Proposition \ref{proposition: dilate POVM to PVM}, we obtain the desired dilation property for POVMs over $M_n(\bofh)$.

\begin{proposition}
\label{proposition: dilate matrix-valued POVM to matrix-valued PVM}
Let $\cH$ be a Hilbert space, and let $q_{a,ij} \in \bofh$ for $1 \leq i,j \leq n$ and $1 \leq a \leq c$ be such that $\{Q_a\}_{a=1}^c$ is a POVM in $M_n(\bofh)$, where $Q_a=(q_{a,ij})$. Let $V:\cH \to \cH^{(c+1)}$ be the isometry sending $\cH$ to the first direct summand of $\cH^{(c+1)}$. Then there are operators $p_{a,ij} \in M_{c+1}(\bofh)$ such that $\{P_a\}_{a=1}^c$ is a PVM in $M_n(M_{c+1}(\bofh))$, where $P_a=(p_{a,ij})$, and $V^*p_{a,ij}V=q_{a,ij}$ for all $1 \leq i,j \leq n$ and $1 \leq a \leq c$.
\end{proposition}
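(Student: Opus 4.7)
The natural plan is to bootstrap from Proposition \ref{proposition: dilate POVM to PVM} by identifying $M_n(\cB(\cH))$ with $\cB(\bC^n \otimes \cH)$ and then reshuffling tensor legs. Concretely, view the given matrix-valued POVM $\{Q_a\}_{a=1}^c \subset M_n(\cB(\cH))$ as a scalar POVM on the single Hilbert space $\cK := \bC^n \otimes \cH$. Applying Proposition \ref{proposition: dilate POVM to PVM} to $\cK$ yields a PVM $\{\widetilde P_a\}_{a=1}^c$ in $M_{c+1}(\cB(\cK)) = \cB(\bC^{c+1} \otimes \bC^n \otimes \cH)$ satisfying
\[
(E_{11} \otimes I_\cK)\,\widetilde P_a\,(E_{11} \otimes I_\cK) \;=\; Q_a, \qquad 1 \le a \le c.
\]

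The second step is to move $\widetilde P_a$ from $M_{c+1}(M_n(\cB(\cH)))$ to $M_n(M_{c+1}(\cB(\cH)))$. Let $W \colon \bC^n \otimes \bC^{c+1} \to \bC^{c+1} \otimes \bC^n$ be the canonical tensor-flip unitary, $e_i \otimes e_k \mapsto e_k \otimes e_i$, and set
\[
P_a \;:=\; (W^* \otimes I_\cH)\,\widetilde P_a\,(W \otimes I_\cH) \;\in\; \cB(\bC^n \otimes \bC^{c+1} \otimes \cH) \;=\; M_n\bigl(M_{c+1}(\cB(\cH))\bigr).
\]
Since $W \otimes I_\cH$ is unitary and conjugation preserves the PVM relations, $\{P_a\}_{a=1}^c$ is a PVM in $M_n(M_{c+1}(\cB(\cH)))$. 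Writing $P_a = (p_{a,ij})$ with $p_{a,ij} \in M_{c+1}(\cB(\cH))$ defines the required entries.

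The final step is to verify $V^* p_{a,ij} V = q_{a,ij}$, which is just unwinding the flip. For $\xi,\eta \in \cH$, using the matrix-entry convention $\langle P_a(e_i \otimes v), e_j \otimes w\rangle = \langle p_{a,ji}v, w\rangle$ and the fact that $V\xi = e_1 \otimes \xi$, one computes
\[
\langle V^* p_{a,ji} V \xi, \eta\rangle \;=\; \langle P_a(e_i \otimes e_1 \otimes \xi),\, e_j \otimes e_1 \otimes \eta\rangle \;=\; \langle \widetilde P_a(e_1 \otimes e_i \otimes \xi),\, e_1 \otimes e_j \otimes \eta\rangle,
\]
where the second equality uses $(W \otimes I_\cH)(e_i \otimes e_1 \otimes \xi) = e_1 \otimes e_i \otimes \xi$. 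By the compression identity for $\widetilde P_a$, the right-hand side equals $\langle Q_a(e_i \otimes \xi), e_j \otimes \eta\rangle = \langle q_{a,ji}\xi, \eta\rangle$, giving the desired relation after relabeling indices.

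None of the steps is technically hard; the only real obstacle is keeping the tensor ordering bookkeeping straight, since $M_{c+1}(M_n(\cB(\cH)))$ and $M_n(M_{c+1}(\cB(\cH)))$ are naturally isomorphic but not identical, and the dilation from the scalar proposition lands in the ``wrong'' ordering. Introducing the flip $W$ explicitly and carrying it through the inner-product computation is the cleanest way to avoid index confusion.
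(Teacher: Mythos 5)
Your proposal is correct and follows essentially the same route as the paper's proof: apply Proposition \ref{proposition: dilate POVM to PVM} after identifying $M_n(\bofh)$ with $\cB(\bC^n \otimes \cH)$, then reorganize $M_{c+1}(M_n(\bofh))$ into $M_n(M_{c+1}(\bofh))$. The only difference is cosmetic: the paper invokes the ``canonical shuffle'' of \cite[p.~97]{Pbook} by name, while you implement it explicitly as conjugation by the tensor-flip unitary $W \otimes I_{\cH}$ and verify the compression identity $V^*p_{a,ij}V = q_{a,ij}$ by an inner-product computation, which is a perfectly valid (and arguably more self-contained) way to carry out the same step.
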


\begin{proof}
We can regard $\{Q_a\}_{a=1}^c$ as a POVM in $M_n(\bofh)$. By Proposition \ref{proposition: dilate POVM to PVM}, there is a PVM $\{S_a\}_{a=1}^c$ in $M_{c+1}(M_n(\bofh))$ such that the $(1,1)$ entry of $S_a$ is $Q_a$. Performing a canonical shuffle $M_{c+1}(M_n(\bofh)) \simeq M_n(M_{c+1}(\bofh))$ \cite[p.~97]{Pbook} on each $S_a$, we obtain operators $p_{a,ij} \in M_{c+1}(\bofh)$ such that the $(1,1)$-entry of $p_{a,ij}$ is $q_{a,ij}$, and $P_a=(p_{a,ij}) \in M_n(M_{c+1}(\bofh))$ are projections with $\sum_{a=1}^c P_a=I$, completing the proof.
\end{proof}

\begin{remark}
In the case of classical inputs and outputs, one would consider $n$ POVMs in $\bofh$ with $c$ outputs each. It is a standard fact that such systems of POVMs can be dilated to a system of $n$ PVMs with $c$ outputs on a larger Hilbert space, which remains finite-dimensional whenever $\cH$ is finite-dimensional.

Alternatively, one can consider $n$ POVMs $\{P_{a,x}\}_{a=1}^c$ for $1 \leq x \leq n$ on $\cH$ as a single POVM on $\bC^n \otimes \cH$ by setting $Q_a=P_{a,1} \oplus \cdots \oplus P_{a,n}$. Then one applies Proposition \ref{proposition: dilate matrix-valued POVM to matrix-valued PVM} to obtain a single PVM in $M_n(\cH \otimes \bC^{c+1})$; however, the projections may no longer be block-diagonal, so they may not induce a family of $n$ PVMs in $\cB(\cH \otimes \bC^{c+1})$. In the case that $n=1$, one can dilate a POVM with $c$ outputs in $\bofh$ to a PVM with $c$ outputs in $\cB(\cH \otimes \bC^c)$, which is more optimal than Proposition \ref{proposition: dilate matrix-valued POVM to matrix-valued PVM}. On the other hand, as soon as $n \geq 2$, the dilation of Proposition \ref{proposition: dilate matrix-valued POVM to matrix-valued PVM} will be more optimal, since the general dilation of $n$ POVMs to $n$ PVMs requires an inductive argument.
\end{remark}

In the following, we let $C^*_{env}(\mathcal S)$ be the C$^\ast$-envelope of an operator system, first shown to exist by M. Hamana \cite{Ham79}.

\begin{proposition}
\label{proposition: c star envelope of block povms}
Let $n,c \in \bN$.
\begin{enumerate}
\item
$C_{env}^*(\mathcal{Q}_{n,c})$ is canonically $*$-isomorphic to the universal $C^*$-algebra $\cP_{n,c}$.
\item
There is a $\ast$-isomorphism $\cP_{n,c} \simeq \mathcal{B}_{n,c}$ given by the map $$\sum_{a=1}^c \omega^a P_a \leftarrow U,$$
where $\omega$ is a primitive $c$-th root of unity.
\end{enumerate}
\end{proposition}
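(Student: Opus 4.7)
My plan is to treat (1) via the POVM-to-PVM dilation (Proposition~\ref{proposition: dilate matrix-valued POVM to matrix-valued PVM}) combined with Hamana's characterization of the $C^\ast$-envelope through the unique extension property, and to treat (2) directly by exhibiting mutually inverse $\ast$-homomorphisms via a discrete Fourier transform between PVMs of size $c$ and unitaries of order $c$.

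For (1), since the universal projections $P_a=(p_{a,ij})$ form a PVM (and hence a POVM) in $M_n(\cP_{n,c})$, the universal property of $\cQ_{n,c}$ first produces a canonical ucp map $\iota\colon\cQ_{n,c}\to\cP_{n,c}$ with $\iota(q_{a,ij})=p_{a,ij}$, whose image generates $\cP_{n,c}$ as a $C^\ast$-algebra. To see $\iota$ is a complete order embedding I would factor any ucp map $\phi\colon\cQ_{n,c}\to\cB(\cH)$ as $\phi=V^\ast(\pi\circ\iota)(\cdot)V$ for some $\ast$-representation $\pi\colon\cP_{n,c}\to\cB(\cK)$ and isometry $V\colon\cH\to\cK$: Proposition~\ref{proposition: dilate matrix-valued POVM to matrix-valued PVM} dilates the POVM $(\phi(q_{a,ij}))$ to a PVM on the larger space $\cH\otimes\bC^{c+1}$, which in turn induces $\pi$ via the universal property of $\cP_{n,c}$. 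Since matrix positivity in an operator system is detected by ucp maps into $\cB(\cH)$, this factorization forces $\iota$ to be a complete order embedding and exhibits $\cP_{n,c}$ as a $C^\ast$-cover of $\cQ_{n,c}$.

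To upgrade this $C^\ast$-cover to the $C^\ast$-envelope, I would verify the unique extension property: whenever $\pi\colon\cP_{n,c}\to\cB(\cH)$ is a $\ast$-representation and $\sigma\colon\cP_{n,c}\to\cB(\cH)$ is any ucp map with $\sigma\circ\iota=\pi\circ\iota$, then $\sigma=\pi$. Indeed, under this hypothesis $\sigma^{(n)}(P_a)=\pi^{(n)}(P_a)$ is a projection, so $\sigma^{(n)}(P_a^2)=\sigma^{(n)}(P_a)=\sigma^{(n)}(P_a)^2$, placing $P_a$ in the multiplicative domain of $\sigma^{(n)}$; compressing by matrix units $E_{ii}(\,\cdot\,)E_{jj}$ (which themselves lie in the multiplicative domain of $\sigma^{(n)}$) then descends this to show that each entry $p_{a,ij}$ lies in the multiplicative domain of $\sigma$. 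Since the multiplicative domain is a $C^\ast$-subalgebra and the $p_{a,ij}$ generate $\cP_{n,c}$, $\sigma$ must be a $\ast$-homomorphism that agrees with $\pi$ on generators, so $\sigma=\pi$. Hamana's theorem then identifies $\cP_{n,c}$ with $C^\ast_{env}(\cQ_{n,c})$. The main technical step I expect to require care is precisely this descent from matrix-level to single-entry multiplicative domains.

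For (2), I would construct mutually inverse $\ast$-homomorphisms by direct computation. The element $\sum_{a=1}^c\omega^a P_a\in M_n(\cP_{n,c})$ is a unitary of order $c$ (by mutual orthogonality of the $P_a$ together with $\omega^c=1$), so the universal property of $\cB_{n,c}$ yields a $\ast$-homomorphism $\cB_{n,c}\to\cP_{n,c}$ sending $U\mapsto\sum_a\omega^a P_a$. Conversely, for the universal unitary $U$ in $M_n(\cB_{n,c})$, the Fourier-inverted spectral projections $P_a:=\frac{1}{c}\sum_{k=0}^{c-1}\omega^{-ak}U^k$ are self-adjoint (using $U^\ast=U^{c-1}$ and $\overline{\omega}=\omega^{-1}$), idempotent (using $U^c=I_n$ to collapse cross terms), and sum to $I_n$ (via the Fourier identity $\sum_{a=1}^c\omega^{-ak}=c\,\delta_{k,0}$ for $0\le k<c$). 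The universal property of $\cP_{n,c}$ then produces the inverse $\ast$-homomorphism, and a direct check on generators confirms these two maps are mutually inverse, completing the proof.
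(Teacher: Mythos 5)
Your proof is correct, and for part (1) it takes a genuinely different (and arguably leaner) route than the paper. Both proofs use Proposition~\ref{proposition: dilate matrix-valued POVM to matrix-valued PVM} identically to show $\cP_{n,c}$ is a $C^*$-cover of $\cQ_{n,c}$. Where you diverge is in upgrading the cover to the envelope: the paper invokes Hamana's universal property to get a surjection $\rho:\cP_{n,c}\to C^*_{env}(\cQ_{n,c})$, then builds an explicit inverse by Arveson's extension theorem and a minimal Stinespring dilation, reading off a block structure from the fact that both the dilated $Q_a$ and the $P_a$ are projections. You instead verify the unique extension property head-on: given a representation $\pi$ and a ucp $\sigma$ agreeing with $\pi$ on the generators, you place each $P_a$ in the multiplicative domain of $\sigma^{(n)}$ via $\sigma^{(n)}(P_a^2)=\sigma^{(n)}(P_a)=\pi^{(n)}(P_a)=\pi^{(n)}(P_a)^2=\sigma^{(n)}(P_a)^2$, and the descent step you flagged as delicate is indeed sound: since $E_{ij}\otimes 1$ lies in the multiplicative domain of $\mathrm{id}_n\otimes\sigma$ by bimodularity, $E_{ij}\otimes p_{a,ij}=(E_{ii}\otimes 1)P_a(E_{jj}\otimes 1)$ does too, and multiplying against $E_{jk}\otimes b$ extracts $\sigma(p_{a,ij}b)=\sigma(p_{a,ij})\sigma(b)$. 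Both approaches ultimately run a multiplicative-domain argument; yours avoids Arveson extension and Stinespring at the cost of citing the UEP characterization of the $C^*$-envelope, which is not quite ``Hamana's theorem'' (Hamana gives existence and the universal property; the UEP criterion is due to Arveson and others), so that citation should be adjusted. For part (2) the paper only remarks that the claim is analogous to $C^*(\bZ_c)\simeq\ell^\infty_c$; your discrete-Fourier computation (unitarity and order $c$ of $\sum_a\omega^aP_a$, and self-adjointness, idempotence, and completeness of $P_a=\frac1c\sum_k\omega^{-ak}U^k$, followed by a check that the maps invert each other on generators) is a correct expansion of that remark.
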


\begin{proof}
We only prove the first claim; the second claim is analogous to the fact that $C^*(\bZ_c) \simeq \ell_{\infty}^c$ (see, for example, \cite{J+}, \cite{Fr12} or \cite{Oz13}). Let $p_{a,ij}$ be the canonical generators of $\cP_{n,c}$, for $1 \leq i,j \leq n$ and $1 \leq a \leq c$. Since $P_a=(p_{a,ij})$ is a projection in $M_n(\cP_{n,c})$, it is positive. Since $\sum_{a=1}^c P_a=I_n$, there is a ucp map $\varphi:\mathcal{Q}_{n,c} \to \cP_{n,c}$ such that $\varphi(q_{a,ij})=p_{a,ij}$. If we represent $\mathcal{Q}_{n,c} \subseteq \bofh$ for some Hilbert space $\cH$, then by Proposition \ref{proposition: dilate matrix-valued POVM to matrix-valued PVM}, there is a unital $*$-homomorphism $\pi:\cP_{n,c} \to M_{c+1}(\bofh)$ such that compressing to the first coordinate yields the map $p_{a,ij} \to q_{a,ij}$. Hence, $\varphi$ is a complete order isomorphism. This shows that $\cP_{n,c}$ is a $C^*$-cover for $\mathcal{Q}_{n,c}$, in the sense that there is a unital complete order embedding of $\mathcal{Q}_{n,c}$ into $\mathcal{P}_{n,c}$, whose range generates $\mathcal{P}_{n,c}$ as a $C^*$-algebra.

By the universal property of the $C^*$-envelope \cite{Ham79}, there is a unique, surjective unital $*$-homomorphism $\rho:\cP_{n,c} \to C_{env}^*(\mathcal{Q}_{n,c})$ such that $\rho(p_{a,ij})=q_{a,ij}$ for all $1 \leq i,j \leq n$ and $1 \leq a \leq c$. As each $P_a$ is a projection in $\cP_{n,c}$, the matrix $Q_a=(q_{a,ij}) \in M_n(C_{env}^*(\mathcal{Q}_{n,c}))$ is a projection as well. We will show that $\rho$ is injective by constructing an inverse. We assume that $\cP_{n,c}$ is faithfully represented as a $C^*$-algebra of operators on a Hilbert space $\cK$. Then the map $\varphi:\mathcal{Q}_{n,c} \to \cP_{n,c}$ above extends to a ucp map $\sigma:C_{env}^*(\mathcal{Q}_{n,c}) \to \cB(\cK)$ by Arveson's extension theorem \cite{Ar69}. We let $\sigma=V^*\beta(\cdot)V$ be a minimal Stinespring representation of $\sigma$, where $V:\cK \to \cL$ is an isometry and $\beta:C_{env}^*(\mathcal{Q}_{n,c}) \to \cB(\cL)$ is a unital $*$-homomorphism. With respect to the decomposition $\cL=\cK \oplus \cK^{\perp}$, one has
$$\beta(q_{a,ij})=\begin{pmatrix} \varphi(q_{a,ij}) & * \\ * & * \end{pmatrix}=\begin{pmatrix} p_{a,ij} & * \\ * & * \end{pmatrix}.$$
Thus, after a shuffle, one may write $\beta^{(n)}(Q_a)=(\beta(q_{a,ij}))$ as
$$\begin{pmatrix} \varphi^{(n)}(Q_a) & * \\ * & * \end{pmatrix}=\begin{pmatrix} P_a & * \\ * & * \end{pmatrix}.$$
As $Q_a$ is a projection in $M_n(C_{env}^*(\mathcal{Q}_{n,c}))$, so is $\beta^{(n)}(Q_a)$ in $M_n(\cB(\cL))$. But $P_a$ is a projection as well, so the off-diagonal blocks must be $0$. Therefore, reversing the shuffle yields
$$\beta(q_{a,ij})=\begin{pmatrix} p_{a,ij} & 0 \\ 0 & * \end{pmatrix}.$$
Considering $\beta(q_{a,ij}^*q_{a,ij})$ and $\beta(q_{a,ij}q_{a,ij}^*)$, it follows that the multiplicative domain of $\sigma$ contains $q_{a,ij}$ for each $1 \leq i,j \leq n$ and $1 \leq a \leq c$; as these elements generate $C_{env}^*(\mathcal{Q}_{n,c})$, $\sigma$ must be a $*$-homomorphism. Since $\rho$ and $\sigma$ are mutual inverses on the generators, they must be mutual inverses on the whole algebras. Hence, $\rho$ is injective, so that $C_{env}^*(\mathcal{Q}_{n,c}) \simeq \cP_{n,c}$.
\end{proof}

Combining part (2) of Proposition \ref{proposition: c star envelope of block povms} and Proposition \ref{proposition: dilate matrix-valued POVM to matrix-valued PVM}, one can obtain a similar dilation corresponding to a block unitary of order $c$. Indeed, if $T=(T_{ij}) \in M_n(\bofh)$ is a contraction that can be written as $T=\sum_{a=1}^c \omega^a Q_a$, where $\omega$ is a primitive $c$-th root of unity and $\{Q_a\}_{a=1}^c$ is a POVM in $M_n(\bofh)$, then one can dilate $T$ to a unitary $U=(U_{ij}) \in M_n(M_{c+1}(\bofh))$ of order $c$, such that the $(1,1)$ block of each $U_{ij}$ is $T_{ij}$. It is sometimes convenient to use this form of the dilation, rather than the dilation of the POVM to a PVM.

We now study some of the structure of $\cP_{n,c}$. First, we show that $\cP_{n,c}$ has the lifting property. Recall that a $C^*$-algebra $\cA$ has the \textbf{lifting property} if, whenever $\cB$ is a $C^*$-algebra, $\cJ$ is an ideal in $\cB$, and $\varphi:\cA \to \cB/\cJ$ is a contractive completely positive map, then there exists a contractive completely positive lift $\widetilde{\varphi}:\cA \to \cB$ of $\varphi$. As noted in \cite[Lemma~13.1.2]{BO08}, when $\cA$ is unital, one need only deal with the case when $\cB$ is unital and $\varphi,\widetilde{\varphi}$ are unital.

On the way to proving that $\cP_{n,c}$ has the lifting property, we will need the following fact. We include a proof for convenience.

\begin{proposition}
\label{proposition: POVMs lift to POVMs}
Let $\cB$ be a unital $C^*$-algebra, $\cJ$ be an ideal in $\cB$, and $p_1,...,p_c \in \cB/\cJ$ be projections with $\sum_{a=1}^c p_a=1_{\cB/\cJ}$. Let $q:\cB \to \cJ$ be the canonical quotient map. Then there are positive elements $\widetilde{p}_1,...,\widetilde{p}_c$ in $\cB$ such that $\sum_{a=1}^c \widetilde{p}_a=1_{\cB}$ and $q(\widetilde{p}_a)=p_a$ for all $1 \leq a \leq c$.
\end{proposition}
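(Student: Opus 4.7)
The plan is to lift $p_1,\ldots,p_{c-1}$ individually to positive contractions in $\cB$, then use continuous functional calculus to renormalize so that these lifts sum to at most $1_\cB$, and finally define $\tilde p_c := 1_\cB - \sum_{a<c}\tilde p_a$ in order to force the sum identity.

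First I would invoke the standard fact that any positive contraction in $\cB/\cJ$ admits a positive contractive lift to $\cB$: pick any self-adjoint lift $b_a\in\cB$ of $p_a$ and apply the continuous function $g(t)=\max(0,\min(t,1))$ via functional calculus. Since $g$ fixes the spectrum $\{0,1\}$ of the projection $p_a$, the element $x_a:=g(b_a)$ is a positive contraction in $\cB$ with $q(x_a)=g(p_a)=p_a$. Do this for $a=1,\ldots,c-1$ and set $s:=\sum_{a<c}x_a\geq 0$. Now choose $f(t):=\max(t,1)$, continuous on $[0,\infty)$. Then $f(s)\geq 1_\cB$ is invertible, and since $f(t)\geq t$ for $t\geq 0$ we have $s\leq f(s)$, so sandwiching by $f(s)^{-1/2}$ yields
\[
\sum_{a=1}^{c-1}f(s)^{-1/2}x_a f(s)^{-1/2}=f(s)^{-1/2}sf(s)^{-1/2}\leq 1_\cB.
\]
Define $\tilde p_a:=f(s)^{-1/2}x_a f(s)^{-1/2}\geq 0$ for $a<c$, and $\tilde p_c:=1_\cB-\sum_{a<c}\tilde p_a\geq 0$. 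By construction $\sum_{a=1}^c\tilde p_a=1_\cB$.

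The key point for verifying $q(\tilde p_a)=p_a$ is that $q(s)=\sum_{a<c}p_a=1-p_c$ is itself a projection, so its spectrum lies in $\{0,1\}$, on which $f$ is identically $1$; hence $q(f(s))=f(1-p_c)=1_{\cB/\cJ}$ and therefore $q(f(s)^{-1/2})=1_{\cB/\cJ}$. This immediately gives $q(\tilde p_a)=p_a$ for $a<c$, and then $q(\tilde p_c)=1-\sum_{a<c}p_a=p_c$.

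The only real subtlety is the normalization: a naive sum of positive contractive lifts has quotient $1_{\cB/\cJ}$ but no control over its norm in $\cB$, so one must renormalize; the specific choice $f(t)=\max(t,1)$ is what makes the renormalization invisible modulo $\cJ$, precisely because $q(s)$ is a projection bounded by $1$. (As a remark, one could alternatively cite the Choi--Effros lifting theorem applied to the unital $\ast$-homomorphism $\bC^c\to\cB/\cJ$, $e_a\mapsto p_a$, since $\bC^c$ is nuclear and ucp maps from nuclear $C^\ast$-algebras lift; the argument above is included as a self-contained alternative.)
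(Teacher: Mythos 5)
Your argument is correct, and it takes a genuinely different route from the paper. The paper's proof is a one-liner via the Choi--Effros lifting theorem: the assignment $e_a\mapsto p_a$ defines a unital $*$-homomorphism $\ell^\infty_c\to\cB/\cJ$, and since $\ell^\infty_c$ is separable and nuclear, Choi--Effros provides a ucp lift $\varphi:\ell^\infty_c\to\cB$; setting $\widetilde p_a=\varphi(e_a)$ finishes. Your proof avoids this machinery entirely, using only continuous functional calculus: lift the first $c-1$ projections to positive contractions, set $s$ to be their sum, and renormalize by $f(s)^{-1/2}$ with $f(t)=\max(t,1)$; the crucial observation that makes this work is precisely that $q(s)=1-p_c$ is a projection, so $f$ acts as the constant $1$ on its spectrum and the renormalization disappears modulo $\cJ$. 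Every step checks out: $g(t)=\max(0,\min(t,1))$ fixes $\{0,1\}$, $f(s)\geq 1_\cB$ is invertible, $s\leq f(s)$ gives $\sum_{a<c}\widetilde p_a\leq 1_\cB$, and $q(f(s)^{-1/2})=f(q(s))^{-1/2}=1_{\cB/\cJ}$ since functional calculus passes through the quotient $*$-homomorphism. What you gain is a self-contained elementary proof that does not invoke nuclearity or the Choi--Effros theorem; what you give up is brevity, since the paper's argument is a single application of a standard black box. Both are valid, and you correctly note the Choi--Effros alternative at the end.
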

 
\begin{proof}
The assumption implies that there is a unital $*$-homomorphism $\pi:\ell^{\infty}_c \to \cB/\cJ$ such that $\pi(e_a)=p_a$ for each $a$. As $\ell^{\infty}_c$ is separable and nuclear, the Choi-Effros lifting theorem \cite{CE76} gives a ucp lift $\varphi:\ell^{\infty}_c \to \cB$ of $\pi$. Defining $\widetilde{p}_a=\varphi(e_a)$ concludes the proof.
\end{proof}
 
\begin{theorem}
\label{theorem: lifting property}
$\cP_{n,c}$ has the lifting property.
\end{theorem}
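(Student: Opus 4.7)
The plan is to lift the matrix POVM data of $\varphi$ from $\cB/\cJ$ up to $\cB$ (using a matrix-valued extension of Proposition \ref{proposition: POVMs lift to POVMs}) and then invoke the Naimark-type dilation of Proposition \ref{proposition: dilate matrix-valued POVM to matrix-valued PVM} together with the universal property of $\cP_{n,c}$ to produce a unital $*$-homomorphism whose compression is the desired ucp lift. By the standard reduction in \cite[Lemma 13.1.2]{BO08}, I may assume $\cB$ is unital and $\varphi:\cP_{n,c}\to \cB/\cJ$ is unital completely positive.

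First, set $Y_a = \varphi^{(n)}(P_a)\in M_n(\cB/\cJ)\cong M_n(\cB)/M_n(\cJ)$, obtaining a matrix POVM summing to $I_n$. Viewing the tuple $(Y_a)_{a=1}^c$ as a ucp map $\ell^\infty_c\to M_n(\cB)/M_n(\cJ)$ and applying the Choi--Effros lifting theorem (since $\ell^\infty_c$ is finite dimensional and nuclear) yields matrix POVMs $\widetilde Y_a=(\widetilde y_{a,ij})\in M_n(\cB)$ summing to $I_n$ with $q^{(n)}(\widetilde Y_a)=Y_a$. This is the direct matrix-valued analogue of Proposition \ref{proposition: POVMs lift to POVMs} used earlier.

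Next, apply Proposition \ref{proposition: dilate matrix-valued POVM to matrix-valued PVM} to dilate $\{\widetilde Y_a\}$ to a matrix PVM $\{\widetilde P_a=(\widetilde p_{a,ij})\}$ in $M_n(M_{c+1}(\cB))$ summing to $I$, with $V^*\widetilde p_{a,ij}V=\widetilde y_{a,ij}$ for the $(1,1)$-corner isometry $V:\cB\hookrightarrow M_{c+1}(\cB)$. By the universal property of $\cP_{n,c}$, the matrix PVM $\{\widetilde P_a\}$ induces a unital $*$-homomorphism $\tau:\cP_{n,c}\to M_{c+1}(\cB)$ sending $p_{a,ij}\mapsto \widetilde p_{a,ij}$. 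Defining $\widetilde\varphi(x):=V^*\tau(x)V$ gives a unital completely positive map $\cP_{n,c}\to \cB$.

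The main obstacle is verifying that $q\circ\widetilde\varphi=\varphi$. Because the explicit construction of Proposition \ref{proposition: dilate matrix-valued POVM to matrix-valued PVM} is built from continuous functional calculus on the generators, it commutes with the quotient $*$-homomorphism $q$, so $q\circ\tau=\sigma$ where $\sigma:\cP_{n,c}\to M_{c+1}(\cB/\cJ)$ is the $*$-homomorphism arising from dilating $Y_a$ directly in $\cB/\cJ$; hence $q\circ\widetilde\varphi(x)=V^*\sigma(x)V$, and this agrees with $\varphi$ on the operator subsystem $\mathcal{Q}_{n,c}\subseteq\cP_{n,c}$. Extending this equality to all of $\cP_{n,c}$ is the crux of the argument, since in general two ucp maps that coincide on a generating operator subsystem need not coincide on the $C^*$-algebra they generate. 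To close this gap I would either (i) refine the choice of lift $\widetilde Y_a$ so that the induced $\sigma$ is in fact a Stinespring dilation of $\varphi$ (not merely of $\varphi|_{\mathcal{Q}_{n,c}}$), or (ii) exploit the $C^*$-envelope identification $\cP_{n,c}=C^*_{env}(\mathcal{Q}_{n,c})$ from Proposition \ref{proposition: c star envelope of block povms} together with a rigidity/maximality argument in the spirit of Dritschel--McCullough, forcing $V^*\sigma(\cdot)V=\varphi$ on all of $\cP_{n,c}$.
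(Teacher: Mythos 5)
Your first two steps (lifting the matrix POVM $\{Y_a\}$ along $q^{(n)}$ via Choi--Effros, then dilating to a matrix PVM in $M_n(M_{c+1}(\cB))$) match the paper, and you correctly identify the crux: the resulting ucp map $\widetilde\varphi = V^*\tau(\cdot)V$ only agrees with $\varphi$ after $q$ on the generating operator system $\mathcal Q_{n,c}$, which does not force agreement on all of $\cP_{n,c}$. Neither of your two proposed repairs closes this gap. Fix (i) amounts to asking that the chosen lift $\widetilde Y_a$ dilate to a Stinespring dilation of $\varphi$ itself, which is essentially the lifting property in disguise and hence circular. Fix (ii) also does not help: the Hamana/Dritschel--McCullough rigidity of $C^*_{env}(\mathcal Q_{n,c})$ concerns ucp self-maps of the envelope fixing $\mathcal Q_{n,c}$, not two ucp maps $\cP_{n,c}\to\cB/\cJ$ that happen to coincide on $\mathcal Q_{n,c}$; there is no rigidity principle that forces equality in the latter situation.

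The paper closes the gap with a two-stage reduction that your outline omits. First, it proves the lifting property for $*$-\emph{homomorphisms} $\pi:\cP_{n,c}\to\cB/\cJ$: here your construction works, because $q^{(c+1)}\circ\tau$ is a $*$-homomorphism whose value on each $p_{a,ij}$ compresses to $\pi(p_{a,ij})$, and since the $\pi^{(n)}(P_a)$ are genuine projections the shuffle argument (as in the proof of Proposition~\ref{proposition: c star envelope of block povms}) forces a block-diagonal decomposition that puts the generators in the multiplicative domain of $q\circ\gamma$, so $q\circ\gamma$ is a $*$-homomorphism agreeing with $\pi$ on generators, hence equal to $\pi$. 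This multiplicative-domain argument is exactly what fails when $\varphi$ is merely ucp, because then $\varphi^{(n)}(P_a)$ is only a POVM and the off-diagonal blocks need not vanish. Second, for a general ucp $\varphi$, the paper invokes Kasparov's dilation theorem to produce a $*$-homomorphism $\rho:\cP_{n,c}\to M(\cK\otimes_{\min}(\cB/\cJ))$ with $\rho(x)_{11}=\varphi(x)$, uses the non-commutative Tietze extension theorem to realize $M(\cK\otimes_{\min}(\cB/\cJ))$ as a quotient of $M(\cK\otimes_{\min}\cB)$, lifts $\rho$ via the $*$-homomorphism case, and compresses to the $(1,1)$ corner. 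Without this reduction, your argument as written does not prove the theorem.
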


\begin{proof}
This proof is similar in nature to results from \cite{J+,BO08}. First, suppose that $\cB$ is a unital $C^*$-algebra, $\cJ$ is an ideal in $\cB$ and $\pi:\cP_{n,c} \to \cB/\cJ$ is a $*$-homomorphism. Then $\pi^{(n)}=\id_n \otimes \pi:M_n(\cP_{n,c}) \to M_n(\cB)/M_n(\cJ)$ is a $*$-homomorphism. Define $Q_a=\pi^{(n)}(P_a)$. By Proposition \ref{proposition: POVMs lift to POVMs}, one can find a POVM $\widetilde{Q}_1,...,\widetilde{Q}_c$ in $M_n(\cB)$ that is a lift of $Q_1,...,Q_c$. Next, we apply Proposition \ref{proposition: dilate matrix-valued POVM to matrix-valued PVM} and compress to the $(1,1)$ corner to obtain a ucp map $\gamma:\cP_{n,c} \to \cB$ given by $\gamma(p_{a,ij})=\widetilde{Q}_{a,ij}$ for all $a,i,j$. As $\gamma$ is a lift of $\pi$ on the generators, a multiplicative domain argument establishes that $\gamma$ is a lift of $\pi$.

Now we deal with the general case. Let $\varphi:\cP_{n,c} \to \cB/\cJ$ be a ucp map. Since $\cP_{n,c}$ is separable, one can restrict if necessary and assume without loss of generality that $\cB$ is separable. Then we apply Kasparov's dilation theorem \cite{Ka80} to $\varphi$: letting $\cK=\cK(\ell_2)$ denote the compact operators and $M(\cA)$ denote the multiplier algebra of a (separable) $C^*$-algebra $\cA$, there is a $*$-homomorphism $\rho:\cP_{n,c} \to M(\cK \otimes_{\min} (\cB/\cJ))$ satisfying $\rho(x)_{11}=\varphi(x)$ for all $x \in \cP_{n,c}$. (Here, $\rho(x)_{11}$ refers to the $(1,1)$ entry of $\rho(x)$.) If $q:\cB \to \cB/\cJ$ is the canonical quotient map, then $\id \otimes q:\cK \otimes_{\min} \cB \to \cK \otimes_{\min} (\cB/\cJ)$ extends to a surjective $*$-homomorphism $\sigma:M(\cK \otimes_{\min} \cB) \to M(\cK \otimes_{\min} (\cB/\cJ))$ by the non-commutative Tietze extension theorem \cite[Proposition~6.8]{La95}. Therefore, we can lift the $*$-homomorphism $\rho$ to a ucp map $\eta:\cP_{n,c} \to M(\cK \otimes_{\min} \cB)$. Defining $\widetilde{\varphi}(x)=\eta(x)_{11}$, we obtain a lift of $\varphi$.
\end{proof}

Next, we establish residual finite-dimensionality of $\cP_{n,c}$.  Recall that a $C^\ast$-algebra $\mathcal A$ is called {\bf residually finite-dimensional (RFD)} if, for any $x \in \cA \backslash \{0\}$, there exists  $k \in\mathbb N$ and a finite-dimensional representation $\pi: \cA \to M_k$ with $\pi(x) \ne 0$.   

\begin{theorem}
$\cP_{n,c}$ is RFD.
\end{theorem}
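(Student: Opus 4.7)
The plan is to reduce via Proposition \ref{proposition: c star envelope of block povms}(2) to showing that $\cB_{n,c}$ is RFD, i.e., that finite-dimensional $\ast$-representations separate the points of $\cP_{n,c}$. Finite-dimensional $\ast$-representations of $\cP_{n,c}$ correspond to $c$-tuples of orthogonal projections $P_1,\dots,P_c$ in $M_n(M_k)$ summing to $I_n$, or equivalently, to unitaries $V\in U(nk)$ of order $c$ viewed as $n\times n$ block matrices over $M_k$.

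The natural approach is compression-then-dilation, building directly on Proposition \ref{proposition: dilate matrix-valued POVM to matrix-valued PVM}. Starting from a faithful representation $\pi:\cP_{n,c}\to\bofh$ and an increasing net of finite-dimensional subspaces $K\subseteq\cH$ with dense union, I would let $q_K\in\bofh$ denote the projection onto $K$ and form the compressed POVMs $Q_a^K:=(I_n\otimes q_K)(\pi(p_{a,ij}))_{i,j}(I_n\otimes q_K)\in M_n(\cB(K))$. Dilating via Proposition \ref{proposition: dilate matrix-valued POVM to matrix-valued PVM} yields a PVM in $M_n(M_{c+1}(\cB(K)))$, and by the universal property of $\cP_{n,c}$ this produces a finite-dimensional $\ast$-representation $\rho_K:\cP_{n,c}\to M_{c+1}(\cB(K))$, whose compression to the first block summand recovers the original compressed generators on $K$.

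The hard part will be showing that the family $\{\rho_K\}$ actually separates points. On a single generator the identity $V_K^{\ast}\rho_K(p_{a,ij})V_K=q_K\pi(p_{a,ij})q_K$, where $V_K:K\hookrightarrow K\otimes\bC^{c+1}$ is the canonical inclusion into the first summand, gives strong convergence $V_K^{\ast}\rho_K(p_{a,ij})V_K\to\pi(p_{a,ij})$ as $K\nearrow\cH$. For products of generators, however, the compression $V_K^{\ast}\rho_K(\cdot)V_K$ is no longer multiplicative, and cross-terms from the auxiliary $c$-dimensional summand enter; these would need to be estimated using the explicit form of the Stinespring-style dilation in the proof of Proposition \ref{proposition: dilate POVM to PVM}, followed by a diagonalization over a countable dense subset of $\cP_{n,c}$.

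An alternative, potentially cleaner route that leverages the work already done in Theorem \ref{theorem: lifting property} is to argue via MF-ness: use the finite-dimensional representations $\rho_K$ to build an injective unital $\ast$-homomorphism $\cP_{n,c}\hookrightarrow\prod_k M_{n_k}/\bigoplus_k M_{n_k}$, then invoke the lifting property to lift this quotient embedding to a unital completely positive map into $\prod_k M_{n_k}$; a multiplicative-domain argument at the level of the matricial generators $P_a$ (in the spirit of the proof of Theorem \ref{theorem: lifting property}) would then promote this lift to a $\ast$-homomorphism, thereby exhibiting a faithful $\ast$-representation of $\cP_{n,c}$ into a direct product of matrix algebras and establishing residual finite-dimensionality.
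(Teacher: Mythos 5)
Your first route is essentially the paper's argument (compress the generators to finite-rank corners $E_m p_{a,ij} E_m$, dilate the resulting POVM via Proposition \ref{proposition: dilate matrix-valued POVM to matrix-valued PVM}, and let the resulting $*$-representations $\rho_m$ do the work), and you have correctly flagged the precise point where the argument is nontrivial: the compressions $V_K^*\rho_K(\cdot)V_K$ are not multiplicative, so something must be said about the cross-terms. But you leave that step open ("these would need to be estimated"), and the paper's way of closing it is not an estimate but a structural observation you are missing. Namely, after a canonical shuffle, one checks that $\rho_m(p_{a,ij})$ has a $\text{SOT}^*$-limit in $M_{c+1}(\bofh)$ whose $(1,1)$ entry is exactly $p_{a,ij}$; and because $P_a=(p_{a,ij})$ is already a projection, the limiting block matrix is forced to be block-diagonal, i.e. $\text{SOT}^*\text{-}\lim_m \rho_m(p_{a,ij})=\begin{pmatrix} p_{a,ij} & 0 \\ 0 & * \end{pmatrix}$. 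Joint $\text{SOT}^*$-continuity of multiplication on bounded sets then shows that for any noncommutative polynomial $W$ in the generators, $\rho_m(W)$ converges block-diagonally with $(1,1)$ block $W$; the off-diagonal cross-terms you worried about simply vanish in the limit. Passing to a subsequence gives $\|\rho_m(W)\|\to\|W\|$, so $\bigoplus_m\rho_m$ is isometric on the dense $*$-subalgebra and hence on all of $\cP_{n,c}$.

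Your second ("potentially cleaner") alternative does not work. First, proving that the $\rho_K$ yield an injective $*$-homomorphism into $\prod_k M_{n_k}/\bigoplus_k M_{n_k}$ already requires showing that the compressions recover norms asymptotically, which is the same content as the first route, so nothing is saved. More seriously, even granting an MF embedding, the lifting property gives only a unital completely positive lift $\varphi:\cP_{n,c}\to\prod_k M_{n_k}$ of the quotient embedding; there is no multiplicative-domain argument that upgrades this to a $*$-homomorphism. A multiplicative-domain argument needs the lift itself to satisfy $\varphi(x^*x)=\varphi(x)^*\varphi(x)$ on generators, and a Choi--Effros or Kasparov-type lift has no reason to satisfy this --- knowing only that $q\circ\varphi$ is multiplicative tells you nothing about $\varphi$ modulo the ideal $\bigoplus_k M_{n_k}$. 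If such an argument existed, it would show that every MF $C^*$-algebra with the lifting property is RFD, which is false in general. (In Theorem \ref{theorem: lifting property} the multiplicative-domain argument is used for the much weaker claim that a ucp map agreeing with a $*$-homomorphism on generators agrees with it everywhere; it does not promote ucp lifts to $*$-homomorphic lifts.)
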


\begin{proof}
This proof is very similar to the proofs that $C^*(\mathbb{F}_2)$ and $\cB_n$ are RFD, respectively \cite{Ch80,Ha19}. As $\cP_{n,c}$ is separable, we may represent it faithfully as a subalgebra of $\bofh$, where $\cH$ is separable and infinite-dimensional. Let $(E_m)_{m=1}^{\infty}$ be an increasing sequence of projections in $\bofh$ such that $\text{rank}(E_m)=m$ and $\text{SOT}$-$\lim_{m \to \infty} E_m=I_{\cH}$. For each $1 \leq a \leq c$ and $1 \leq i,j \leq n$, we let $p_{a,ij}^{(m)}=E_m p_{a,ij}E_m$. Then the matrices $P_a^{(m)}=(p^{(m)}_{a,ij}) \in M_n(\cB(E_m \cH))$ define a POVM with $c$ outputs in $\cB(E_m \cH)$. Applying Proposition \ref{proposition: dilate matrix-valued POVM to matrix-valued PVM}, we obtain a unital $*$-homomorphism $\rho_m:\cP_{n,c} \to M_{c+1}(\cB(E_m \cH))$ which, after a shuffle of the form $M_n(M_{c+1}(\cB(E_m\cH))) \to M_{c+1}(M_n(\cB(E_m \cH)))$, can be written as
$$P_a \mapsto \begin{cases} U_m^*(E_{aa} \otimes I_{E_m\cH})U_m & 1 \leq a \leq c-1 \\ U_m^*((E_{cc}+E_{c+1,c+1}) \otimes I_{E_m\cH})U_m & a=c \end{cases},$$
where
$$U_m=\begin{pmatrix} \begin{pmatrix} (P_1^{(m)})^{\frac{1}{2}} \\ \vdots \\ (P_c^{(m)})^{\frac{1}{2}} \end{pmatrix} & \left( (\delta_{ab} I_{E_m \cH} - (P_a^{(m)})^{\frac{1}{2}}(P_b^{(m)})^{\frac{1}{2}})_{a,b=1}^c \right)^{\frac{1}{2}} \\ 0 & -\begin{pmatrix} (P_1^{(m)})^{\frac{1}{2}} & \cdots & (P_c^{(m)})^{\frac{1}{2}} \end{pmatrix}\end{pmatrix} \in M_{c+1}(M_n(\cB(E_m\cH))).$$ 
The key point is that, considering $\rho_m(p_{a,ij}) \in M_{c+1}(\cB(E_m \cH))$, each block from $\cB(E_m \cH)$ belongs to the $C^*$-subalgebra of $\cB(E_m \cH)$ generated by the set $\{ p^{(m)}_{a,ij}: 1 \leq a \leq c, \, 1 \leq i,j \leq n\}$. This set of blocks is closed under the adjoint since $(E_m p_{a,ij} E_m)^*=E_m p_{a,ij}^* E_m=E_m p_{a,ji} E_m$. Since $\text{SOT}$-$\lim_{m \to \infty} E_m=I_{\cH}$, we have $\text{SOT}^*$-$\lim_{m \to \infty} p_{a,ij}^{(m)}=p_{a,ij}$. By joint continuity of multiplication in the unit ball with respect to $\text{SOT}^*$, it follows that $\text{SOT}^*$-$\lim_{m \to \infty} P_a^{(m)}=\text{SOT}^*$-$\lim_{m \to \infty} (P_a^{(m)})^{\frac{1}{2}}=P_a$ for each $a$. One can check that
$$\text{SOT}^*\text{-}\lim_{m \to \infty} U_m=\begin{pmatrix} \begin{pmatrix} P_1 \\ \vdots \\ P_c \end{pmatrix} & \begin{pmatrix} I_{\cH}-P_1 \\ & \ddots \\ & & I_{\cH}-P_c \end{pmatrix} \\ 0 & -(\begin{pmatrix} P_1 & \cdots & P_c \end{pmatrix}) \end{pmatrix}.$$
Applying a shuffle, we see that, for each $a,i,j$, $\text{SOT}^*$-$\lim_{m \to \infty} \rho_m(p_{a,ij})$ exists in $M_{c+1}(\bofh)$; moreover, its $(1,1)$-entry is exactly $p_{a,ij}$. As $P_a=(p_{a,ij})$ is a projection, another shuffle argument shows that
$$\text{SOT}^*\text{-}\lim_{m \to \infty} \rho_m(p_{a,ij})=\begin{pmatrix} p_{a,ij} & 0 \\ 0 & * \end{pmatrix} \in M_{c+1}(\bofh).$$
Therefore, if $W$ is a linear combination of finite words in the generators of $\cP_{n,c}$, by considering the $(1,1)$ entry of $\rho_m(W)$, it follows that $\text{SOT}^*$-$\lim_{m \to \infty} \rho_m(W)=\begin{pmatrix} W & 0 \\ 0 & * \end{pmatrix}$. By passing to a subsequence if necessary, this forces $\lim_{m \to \infty} \|\rho_m(W)\|_{M_{c+1}(\cB(E_m\cH))}=\|W\|_{\bofh}$. Hence, $\bigoplus_{m=1}^{\infty} \rho_m:\cP_{n,c} \to \bigoplus_{m=1}^{\infty} M_{c+1}(\cB(E_m \cH))$ is a $*$-homomorphism that is isometric on the dense $*$-subalgebra spanned by finite words in the generators $p_{a,ij}$. It follows that $\bigoplus_{m=1}^{\infty} \rho_m$ is an isometry on the whole algebra. As each $E_m \cH$ is finite-dimensional, we conclude that $\cP_{n,c}$ is RFD.
\end{proof}

A standard fact is that minimal tensor products of RFD $C^*$-algebras remain RFD. Hence, $\cP_{n,c} \otimes_{\min} \cP_{n,c}$ is RFD. We can use this fact to relate $\mathcal{Q}_{qa}(n,c)$ to states on the minimal tensor product. First, we need the fact that quantum commuting correlations with a finite-dimensional entanglement space must belong to $\mathcal{Q}_q(n,c)$.

\begin{lemma}
\label{lemma: finite-dimensional realization of commuting}
Suppose that $X=(X_{(i,j),(k,\ell)}^{(a,b)}) \in \mathcal{Q}_{qc}(n,c)$ can be written as $X=(\la P_{a,ij}Q_{b,k\ell} \chi,\chi \ra)$, where $P_a=(P_{a,ij})$ and $Q_b=(Q_{b,k\ell})$ are positive in $M_n(\bofh)$, $\sum_{a=1}^c P_a=\sum_{a=1}^c Q_a=I_n$, $[P_{a,ij},Q_{b,k\ell}]=0$ for all $i,j,k,\ell,a,b$ and $\chi \in \cH$ is a unit vector. If $\cH$ is finite-dimensional, then $X \in \mathcal{Q}_q(n,c)$.
\end{lemma}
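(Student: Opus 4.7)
The strategy is to invoke the finite-dimensional structure theorem for commuting $C^*$-subalgebras of $\cB(\cH)$ in order to split $\cH$ into a bipartite tensor product. Let $\cA \subseteq \cB(\cH)$ be the unital $C^*$-subalgebra generated by the $\{P_{a,ij}\}$ and let $\mathcal{D} \subseteq \cB(\cH)$ be the unital $C^*$-subalgebra generated by the $\{Q_{b,k\ell}\}$. The commutation hypothesis gives $\mathcal{D} \subseteq \cA'$. Since $\cH$ is finite-dimensional, the structure theorem yields a unitary identification
\[
\cH \cong \bigoplus_{\alpha} V_\alpha \otimes W_\alpha
\]
under which $\cA = \bigoplus_\alpha \cB(V_\alpha) \otimes I_{W_\alpha}$ and $\cA' = \bigoplus_\alpha I_{V_\alpha} \otimes \cB(W_\alpha)$. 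Consequently, each $P_{a,ij}$ has the form $\bigoplus_\alpha \widehat{P}_{a,ij}^{(\alpha)} \otimes I_{W_\alpha}$ for some $\widehat{P}_{a,ij}^{(\alpha)} \in \cB(V_\alpha)$, and because $\mathcal{D} \subseteq \cA'$, each $Q_{b,k\ell}$ has the form $\bigoplus_\alpha I_{V_\alpha} \otimes \widehat{Q}_{b,k\ell}^{(\alpha)}$ for some $\widehat{Q}_{b,k\ell}^{(\alpha)} \in \cB(W_\alpha)$.

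I then set $\cH_A := \bigoplus_\alpha V_\alpha$ and $\cH_B := \bigoplus_\alpha W_\alpha$, and define $\widetilde{P}_{a,ij} := \bigoplus_\alpha \widehat{P}_{a,ij}^{(\alpha)} \in \cB(\cH_A)$ and $\widetilde{Q}_{b,k\ell} := \bigoplus_\alpha \widehat{Q}_{b,k\ell}^{(\alpha)} \in \cB(\cH_B)$. Writing $\widehat{P}_a^{(\alpha)} := (\widehat{P}_{a,ij}^{(\alpha)}) \in M_n(\cB(V_\alpha))$, the positivity of $P_a = (P_{a,ij}) \in M_n(\cB(\cH))$ corresponds under the decomposition to positivity of each $\widehat{P}_a^{(\alpha)}$, and the identity $\sum_a P_a = I$ translates to $\sum_a \widehat{P}_a^{(\alpha)} = I_{V_\alpha}$ for every $\alpha$. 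It follows that $\{\widetilde{P}_a\}_{a=1}^c$ is a POVM on $\bC^n \otimes \cH_A$, and the same reasoning shows that $\{\widetilde{Q}_b\}_{b=1}^c$ is a POVM on $\cH_B \otimes \bC^n$.

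Finally, I embed $\cH = \bigoplus_\alpha V_\alpha \otimes W_\alpha$ isometrically into $\cH_A \otimes \cH_B = \bigoplus_{\alpha,\beta} V_\alpha \otimes W_\beta$ as the ``diagonal'' subspace, and regard $\chi$ as a unit vector in $\cH_A \otimes \cH_B$ via this embedding. Because $\widetilde{P}_{a,ij} \otimes \widetilde{Q}_{b,k\ell}$ is block-diagonal with respect to the $(\alpha,\beta)$-grading of $\cH_A \otimes \cH_B$, it preserves the diagonal subspace, and on the summand $V_\alpha \otimes W_\alpha$ it restricts to $\widehat{P}_{a,ij}^{(\alpha)} \otimes \widehat{Q}_{b,k\ell}^{(\alpha)} = P_{a,ij}Q_{b,k\ell}|_{V_\alpha \otimes W_\alpha}$. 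Writing $\chi = \sum_\alpha \chi_\alpha$ with $\chi_\alpha \in V_\alpha \otimes W_\alpha$, a direct computation then gives
\[
\la (\widetilde{P}_{a,ij} \otimes \widetilde{Q}_{b,k\ell})\chi, \chi \ra
= \sum_\alpha \la (\widehat{P}_{a,ij}^{(\alpha)} \otimes \widehat{Q}_{b,k\ell}^{(\alpha)})\chi_\alpha, \chi_\alpha\ra
= \la P_{a,ij}Q_{b,k\ell}\chi, \chi\ra,
\]
exhibiting $X$ as an element of $\mathcal{Q}_q(n,c)$. The only substantive tool is the finite-dimensional commutant structure theorem; the rest is bookkeeping. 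The point requiring a moment's care is that $\cH$ embeds only into a proper ``diagonal'' part of $\cH_A \otimes \cH_B$, but the block-diagonality of the extended operators with respect to the summand grading guarantees that inner products are unchanged under the embedding.
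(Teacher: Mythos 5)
Your proof is correct, and it reaches the same destination as the paper's by the same underlying idea (a bipartite tensor-product realization), but via a genuinely different mechanism: the paper invokes a theorem of Tsirelson as a black box — that theorem states precisely that a commuting pair of $C^*$-algebras on a finite-dimensional $\cH$ admits finite-dimensional $\cH_A$, $\cH_B$, an isometry $V:\cH\to\cH_A\otimes\cH_B$, and $*$-homomorphisms $\pi,\rho$ with $V^*(\pi(P_{a,ij})\otimes\rho(Q_{b,k\ell}))V=P_{a,ij}Q_{b,k\ell}$. You instead unpack Tsirelson's result in this special case from the finite-dimensional commutant structure theorem: decompose $\cH=\bigoplus_\alpha V_\alpha\otimes W_\alpha$ so that $\cA=\bigoplus_\alpha\cB(V_\alpha)\otimes I_{W_\alpha}$, take $\cH_A=\bigoplus_\alpha V_\alpha$, $\cH_B=\bigoplus_\alpha W_\alpha$, and use the diagonal embedding of $\cH$ into $\cH_A\otimes\cH_B$. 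The verification that the extended operators are POVMs and that the diagonal embedding preserves the relevant matrix coefficients is handled correctly, as is the observation that $\cA=\cA''$ automatically in finite dimensions, which makes the structure theorem applicable. The trade-off is clear: the paper's proof is a one-line citation, while yours is longer but self-contained and makes the isometry and homomorphisms completely explicit.
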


\begin{proof}
Let $\cA$ be the $C^*$-algebra generated by the set $\{P_{a,ij}: 1 \leq a \leq c, \, 1 \leq i,j \leq n\}$ and let $\cB$ be the $C^*$-algebra generated by the set $\{Q_{b,k\ell}: 1 \leq b \leq c, \, 1 \leq k,\ell \leq n\}$. Then $\cA$ and $\cB$ are unital $C^*$-subalgebras of $\bofh$, and every element of $\cA$ commutes with every element of $\cB$. By a theorem of Tsirelson \cite{Ts06}, there are finite-dimensional Hilbert spaces $\cH_A$ and $\cH_B$, an isometry $V:\cH \to \cH_A \otimes \cH_B$, and unital $*$-homomorphisms $\pi:\cA \to \cB(\cH_A)$ and $\rho:\cB \to \cB(\cH_B)$ such that $V^*(\pi(P_{a,ij}) \otimes \rho(Q_{b,k\ell}))V=P_{a,ij}Q_{b,k\ell}$ for all $a,b,i,j,k,\ell$. Defining the unit vector $\xi=V\chi \in \cH_A \otimes \cH_B$, we see that
\[ X_{(i,j),(k,\ell)}^{(a,b)}=\la (\pi(P_{a,ij}) \otimes \rho(Q_{b,k\ell}))\xi,\xi \ra. \]
Therefore, $X \in \mathcal{Q}_q(n,c)$.
\end{proof}

Now, we can prove the disambiguation theorems for $\mathcal{Q}_t(n,c)$. We note that, by Proposition \ref{proposition: dilate matrix-valued POVM to matrix-valued PVM}, any element of $\mathcal{Q}_q(n,c)$ can be represented using a finite-dimensional tensor product framework $\cH_A \otimes \cH_B$ and PVMs $\{P_a\}_{a=1}^c$ on $\cH_A$ and $\{Q_b\}_{b=1}^c$ on $\cH_B$, respectively. This fact holds because, given a POVM $\{Q_b\}_{b=1}^c$ in $\bofh$, the dilation in Proposition \ref{proposition: dilate matrix-valued POVM to matrix-valued PVM} is in $M_{c+1}(\bofh) \simeq \cB(\cH^{(c+1)})$; in particular, the Hilbert space remains finite-dimensional if $\cH$ is finite-dimensional. Similarly, it is easy to see that all elements of $\mathcal{Q}_{qs}(n,c)$ can be represented using PVMs.

Next, we show that every element $\mathcal{Q}_{qa}(n,c)$ can be represented by PVMs, which arise from the minimal tensor product of $\cP_{n,c}$.

\begin{theorem}
\label{theorem: characterizing qa}
Let $X=(X_{(i,j),(k,\ell)}^{(a,b)}) \in (M_n \otimes M_n)^{c^2}$. The following are equivalent:
\begin{enumerate}
\item
$X$ belongs to $\mathcal{Q}_{qa}(n,c)$.
\item
There is a state $s:\cP_{n,c} \otimes_{\min} \cP_{n,c} \to \bC$ satisfying $s(p_{a,ij} \otimes p_{b,k\ell})=X^{(a,b)}_{(i,j),(k,\ell)}$ for all $1 \leq a,b \leq c$ and $1 \leq i,j,k,\ell \leq n$.
\item
There is a state $s:\mathcal{Q}_{n,c} \otimes_{\min} \mathcal{Q}_{n,c} \to \bC$ satisfying $s(q_{a,ij} \otimes q_{b,k\ell})=X^{(a,b)}_{(i,j),(k,\ell)}$ for all $1 \leq a,b \leq c$ and $1 \leq i,j,k,\ell \leq n$.
\end{enumerate}
\end{theorem}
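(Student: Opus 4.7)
The plan is to follow the standard template for correlation characterisations via $C^\ast$-algebraic states, exploiting the two structural results about $\cP_{n,c}$ established above (residual finite-dimensionality and the identification $C^*_{env}(\mathcal{Q}_{n,c}) \simeq \cP_{n,c}$). The argument splits into the three implications (1)$\Rightarrow$(2), (2)$\Rightarrow$(1), and (2)$\Leftrightarrow$(3), and it will be convenient to work with PVM representations throughout, which is justified by the observation following Proposition \ref{proposition: dilate matrix-valued POVM to matrix-valued PVM}.

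For (1)$\Rightarrow$(2), I first handle $X \in \mathcal{Q}_q(n,c)$. After dilating the POVMs on $\bC^n \otimes \cH_A$ and $\cH_B \otimes \bC^n$ to PVMs (keeping the auxiliary Hilbert spaces finite-dimensional), the defining relations of $\cP_{n,c}$ furnish unital $\ast$-homomorphisms $\pi_A,\pi_B : \cP_{n,c} \to \cB(\cH_A), \cB(\cH_B)$ sending $p_{a,ij}$ to the corresponding matrix entries. The vector state induced by $\chi \in \cH_A \otimes \cH_B$ composed with $\pi_A \otimes \pi_B : \cP_{n,c} \otimes_{\min} \cP_{n,c} \to \cB(\cH_A \otimes \cH_B)$ is a state realising the entries $X^{(a,b)}_{(i,j),(k,\ell)}$. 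For a general $X \in \mathcal{Q}_{qa}(n,c)$, I take a sequence $X^{(m)} \in \mathcal{Q}_q(n,c)$ converging in norm to $X$, extract associated states $s_m$ on $\cP_{n,c} \otimes_{\min} \cP_{n,c}$, and pass to a weak-$\ast$ cluster point $s$ by Banach–Alaoglu. The resulting $s$ still produces the right values on the (finitely many) generator pairs.

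For (2)$\Rightarrow$(1), this is where RFD of $\cP_{n,c}$ is essential. Since $\cP_{n,c}$ is RFD, so is $\cP_{n,c} \otimes_{\min} \cP_{n,c}$, hence it has a separating family of finite-dimensional $\ast$-representations $\{\sigma_\lambda\}$. A standard convex approximation argument shows that states of the form $\omega \circ \sigma_\lambda$, with $\omega$ a vector state on the finite-dimensional target, are weak-$\ast$ dense in the state space of $\cP_{n,c} \otimes_{\min} \cP_{n,c}$. Each such state manifestly produces a correlation in $\mathcal{Q}_q(n,c)$ by reversing the construction of the previous paragraph (the two commuting images of $\cP_{n,c}$ sit inside a finite-dimensional tensor product). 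Since the map from states to correlations in the finite-dimensional space $(M_n \otimes M_n)^{c^2}$ is weak-$\ast$-to-norm continuous, the correlation $X$ associated to the given state $s$ is a norm limit of elements of $\mathcal{Q}_q(n,c)$, hence $X \in \mathcal{Q}_{qa}(n,c)$.

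For (2)$\Leftrightarrow$(3), I use Proposition \ref{proposition: c star envelope of block povms}(1) to view $\mathcal{Q}_{n,c}$ as a unital complete order subspace of $\cP_{n,c}$ with $q_{a,ij}$ identified with $p_{a,ij}$. Functoriality of the minimal tensor product then gives a unital complete order embedding $\mathcal{Q}_{n,c} \otimes_{\min} \mathcal{Q}_{n,c} \hookrightarrow \cP_{n,c} \otimes_{\min} \cP_{n,c}$, so restriction of any state yields (2)$\Rightarrow$(3). Conversely, Arveson's extension theorem extends any state on the operator system $\mathcal{Q}_{n,c} \otimes_{\min} \mathcal{Q}_{n,c}$ to a state on the containing $C^\ast$-algebra $\cP_{n,c} \otimes_{\min} \cP_{n,c}$, giving (3)$\Rightarrow$(2); the values on the generator pairs are preserved by construction. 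The main obstacle in the whole argument is the density step in (2)$\Rightarrow$(1); one must be careful that in passing from a general state to finite-dimensional approximants, commutation of the two copies is preserved, which is exactly what the minimal (as opposed to maximal) tensor norm and the finite-dimensional representation theory of an RFD algebra guarantee.
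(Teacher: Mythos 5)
Your proposal is correct and follows essentially the same route as the paper: dilation to PVMs plus universality for (1)$\Rightarrow$(2), residual finite-dimensionality and approximation by finite-dimensional states for (2)$\Rightarrow$(1), and injectivity of $\otimes_{\min}$ plus a Hahn–Banach/Arveson extension for (2)$\Leftrightarrow$(3). Two steps in your (2)$\Rightarrow$(1) are stated too glibly, though neither is wrong: the density of states factoring through finite-dimensional representations in the state space of an RFD $C^\ast$-algebra is precisely the Exel–Loring theorem (the paper cites \cite{EL92}; also note that it is the convex hull over all finite-dimensional representations, not vector states of a fixed separating family $\{\sigma_\lambda\}$, that is $w^\ast$-dense), and "reversing the construction of the previous paragraph" hides the passage from a finite-dimensional representation of the two commuting copies of $\cP_{n,c}$ to a genuine tensor-product $q$-strategy, which requires Tsirelson's finite-dimensional splitting result, invoked in the paper as Lemma~\ref{lemma: finite-dimensional realization of commuting}.
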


\begin{proof}
We recall that the minimal tensor product of operator spaces (in particular, operator systems) is injective (see, for example, \cite{KPTT11}). Since $\mathcal{Q}_{n,c} \subseteq \mathcal{P}_{n,c}$ via the mapping $q_{a,ij} \mapsto p_{a,ij}$, injectivity of the minimal tensor product shows that $\mathcal{Q}_{n,c} \otimes_{\min} \mathcal{Q}_{n,c} \subseteq \mathcal{P}_{n,c} \otimes_{\min} \mathcal{P}_{n,c}$ completely order isomorphically. Using the Hahn-Banach theorem, it then follows that (2) and (3) are equivalent.

If (1) holds, then $X$ is in $\mathcal{Q}_{qa}(n,c)$, so it is a pointwise limit of elements of $\mathcal{Q}_q(n,c)$. Since elements of $\mathcal{Q}_q(n,c)$ can be represented by PVMs, $X$ is a limit of elements which correspond to finite-dimensional tensor product representations of $\mathcal{P}_{n,c} \otimes_{\min} \mathcal{P}_{n,c}$, which are automatically continuous. Hence, (1) implies (2). Lastly, suppose that (2) holds. Since $\mathcal{P}_{n,c} \otimes_{\min} \mathcal{P}_{n,c}$ is RFD, a theorem of R. Exel and T.A. Loring \cite{EL92} shows that $s$ is a $w^*$-limit of states $s_{\lambda}$ on $\mathcal{P}_{n,c} \otimes_{\min} \mathcal{P}_{n,c}$ whose GNS representations are finite-dimensional. Applying Lemma \ref{lemma: finite-dimensional realization of commuting}, each $s_{\lambda}$ applied to the generators $p_{a,ij} \otimes p_{b,k\ell}$ of $\mathcal{P}_{n,c} \otimes_{\min} \mathcal{P}_{n,c}$ yields an element $X_{\lambda}$ of $\mathcal{Q}_q(n,c)$; moreover, $\lim_{\lambda} X_{\lambda}=X$ pointwise. This shows that $X \in \overline{\mathcal{Q}_q(n,c)}=\mathcal{Q}_{qa}(n,c)$, which shows that (2) implies (1).
\end{proof}

To establish the same disambiguation theorem for $qc$-correlations, we will show that the {\it commuting} tensor product $\mathcal{Q}_{n,c} \otimes_c \mathcal{Q}_{n,c}$ is completely order isomorphic to the copy of $\mathcal{Q}_{n,c} \otimes \mathcal{Q}_{n,c}$ inside of $\mathcal{P}_{n,c} \otimes_{\max} \mathcal{P}_{n,c}$. We recall that, if $\cS$ and $\cT$ are operator systems, then an element $Y$ in $M_n(\cS \otimes_c \cT)$ is defined as positive in the commuting tensor product provided that $Y=Y^*$ and, whenever $\varphi:\cS \to \bofh$ and $\psi:\cT \to \bofh$ are ucp maps with commuting ranges, then $(\varphi \cdot \psi)^{(n)}(Y)$ is positive in $M_n(\bofh)$, where $\varphi \cdot \psi:\cS \otimes \cT \to \bofh$ is the linear map defined by $(\varphi \cdot \psi)(x \otimes y)=\varphi(x)\psi(y)$ for all $x \in \cS$ and $y \in \cT$. 

The next lemma is an adaptation of \cite[Proposition~4.6]{Ha19}.

\begin{lemma}
\label{lemma: c to max complete order embedding}
Let $\cS$ be an operator system. Then the canonical map $\mathcal{Q}_{n,c} \otimes_c \cS \to \mathcal{P}_{n,c} \otimes_{\max} \cS$ is a complete order embedding.
\end{lemma}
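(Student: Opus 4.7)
The plan is to verify both parts of the complete order embedding: the canonical map sends positive elements to positive elements, and it also reflects positivity at every matrix level. We characterize positivity in each tensor product via pairs of appropriate ucp maps with commuting ranges into $\bofh$, with Proposition \ref{proposition: dilate matrix-valued POVM to matrix-valued PVM} serving as the bridge between the two sides.

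For the forward direction, suppose $Y \in M_k(\mathcal{Q}_{n,c} \otimes_c \cS)^+$, and take a unital $*$-homomorphism $\pi: \mathcal{P}_{n,c} \to \bofh$ together with a ucp map $\psi: \cS \to \bofh$ whose ranges commute. The restriction $\varphi := \pi|_{\mathcal{Q}_{n,c}}$ is a ucp map whose range still commutes with $\psi(\cS)$, so positivity of $Y$ in the commuting tensor product gives $(\varphi \cdot \psi)^{(k)}(Y) \geq 0$. Since $\varphi \cdot \psi$ and $\pi \cdot \psi$ agree on $\mathcal{Q}_{n,c} \otimes \cS$, the same inequality holds for $(\pi \cdot \psi)^{(k)}(Y)$, and varying $(\pi,\psi)$ establishes that the image of $Y$ is positive in $M_k(\mathcal{P}_{n,c} \otimes_{\max} \cS)$.

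The main part is the reverse direction. Assume the image of $Y \in M_k(\mathcal{Q}_{n,c} \otimes \cS)$ in $M_k(\mathcal{P}_{n,c} \otimes_{\max} \cS)$ is positive, and take arbitrary ucp maps $\varphi: \mathcal{Q}_{n,c} \to \bofh$ and $\psi: \cS \to \bofh$ with commuting ranges. Writing $P_{a,ij} = \varphi(q_{a,ij})$, we obtain a POVM $\{P_a\}_{a=1}^c$ with $P_a=(P_{a,ij})$ in $M_n(\bofh)$, and Proposition \ref{proposition: dilate matrix-valued POVM to matrix-valued PVM} produces a unital $*$-homomorphism $\pi: \mathcal{P}_{n,c} \to M_{c+1}(\bofh)$ whose $(1,1)$-block compression recovers $\varphi$. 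I would extend $\psi$ by $\tilde{\psi}(s) := I_{c+1} \otimes \psi(s)$ to a block-diagonal ucp map $\tilde{\psi}: \cS \to M_{c+1}(\bofh)$. The key technical step is to verify that $\pi(\mathcal{P}_{n,c})$ commutes with $\tilde{\psi}(\cS)$: by construction in Proposition \ref{proposition: dilate POVM to PVM}, each entry of the unitary $U$ used to build $\pi$ is a polynomial in $P_a^{1/2}$ and $\sqrt{I-VV^*}$ viewed in $M_n(\bofh)$; since $\psi(s)$ commutes with every $P_{a,ij}$, the amplification $I_n \otimes \psi(s)$ commutes with each $P_a \in M_n(\bofh)$, and hence by continuous functional calculus in $M_n(\bofh)$ commutes with $P_a^{1/2}$ and $\sqrt{I-VV^*}$. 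After the canonical shuffle between $M_{c+1}(M_n(\bofh))$ and $M_n(M_{c+1}(\bofh))$, this delivers $[\pi(p_{a,ij}), \tilde{\psi}(s)] = 0$ for all indices.

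Once the commuting-ranges condition is established, positivity of the image of $Y$ in $M_k(\mathcal{P}_{n,c} \otimes_{\max} \cS)$ applied to the pair $(\pi, \tilde{\psi})$ yields $(\pi \cdot \tilde{\psi})^{(k)}(Y) \geq 0$ in $M_k(M_{c+1}(\bofh))$. Compressing by $E_{11} \otimes I_\bofh$ is a ucp map, and the block-diagonality of $\tilde{\psi}(s)$ forces the $(1,1)$-corner of $\pi(q)\tilde{\psi}(s)$ to be exactly $\varphi(q)\psi(s)$; hence the compressed element is $(\varphi \cdot \psi)^{(k)}(Y) \geq 0$. Since this holds for every valid $(\varphi,\psi)$, we conclude $Y \in M_k(\mathcal{Q}_{n,c} \otimes_c \cS)^+$. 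The principal obstacle is precisely the commutativity verification in the dilation step: it relies on the explicit POVM-to-PVM construction and on the propagation of commutativity through the square-root functional calculus in $M_n(\bofh)$, with the rest of the argument being routine bookkeeping on compressions and canonical shuffles.
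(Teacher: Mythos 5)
Your treatment of the reverse (positivity-reflecting) direction is essentially the paper's argument: both apply Proposition \ref{proposition: dilate matrix-valued POVM to matrix-valued PVM} to produce a unital $*$-homomorphism $\pi:\cP_{n,c}\to M_{c+1}(\bofh)$ that compresses to $\varphi$, set $\tilde\psi = I_{c+1}\otimes\psi$, check that the ranges of $\pi$ and $\tilde\psi$ commute, and then compress the resulting ucp map $\pi\cdot\tilde\psi$ to the $(1,1)$ corner to recover $\varphi\cdot\psi$. The only real difference is how the commutation is verified: you chase the explicit dilation unitary $U$ from Proposition \ref{proposition: dilate POVM to PVM} and argue by continuous functional calculus in $M_n(\bofh)$ that $I_n\otimes\psi(s)$ commutes with $P_a^{1/2}$ and $\sqrt{I-VV^*}$, whereas the paper compresses this into the single observation that every $\bofh$-block of $\pi(p_{a,ij})$ lies in $C^*(\{\varphi(q_{a,ij})\})$, which commutes with $\psi(\cS)$ by hypothesis. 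These are the same mechanism stated at different levels of granularity. Your forward direction is more elaborate than it needs to be: the paper disposes of it in one line by functoriality of $\otimes_c$ (the inclusion $\mathcal{Q}_{n,c}\hookrightarrow\cP_{n,c}$ and $\id_{\cS}$ are ucp, hence so is their $\otimes_c$-tensor) together with the identification $\cP_{n,c}\otimes_c\cS = \cP_{n,c}\otimes_{\max}\cS$ from \cite{KPTT11}. You instead test positivity of the image against pairs $(\pi,\psi)$ with $\pi$ a $*$-homomorphism on $\cP_{n,c}$; that does work, but it silently relies on the fact that such pairs suffice to detect $\otimes_{\max}$-positivity when the left factor is a $C^*$-algebra, which you should either justify or replace with the functoriality argument.
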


\begin{proof}
Since $\mathcal{P}_{n,c}$ is a unital $C^*$-algebra, we have $\mathcal{P}_{n,c} \otimes_c \cS=\mathcal{P}_{n,c} \otimes_{\max} \cS$ \cite[Theorem~6.7]{KPTT11}. The canonical map $\mathcal{Q}_{n,c} \otimes_c \cS \to \mathcal{P}_{n,c} \otimes_c \cS$ is a tensor product of canonical inclusion maps, which are ucp. By functoriality of the commuting tensor product \cite{KPTT11}, the inclusion $\mathcal{Q}_{n,c} \otimes_c \cS \to \mathcal{P}_{n,c} \otimes_c \cS$ is ucp. Hence, it suffices to show that this map is a complete order embedding.

To this end, suppose that $Y=Y^* \in M_m(\mathcal{Q}_{n,c} \otimes \cS)$ is a positive element of $M_m(\mathcal{P}_{n,c} \otimes_c \cS)$. Let $\varphi:\mathcal{Q}_{n,c} \to \bofh$ and $\psi:\cS \to \bofh$ be ucp maps with commuting ranges; we will show that $(\varphi \cdot \psi)^{(m)}(Y)$ is positive in $M_m(\bofh)$. For convenience, we define $Q_{a,ij}=\varphi(q_{a,ij})$. By Proposition \ref{proposition: dilate matrix-valued POVM to matrix-valued PVM}, there is a unital $*$-homomorphism $\pi:\mathcal{P}_{n,c} \to M_{c+1}(\bofh)$ such that the $(1,1)$ corner of $\pi(p_{a,ij})$ is $Q_{a,ij}$ for all $1 \leq a \leq c$ and $1 \leq i,j \leq n$. Moreover, for each $x \in \mathcal{P}_{n,c}$, each block of $\pi(x)$ in $\bofh$ belongs to the $C^*$-algebra generated by the set $\{ Q_{a,ij}: 1 \leq a \leq c, \, 1 \leq i,j \leq n \}$. We extend $\varphi$ to a ucp map on $\mathcal{P}_{n,c}$ by defining $\varphi(\cdot)=(\pi(\cdot))_{11}$. Define $\widetilde{\psi}: \cS \to M_{c+1}(\bofh)$ by $\widetilde{\psi}(s)=I_{c+1} \otimes \psi(s)$. Since $\psi(s)$ commutes with the range of $\varphi$, $\psi(s)$ must commute with the $C^*$-algebra generated by the range of $\varphi$. Hence, $\psi(s)$ commutes with every block of $\pi(p_{a,ij})$, for all $a,i,j$. By the multiplicativity of $\pi$, $\psi(s)$ commutes with the range of $\pi$. By definition of the commuting tensor product, this means that $\pi \cdot \widetilde{\psi}: \mathcal{P}_{n,c} \otimes_c \cS \to M_{c+1}(\bofh)$ is ucp; moreover, the $(1,1)$ block of $\pi \cdot \widetilde{\psi}$ is $\varphi \cdot \psi$. This means that $\varphi \cdot \psi$ is ucp on $\mathcal{P}_{n,c} \otimes_c \cS$. Restricting to the copy of the algebraic tensor product $\mathcal{Q}_{n,c} \otimes \cS$, it follows that $(\varphi \cdot \psi)^{(m)}(Y)$ is positive, making the canonical map $\mathcal{Q}_{n,c} \otimes_c \cS \to \mathcal{P}_{n,c} \otimes_c \cS$ a complete order embedding.
\end{proof}

\begin{theorem}
\label{theorem: characterizing qc}
Let $X=(X^{(a,b)}_{(i,j),(k,\ell)}) \in (M_n \otimes M_n)^{c^2}$. The following are equivalent.
\begin{enumerate}
\item
$X$ belongs to $\mathcal{Q}_{qc}(n,c)$.
\item
There is a state $s:\cP_{n,c} \otimes_{\max} \cP_{n,c} \to \bC$ satisfying $s(p_{a,ij} \otimes p_{b,k\ell})=X^{(a,b)}_{(i,j),(k,\ell)}$ for all $1 \leq a,b \leq c$ and $1 \leq i,j,k,\ell \leq n$.
\item
There is a state $s:\mathcal{Q}_{n,c} \otimes_c \mathcal{Q}_{n,c} \to \bC$ satisfying $s(q_{a,ij} \otimes q_{b,k\ell})=X^{(a,b)}_{(i,j),(k,\ell)}$ for all $1 \leq a,b \leq c$ and $1 \leq i,j,k,\ell \leq n$.
\end{enumerate}
\end{theorem}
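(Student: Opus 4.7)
The plan is to prove the triangle of implications $(1)\Rightarrow(3)$, $(3)\Rightarrow(2)$, $(2)\Rightarrow(1)$, using Lemma \ref{lemma: c to max complete order embedding} to handle $(2)\Leftrightarrow(3)$ and relying on universal properties at the other two steps. The proof is largely an assembly of the tools already developed, so the main conceptual point is choosing the right formulation on each side; there is no deep obstacle.

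For $(2)\Leftrightarrow(3)$, Lemma \ref{lemma: c to max complete order embedding} (with $\cS = \mathcal{Q}_{n,c}$) shows that the canonical map $\mathcal{Q}_{n,c}\otimes_c\mathcal{Q}_{n,c}\hookrightarrow\mathcal{P}_{n,c}\otimes_{\max}\mathcal{P}_{n,c}$ is a unital complete order embedding. Since states are precisely the unital positive linear functionals, Hahn--Banach (or the Krein extension theorem) provides an extension of any state on the subsystem to a state on the larger $C^*$-algebra, and conversely restriction preserves the defining values $s(q_{a,ij}\otimes q_{b,k\ell}) = s(p_{a,ij}\otimes p_{b,k\ell}) = X^{(a,b)}_{(i,j),(k,\ell)}$.

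For $(1)\Rightarrow(3)$, suppose $X$ arises from a $qc$-strategy with POVMs $\{P_a\},\{Q_b\}$ on $\cH$ satisfying $[P_{a,ij},Q_{b,k\ell}]=0$ and unit vector $\chi$. By the universal property of $\mathcal{Q}_{n,c}$, the assignments $q_{a,ij}\mapsto P_{a,ij}$ and $q_{b,k\ell}\mapsto Q_{b,k\ell}$ induce ucp maps $\varphi_A,\varphi_B:\mathcal{Q}_{n,c}\to\cB(\cH)$. The commutation hypothesis is exactly the statement that $\varphi_A(\mathcal{Q}_{n,c})$ and $\varphi_B(\mathcal{Q}_{n,c})$ commute elementwise, so by the definition of the commuting tensor product the product map $\varphi_A\cdot\varphi_B:\mathcal{Q}_{n,c}\otimes_c\mathcal{Q}_{n,c}\to\cB(\cH)$ is ucp. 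Composing with the vector state at $\chi$ yields the required state $s$. Note that this direction does not require dilating the POVMs to PVMs — working with the operator system $\mathcal{Q}_{n,c}$ (rather than the $C^*$-algebra $\mathcal{P}_{n,c}$) removes that obstacle.

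For $(2)\Rightarrow(1)$, apply the GNS construction to the state $s$ on the $C^*$-algebra $\mathcal{P}_{n,c}\otimes_{\max}\mathcal{P}_{n,c}$, producing a cyclic representation $\pi$ on a Hilbert space $\cH$ with cyclic vector $\chi$ such that $s(\cdot)=\langle\pi(\cdot)\chi,\chi\rangle$. The defining universal property of $\otimes_{\max}$ gives two $*$-homomorphisms $\pi_A,\pi_B:\mathcal{P}_{n,c}\to\cB(\cH)$ with commuting ranges determined by $\pi(x\otimes 1)=\pi_A(x)$ and $\pi(1\otimes y)=\pi_B(y)$. Setting $P_{a,ij}:=\pi_A(p_{a,ij})$ and $Q_{b,k\ell}:=\pi_B(p_{b,k\ell})$, the matrices $P_a=(P_{a,ij})$ and $Q_b=(Q_{b,k\ell})$ are projections in $M_n(\cB(\cH))$ summing to $I_n$, and the entrywise commutation $[P_{a,ij},Q_{b,k\ell}]=0$ follows from the commuting ranges of $\pi_A$ and $\pi_B$. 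By the remark following the definition of a $qc$-strategy, $(P_a\otimes I_n)(I_n\otimes Q_b)=(I_n\otimes Q_b)(P_a\otimes I_n)$, and the vector $\chi$ then realizes $X\in\mathcal{Q}_{qc}(n,c)$ via $\langle P_{a,ij}Q_{b,k\ell}\chi,\chi\rangle=s(p_{a,ij}\otimes p_{b,k\ell})=X^{(a,b)}_{(i,j),(k,\ell)}$.
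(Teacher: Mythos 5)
Your proof is correct and follows essentially the same route as the paper: Lemma \ref{lemma: c to max complete order embedding} for $(2)\Leftrightarrow(3)$, the definition of the commuting tensor product for $(1)\Rightarrow(3)$, and GNS plus the universal property of $\otimes_{\max}$ for $(2)\Rightarrow(1)$. One small imprecision to fix: a single application of Lemma \ref{lemma: c to max complete order embedding} with $\cS=\mathcal{Q}_{n,c}$ only yields the embedding $\mathcal{Q}_{n,c}\otimes_c\mathcal{Q}_{n,c}\hookrightarrow\mathcal{P}_{n,c}\otimes_{\max}\mathcal{Q}_{n,c}$; you then need a second application (after identifying $\mathcal{P}_{n,c}\otimes_{\max}\mathcal{Q}_{n,c}\simeq\mathcal{Q}_{n,c}\otimes_c\mathcal{P}_{n,c}$ using symmetry and that $\cP_{n,c}$ is a $C^*$-algebra) to reach $\mathcal{P}_{n,c}\otimes_{\max}\mathcal{P}_{n,c}$ --- this is precisely the ``two applications'' the paper invokes.
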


\begin{proof}
Since $\mathcal{Q}_{qc}(n,c)$ is defined in terms of POVMs where Alice's entries commute with Bob's, we see that (1) is equivalent to (3). Based on two applications of Lemma \ref{lemma: c to max complete order embedding}, we see that $\mathcal{Q}_{n,c} \otimes_c \mathcal{Q}_{n,c}$ is completely order isomorphic to the image of $\mathcal{Q}_{n,c} \otimes \mathcal{Q}_{n,c}$ in $\mathcal{P}_{n,c} \otimes_{\max} \mathcal{P}_{n,c}$. Hence, (2) and (3) are equivalent.
\end{proof}

When considering the quantum-to-classical graph homomorphism game, the local model will be of interest because of its link to the usual notion of a (classical) homomorphism from a quantum graph to a classical graph.  It is helpful to note  that all strategies in $\mathcal{Q}_{loc}(n,c)$ can be obtained using PVMs instead of just POVMs. We use a bit of a different direction for proving this fact. First, we show the following simple fact:

\begin{proposition}
\label{proposition: loc is closed}
$\mathcal{Q}_{loc}(n,c)$ is a closed set.
\end{proposition}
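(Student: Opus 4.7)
The plan is to parametrize every local correlation by a probability measure on a fixed compact space, and then use weak-$*$ compactness.

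First, I would show that $X \in \mathcal{Q}_{loc}(n,c)$ if and only if there is a Borel probability measure $\nu$ on the compact set $\mathcal{P}_n \times \mathcal{P}_n$, where $\mathcal{P}_n := \{(R_1,\dots,R_c) \in M_n^c : R_a \geq 0, \sum_a R_a = I_n\}$ is the (compact) set of POVMs in $M_n$ with $c$ outcomes, such that
\[
X^{(a,b)}_{(i,j),(k,\ell)} = \int_{\mathcal{P}_n \times \mathcal{P}_n} R_{a,ij}\, S_{b,k\ell} \, d\nu(R,S).
\]
The forward direction uses Gelfand duality: if the $C^*$-algebra $\cA$ generated by $\{P_{a,ij},Q_{b,k\ell}\}$ is commutative, then $\cA \simeq C(\Omega)$ for a compact Hausdorff space $\Omega$, and the vector state $\la \,\cdot\, \chi,\chi\ra|_{\cA}$ corresponds to a probability measure $\mu$ on $\Omega$. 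The POVM-matrices $(P_{a,ij}),(Q_{b,k\ell}) \in M_n(C(\Omega))$ define continuous maps $P,Q:\Omega \to \mathcal{P}_n$, and one obtains the desired $\nu$ by pushing $\mu$ forward along $\omega \mapsto (P(\omega),Q(\omega))$. Conversely, given such a $\nu$, one realizes the strategy by taking $\cH = L^2(\mathcal{P}_n \times \mathcal{P}_n,\nu)$, $\chi$ the constant function $1$, and letting $P_{a,ij}$ and $Q_{b,k\ell}$ be multiplication operators by the coordinate functions.

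Second, I would conclude using compactness. The space $\mathcal{M}_1(\mathcal{P}_n \times \mathcal{P}_n)$ of Borel probability measures on the compact space $\mathcal{P}_n \times \mathcal{P}_n$ is weak-$*$ compact by the Banach--Alaoglu theorem. The evaluation map $\nu \mapsto X(\nu) \in (M_n \otimes M_n)^{c^2}$ given by the integral above is weak-$*$ to norm continuous, because each integrand $(R,S) \mapsto R_{a,ij} S_{b,k\ell}$ is a continuous scalar function on $\mathcal{P}_n \times \mathcal{P}_n$ and the target is finite-dimensional. Hence $\mathcal{Q}_{loc}(n,c)$, being the continuous image of a compact set, is compact and in particular closed.

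The main obstacle is really just the first step, i.e., verifying that the commutativity of the generated $C^*$-algebra genuinely lets one replace an arbitrary resource Hilbert space by a measure on the fixed finite-dimensional compactum $\mathcal{P}_n \times \mathcal{P}_n$. Once this canonical form is available, closedness is an immediate weak-$*$ compactness argument, and the proof is short.
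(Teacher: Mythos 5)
Your proof is correct, but it takes a genuinely different route from the paper's.

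The paper's argument is a direct ``glue the sequence'' argument: given $X_m \to X$ with $X_m \in \mathcal{Q}_{loc}(n,c)$ realized by $(\cA_m, s_m, P^{(m)}, Q^{(m)})$, one forms the $\ell^\infty$-direct sum $\cA = \bigoplus_m \cA_m$ (still commutative), direct-sums the POVMs, extends each $s_m$ to a state $\widetilde{s}_m$ on $\cA$ via the coordinate compression, and then takes a $w^*$-limit point $s$ of $(\widetilde{s}_m)$; the resulting correlation is $X$. Your argument instead constructs a canonical parametrization: $\mathcal{Q}_{loc}(n,c)$ is exactly the image of the weak-$*$ compact set $\mathcal{M}_1(\mathcal{P}_n \times \mathcal{P}_n)$ of Borel probability measures on the compact space of pairs of $M_n$-valued POVMs with $c$ outcomes, under the continuous moment map $\nu \mapsto \bigl(\int R_{a,ij} S_{b,k\ell}\, d\nu\bigr)$. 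Both directions of the parametrization claim are verified cleanly (Gelfand/Riesz--Markov with pushforward for the forward direction, $L^2(\nu)$ with multiplication operators for the converse), and then closedness follows at once since the continuous image of a compact set is compact. Your approach gives slightly more, namely compactness and a canonical integral representation, and it foreshadows the extreme-point/disintegration ideas that appear in the paper's later Theorem \ref{theorem: characterizing loc}; the paper's proof, by contrast, avoids any appeal to Gelfand duality or Riesz--Markov and works directly with sequences of algebras and a single diagonal limit, which keeps it short and self-contained at the point where it appears.
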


\begin{proof}
Let $X_m=(X_{m,(i,j),(k,\ell)}^{(a,b)}) \in \mathcal{Q}_{loc}(n,c)$ be a sequence of correlations such that $\displaystyle \lim_{m \to \infty} X_m=X$ pointwise in $(M_n \otimes M_n)^{c^2}$. For each $m$, there is a unital commutative $C^*$-algebra $\cA_m$, POVMs $P_1^{(m)},...,P_c^{(m)}$ and $Q_1^{(m)},...,Q_c^{(m)}$ in $M_n(\cA_m)$, and a state $s_m$ on $\cA_m$ such that
$$X_{m,(i,j),(k,\ell)}^{(a,b)}=s_m(P_{a,ij}^{(m)}Q_{b,k\ell}^{(m)}) \, \forall a,b,i,j,k,\ell.$$
Define $\cA=\bigoplus_{m=1}^{\infty} \cA_m$, $P_{a,ij}=\bigoplus_{m=1}^{\infty} P_{a,ij}^{(m)}$ and $Q_{b,k\ell}=\bigoplus_{m=1}^{\infty} Q_{b,k\ell}^{(m)}$. Then $P_a=(P_{a,ij})$ and $Q_b=(Q_{b,k\ell})$ define two POVMs in $M_n(\cA)$ with $c$ outputs. Using the canonical compression from $\cA$ onto $\cA_m$, we can extend $s_m$ to a state $\widetilde{s}_m$ on $\cA$. As the state space of $\cA$ is $w^*$-compact, we choose a $w^*$-limit point $s$ of the sequence $(\widetilde{s}_m)_{m=1}^{\infty}$. Then $X=(X^{(a,b)}_{(i,j),(k,\ell)})=(s(P_{a,ij}Q_{b,k\ell}))$, which shows that $X \in \mathcal{Q}_{loc}(n,c)$. 
\end{proof}

We note that the above proof works just as well for projection-valued measures. A standard argument shows that limits of convex combinations of elements of $\mathcal{Q}_{loc}(n,c)$ represented by PVMs from abelian algebras can still be represented by PVMs from abelian algebras. With this fact in hand, we can prove the disambiguation theorem for $\mathcal{Q}_{loc}(n,c)$.

\begin{theorem}
\label{theorem: characterizing loc}
Let $X=(X^{(a,b)}_{(i,j),(k,\ell)}) \in (M_n \otimes M_n)^{c^2}$. The following are equivalent:
\begin{enumerate}
\item
$X$ belongs to $\mathcal{Q}_{loc}(n,c)$;
\item
There is a commutative $C^*$-algebra $\cA$, a state $s$ on $\cA$ and POVMs $\{P_1,...,P_c\}$, $\{Q_1,...,Q_c\} \subseteq M_n(\cA)$ such that
$$X^{(a,b)}_{(i,j),(k,\ell)}=s(P_{a,ij}Q_{b,k\ell});$$
\item
There is a commutative $C^*$-algebra $\cA$, a state $s$ on $\cA$, and PVMs $\{P_1,...,P_c\},\{Q_1,...,Q_c\} \subseteq M_n(\cA)$ such that
$$X^{(a,b)}_{(i,j),(k,\ell)}=s(P_{a,ij}Q_{b,k\ell}).$$
\end{enumerate}
\end{theorem}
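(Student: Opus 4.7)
The proof splits into three implications: $(1) \Leftrightarrow (2)$, $(3) \Rightarrow (2)$, and $(2) \Rightarrow (3)$. The direction $(1) \Rightarrow (2)$ is obtained by taking $\cA$ to be the commutative $C^*$-algebra generated by the POVM entries inside $\cB(\cH)$ and restricting the vector state $\langle \cdot \chi, \chi\rangle$ to $\cA$. For $(2) \Rightarrow (1)$, apply the GNS construction to $(\cA, s)$ to embed $\cA$ as commuting operators on a Hilbert space $\cH_s$ with cyclic vector $\chi_s$ realizing $s$; the resulting $P_a, Q_b \in M_n(\cA) \subseteq M_n(\cB(\cH_s))$ then satisfy $[P_{a,ij}, Q_{b,k\ell}]=0$ and reproduce $X$, yielding a local strategy. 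The implication $(3) \Rightarrow (2)$ is immediate since every PVM is a POVM.

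The substance is therefore in $(2) \Rightarrow (3)$, for which the plan is to follow the strategy sketched in the paragraph preceding the theorem. Let $\mathcal{L}_{PVM}(n,c)$ denote the set of correlations admitting a PVM-representation over a commutative $C^*$-algebra. First, the proof of Proposition \ref{proposition: loc is closed} adapts verbatim with ``POVM'' replaced by ``PVM''---the direct sum $\bigoplus_m \cA_m$ of commutative algebras remains commutative, and block-direct sums of projections remain projections---so $\mathcal{L}_{PVM}(n,c)$ is closed. Moreover, $\mathcal{L}_{PVM}(n,c)$ is convex: a convex combination of two PVM-representations is realized over the direct sum of the representing algebras with the corresponding convex combination of states, and the projection property is preserved blockwise.

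With $\mathcal{L}_{PVM}(n,c)$ established as a closed convex subset of $\mathcal{Q}_{loc}(n,c)$, the theorem reduces to showing that every POVM-representation in $(2)$ can be approximated by convex combinations of PVM-representations. Writing $\cA = C(K)$ and $s = \int \cdot \, d\mu$, the correlation $X$ is a continuous convex combination of the pointwise POVM-pair correlations $\bigl( P_{a,ij}(\omega) Q_{b,k\ell}(\omega) \bigr)_{a,b,i,j,k,\ell}$, so it suffices to show that each such pointwise correlation (with $\cA = \bC$) belongs to $\overline{\mathcal{L}_{PVM}(n,c)}$. The hard part will be the latter step: a naive pointwise application of the Naimark dilation from Proposition \ref{proposition: dilate matrix-valued POVM to matrix-valued PVM} lands in $M_n(M_{c+1})$, whose coefficient algebra $M_{c+1}$ is not commutative, so the key argument must be a more delicate averaging that produces PVMs within a genuinely commutative algebra---most plausibly by introducing auxiliary randomness parameterizing a family of dilations of the original POVMs whose expectation recovers them, and then invoking the closedness and convexity of $\mathcal{L}_{PVM}(n,c)$ to conclude.
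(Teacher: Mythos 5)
Your framework matches the paper's closely: both reduce $(2)\Rightarrow(3)$ to the commutativity of the coefficient algebra, disintegrate the state $s$ over the Gelfand spectrum, observe that the pointwise correlations have $\cA=\bC$, and then appeal to closedness and convexity of the PVM-represented set to reassemble. You also correctly diagnose why a naive Naimark dilation fails --- it escapes the commutative category.

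However, there is a genuine gap exactly at the step you flag as ``the hard part.'' You leave the crux as speculation (``most plausibly by introducing auxiliary randomness parameterizing a family of dilations''), but no dilation of any kind is needed. The missing observation, which the paper supplies, is that the set of POVMs with $c$ outputs in $M_n(\bC)$ is a compact convex set whose extreme points are exactly the PVMs with $c$ outputs. This follows from the standard fact that in any von Neumann algebra the extreme points of the positive contractions are the projections: if $\{Q_1,\dots,Q_c\}$ is a POVM and some $Q_a$ is not a projection, one can perturb it (and correspondingly perturb the remaining $Q_b$'s to keep the sum equal to $I$) to exhibit $\{Q_a\}$ as a proper midpoint. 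By Krein--Milman, every POVM in $M_n(\bC)$ is therefore a limit of convex combinations of PVMs in $M_n(\bC)$ --- no enlargement of the Hilbert space and no ancillary randomness. Applying this separately to $\{\delta_y^{(n)}(P_a)\}_a$ and $\{\delta_y^{(n)}(Q_b)\}_b$ (which automatically commute since all entries are scalars) and multiplying the two convex decompositions shows each pointwise correlation lies in $\overline{\mathcal{L}_{PVM}(n,c)}$, completing the argument. Without identifying this extreme-point fact, the proof does not actually close.
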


\begin{proof}
Clearly (1) and (2) are equivalent by the definition of $\mathcal{Q}_{loc}(n,c)$. Since every PVM is a POVM, (3) implies (2). Hence, we need only show that (2) implies (3). Suppose that
$$X_{(i,j),(k,\ell)}^{(a,b)}=s(P_{a,ij}Q_{b,k\ell})$$
for a state $s$ on a commutative $C^*$-algebra $\cA$ and a POVMs $P_1,...,P_c$ and $Q_1,...,Q_c$ in $M_n(\cA)$. Then $\cA \simeq C(Y)$ for a compact Hausdorff space $Y$. The extreme points of the state space of $Y$ are simply evaluation functionals $\delta_y$ for $y \in Y$, which are multiplicative. Hence, $\delta_y^{(n)}(Q_a) \in M_n(\bC)$ defines a POVM with $c$ outputs in $M_n(\bC)$, where $\delta_y^{(n)}=\id_n \otimes \delta_y$. Recall that the extreme points of the set of positive contractions in a von Neumann algebra are precisely the projections in the von Neumann algebra. An easy application of this argument shows that the extreme points of the set of POVMs with $c$ outputs in a von Neumann algebra are precisely the PVMs with $c$ outputs. Hence, $\{\delta_y^{(n)}(Q_1),...,\delta_y^{(n)}(Q_c)\}$ lies in the closed convex hull of the set of PVMs in $M_n(\bC)$ with $c$ outputs.  Applying a similar argument to $\{\delta_y^{(n)}(P_1),...,\delta_y^{(n)}(P_c)\}$, it follows that the correlation $(\delta_y(P_{a,ij}Q_{b,k\ell}))$ is a convex combination of elements of $\mathcal{Q}_{loc}(n,c)$ obtained by tensoring projections from $M_n(\bC)$. Taking the closed convex hull, we obtain the original correlation $X$. In this way, we can write $X$ using projection-valued measures, which shows that (2) implies (3).
\end{proof}

For $t \in \{loc,q,qs,qa,qc\}$, we let $C_t(n,c)$ denote the set of correlations with classical inputs and classical outputs in the $t$-model, where Alice and Bob now possess $n$ PVMs (equivalently, POVMs) with $c$ outputs each. These sets embed into $\mathcal{Q}_t(n,c)$ in a natural way.

\begin{proposition}
\label{proposition: classical inputs inside of quantum inputs}
Let $t \in \{loc,q,qs,qa,qc\}$. Then $C_t(n,c)$ is affinely isomorphic to
$$\{ X \in \mathcal{Q}_t(n,c): X^{(a,b)}_{(i,j),(k,\ell)}=0 \text{ if } i \neq j \text{ or } k \neq \ell\} \subseteq \mathcal{Q}_t(n,c).$$
Moreover, the compression map
$$X \mapsto (\delta_{ij}\delta_{k\ell} X^{(a,b)}_{(i,j),(k,\ell)}): \mathcal{Q}_t(n,c) \to C_t(n,c)$$
is a continuous affine map.
\end{proposition}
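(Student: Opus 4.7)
The plan is to construct an explicit affine embedding of $C_t(n,c)$ into the diagonal subset of $\mathcal{Q}_t(n,c)$, and then to verify that the compression map inverts it on the image. Throughout I use the canonical matrix units $\{e_{xy}\}_{x,y=1}^n$ in $M_n$.

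Given a classical $t$-strategy with Alice's POVMs $\{E_{x,a}\}_{a=1}^c$ (for $1 \le x \le n$) on $\cH_A$, Bob's POVMs $\{F_{y,b}\}_{b=1}^c$ (for $1 \le y \le n$) on $\cH_B$, and state $\chi$, I would form the block-diagonal POVMs
\[
P_a := \sum_{x=1}^n e_{xx} \otimes E_{x,a} \in M_n(\cB(\cH_A)), \qquad Q_b := \sum_{y=1}^n e_{yy} \otimes F_{y,b} \in M_n(\cB(\cH_B)),
\]
using a single Hilbert space in the $qc$ case. Each $P_a, Q_b$ is positive, $\sum_a P_a = I_n \otimes I_{\cH_A}$ and $\sum_b Q_b = I_n \otimes I_{\cH_B}$, and the induced quantum-input correlation is $\iota(p)^{(a,b)}_{(i,j),(k,\ell)} = \delta_{ij}\delta_{k\ell}\, p(a,b|i,k)$. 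The map $\iota$ is linear, hence affine and continuous, and manifestly injective. It preserves the $t$-class: for $q$ and $qs$ the Hilbert space structure is unchanged; for $qc$ the commutation $[P_{a,ij}, Q_{b,k\ell}] = \delta_{ij}\delta_{k\ell}[E_{i,a}, F_{k,b}]$ still vanishes; for $loc$ the nonzero blocks $E_{x,a}, F_{y,b}$ already lie in a commutative algebra; and for $qa$ I take pointwise limits using continuity.

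For the converse, suppose $X \in \mathcal{Q}_t(n,c)$ satisfies $X^{(a,b)}_{(i,j),(k,\ell)} = 0$ whenever $i\neq j$ or $k \neq \ell$, with realization $(P_{a,ij}, Q_{b,k\ell}, \chi)$. I extract the diagonal blocks $\widetilde{E}_{x,a} := P_{a,xx}$ and $\widetilde{F}_{y,b} := Q_{b,yy}$. These are positive, and since $\sum_a P_a = I_n \otimes I$, the $(x,x)$ block yields $\sum_a \widetilde{E}_{x,a} = I_{\cH_A}$, giving $n$ POVMs for Alice and similarly for Bob. The $t$-class is preserved because these operators lie in the same $C^*$-algebras as the original $P_{a,ij}, Q_{b,k\ell}$. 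The resulting classical correlation is $p(a,b|x,y) = X^{(a,b)}_{(x,x),(y,y)}$, and composing with $\iota$ recovers $X$, establishing the affine isomorphism with the diagonal subset.

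The compression map $X \mapsto (\delta_{ij}\delta_{k\ell} X^{(a,b)}_{(i,j),(k,\ell)})$ is a coordinate projection on the finite-dimensional space $(M_n \otimes M_n)^{c^2}$, so it is linear (hence affine) and continuous. The argument of the previous paragraph, applied to arbitrary $X \in \mathcal{Q}_t(n,c)$ (not just diagonal ones), shows that its image, viewed via $\iota$, lies in $C_t(n,c)$. The one subtle point is the $t = qa$ case: since $\mathcal{Q}_{qa}(n,c) = \overline{\mathcal{Q}_q(n,c)}$ and compression is continuous, the image of $\mathcal{Q}_{qa}(n,c)$ lies in the closure of the image of $\mathcal{Q}_q(n,c)$, which is $\overline{C_q(n,c)} = C_{qa}(n,c)$. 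The main obstacle, really, is just the bookkeeping to verify $t$-class preservation across all five models; none of the verifications are deep, but the $loc$ case needs a moment's thought to see that diagonal blocks of matrices with commuting entries still generate a commutative algebra together.
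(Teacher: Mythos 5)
Your argument is correct and is precisely the two observations the paper's proof records (block-diagonal embedding $E_{x,a} \mapsto \bigoplus_x E_{x,a}$ one way, diagonal extraction $P_a \mapsto P_{a,xx}$ the other), with the $t$-class bookkeeping that the paper leaves to the reader worked out explicitly. No substantive differences.
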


\begin{proof}
All of the claims follow from the following observations: if $\{E_{a,x}\}$ is a collection of positive operators such that $\{E_{a,x}\}_{a=1}^c$ is a POVM in $\bofh$ for each $1 \leq x \leq n$, then the operators $P_a:=\bigoplus_{x=1}^n E_{a,x}$ define a POVM in $M_n(\bofh)$. Similarly, if $\{Q_a\}_{a=1}^c$ is a POVM in $M_n(\bofh)$, then setting $F_{a,x}=Q_{a,xx} \in \bofh$, we see that $\{F_{a,x}\}_{a=1}^c$ is a POVM in $\bofh$ for each $1 \leq x \leq n$. We leave the verification of the claims above to the reader.
\end{proof}

Using what we have established so far, we see that the sets $\mathcal{Q}_t(n,c)$ satisfy
$$\mathcal{Q}_{loc}(n,c) \subseteq \mathcal{Q}_q(n,c) \subseteq \mathcal{Q}_{qs}(n,c) \subseteq \mathcal{Q}_{qa}(n,c) \subseteq \mathcal{Q}_{qc}(n,c).$$
The sets $\mathcal{Q}_{loc}(n,c)$, $\mathcal{Q}_{qa}(n,c)$ and $\mathcal{Q}_{qc}(n,c)$ are all closed, and $\mathcal{Q}_{qa}(n,c)=\overline{\mathcal{Q}_{qs}(n,c)}=\overline{\mathcal{Q}_q(n,c)}$. Using the previous proposition, all of the containments above are strict in general. Indeed, $\mathcal{Q}_{loc}(2,2) \neq \mathcal{Q}_q(2,2)$ by the CHSH game \cite[Chapter 3]{wilde}. By a theorem of A. Coladangelo and J. Stark \cite{CS18}, $\mathcal{Q}_q(5,3) \neq \mathcal{Q}_{qs}(5,3)$. A theorem of K. Dykema, V.I. Paulsen and J. Prakash \cite{DPP19} shows $\mathcal{Q}_{qs}(5,2) \neq \mathcal{Q}_{qa}(5,2)$. In fact, using the $T_2$ quantum XOR game and a result of R. Cleve, L. Liu and V.I. Paulsen \cite{CLP17}, one can show that $\mathcal{Q}_{qs}(3,2) \neq \mathcal{Q}_{qa}(3,2)$. (The analogous problem for $C_t(3,2)$, perhaps surprisingly, is still open, although it has been shown that the \textit{synchronous} versions are equal; in fact, $C_q^s(3,2)=C_{qc}^s(3,2)$ \cite{Ru19}.) Lastly, due to the negative resolution to Connes' embedding problem \cite{J+20}, it follows that $\mathcal{Q}_{qa}(n,c) \neq \mathcal{Q}_{qc}(n,c)$ for some (likely very large) values of $n$ and $c$.

We close this section with the following isomorphism between $\cP_{n,c}$ and its opposite algebra. This isomorphism will be used in our discussion of synchronous correlations in the next few sections. Recall that, if $\cA$ is a $C^*$-algebra, then $\cA^{op}$ is the $C^*$-algebra with the same norm structure as $\cA$, but with multiplication given by $a^{op}b^{op}=(ba)^{op}$.

\begin{lemma}
\label{lemma: opposite algebra}
The map $p_{a,ij} \mapsto p_{a,ji}^{op}$ extends to a unital $*$-isomorphism $\pi:\cP_{n,c} \to \cP_{n,c}^{op}$.
\end{lemma}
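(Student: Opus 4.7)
The proof strategy is to invoke the universal property of $\cP_{n,c}$. Given the proposed assignment $p_{a,ij}\mapsto p_{a,ji}^{op}$, the plan is to verify that the matrices $R_a := (p_{a,ji}^{op})_{i,j=1}^n \in M_n(\cP_{n,c}^{op})$ are projections for $1\le a\le c$ with $\sum_{a=1}^c R_a = I_n$. Once this is done, the universal property of $\cP_{n,c}$ yields a unital $*$-homomorphism $\pi:\cP_{n,c}\to\cP_{n,c}^{op}$ with the stated action on generators, and it remains only to produce an inverse.

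For the verification, self-adjointness of $R_a$ is immediate from the identity $p_{a,ij}^{*}=p_{a,ji}$ (which encodes $P_a=P_a^{*}$ in $\cP_{n,c}$) together with $(x^{op})^{*}=(x^{*})^{op}$. The substantive computation is idempotence: expanding the matrix product in $M_n(\cP_{n,c}^{op})$ using the opposite multiplication $x^{op}\cdot_{op} y^{op}=(yx)^{op}$ gives
\[
(R_a^{\,2})_{ij}=\sum_k p_{a,ki}^{op}\cdot_{op} p_{a,jk}^{op}=\Bigl(\sum_k p_{a,jk}\,p_{a,ki}\Bigr)^{op}=(P_a^{\,2})_{ji}^{\,op}=p_{a,ji}^{op}=(R_a)_{ij}.
\]
The transposition in the definition of $R_a$ is exactly what is needed to compensate for the reversal of multiplication in $\cP_{n,c}^{op}$, converting the relation $P_a^{\,2}=P_a$ into $R_a^{\,2}=R_a$. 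The sum condition is equally direct: $\sum_a(R_a)_{ij}=(\sum_a p_{a,ji})^{op}=\delta_{ji}\cdot 1_{\cP_{n,c}^{op}}$, so $\sum_a R_a=I_n$.

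For bijectivity, the cleanest route is to pass to opposites. Any $*$-homomorphism $f:A\to B$ induces, via the same underlying set map, a $*$-homomorphism $f^{op}:A^{op}\to B^{op}$. Applied to $\pi$ and using the identification $(\cP_{n,c}^{op})^{op}=\cP_{n,c}$, this produces a $*$-homomorphism $\pi^{op}:\cP_{n,c}^{op}\to\cP_{n,c}$ sending $p_{a,ij}^{op}\mapsto p_{a,ji}$. The compositions $\pi^{op}\circ\pi$ and $\pi\circ\pi^{op}$ fix every generator, so by the universal property they equal the identity, showing $\pi$ is a $*$-isomorphism. There is no real obstacle beyond keeping the transpose and the reversed multiplication correctly aligned in the idempotence check; once that alignment is spotted, the rest is routine.
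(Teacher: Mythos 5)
Your proof is correct and follows essentially the same route as the paper: verify that the transposed-and-opposed matrices $(p_{a,ji}^{op})$ are projections summing to the identity, invoke the universal property of $\cP_{n,c}$ to produce $\pi$, and exhibit an inverse that is mutually inverse on generators. The only cosmetic difference is in constructing the inverse—you use the functoriality of $(-)^{op}$ on $*$-homomorphisms, while the paper instead notes that $\cP_{n,c}^{op}$ inherits an analogous universal property and applies it directly; both are fine.
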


\begin{proof}
In $\cP_{n,c}^{op}$, one has
\begin{align*}
\sum_{k=1}^n p_{a,kj}^{op}p_{a,ik}^{op}&=\sum_{k=1}^n (p_{a,ik}p_{a,kj})^{op} \\
&=\left( \sum_{k=1}^n p_{a,ik}p_{a,kj} \right)^{op} \\
&=p_{a,ij}^{op},
\end{align*}
where we have used the fact that $P_a=(p_{a,ij})$ is a projection in $\cP_{n,c}$. Evidently $(p_{a,ij}^{op})^*=(p_{a,ij}^*)^{op}=p_{a,ji}^{op}$, so the above calculations show that $P_a^{op}:=(p_{a,ji}^{op})_{i,j=1}^n$ is a projection in $M_n(\cP_{n,c}^{op})$. Moreover, $\sum_{a=1}^c P_a^{op}=I_n$. By the universal property of $\cP_{n,c}$, there is a unital $*$-homomorphism $\pi:\cP_{n,c} \to \cP_{n,c}^{op}$ such that $\pi(p_{a,ij})=p_{a,ji}^{op}$.

One can show that $\cP_{n,c}^{op}$ is the universal $C^*$-algebra generated by entries $p_{a,ij}^{op}$ with the property that $P_a^{op}=(p_{a,ji}^{op})_{i,j=1}^n$ is a projection in $M_n(\cP_{n,c}^{op})$ with $\sum_{a=1}^c P_a^{op}=I_n$. By a similar argument to the above, the map $p_{a,ji}^{op} \mapsto p_{a,ij}$ extends to a $*$-homomorphism $\rho:\cP_{n,c}^{op} \to \cP_{n,c}$. Since $\pi$ and $\rho$ are mutual inverses on the generators of the respective algebras, they both extend to isomorphisms, yielding the desired result.
\end{proof}

\section{Synchronous Quantum Input--Classical Output Correlations} \label{sec: sync cor}

We now generalize the notion of synchronous correlations.  Recall that a correlation $P = (p(a,b|x,y)) \in C(n,k)$ is called {\it synchronous} if $p(a,b|x,x) = 0$ whenever $a \ne b$ \cite{HMPS19}.   

In the following considerations, we fix once and for all an orthonormal basis $\{e_1,...,e_n\}$ for $\bC^n$.  

\begin{definition}
Let $S \subseteq [n]$. We define the \textbf{maximally entangled Bell state corresponding to} $S$ as the vector
$$\varphi_{S}=\frac{1}{\sqrt{|S|}} \sum_{j \in S} e_j \otimes e_j.$$
\end{definition}

\begin{definition}
Let $X \in \mathcal{Q}_t(n,c)$ be a correlation in $n$-dimensional quantum inputs and $c$ classical outputs, where $t \in \{ loc,q,qs,qa,qc\}$. We say that $X$ is \textbf{synchronous} provided that there is a partition $S_1 \dot{\cup} \cdots \dot{\cup} S_{\ell}$ of $[n]$ with the property that, if $a \neq b$, then
$$p(a,b|\varphi_{S_r})=0 \text{ for all } 1 \leq r \leq \ell.$$
We define the subset
$$\mathcal{Q}_t^s(n,c)=\{ X \in \mathcal{Q}_t(n,c): X \text{ is synchronous}\}.$$
\end{definition}

The following proposition gives a very useful description of synchronicity in terms of the entries of the matrices involved in the correlation.

\begin{proposition}
\label{proposition: synchronicity in terms of the correlation}
Let $X=(X_{(i,j),(k,\ell)}^{(a,b)}) \in \mathcal{Q}_t(n,c)$ for $t \in \{loc,q,qs,qa,qc\}$. The following are equivalent:
\begin{enumerate}
\item
$X$ is synchronous.
\item
$X$ satisfies the equation
\begin{equation}
\frac{1}{n} \sum_{a=1}^c \sum_{i,j=1}^n X_{(i,j),(i,j)}^{(a,a)}=1. \label{synchronous in terms of sum of ij entries on each a}
\end{equation}
\item
If $a \neq b$, then
\begin{equation}\sum_{i,j=1}^n X^{(a,b)}_{(i,j),(i,j)}=0. \label{synchronous in terms of sum of ij entries on a neq b}
\end{equation}
\end{enumerate}
\end{proposition}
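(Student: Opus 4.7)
The plan is to first reduce the problem to identities involving the probabilities $p(a,b|\varphi_S)$. A direct expansion of
$$p(a,b|\varphi_S) = \la (P_a \otimes Q_b)(\varphi_S \odot \chi), \varphi_S \odot \chi \ra$$
using $\varphi_S = \frac{1}{\sqrt{|S|}}\sum_{j \in S} e_j \otimes e_j$ and the block decomposition of $P_a = (P_{a,ij})$ and $Q_b = (Q_{b,k\ell})$ should yield the key formula
$$p(a,b|\varphi_S) = \frac{1}{|S|}\sum_{j,j' \in S} X^{(a,b)}_{(j',j),(j',j)},$$
with the obvious modification (replacing $P_{a,ij} \otimes Q_{b,k\ell}$ by $P_{a,ij}Q_{b,k\ell}$) in the $qc$-setting. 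Specializing to $S = [n]$, the expressions appearing in (2) and (3) become precisely $n \sum_a p(a,a|\varphi_{[n]})$ and $n\, p(a,b|\varphi_{[n]})$ respectively.

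The equivalence (2) $\Leftrightarrow$ (3) will then follow from the normalization $\sum_{a,b} p(a,b|\varphi_{[n]}) = 1$ (a consequence of $\sum_a P_a = \sum_b Q_b = I_n$ together with $\|\varphi_{[n]} \odot \chi\| = 1$) combined with the non-negativity of each $p(a,b|\varphi_{[n]})$. The implication (3) $\Rightarrow$ (1) is immediate from the trivial partition $\{[n]\}$.

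The only nontrivial implication is (1) $\Rightarrow$ (3), and the key observation here is that the map $\varphi \mapsto p(a,b|\varphi)$ is a non-negative sesquilinear form on $\bC^n \otimes \bC^n$ (non-negative on unit vectors, hence on all non-zero vectors by homogeneity), so there exists a positive semi-definite operator $M_{a,b} \in \cB(\bC^n \otimes \bC^n)$ with $p(a,b|\varphi) = \la M_{a,b}\varphi, \varphi\ra$. If $X$ is synchronous with respect to a partition $\{S_r\}_{r=1}^\ell$, then for each $r$ and each $a \neq b$, the vanishing $\la M_{a,b}\varphi_{S_r}, \varphi_{S_r}\ra = 0$ combined with $M_{a,b} \geq 0$ forces $M_{a,b}^{1/2}\varphi_{S_r} = 0$, and hence $\varphi_{S_r} \in \ker M_{a,b}$. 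The elementary identity
$$\sqrt{n}\,\varphi_{[n]} = \sum_{r=1}^\ell \sqrt{|S_r|}\, \varphi_{S_r}$$
then places $\varphi_{[n]}$ inside $\ker M_{a,b}$, giving $p(a,b|\varphi_{[n]}) = 0$ and hence (3). The main obstacle is spotting this kernel argument, which is precisely what upgrades the existentially-quantified partition in the definition of synchronicity to the single universal condition appearing in (3).
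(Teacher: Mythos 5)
Your proposal is correct and follows the same overall route as the paper's proof: both use the identity $p(a,b|\varphi_S)=\frac{1}{|S|}\sum_{i,j\in S}X^{(a,b)}_{(i,j),(i,j)}$, the normalization $\sum_{a,b}p(a,b|\varphi_{[n]})=1$, and non-negativity of the probabilities to pass among $(1)$, $(2)$, and $(3)$. Where you differ—and where your argument is actually more careful—is the implication $(1)\Rightarrow(3)$. The paper's proof takes $\sum_{i,j\in S_r}X^{(a,b)}_{(i,j),(i,j)}=0$ for each block $S_r$ and then asserts that ``summing over all $r$'' yields $\sum_{i,j=1}^n X^{(a,b)}_{(i,j),(i,j)}=0$; but $\sum_{r}\sum_{i,j\in S_r}$ only picks up index pairs lying in the \emph{same} block, silently omitting the cross-block terms $i\in S_r$, $j\in S_{r'}$ with $r\neq r'$. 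Your observation that $p(a,b|\cdot)$ is a non-negative quadratic form on $\bC^n\otimes\bC^n$ represented by a positive semi-definite operator $M_{a,b}$, so that $p(a,b|\varphi_{S_r})=0$ forces $\varphi_{S_r}\in\ker M_{a,b}$ and hence $\sqrt{n}\,\varphi_{[n]}=\sum_r\sqrt{|S_r|}\,\varphi_{S_r}\in\ker M_{a,b}$, is precisely what is needed to account for those cross terms. So the proposal not only proves the proposition but supplies a step that the paper's own proof glosses over.
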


\begin{proof}
Suppose that $X$ can be represented using the PVMs $\{P_a\}_{a=1}^c$ in $\cB(\bC^n \otimes \cH)$ and $\{Q_b\}_{b=1}^c$ in $\cB(\cH \otimes \bC^n)$ and the state $\chi \in \cH$. We observe that, if $S \subseteq [n]$, then
\begin{align*}
p(a,b|\varphi_{S})&=\frac{1}{|S|} \sum_{i,j \in S} \la (P_a \otimes I_n)(I_n \otimes Q_b)(e_j \otimes \chi \otimes e_j),e_i \otimes \chi \otimes  e_i \ra \\
&=\frac{1}{|S|} \sum_{i,j \in S} \la P_{a,ij} Q_{b,ij} \chi,\chi \ra \\
&=\frac{1}{|S|} \sum_{i,j \in S} X^{(a,b)}_{(i,j),(i,j)}.
\end{align*}
Suppose that $X$ is synchronous, and let $S_1,...,S_{\ell}$ be a partition of $[n]$ for which $p(a,b|\varphi_{S_r})=0$ whenever $a \neq b$ and $1 \leq r \leq \ell$. Then the above calculation shows that $\sum_{i,j \in S_r} X^{(a,b)}_{(i,j),(i,j)}=0$ for all $r$. Summing over all $r$, it follows that $\sum_{i,j=1}^n X^{(a,b)}_{(i,j),(i,j)}=0$ whenever $a \neq b$. Hence, (1) implies (3).

Next, we show that (3) implies (2). Notice that, for any $X \in \mathcal{Q}_{qc}(n,c)$,
\begin{align*}
\sum_{a,b=1}^c \sum_{i,j=1}^n X^{(a,b)}_{(i,j),(k,\ell)}&=\sum_{a,b=1}^c \sum_{i,j=1}^n \la P_{a,ij}Q_{b,ij} \chi,\chi \ra \\
&=\sum_{i,j=1}^n \left\la \left(\sum_{a=1}^c P_{a,ij}\right)\left(\sum_{b=1}^c Q_{b,ij} \right)\chi,\chi \right\ra \\
&=\sum_{i=1}^n \la \chi,\chi \ra=n,
\end{align*}
where we have used the fact that $\sum_{a=1}^c P_a=\sum_{b=1}^c Q_b=I_n$ implies that $\sum_{a=1}^c P_{a,ij}=\sum_{b=1}^c Q_{b,ij}$ is $I$ when $i=j$ and $0$ otherwise. Therefore,
$$\frac{1}{n} \sum_{i,j=1}^n X^{(a,b)}_{(i,j),(i,j)}=1,$$
which shows that (2) holds.

Lastly, if (2) holds, then (1) immediately follows using the single-set partition $S=[n]$.
\end{proof}

\begin{remark}
In the case of a correlation $p(a,b|x,y) \in C_t(n,c)$ with $n$ classical inputs and $c$ classical outputs, using the $[n]=\{1\} \cup \{2\} \cup \cdots \cup \{n\}$, we see that any synchronous correlation in $C_t(n,c)$ is a synchronous correlation in the sense of the definition above. In this way, we see that $C_t^s(n,c) \subseteq \mathcal{Q}_t^s(n,c)$.
\end{remark}

We wish to show the analogue of \cite[Theorem~5.5]{PSSTW16}; that is, synchronous correlations with $n$-dimensional inputs and $c$ outputs arise from tracial states on the algebra generated by Alice's operators (respectively, Bob's operators). We will also see that, in any realization of a synchronous correlation, Bob's operators can be described naturally in terms of Alice's operators. By a \textbf{realization} of $X \in \mathcal{Q}_{qc}(n,c)$, we simply mean a $4$-tuple $(\{P_a\}_{a=1}^c,\{Q_b\}_{b=1}^c,\cH,\psi)$, where $\{P_a\}_{a=1}^c$ is a PVM on $\bC^n \otimes \cH$, $\{Q_b\}_{b=1}^c$ is a PVM on $\cH \otimes \bC^n$, $\psi$ is a state in $\cH$, and $[P_a \otimes I_n,I_n \otimes Q_b]=0$ for all $a,b$.

\begin{theorem}
\label{theorem: synchronicity condition}
Let $X=(X_{(i,j),(k,\ell)}^{(a,b)}) \in \mathcal{Q}_{qc}^s(n,c)$. Let $(\{P_a\}_{a=1}^c,\{Q_b\}_{b=1}^c,\cH,\psi)$ be a realization of $X$. Then:
\begin{enumerate}
\item
$Q_{a,ij}\psi=P_{a,ij}^*\psi \text{ for all } 1 \leq a \leq c \text{ and } 1 \leq i,j \leq n$.
\item
The state $\rho=\la (\cdot) \psi,\psi \ra$ is a tracial state on the $C^*$-algebra $\cA$ generated by $\{P_{a,ij}: 1 \leq a \leq c, \, 1 \leq i,j \leq n\}$, and on the $C^*$-algebra $\cB$ generated by $\{Q_{b,k\ell}: 1 \leq b \leq c, \, 1 \leq k,\ell \leq n\}$.
\end{enumerate}
Conversely, if $P_{a,ij}$ are operators in a tracial $C^*$-algebra $\cA$ with a trace $\tau$, such that the operators $P_a=(P_{a,ij}) \in M_n(\cA)$ form a PVM with $c$ outputs, then $(\tau(P_{a,ij}P_{b,k\ell}^*))$ defines an element of $\mathcal{Q}_{qc}^s(n,c)$.
\end{theorem}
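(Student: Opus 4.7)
The plan is to begin by invoking Proposition~\ref{proposition: synchronicity in terms of the correlation}(3), which shows synchronicity of $X$ is equivalent to $\sum_{i,j=1}^n X^{(a,b)}_{(i,j),(i,j)}=0$ whenever $a \neq b$. Translating this into the realization and writing $\Phi := \varphi_{[n]} \odot \psi \in \bC^n \otimes \cH \otimes \bC^n$, the condition becomes
\[
\langle (P_a \otimes I_n)(I_n \otimes Q_b)\Phi,\Phi\rangle = 0 \qquad (a \neq b).
\]
Since $P_a \otimes I_n$ and $I_n \otimes Q_b$ are commuting projections, their product is itself a projection, hence positive, so a vanishing expectation forces $(P_a \otimes I_n)(I_n \otimes Q_b)\Phi = 0$ for every $a \neq b$. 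Summing this relation over $b \neq a$ (resp.\ over $a$ with $b$ fixed) and invoking $\sum_b Q_b = I_n$ (resp.\ $\sum_a P_a = I_n$) produces the key vector identity
\[
(P_a \otimes I_n)\Phi \;=\; (P_a \otimes I_n)(I_n \otimes Q_a)\Phi \;=\; (I_n \otimes Q_a)\Phi \qquad (a = 1,\dots,c).
\]

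For (1), I would expand both sides of $(P_a \otimes I_n)\Phi = (I_n \otimes Q_a)\Phi$ in the product basis $\{e_i \otimes \xi \otimes e_k\}_{i,k}$ and match coefficients to obtain $P_{a,ij}\psi = Q_{a,ji}\psi$ for all $a,i,j$. The matrix self-adjointness $P_{a,ji} = P_{a,ij}^*$ and $Q_{a,ji} = Q_{a,ij}^*$ then gives the desired $P_{a,ij}^*\psi = Q_{a,ij}\psi$ and, symmetrically, $Q_{a,ij}^*\psi = P_{a,ij}\psi$. For (2), combining these relations with the $qc$-commutation $[P_{a,ij},Q_{b,k\ell}]=0$ yields, for any $x \in \cA$,
\[
\rho(P_{a,ij}x) \;=\; \langle x\psi, P_{a,ij}^*\psi\rangle \;=\; \langle x\psi, Q_{a,ij}\psi\rangle \;=\; \langle xQ_{a,ij}^*\psi, \psi\rangle \;=\; \langle xP_{a,ij}\psi, \psi\rangle \;=\; \rho(xP_{a,ij}).
\]
A routine induction on word length in the generators $\{P_{a,ij}\}\cup\{P_{a,ij}^*\}$, followed by continuity of $\rho$, extends this to $\rho(xy)=\rho(yx)$ for all $x,y \in \cA$. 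The tracial property on $\cB$ then follows by the same argument with the roles of $P$ and $Q$ interchanged.

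For the converse, the plan is to perform the GNS construction on $(\cA,\tau)$, yielding a representation $\pi:\cA \to \cB(\cH)$ with a cyclic tracial vector $\psi$ such that $\tau(\cdot) = \langle \pi(\cdot)\psi,\psi\rangle$. Traciality makes the antiunitary $J:\cH\to\cH$, $J(x\psi) = x^*\psi$, well defined, and standard Tomita theory for tracial vectors gives $J^2=I$, $J\psi=\psi$, $(JyJ)^* = Jy^*J$, and $J\pi(\cA)J \subseteq \pi(\cA)'$. Set $Q_{b,k\ell} := J\pi(P_{b,k\ell})J$. A short calculation using these properties shows that $Q_b := (Q_{b,k\ell}) \in M_n(\pi(\cA)')$ is a projection and $\sum_b Q_b = I_n$, while its entries commute with every $\pi(P_{a,ij})$. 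Using $Q_{b,k\ell}\psi = J\pi(P_{b,k\ell})\psi = \pi(P_{b,k\ell})^*\psi$, one obtains
\[
\langle \pi(P_{a,ij})Q_{b,k\ell}\psi,\psi\rangle \;=\; \tau(P_{a,ij}P_{b,k\ell}^*),
\]
so the data $(\{\pi(P_a)\},\{Q_b\},\cH,\psi)$ is a $qc$-realization of the stated correlation. Synchronicity is then verified directly via Proposition~\ref{proposition: synchronicity in terms of the correlation}(3): for $a \neq b$,
\[
\sum_{i,j=1}^n \tau(P_{a,ij}P_{b,ij}^*) \;=\; \sum_{i,j=1}^n \tau(P_{a,ij}P_{b,ji}) \;=\; \tau\bigl((\Tr\otimes\id)(P_aP_b)\bigr) \;=\; 0,
\]
since $P_aP_b = 0$ as orthogonal projections in $M_n(\cA)$.

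The step I expect to require the most care is the initial translation of the scalar synchronicity identity into the operator-vector equation $(P_a \otimes I_n)\Phi = (I_n \otimes Q_a)\Phi$, which critically leverages the $qc$-commutation to secure positivity of $(P_a \otimes I_n)(I_n \otimes Q_b)$. Once that identity is established, the remaining work reduces to index matching, commutation manipulations, and the standard tracial GNS/modular conjugation construction.
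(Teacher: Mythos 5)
Your proposal is correct, but it takes a genuinely different route from the paper's proof at the two key junctures. For part (1), the paper proves the vector identity $Q_{a,ij}\psi = P_{a,ij}^*\psi$ via an inequality chain starting from $\frac{1}{n}\sum_{a,i,j} X^{(a,a)}_{(i,j),(i,j)} = 1$ (Proposition~\ref{proposition: synchronicity in terms of the correlation}(2)), squeezing through two applications of Cauchy--Schwarz and then analyzing the equality case to first get $Q_{a,ij}\psi = \alpha_{a,ij}P_{a,ij}^*\psi$ for some unimodular $\alpha_{a,ij}$, and then pinning $\alpha_{a,ij} = 1$. You instead start from Proposition~\ref{proposition: synchronicity in terms of the correlation}(3), observe that $(P_a \otimes I_n)(I_n \otimes Q_b)$ is a projection (here you must, and do, use that a \emph{realization} consists of PVMs, not merely POVMs), conclude that the vanishing expectation forces $(P_a \otimes I_n)(I_n \otimes Q_b)\Phi = 0$ for $a \ne b$, and then sum against $\sum_b Q_b = I$ to extract $(P_a \otimes I_n)\Phi = (I_n \otimes Q_a)\Phi$ directly. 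This is shorter and avoids the phase-tracking entirely; the trade-off is that it leans more heavily on the PVM structure, while the paper's Cauchy--Schwarz argument is the one that generalizes to prove the inequality $\frac{1}{n}\sum_{a,i,j} Y^{(a,a)}_{(i,j),(i,j)} \le 1$ for arbitrary $Y \in \mathcal{Q}_{qc}(n,c)$, a fact they reuse in Corollary~\ref{corollary: characterizing synchronous loc}. For the converse, the paper routes through the universal algebra $\cP_{n,c}$: it uses the standard state $s(x \otimes y^{op}) = \tau(xy)$ on $\cA \otimes_{\max} \cA^{op}$, the opposite-algebra isomorphism of Lemma~\ref{lemma: opposite algebra}, and the characterization of $\mathcal{Q}_{qc}(n,c)$ via states on $\cP_{n,c} \otimes_{\max} \cP_{n,c}$ (Theorem~\ref{theorem: characterizing qc}). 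You instead construct an explicit $qc$-realization from the GNS representation of $\tau$ and the tracial modular conjugation $J$, setting $Q_{b,k\ell} := J\pi(P_{b,k\ell})J$. Both are valid; your version is more self-contained and produces an honest strategy $(\{\pi(P_a)\},\{Q_b\},\cH_\tau,\psi)$ in one step, whereas the paper's version leverages machinery (the universal algebra, its opposite, the disambiguation theorems) that it has already built and continues to use elsewhere. The traciality argument in part (2) is essentially the same word-length induction in both.
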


\begin{proof}
Suppose $X \in \mathcal{Q}_{qc}^s(n,c)$, with realization $(\{P_a\}_{a=1}^c,\{Q_b\}_{b=1}^c,\cH,\psi)$. By Proposition \ref{proposition: synchronicity in terms of the correlation}, we have 
\begin{align}
1&=\frac{1}{n} \sum_{a=1}^c \sum_{i,j=1}^n X^{(a,a)}_{(i,j),(i,j)} \\
&=\frac{1}{n} \sum_{a=1}^c \sum_{i,j=1}^n \la P_{a,ij}Q_{a,ij} \psi,\psi \ra \nonumber \\
&\leq \frac{1}{n} \sum_{a=1}^c \sum_{i,j=1}^n |\la P_{a,ij}Q_{a,ij}\psi,\psi \ra| \label{absvalueineq} \\
&=\frac{1}{n} \sum_{a=1}^c \sum_{i,j=1}^n |\la Q_{a,ij}\psi,P_{a,ij}^*\psi \ra | \nonumber \\
&\leq \frac{1}{n} \sum_{a=1}^c \sum_{i,j=1}^n \|Q_{a,ij}\psi\| \|P_{a,ij}^*\psi\| \label{firstCS} \\
&\leq \frac{1}{n}  \left( \sum_{a=1}^c\sum_{i,j=1}^n \|Q_{a,ij}\psi \|^2 \right)^{\frac{1}{2}} \left( \sum_{a=1}^c\sum_{i,j=1}^n \|P_{a,ij}^*\psi \|^2 \right)^{\frac{1}{2}} \\
&=\frac{1}{n} \left( \sum_{a=1}^c\sum_{i,j=1}^n \la Q_{a,ij}^*Q_{a,ij}\psi,\psi \ra \right)^{\frac{1}{2}} \left( \sum_{a=1}^c\sum_{i,j=1}^n \la P_{a,ij}P_{a,ij}^* \psi, \psi \ra \right)^{\frac{1}{2}} \nonumber \\
&= \frac{1}{n}\left( \sum_{a=1}^c\sum_{i,j=1}^n \la Q_{a,ji}Q_{a,ij}\psi,\psi \ra \right)^{\frac{1}{2}} \left(\sum_{a=1}^c \sum_{i,j=1}^n \la P_{a,ij}P_{a,ji} \psi,\psi \ra \right)^{\frac{1}{2}} \nonumber
\end{align}
Since $P_a$ and $Q_a$ are projections, the last line is equal to
\begin{align*}
\frac{1}{n} \left( \sum_{a=1}^c\sum_{j=1}^n \la Q_{a,jj}\psi,\psi \ra \right)^{\frac{1}{2}} \left(\sum_{a=1}^c \sum_{i=1}^n \la P_{a,ii} \psi,\psi \ra \right)^{\frac{1}{2}} &=\frac{1}{n} \left( \sum_{j=1}^n \la I_{\cH}\psi,\psi \ra \right)^{\frac{1}{2}} \left( \sum_{i=1}^n \la I_{\cH}\psi,\psi \ra \right)^{\frac{1}{2}} \\
&=\frac{1}{n}\cdot \sqrt{n} \cdot \sqrt{n} \\
&=1.
\end{align*}
Therefore, all of these inequalities are equalities. Then (\ref{absvalueineq}) implies that
$$X^{(a,a)}_{(i,j),(i,j)} \geq 0 \text{ for all } 1 \leq a \leq c, \, 1\leq i,j \leq n.$$
The equality case of (\ref{firstCS}) shows that
\begin{equation}
Q_{a,ij}\psi=\alpha_{a,ij} P_{a,ij}^*\psi \text{ for some } \alpha_{a,ij} \in \bT. \label{firstCSconsequence}
\end{equation}
Then equation (\ref{firstCSconsequence}) yields
$$X^{(a,a)}_{(i,j),(i,j)}=\alpha_{a,ij} \la P_{a,ij}P_{a,ij}^*\psi,\psi \ra=\alpha_{a,ij}\|P_{a,ij}^*\psi\|^2.$$
Since $X_{(i,j),(i,j)}^{(a,a)} \geq 0$ and $\|P_{a,ij}^*\psi \|^2 \geq 0$, we either have $P_{a,ij}^*\psi=0$ or $\alpha_{a,ij}=1$. In either case, we obtain
$$Q_{a,ij}\psi=P_{a,ij}^*\psi,$$
as desired.

To prove (2), it suffices to show that it holds for $\cA=C^*(\{P_{a,ij}\}_{a,i,j})$; a similar argument works for $\cB=C^*(\{Q_{b,k\ell}\}_{b,k,\ell})$.  Let $\rho:\cA \to \bC$ be the state given by $\rho(X)=\la X \psi,\psi \ra$.  Let $W=P_{a_1,i_1j_1}^{m_1} \cdots P_{a_k,i_kj_k}^{m_k}$ be a word in $\{P_{a,ij},P_{a,ij}^*\}_{a,i,j}$, where we denote by $P_{a_{\ell},i_{\ell}j_{\ell}}^{-1}$ the operator $P_{a_{\ell},i_{\ell}j_{\ell}}^*$ and let $m_{\ell} \in \{-1,1\}$.  We will first show that $W\psi=Q_{a_k,i_kj_k}^{-m_k} \cdots Q_{a_1,i_1j_1}^{-m_1} \psi$, where $Q_{a_{\ell},i_{\ell}j_{\ell}}^{-1}:=Q_{a_{\ell},i_{\ell}j_{\ell}}^*$.  Using the fact that $P_{a,ij}$ and $Q_{b,k\ell}$ $*$-commute for each $a,b,i,j,k,\ell$, we obtain
\begin{align*}
W\psi&=P_{a_1,i_1j_1}^{m_1} \cdots P_{a_k,i_kj_k}^{m_k}\psi \\
&=P_{a_1,i_1j_1}^{m_1} \cdots P_{a_{k-1},i_{k-1}j_{k-1}}^{m_{k-1}} Q_{a_k,i_kj_k}^{-m_k}\psi \\
&=Q_{a_k,i_kj_k}^{-m_k} (P_{a_1,i_1j_1}^{m_1} \cdots P_{a_{k-1},i_{k-1}j_{k-1}}^{m_{k-1}})\psi.
\end{align*}
and the desired equality easily follows by induction on $k$.  For $1 \leq a \leq c$ and $1 \leq i,j \leq n$,
\begin{align*}
\rho(P_{a,ij}W)&=\la P_{a,ij}W\psi,\psi \ra \\
&=\la P_{a,ij}(Q_{a_1,i_1j_1}^{m_1} \cdots Q_{a_k,i_kj_k}^{m_k})^*\psi,\psi \ra \\
&=\la (Q_{a_1,i_1j_1}^{m_1} \cdots Q_{a_k,i_kj_k}^{m_k})^*P_{a,ij}\psi,\psi \ra \\
&=\la P_{a,ij}\psi,Q_{a_1,i_1j_1}^{m_1} \cdots Q_{a_k,i_kj_k}^{m_k}\psi \ra \\
&=\la P_{a,ij}\psi,(P_{a_1,i_1j_1}^{m_1} \cdots P_{a_k,i_kj_k}^{m_k})^*\psi \ra \\
&=\la P_{a,ij}\psi,W^*\psi \ra \\
&=\la WP_{a,ij}\psi,\psi \ra=\rho(WP_{a,ij}).
\end{align*}
In the same way, $\rho(P_{a,ij}P_{b,k\ell}W)=\rho(WP_{a,ij}P_{b,k\ell})$.  It follows by induction, linearity and continuity that $\rho$ is tracial on $\cA$, as desired.

For the converse direction, we recall the standard fact that, if $\cA$ is a unital $C^*$-algebra and $\tau$ is a trace on $\cA$, then there is a state $s:\cA \otimes_{\max} \cA^{op} \to \bC$ satisfying $s(x \otimes y^{op})=\tau(xy)$ for all $x,y \in \cA$. Thus, if $P_1,...,P_c \in M_n(\cA)$ is a projection-valued measure, then
$$s(P_{a,ij} \otimes P_{b,k\ell}^{op})=\tau(P_{a,ij}P_{b,k\ell}) \, \forall 1 \leq a,b \leq c, \, 1 \leq i,j,k,\ell \leq n.$$
Applying the universal property of $\cP_{n,c}$, we obtain a state $\gamma:\cP_{n,c} \otimes_{\max} \cP_{n,c}^{op} \to \bC$ satisfying
$$\gamma(p_{a,ij} \otimes p_{b,k\ell}^{op})=\tau(P_{a,ij}P_{b,k\ell}).$$
By Lemma \ref{lemma: opposite algebra}, the map $p_{a,ij} \otimes p_{b,k\ell} \mapsto \tau(P_{a,ij}P_{b,\ell k})=\tau(P_{a,ij}P_{b,k \ell}^*)$ defines a state on $\cP_{n,c} \otimes_{\max} \cP_{n,c}$. Then Theorem \ref{theorem: characterizing qc} shows that
$$X:=\tau(P_{a,ij}P_{b,k\ell}^*)$$
defines an element of $\mathcal{Q}_{qc}(n,c)$. If $a \neq b$, then
\begin{align*}
\sum_{i,j=1}^n X^{(a,b)}_{(i,j),(i,j)}&=\sum_{i,j=1}^n \tau(P_{a,ij}P_{b,ij}^*) \\
&=\Tr \otimes \tau(P_aP_b)=0,
\end{align*}
since $P_aP_b=0$. By Proposition \ref{proposition: synchronicity in terms of the correlation}, $X=(X^{(a,b)}_{(i,j),(k,\ell)}) \in \mathcal{Q}_{qc}^s(n,c)$.
\end{proof}

In light of Theorem \ref{theorem: synchronicity condition}, we may refer to a \textbf{synchronous} $t$\textbf{-strategy} $(\{P_a\}_{a=1}^c,\chi)$ when referring to a $t$-strategy $(\{P_a\}_{a=1}^c,\{Q_b\}_{b=1}^c,\chi)$ where the associated correlation is synchronous.

\begin{corollary}
Let $(X_{(i,j),(k,\ell)}^{(a,b)}) \in \mathcal{Q}_t^s(n,c)$ where $t \in \{loc,q,qs,qa,qc\}$. Then:
\begin{enumerate}
\item
$X_{(i,i),(j,j)}^{(a,b)} \geq 0$ for all $1 \leq a,b \leq c$ and $1 \leq i,j \leq n$.
\item
$X^{(a,b)}_{(i,j),(k,\ell)}=\overline{X^{(a,b)}_{(j,i),(\ell,k)}}$.
\item
For any $1 \leq a \neq b \leq c$ and $1 \leq i,j \leq n$, we have
$$\sum_{k=1}^n X^{(a,b)}_{(i,k),(j,k)}=\sum_{k=1}^n X^{(a,b)}_{(k,i),(k,j)}=0.$$
\item
For any $1 \leq i,j \leq n$, we have
$$\sum_{a=1}^c\sum_{k=1}^n X^{(a,a)}_{(i,k),(j,k)}=\sum_{a=1}^c\sum_{k=1}^n X^{(a,a)}_{(k,i),(k,j)}=\delta_{ij}.$$
\end{enumerate}
\end{corollary}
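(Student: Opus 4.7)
The plan is to reduce all four claims to short identities inside a tracial $C^\ast$-algebra, using Theorem~\ref{theorem: synchronicity condition} as the single source of leverage. Since $\mathcal{Q}_t(n,c) \subseteq \mathcal{Q}_{qc}(n,c)$ for every $t \in \{loc,q,qs,qa,qc\}$, any $X \in \mathcal{Q}_t^s(n,c)$ in fact lies in $\mathcal{Q}_{qc}^s(n,c)$ and admits a realization $(\{P_a\}_{a=1}^c,\{Q_b\}_{b=1}^c,\cH,\psi)$. Theorem~\ref{theorem: synchronicity condition} then gives $Q_{b,k\ell}\psi = P_{b,k\ell}^*\psi = P_{b,\ell k}\psi$ (using that $P_b = P_b^*$ implies $P_{b,k\ell}^* = P_{b,\ell k}$) and tells us that $\tau(\cdot) := \la(\cdot)\psi,\psi\ra$ is tracial on $\cA := C^*(\{P_{a,ij}\})$. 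Combining these and using $[P_{a,ij}, Q_{b,k\ell}] = 0$, I obtain the master identity
\[
X^{(a,b)}_{(i,j),(k,\ell)} = \la P_{a,ij} Q_{b,k\ell}\psi,\psi\ra = \la P_{a,ij} P_{b,\ell k}\psi,\psi\ra = \tau(P_{a,ij}P_{b,\ell k}),
\]
and from here each of the four assertions is a short computation.

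For (1), the diagonal blocks $P_{a,ii}$ and $P_{b,jj}$ are positive in $\cA$ since they are the diagonal compressions of the projections $P_a, P_b \in M_n(\cA)$; traciality then gives $X^{(a,b)}_{(i,i),(j,j)} = \tau(P_{a,ii}P_{b,jj}) = \tau(P_{a,ii}^{1/2}P_{b,jj}P_{a,ii}^{1/2}) \geq 0$. For (2), I use that $\tau$ is a tracial state and $\overline{\tau(x)} = \tau(x^*)$:
\[
\overline{X^{(a,b)}_{(j,i),(\ell,k)}} = \overline{\tau(P_{a,ji}P_{b,k\ell})} = \tau(P_{b,\ell k}P_{a,ij}) = \tau(P_{a,ij}P_{b,\ell k}) = X^{(a,b)}_{(i,j),(k,\ell)}.
\]

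For (3) and (4), I read the indicated sums as extracting $(i,j)$-entries of matrix products in $M_n(\cA)$. Namely $\sum_k P_{a,ik}P_{b,kj} = (P_aP_b)_{ij}$, so
\[
\sum_k X^{(a,b)}_{(i,k),(j,k)} = \tau((P_aP_b)_{ij}),
\]
which vanishes whenever $a \neq b$ by orthogonality of the PVM; the companion sum in (3) is $\tau((P_bP_a)_{ji})$ after a cyclic swap, which vanishes for the same reason. For (4), using $P_a^2 = P_a$ together with $\sum_a P_a = I_n$ (so $\sum_a P_{a,ij} = \delta_{ij} 1_\cA$) yields
\[
\sum_{a,k} X^{(a,a)}_{(i,k),(j,k)} = \sum_a \tau(P_{a,ij}) = \delta_{ij},
\]
and the companion identity follows after one trace swap. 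I expect no real obstacle here: Theorem~\ref{theorem: synchronicity condition} does all the heavy lifting by producing a trace and identifying Bob's operators with the adjoints of Alice's, and everything else is routine bookkeeping with entries of the PVMs $P_a \in M_n(\cA)$.
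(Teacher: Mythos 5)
Your proof is correct and takes essentially the same approach as the paper: both invoke Theorem~\ref{theorem: synchronicity condition} to reduce everything to the master identity $X^{(a,b)}_{(i,j),(k,\ell)}=\tau(P_{a,ij}P_{b,k\ell}^*)$ in a tracial $C^*$-algebra, and then derive (1)--(4) by the same routine manipulations with positivity of diagonal blocks, traciality, orthogonality $P_aP_b=0$, and $\sum_a P_a=I_n$.
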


\begin{proof}
By Theorem \ref{theorem: synchronicity condition}, we may choose projections $P_1,...,P_c \in M_n(\cA)$, for a unital $C^*$-algebra $\cA$, along with a tracial state $\tau$ on $\cA$ such that
$$X^{(a,b)}_{(i,j),(k,\ell)}=\tau(P_{a,ij}P_{b,k\ell}^*) \text{ for all } 1 \leq a,b \leq c, \, 1 \leq i,j,k,\ell \leq n.$$
Since $P_a$ is a projection, it defines a positive element of $M_n(\cA)$. Compressing to any diagonal block preserves positivity, which implies that $P_{a,ii} \in \cA^+$ for any $i$. Since $\tau$ is a trace, it follows that $\tau(P_{a,ii}P_{b,jj}) \geq 0$ for any $i,j,a,b$. Hence, (1) follows.

We note that (2) follows easily from the fact that $\tau$ is a trace and that, since $\tau$ is a state, one has $\tau(Y^*)=\overline{\tau(Y)}$ for all $Y \in \cA$.

To show (3), we observe that
\begin{align*}
\sum_{k=1}^n X_{(i,k),(j,k)}^{(a,b)}&=\sum_{k=1}^n \tau(P_{a,ik}P_{b,jk}^*) \\
&=\sum_{k=1}^n \tau(P_{a,ik}P_{b,kj}) \\
&=\tau \left( \sum_{k=1}^n P_{a,ik}P_{b,kj} \right) \\
&=\tau( (P_aP_b)_{ij})=0,
\end{align*}
since $P_aP_b=0$. Similarly, $\sum_{k=1}^n X_{(k,i),(k,j)}^{(a,b)}=0$ when $a \neq b$.

A similar argument establishes (4). Indeed, we have
$$\sum_{a=1}^c \sum_{k=1}^n X_{(i,k),(j,k)}^{(a,a)}=\sum_{a=1}^c \sum_{k=1}^n \tau(P_{a,ik}P_{a,kj})=\tau \left( \sum_{a=1}^c P_{a,ij} \right),$$
and this latter sum is $\delta_{ij}$, since $\sum_{a=1}^c P_a=I$. The other equation in (4) follows similarly.
\end{proof}

\begin{remark}
It makes sense (and we will have occasion) to discuss synchronicity of a strategy with respect to a different orthonormal basis 
$\mb{v} = \{v_1,...,v_n\}$ of $\bC^n$. In this case, a $qc$-strategy $(\{P_a\}_{a=1}^c,\{Q_b\}_{b=1}^c,\chi)$ is said to be \textbf{synchronous with respect to} $\{v_1,...,v_n\}$ if there is a partition $S_1 \cup \cdots \cup S_s$ of $[n]$ such that for each $r$ and $\varphi_{S_r,\mb{v}} :=\frac{1}{\sqrt{|S_r|}} \sum_{j \in S_r} v_j \otimes v_j$, we have
$$p(a,b|\varphi_{S_r,\mb{v}})=0 \text{ if } a \neq b.$$
One can then write down an analogue of Theorem \ref{theorem: synchronicity condition} in this context. Alternatively, one can simply let $\widetilde{P}_a=U^*P_aU$ and $\widetilde{Q}_b=U^*Q_bU$, where $U$ is the unitary satisfying $Ue_i=v_i$ for all $i$. Then applying Theorem \ref{theorem: synchronicity condition} relates the entries of $\widetilde{Q}_a$ to the entries of $\widetilde{P}_a$, while showing that the state $\la (\cdot)\chi,\chi \ra$ is a trace on the algebra generated by the entries of the operators $\widetilde{Q}_a$ (respectively, $\widetilde{P}_a$). Since $P_a=U\widetilde{P}_a U^*$ and $Q_b=U\widetilde{Q}_bU^*$, the entries of $P_a$ (respectively, $Q_b$) are linear combinations of the entries of $\widetilde{P}_a$ (respectively, $\widetilde{Q}_b$), so it follows that the algebra generated by the entries of the operators $P_a$ (respectively, $Q_b$) is the same as the algebra generated by the entries of $\widetilde{P}_a$ (respectively, $\widetilde{Q}_b$).
\end{remark}

It is helpful to describe the simplest ways to realize synchronous correlations. To that end, we spend the rest of this section describing the simplest realizations for $t\in \{loc,q,qs\}$. We start with the case of $\mathcal{Q}_{loc}^s(n,c)$.

\begin{corollary}
\label{corollary: characterizing synchronous loc}
Let $X \in (M_n \otimes M_n)^{c^2}$. Then $X$ belongs to $\mathcal{Q}_{loc}^s(n,c)$ if and only if there is a unital, commutative $C^*$-algebra $\cA$, a projection-valued measure $\{P_a\}_{a=1}^c \subseteq M_n(\cA)$ for $1 \leq a \leq c$, and a faithful state $\psi \in \cS(\cA)$ such that, for all $1 \leq a,b \leq c$ and $1 \leq i,j,k,\ell \leq n$,
\[ X_{(i,j),(k,\ell)}^{(a,b)}=\psi(P_{a,ij}P_{b,k\ell}^*).\]
Moreover, if $X$ is an extreme point in $\mathcal{Q}_{loc}^s(n,c)$, then we may take $\cA=\bC$.
\end{corollary}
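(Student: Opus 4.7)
The plan is to derive the corollary by combining Theorem \ref{theorem: characterizing loc} with Theorem \ref{theorem: synchronicity condition}. The key observation is that synchronicity, in the presence of commutativity, forces Bob's PVM to be determined by Alice's via $Q_{b,k\ell} = P_{b,k\ell}^*$, so the two PVMs appearing in Theorem \ref{theorem: characterizing loc} collapse into a single one.

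For the easy direction, suppose a triple $(\cA,\{P_a\},\psi)$ as in the statement is given, and set $Q_{b,k\ell} := P_{b,k\ell}^*$. A short calculation using commutativity of $\cA$ together with $P_b = P_b^*$ shows that $\{Q_b\}_{b=1}^c$ is again a PVM in $M_n(\cA)$ (both self-adjointness and idempotency survive the entrywise conjugation, because commutativity lets us reverse orders of products inside matrix entries). Theorem \ref{theorem: characterizing loc} then places $X^{(a,b)}_{(i,j),(k,\ell)} = \psi(P_{a,ij}Q_{b,k\ell})$ in $\mathcal{Q}_{loc}(n,c)$, and synchronicity follows via Proposition \ref{proposition: synchronicity in terms of the correlation} from the identity $\sum_{i,j} P_{a,ij}P_{b,ij}^* = \Tr_{M_n}(P_aP_b) = 0$ whenever $a \neq b$.

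For the forward direction, I would begin with a realization $X^{(a,b)}_{(i,j),(k,\ell)} = s(P_{a,ij}Q_{b,k\ell})$ over a commutative $C^*$-algebra $\cA_0$ furnished by Theorem \ref{theorem: characterizing loc}, and pass to the GNS construction of $s$. Denote by $\cA \subseteq \cB(\cH_s)$ the image of $\cA_0$, by $\xi_s$ the cyclic vector, and by $\psi$ the corresponding faithful vector state on $\cA$ (faithfulness holds by construction of GNS for a commutative algebra: the null ideal of $s$ is quotiented out). Theorem \ref{theorem: synchronicity condition}, applied in the GNS realization, yields the vector identity $Q_{a,ij}\xi_s = P_{a,ij}^*\xi_s$. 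The main technical step is upgrading this to an operator identity: setting $D := Q_{a,ij} - P_{a,ij}^*$ in $\cA$, one has $\psi(D^*D) = \|D\xi_s\|^2 = 0$, and faithfulness of $\psi$ forces $D = 0$, whence $X^{(a,b)}_{(i,j),(k,\ell)} = \psi(P_{a,ij}P_{b,k\ell}^*)$. Guaranteeing faithfulness of $\psi$ is the crux of this direction.

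For the moreover clause, let $\mathcal{E} \subseteq \mathcal{Q}_{loc}^s(n,c)$ consist of those correlations obtained from $\cA = \bC$; equivalently, $\mathcal{E}$ is the image of the compact space of $c$-outcome PVMs in $M_n$ under the continuous map $\{P_a\} \mapsto ((P_a)_{ij}\overline{(P_b)_{k\ell}})$, and is therefore compact. Any realization of $X \in \mathcal{Q}_{loc}^s(n,c)$ over $\cA = C(Y)$ exhibits $X$ as the barycenter of the pushforward of the representing probability measure under $y \mapsto ((P_a(y))_{ij}\overline{(P_b(y))_{k\ell}})$, so $X \in \overline{\mathrm{conv}}(\mathcal{E})$. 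Thus $\mathcal{Q}_{loc}^s(n,c) = \overline{\mathrm{conv}}(\mathcal{E})$, and Milman's converse to Krein-Milman forces the extreme points of $\mathcal{Q}_{loc}^s(n,c)$ into $\mathcal{E}$, proving the claim.
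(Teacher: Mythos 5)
Your proof is correct and, for the main equivalence, follows the same line as the paper: both directions rest on combining Theorem \ref{theorem: characterizing loc} with Theorem \ref{theorem: synchronicity condition}. Your only variation there is to upgrade the vector identity $Q_{a,ij}\xi_s = P_{a,ij}^*\xi_s$ to an operator identity via faithfulness of $\psi$; the paper instead just substitutes the vector identity into $\langle P_{a,ij}Q_{b,k\ell}\chi,\chi\rangle = \langle P_{a,ij}P_{b,k\ell}^*\chi,\chi\rangle$, needing no faithfulness for the formula itself, and only passes to the GNS quotient afterward to make $\psi$ faithful — so you have simply reordered the steps. Where you genuinely diverge is the moreover clause: you establish $\mathcal{Q}_{loc}^s(n,c) = \overline{\mathrm{conv}}(\mathcal{E})$ directly by a barycenter decomposition over $C(Y)$ and then apply Milman's converse to that equality. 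The paper instead first shows (via Proposition \ref{proposition: loc is closed} and Milman's converse) that extreme points of the \emph{unsynchronized} set $\mathcal{Q}_{loc}(n,c)$ admit $M_n(\bC)$-realizations, and then argues separately that $\mathcal{Q}_{loc}^s(n,c)$ is a face of $\mathcal{Q}_{loc}(n,c)$, since the affine functional $Y \mapsto \frac{1}{n}\sum_{a}\sum_{i,j} Y^{(a,a)}_{(i,j),(i,j)}$ is bounded above by $1$ on $\mathcal{Q}_{qc}(n,c)$ with equality exactly on the synchronous set. Your barycenter route is more self-contained and dispenses with the face observation; the paper's version, in exchange, isolates a reusable fact — that $\mathcal{Q}_t^s(n,c)$ is a face of $\mathcal{Q}_t(n,c)$ for every model $t$, not just $t = loc$.
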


\begin{proof}
If $X \in \mathcal{Q}_{loc}^s(n,c)$, then by definition of loc-correlations, $X$ can be written using projection-valued measures $\{P_a\}_{a=1}^c$ and $\{Q_b\}_{b=1}^c$ in $M_n(\bofh)$, along with a state $\chi \in \cH$, such that $X_{(i,j),(k,\ell)}^{(a,b)}=\la P_{a,ij}Q_{b,k\ell}\chi,\chi \ra$ and the $C^*$-algebra $\cA$ generated by the set of all entries $P_{a,ij}$ and $Q_{b,\ell}$ is a commutative $C^*$-algebra. Applying Theorem \ref{theorem: synchronicity condition}, we can write $X_{(i,j),(k,\ell)}^{(a,b)}=\psi(P_{a,ij}P_{b,k\ell}^*)$, where $\psi(\cdot)=\la (\cdot)\chi,\chi \ra$. As this state is tracial, by replacing $\cA$ with its quotient by the kernel of the GNS representation of $\psi$ if necessary, we may assume without loss of generality that $\psi$ is faithful, which establishes the forward direction. The converse follows by the converse of Theorem \ref{theorem: synchronicity condition} and the definition of $\mathcal{Q}_{loc}(n,c)$.

To establish the claim about extreme points, we note that the proof of Proposition \ref{proposition: loc is closed} shows that every element of $\mathcal{Q}_{loc}(n,c)$ is a limit of convex combinations of correlations arising from PVMs in $M_n(\bC)$. Evidently the set of elements of $\mathcal{Q}_{loc}(n,c)$ that have realizations using PVMs in $M_n(\bC)$ is a closed set. As $\mathcal{Q}_{loc}(n,c)$ is compact and convex, the converse of the Krein-Milman theorem shows that extreme points in $\mathcal{Q}_{loc}(n,c)$ must have a realization using PVMs in $M_n(\bC)$. Now, the proof of the forward direction of Theorem \ref{theorem: synchronicity condition} shows that $\frac{1}{n} \sum_{a=1}^c \sum_{i,j=1}^n Y_{(i,j),(i,j)}^{(a,b)} \leq 1$ for any $Y \in \mathcal{Q}_{qc}(n,c)$. Moreover, this inequality is an equality if and only if $Y$ is synchronous, by Proposition \ref{proposition: synchronicity in terms of the correlation}. Hence, $\mathcal{Q}_{loc}^s(n,c)$ is a face in $\mathcal{Q}_{loc}(n,c)$, so extreme points in $\mathcal{Q}_{loc}^s(n,c)$ are also extreme points in $\mathcal{Q}_{loc}(n,c)$. This shows that $X$ has a realization using the algebra $\cA=\bC$.
\end{proof}

\begin{corollary}
\label{corollary: characterizing synchronous q}
Let $X \in (M_n \otimes M_n)^{c^2}$. Then $X$ belongs to $\mathcal{Q}_q^s(n,c)$ if and only if there is a finite-dimensional $C^*$-algebra $\cA$, a projection-valued measure $\{P_a\}_{a=1}^c \subseteq M_n(\cA)$ for $1 \leq a \leq c$, and a faithful tracial state $\psi \in \cS(\cA)$ such that, for all $1 \leq a,b \leq c$ and $1 \leq i,j,k,\ell \leq n$,
\[ X_{(i,j),(k,\ell)}^{(a,b)}=\psi(P_{a,ij}P_{b,k\ell}^*).\]
Moreover, if $X$ is an extreme point in $\mathcal{Q}_q^s(n,c)$, then we may take $\cA=M_d$ for some $d$, and hence $\psi=\tr_d$, where $\tr_d$ is the normalized trace on $M_d$.
\end{corollary}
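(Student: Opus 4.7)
The plan is to follow the template of Corollary \ref{corollary: characterizing synchronous loc}, using Theorem \ref{theorem: synchronicity condition} in the forward direction and its converse combined with Lemma \ref{lemma: finite-dimensional realization of commuting} in the backward direction.

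First, for the forward direction, suppose $X \in \mathcal{Q}_q^s(n,c)$. By Proposition \ref{proposition: dilate matrix-valued POVM to matrix-valued PVM} (applied separately to Alice's and Bob's POVMs), $X$ has a realization using PVMs $\{P_a\}_{a=1}^c$ on $\bC^n \otimes \cH_A$ and $\{Q_b\}_{b=1}^c$ on $\cH_B \otimes \bC^n$ with a unit vector $\chi \in \cH_A \otimes \cH_B$, where $\cH_A$ and $\cH_B$ remain finite-dimensional. Viewed as a $qc$-realization on $\cH = \cH_A \otimes \cH_B$ via $\tilde P_{a,ij} := P_{a,ij} \otimes I_{\cH_B}$ and $\tilde Q_{b,k\ell} := I_{\cH_A} \otimes Q_{b,k\ell}$, Theorem \ref{theorem: synchronicity condition} yields $\tilde Q_{b,k\ell}\chi = \tilde P_{b,k\ell}^*\chi$ and shows that the vector state $T \mapsto \la T\chi,\chi\ra$ is tracial on $C^*(\tilde P_{a,ij}) \cong \cA_0 := C^*(P_{a,ij}) \subseteq \cB(\cH_A)$. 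Consequently,
\[
X^{(a,b)}_{(i,j),(k,\ell)} = \la \tilde P_{a,ij}\tilde Q_{b,k\ell}\chi,\chi\ra = \la \tilde P_{a,ij}\tilde P_{b,k\ell}^*\chi,\chi\ra = \psi_0(P_{a,ij}P_{b,k\ell}^*),
\]
where $\psi_0(T) := \la (T \otimes I_{\cH_B})\chi,\chi\ra$ is a tracial state on the finite-dimensional algebra $\cA_0$. Quotienting $\cA_0$ by the GNS kernel of $\psi_0$ produces a finite-dimensional $C^*$-algebra $\cA$ and a faithful tracial state $\psi$ on $\cA$; since quotients send PVMs to PVMs, the images of $P_{a,ij}$ in $\cA$ furnish the required data.

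For the backward direction, given $(\cA, \psi, \{P_a\})$ as in the statement, the converse half of Theorem \ref{theorem: synchronicity condition} already shows that $X := (\psi(P_{a,ij}P_{b,k\ell}^*)) \in \mathcal{Q}_{qc}^s(n,c)$. I would then construct an explicit finite-dimensional realization on $\cH := L^2(\cA,\psi)$ using the standard form: take $\tilde P_{a,ij} := \pi_l(P_{a,ij})$, where $\pi_l$ is the left regular representation, and $\tilde Q_{b,k\ell} := \pi_r(P_{b,\ell k})$, where $\pi_r(y)\hat x := \widehat{xy}$ is the right regular anti-representation. A direct check using $P_{b,k\ell}^* = P_{b,\ell k}$ and $P_b^2 = P_b$ confirms that $\tilde Q_b = (\tilde Q_{b,k\ell})$ is a PVM in $M_n(\cB(\cH))$, while the ranges of $\pi_l$ and $\pi_r$ commute entrywise, so the strategy is $qc$; and with state $\chi := \hat 1$, one has
\[
\la \tilde P_{a,ij}\tilde Q_{b,k\ell}\hat 1,\hat 1\ra = \psi(P_{a,ij}P_{b,\ell k}) = \psi(P_{a,ij}P_{b,k\ell}^*) = X^{(a,b)}_{(i,j),(k,\ell)}.
\]
Since $\cA$ is finite-dimensional, so is $\cH$, and Lemma \ref{lemma: finite-dimensional realization of commuting} promotes this $qc$-realization to an element of $\mathcal{Q}_q(n,c)$, as desired.

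For the extreme point claim, write $\cA = \bigoplus_{r=1}^K M_{d_r}$ via Wedderburn, and correspondingly $\psi = \sum_{r=1}^K \lambda_r \tr_{d_r}$ with $\lambda_r > 0$ and $\sum_r \lambda_r = 1$, together with $P_a = \bigoplus_r P_a^{(r)}$. Setting $X^{(r)}$ to be the correlation arising from $(M_{d_r}, \tr_{d_r}, \{P_a^{(r)}\})$, the first part gives $X^{(r)} \in \mathcal{Q}_q^s(n,c)$ and the linearity of the trace yields $X = \sum_r \lambda_r X^{(r)}$. Since Proposition \ref{proposition: synchronicity in terms of the correlation} cuts $\mathcal{Q}_q^s(n,c)$ out of $\mathcal{Q}_q(n,c)$ by an affine equation, it is convex, so extremality of $X$ forces $X = X^{(r)}$ whenever $\lambda_r > 0$, yielding the desired realization on a single matrix block. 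The main obstacle I anticipate is the backward direction: the converse of Theorem \ref{theorem: synchronicity condition} only produces a $qc$-correlation, so finite-dimensionality of $\cA$ must be exploited (via the standard-form Hilbert space $L^2(\cA,\psi)$) to descend to $\mathcal{Q}_q^s(n,c)$ through Lemma \ref{lemma: finite-dimensional realization of commuting}.
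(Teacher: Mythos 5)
Your proposal is correct and follows essentially the same route as the paper: dilate to a PVM realization, invoke Theorem \ref{theorem: synchronicity condition} for the trace in the forward direction, construct a finite-dimensional $qc$-realization and pass through Lemma \ref{lemma: finite-dimensional realization of commuting} in the backward direction, and decompose $\cA$ into matrix blocks for the extreme-point claim. The only differences are cosmetic: you make explicit the standard-form construction on $L^2(\cA,\psi)$ that the paper leaves implicit in its appeal to ``the proof of Theorem \ref{theorem: synchronicity condition},'' and you handle faithfulness by quotienting by the GNS kernel rather than asserting it outright, which is in fact a welcome clarification.
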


\begin{proof}
If $X$ belongs to $\mathcal{Q}_q^s(n,c)$, then one can write $X=(\la (P_{a,ij} \otimes Q_{b,k\ell})\chi,\chi \ra)$ for projection-valued measures $\{P_a\}_{a=1}^c \subseteq M_n(\cB(\cH_A))$ and $\{Q_b\}_{b=1}^c \subseteq M_n(\cB(\cH_B))$ on finite-dimensional Hilbert spaces $\cH_A$ and $\cH_B$, along with a unit vector $\chi \in \cH_A \otimes \cH_B$. By Theorem \ref{theorem: synchronicity condition}, we may write $X=\psi(P_{a,ij}P_{b,k\ell}^*)$ where $\psi$ is the (necessarily faithful) tracial state on the finite-dimensional $C^*$-algebra $\cA$ generated by the set $\{ P_{a,ij}: 1 \leq a \leq c, \, 1 \leq i,j \leq n \}$.

Conversely, if $X$ can be written as $X=(\psi(P_{a,ij}P_{b,k\ell}^*))$ for a projection-valued measure $\{P_a\}_{a=1}^c \subseteq M_n(\cA)$, where $\cA$ is a finite-dimensional $C^*$-algebra with  a faithful trace $\psi$ on $\cA$, then the proof of Theorem \ref{theorem: synchronicity condition} yields a finite-dimensional realization of $X$ as an element of $\mathcal{Q}_{qc}^s(n,c)$. By Lemma \ref{lemma: finite-dimensional realization of commuting}, we must have $X \in \mathcal{Q}_q^s(n,c)$.

Now, assume that $X$ is extreme in $\mathcal{Q}_q^s(n,c)$. Since $\cA$ is finite-dimensional, it is $*$-isomorphic to $\bigoplus_{r=1}^m M_{k_r}$ for some $r$ and numbers $k_1,...,k_r \in \bN$. Since $\psi$ is a trace on $\cA$, there must be $t_1,...,t_m \geq 0$ such that $\sum_{r=1}^m t_r=1$ and $\psi(\cdot)=\sum_{r=1}^m t_r \tr_{k_r}(\cdot)$, where $\tr_{k_r}$ is the normalized trace on $M_{k_r}$. Writing $P_{a,ij}=\bigoplus_{r=1}^m P_{a,ij}^{(r)}$ for each $1 \leq a \leq c$ and $1 \leq i,j \leq n$, where $P_{a,ij}^{(r)} \in M_{k_r}$, we have
\[ X_{(i,j),(k,\ell)}^{(a,b)}=\sum_{r=1}^m t_r \tr_{k_r}(P_{a,ij}^{(r)} (P_{b,k\ell}^{(r)})^*). \]
Since $P_a^{(r)}=(P_{a,ij}^{(r)}) \in M_n(M_{k_r})$ must define an orthogonal projection and $\sum_{a=1}^c P_a^{(r)}=I_n \otimes I_{k_r}$, it follows that
$X_{r,(i,j),(k,\ell)}^{(a,b)}=\tr_{k_r}(P_{a,ij}^{(r)} (P_{b,k\ell}^{(r)})^*) \in \mathcal{Q}_q^s(n,c)$, and
$\sum_{r=1}^m t_r X_{r,(i,j),(k,\ell)}^{(a,b)}=X^{(a,b)}_{(i,j),(k,\ell)}$. Therefore, $X_{r,(i,j),(k,\ell)}^{(a,b)}=X^{(a,b)}_{(i,j),(k,\ell)}$ for each $r$. This shows that we may take $\cA$ to be a matrix algebra, completing the proof.
\end{proof}

We will end this section by showing that $\mathcal{Q}_{qs}^s(n,c)=\mathcal{Q}_q^s(n,c)$. To prove this fact, we use a similar approach to \cite{KPS18}. In fact, by an application of Proposition \ref{proposition: classical inputs inside of quantum inputs}, the following theorem is a direct generalization of the analogous result in \cite{KPS18}.

\begin{theorem}
For each $n,c \in \bN$, we have $\mathcal{Q}_{qs}^s(n,c)=\mathcal{Q}_q^s(n,c)$.
\end{theorem}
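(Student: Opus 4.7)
The inclusion $\mathcal{Q}_q^s(n,c) \subseteq \mathcal{Q}_{qs}^s(n,c)$ is immediate, since every $q$-strategy is in particular a $qs$-strategy and synchronicity depends only on the resulting correlation. For the reverse inclusion, let $X \in \mathcal{Q}_{qs}^s(n,c)$. By Proposition \ref{proposition: dilate matrix-valued POVM to matrix-valued PVM} we may take a $qs$-realization $(\{P_a\}_{a=1}^c, \{Q_b\}_{b=1}^c, \cH_A \otimes \cH_B, \chi)$ whose $P_a$'s and $Q_b$'s are PVMs. Reading this as a $qc$-realization on $\cH = \cH_A \otimes \cH_B$, Theorem \ref{theorem: synchronicity condition} produces a tracial state $\tau$ on the $C^*$-algebra $\cA \subseteq \cB(\cH_A)$ generated by $\{P_{a,ij}\}$ such that $X^{(a,b)}_{(i,j),(k,\ell)} = \tau(P_{a,ij} P_{b,k\ell}^*)$ and $\tau(a) = \la(a \otimes I_{\cH_B})\chi, \chi\ra$.

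The main work is to replace this \emph{a priori} infinite-dimensional tracial realization by a finite-dimensional one, so that the converse half of Corollary \ref{corollary: characterizing synchronous q} applies. Write $\rho_A = \Tr_{\cH_B}(|\chi\ra\la\chi|)$ for the reduced density operator, so that $\tau(a) = \Tr_{\cH_A}(\rho_A a)$. Tracialness then forces $\Tr(\rho_A[a,b]) = \Tr([\rho_A,a]b) = 0$ for all $a,b \in \cA$, hence $\rho_A$ commutes with $\cA$. Because $\rho_A$ is a trace-class positive operator with $\Tr(\rho_A)=1$, its nonzero spectral eigenspaces are finite-dimensional; as they lie in $\cA'$, they produce an $\cA$-invariant orthogonal decomposition $\cH_A = \bigoplus_k E_k$ with $\dim E_k < \infty$ and $\rho_A|_{E_k} = \lambda_k I_{E_k}$. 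Each compression $\cA|_{E_k} \subseteq \cB(E_k)$ is a finite-dimensional $C^*$-algebra on which $\tau$ restricts to $\lambda_k\dim(E_k)\tr_{E_k}$.

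The final and most delicate step is to package these compressions into a \emph{single} finite-dimensional tracial $C^*$-algebra whose PVM data recovers $X$. The approach I would follow, modelled on the classical-input argument of \cite{KPS18}, is to first pass to the cyclic subspace $\overline{(\cA \otimes I_{\cH_B})\chi} \subseteq \cH_A \otimes \cH_B$ and use the finite generation of $\cA$ together with the normalization $\sum_k \lambda_k \dim(E_k) = 1$ to argue that only finitely many blocks $E_k$ are needed. A direct-sum assembly of those blocks then yields a finite-dimensional pair $(\widetilde{\cA}, \widetilde\tau)$ carrying a PVM $\{\widetilde{P}_a\}_{a=1}^c \subseteq M_n(\widetilde{\cA})$ reproducing all the relevant moments $\tau(P_{a,ij}P_{b,k\ell}^*)$, and Corollary \ref{corollary: characterizing synchronous q} places $X$ in $\mathcal{Q}_q^s(n,c)$. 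The main obstacle is thus the finiteness reduction in this last step; everything else is essentially already assembled from the trace-characterization results of this section together with the matrix-PVM dilation theorem.
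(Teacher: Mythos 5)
The high-level plan (Schmidt decomposition, reduce to the eigenspaces of the reduced density operator $\rho_A$, compress to finite-dimensional blocks, reassemble) does parallel the paper's proof. However, two steps in your argument have genuine gaps.

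\textbf{Gap 1: the claim $\rho_A \in \cA'$.} You argue that because $\tau = \Tr(\rho_A\,\cdot\,)$ is tracial on $\cA$, one has $\Tr([\rho_A,a]b) = 0$ for all $a,b \in \cA$ and ``hence $\rho_A$ commutes with $\cA$.'' This is a non-sequitur: the identity $\Tr([\rho_A,a]b)=0$ only says that the trace-class operator $[\rho_A,a]$ is orthogonal to $\cA$ in the trace pairing, which does not force $[\rho_A,a]=0$ when $\cA$ is a proper (in general, not weakly dense) subalgebra of $\cB(\cH_A)$. A concrete counterexample: take $\cH_A=\cH_B=\bC^2$, $\psi = \alpha\, e_1\otimes f_1 + \beta\, e_2\otimes f_2$ with $\alpha\neq\beta$ both positive, so $\rho_A = \alpha^2 E_{11}+\beta^2 E_{22}$, and let $\cA = C^*\bigl(E_{11}+E_{12}+E_{21}\bigr)$. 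Then $\cA$ is commutative, so $\tau|_\cA$ is automatically tracial, yet $\rho_A$ does not commute with $\cA$. The conclusion $\rho_A \in \cA'$ \emph{is} true in the synchronous setting, but its proof requires the vector relation $(I_{\cH_A}\otimes Q_{a,ij}^*)\psi = (P_{a,ij}\otimes I_{\cH_B})\psi$ from part (1) of Theorem \ref{theorem: synchronicity condition}, not merely the scalar tracial property from part (2). The paper extracts this via a Cauchy--Schwarz/telescoping computation with the order-$c$ unitaries $U=\sum_a\omega^a P_a$ and $V=\sum_b\omega^{-b}Q_b$, using $\sum_{i,j}\|U_{ij}^*e_p\|^2 = n$ together with $(U_{ij}\otimes I_{\cH_B})\psi = (I_{\cH_A}\otimes V_{ij}^*)\psi$ to show that each Schmidt eigenspace $\cK_v$ is reducing. (An alternative: note that $(U\otimes I_{\cH_B}\otimes I_n)\Psi = (I_n\otimes I_{\cH_A}\otimes V^*)\Psi$ for $\Psi = \tfrac{1}{\sqrt n}\sum_j e_j\otimes\psi\otimes e_j$; since both sides are unitary images of $\Psi$ and the right side leaves the $\bC^n\otimes\cH_A$ marginal untouched, $U$ commutes with $I_n\otimes\rho_A$, whence each $U_{ij}$ commutes with $\rho_A$. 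Either way, the PVM structure is essential.)

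\textbf{Gap 2: the finiteness reduction.} After decomposing, you obtain $X = \sum_k t_k X_k$ as a \emph{countable} convex combination with each $X_k \in \mathcal{Q}_q^s(n,c)$, and you propose to ``use the finite generation of $\cA$ together with the normalization to argue that only finitely many blocks $E_k$ are needed.'' This does not hold: $\rho_A$ can have infinitely many distinct eigenvalues even when $\cA$ is finitely generated, and a direct sum of infinitely many finite-dimensional realizations is again infinite-dimensional, yielding only a $qs$- (not $q$-) realization. The ingredient the paper invokes at this point is the Cook--Webster countable version of Carath\'{e}odory's theorem: since $\mathcal{Q}_q^s(n,c)$ is a convex (but not closed) subset of a finite-dimensional real vector space, any countable convex combination of its elements already lies in $\mathcal{Q}_q^s(n,c)$. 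Without this (or some replacement), one only concludes $X \in \overline{\mathcal{Q}_q^s(n,c)} = \mathcal{Q}_{qa}^s(n,c)$, which is strictly weaker than the theorem.
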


\begin{proof}
Let $X=(X^{(a,b)}_{(i,j),(k,\ell)}) \in \mathcal{Q}_{qs}^s(n,c)$, and write
$$X^{(a,b)}_{(i,j),(k,\ell)}=\la (P_{a,ij} \otimes Q_{b,k\ell})\psi,\psi \ra,$$
where $P_a=(P_{a,ij})$ is a projection in $\bC^n \otimes \cH_A$ for each $1 \leq a \leq c$, $Q_b=(Q_{b,ij})$ is a projection in $\cH_B \otimes \bC^n$ for each $1 \leq b \leq c$, $\sum_{a=1}^c P_a=I_{\bC^n \otimes \cH_A}$, $\sum_{b=1}^c Q_b=I_{\cH_B \otimes \bC^n}$, and $\psi \in \cH_A \otimes \cH_B$ is a state. We can arrange to have $\dim(\cH_A)=\dim(\cH_B)$. For example, if $\dim(\cH_A)<\dim(\cH_B)$, then we choose a Hilbert space $\cH_C$ with $\dim(\cH_A \oplus \cH_C)=\dim(\cH_B)$, and extend $P_a$ by defining $\widetilde{P}_{a,ij}=P_{a,ij} \oplus \delta_{ij} I_{\cH_C}$. Then
$$\la (\widetilde{P}_{a,ij} \otimes Q_{b,k\ell})\psi,\psi \ra=\la (P_{a,ij} \otimes Q_{b,k\ell})\psi,\psi \ra=X^{(a,b)}_{(i,j),(k,\ell)}.$$
In this way, we may assume without loss of generality that $\dim(\cH_A)=\dim(\cH_B)$.

We write down a Schmidt decomposition
$$\psi=\sum_{p=1}^{\infty} \alpha_p e_p \otimes f_p,$$
where $\{e_p\}_{p=1}^{\infty} \subseteq \cH_A$ and $\{f_p\}_{p=1}^{\infty} \subseteq \cH_B$ are orthonormal, and $\alpha_1 \geq \alpha_2 \geq ... \geq 0$ are such that $\sum_{p=1}^{\infty} \alpha_p^2=1$. If one extends these orthonormal sets to orthonormal bases for $\cH_A$ and $\cH_B$ respectively, and defines additional $\alpha_p$'s to be $0$, then after direct summing a Hilbert space on one side if necessary, we may assume that $\dim(\cH_A)=\dim(\cH_B)$ and that $\{e_r\}_{r \in I}$ is an orthonormal basis for $\cH_A$, and $\{f_s\}_{s \in I}$ is an orthonormal basis for $\cH_B$.

We rewrite the (at most) countable set $\{\alpha_q: \alpha_q \neq 0\}=\{\beta_v: v \in V\}$, where $V=\{1,2,...\}$ and $\beta_v>\beta_{v+1}$ for all $v \in V$. We define $K_v=\{ e_q: \alpha_q=\beta_v\}$ and $L_v=\{ f_q: \alpha_q=\beta_v\}$, and define subspaces $\cK_v=\text{span}(K_v)$ and $\cL_v=\text{span}(L_v)$ of $\cH_A$ and $\cH_B$, respectively. Since $\sum_{q=1}^{\infty} |\alpha_q|^2=1$, it follows that each $K_v$ and $L_v$ must be finite, so that $\cK_v$ and $\cL_v$ are finite-dimensional. We will show that each $\cK_v$ is invariant for the operators $\{P_{a,ij}: 1 \leq a \leq c, \, 1 \leq i,j \leq n\}$, and that each $\cL_v$ is invariant for the operators $\{Q_{b,k\ell}: 1 \leq b \leq c, \, 1 \leq k,\ell \leq n\}$. To this end, let $\omega$ be a primitive $c$-th root of unity, and define order $c$ unitaries $U=\sum_{a=1}^c \omega^a P_a \in \cB(\bC^n \otimes \cH_A)$ and $V=\sum_{b=1}^c \omega^{-b} Q_b \in \cB(\cH_A \otimes \bC^n)$. Since $X$ is synchronous, by Theorem \ref{theorem: synchronicity condition}, we know that
$$(I_{\cH_A} \otimes Q_{a,ij}^*)\psi=(P_{a,ij} \otimes I_{\cH_B})\psi \text{ and } (I_{\cH_A} \otimes Q_{a,ij}Q_{b,ij}^*)\psi=(P_{b,ij}P_{a,ij}^* \otimes I_{\cH_B})\psi.$$
Since $U_{ij}U_{ij}^*=\sum_{a,b=1}^c \omega^{a-b} P_{a,ij}P_{b,ij}^*$ and $V_{ij}V_{ij}^*=\sum_{a,b=1}^c \omega^{b-a} Q_{a,ij}Q_{b,ij}^*$, it follows that
$$(I_{\cH_A} \otimes V_{ij}^*)\psi=(U_{ij} \otimes I_{\cH_B})\psi \text{ and } (I_{\cH_A} \otimes V_{ij}V_{ij}^*)\psi=(U_{ij}U_{ij}^* \otimes I_{\cH_B})\psi.$$
Using this fact and the decomposition of $\psi$,
$$\alpha_q \la U_{ij} e_q,e_p \ra=\la (U_{ij} \otimes I_{\cH_B})\psi,e_p \otimes f_q \ra=\la (I_{\cH_A} \otimes V_{ij}^*)\psi,e_p \otimes f_q \ra=\alpha_p \la V_{ij}^*f_p,f_q \ra.$$
Since $U$ and $V$ are unitary, it follows that, for all $p$,
$$\sum_{i,j=1}^n \|U_{ij}^*e_p\|^2=\sum_{i,j=1}^n \la U_{ij}U_{ij}^* e_p,e_p \ra=n \text{ and } \sum_{i,j=1}^n \|U_{ij}e_p\|^2=\sum_{i,j=1}^n \la U_{ij}^*U_{ij}e_p,e_p \ra=n.$$
Similarly, $\sum_{i,j=1}^n \|V_{ij}^*f_q\|^2=\sum_{i,j=1}^n \|V_{ij}f_q\|^2=n$. Suppose that $q$ is such that $e_q \in K_1$. Then using the fact that $\alpha_q=\alpha_1$ and that $\alpha_p \leq \alpha_1$ for all $p$ yields
\begin{align*}
n|\alpha_1|^2 &= \sum_{i,j=1}^n |\alpha_1|^2 \|V_{ij}^* f_q\|^2 \\
&\geq \sum_{i,j=1}^n \sum_{p=1}^{\infty} |\alpha_p|^2 |\la V_{ij}^* f_p,f_q \ra|^2 \\
&=\sum_{i,j=1}^n \sum_{p=1}^{\infty} |\alpha_q|^2 |\la U_{ij}e_q,e_p \ra|^2 \\
&=|\alpha_1|^2 \sum_{i,j=1}^n \sum_{p=1}^{\infty} |\la U_{ij}e_q,e_p \ra|^2 \\
&=|\alpha_1|^2 \sum_{i,j=1}^n \sum_{p=1}^{\infty} |\la U_{ij}^*e_p,e_q \ra|^2 \\
&=|\alpha_1|^2 \sum_{i,j=1}^n \|U_{ij}^*e_q\|^2 \\
&=n|\alpha_1|^2.
\end{align*}
Therefore, we must have equality at all lines. If $p$ is such that $e_p \not\in K_1$, then since $\alpha_p<\alpha_1$, we must have $0=\sum_{i,j=1}^n |\alpha_p|^2|\la V_{ij}^*f_p,f_q \ra|^2=\sum_{i,j=1}^n |\alpha_q|^2 |\la U_{ij}e_q,e_p\ra|^2$. Therefore, $\la U_{ij}e_q,e_p \ra=0$ for each such $p$, which shows that $U_{ij}e_q \perp e_p$ for all $p$ with $e_p \not\in K_1$. Since this happens whenever $\alpha_q=\alpha_1$, the subspace $\cK_1$ must be invariant for every $U_{ij}$. By the same argument as above with the quantity $\sum_{i=1}^n |\alpha_1|^2 \|V_{ij}f_q\|^2$, it follows that $\cK_1$ is invariant for every $U_{ij}^*$. Therefore, $\cK_1$ is reducing for the operators $U_{ij}$, for all $1 \leq i,j \leq n$. A similar argument proves that $\cL_1$ is reducing for the operators $V_{k\ell}$, for all $1 \leq k, \ell \leq n$.

Now, choose $q$ such that $e_q \in K_2$ and $f_q \in L_2$. If $\alpha_p>\alpha_q$, then $\alpha_p=\alpha_1$, so that $e_p \in K_1$ and $f_p \in K_1$. The above shows that $\la U_{ij}e_q,e_p \ra=0$ and $\la U_{ij}^*e_q,e_p \ra=0$, so that $U_{ij}e_q \perp \cK_1$ and $U_{ij}^*e_q \perp \cK_2$. Similarly, $V_{k\ell}f_q \perp \cL_1$ and $V_{k\ell}^*f_q \perp \cL_1$. Then using a similar string of inequalities as before, one obtains $U_{ij}e_q \perp e_p$ whenever $p$ is such that $e_p \not\in K_2$ and $q$ is such that $e_q \in K_2$. Therefore, one finds that $\cK_2$ is invariant for each $U_{ij}$. A similar argument shows that $\cK_2$ is invariant for $U_{ij}^*$, so that $\cK_2$ is reducing for $\{U_{ij}: 1 \leq i,j \leq n\}$. The same argument shows that $\{V_{k\ell}: 1 \leq k,\ell \leq n\}$ reduces $\cK_2$.

It follows by induction that $\cK_v$ is reducing for $\{U_{ij}: 1 \leq i,j \leq n\}$ for all $v$ and that $\cL_v$ is reducing for $\{V_{k\ell}:1  \leq k,\ell \leq n \}$ for all $v$. By construction of the unitaries $U$ and $V$, we know that
$$P_a=\frac{1}{c} \sum_{d=1}^c \omega^{-ad} U^d \text{ and } Q_b=\frac{1}{c} \sum_{d=1}^c \omega^{bd} V^d.$$
Therefore, $\cK_v$ is reducing for each $P_{a,ij}$, and $\cL_v$ is reducing for each $Q_{b,k\ell}$, as desired.

Finally, we will exhibit $X=(X^{(a,b)}_{(i,j),(k,\ell)})$ as a countable convex combination of elements of $\mathcal{Q}_q^s(n,c)$. One can regard elements of $\mathcal{Q}_q^s(n,c)$ as elements of $\bC^{n^4c^2}$, or as elements of $\bR^{2(n^4c^2)}$. Then by a countably infinite version of Carath\'{e}odory's Theorem \cite{CW72}, this will show that $X$ belongs to $\mathcal{Q}_q^s(n,c)$, which will complete the proof. (As mentioned in \cite{KPS18}, this result from \cite{CW72} holds even with non-closed convex sets, of which $\mathcal{Q}_q^s(n,c)$ is an example.)

For each $v \in V$, we let $d_v=\dim(\cK_v)=\dim(\cL_v)=|K_v|=|L_v|$, which is finite. Define the state
$$\psi_v=\frac{1}{\sqrt{d_v}} \sum_{p: e_p \in K_v} e_p \otimes f_p,$$
 and define 
$$P_{v,a,ij}=P_{a,ij}|_{\cK_v} \text{ and } Q_{v,b,k\ell}=Q_{b,k\ell}|_{\cL_v}.$$
Since $\cK_v$ is reducing for $P_{a,ij}$, and since $P_a$ is a projection, the operator $P_{v,a}=(P_{v,a,ij})_{i,j=1}^n$ is a projection on $\bC^n \otimes \cK_v$. Similarly, $Q_{v,b}=(Q_{v,b,k\ell})_{k,\ell=1}^n$ is a projection on $\cL_v \otimes \bC^n$. Moreover, $\sum_{a=1}^c P_{v,a}=I_{\bC^n} \otimes I_{\cK_v}$ and $\sum_{b=1}^c Q_{v,b}=I_{\cL_v} \otimes I_{\bC^n}$. Therefore, the correlation
$$X_v=(X_{v,(i,j),(k,\ell)}^{(a,b)})=(\la( P_{v,a,ij} \otimes Q_{v,b,k\ell})\psi_v,\psi_v \ra)$$
belongs to $\mathcal{Q}_q(n,c)$ for each $v$. Set $t_v=\beta_v^2 d_v$. Then $t_v \geq 0$ and $\sum_{v \geq 1} t_v=\sum_{p=1}^{\infty} |\alpha_p|^2=1$. Finally, for each $1 \leq a,b \leq c$ and $1 \leq i,j \leq n$, we compute
\begin{align*}
X_{(i,j),(k,\ell)}^{(a,b)}&=\la (P_{a,ij} \otimes Q_{b,k\ell})\psi,\psi \ra \\
&=\sum_v \sum_{p,q: e_p,e_q \in K_v} \beta_v^2 \la (P_{a,ij} \otimes Q_{b,k\ell}) (e_p \otimes f_p),e_q \otimes f_q \ra \\
&=\sum_v \beta_v^2 d_v \la (P_{v,a,ij} \otimes Q_{v,b,k\ell})\psi_v,\psi_v \ra \\
&=\sum_v t_v X_{v,(i,j),(k,\ell)}^{(a,b)}.
\end{align*}
It follows that $X=\sum_v t_v X_v$. Since each $X_v \in \mathcal{Q}_q(n,c)$, it follows that $X \in \mathcal{Q}_q(n,c)$. Since $X$ is also synchronous, we obtain $X \in \mathcal{Q}_q^s(n,c)$, completing the proof.
\end{proof}

\section{Approximately finite-dimensional correlations} \label{sec: afd cor}

In this section, we will show that elements of $\mathcal{Q}_{qa}^s(n,c)$ arise from amenable traces. Equivalently, elements of $\mathcal{Q}_{qa}^s(n,c)$ can be represented using the trace on $\cR^{\cU}$ and projection-valued measures with $c$ outputs in $M_n(\cR^{\cU})$, where $\cR^{\cU}$ denotes an ultrapower of the hyperfinite $II_1$-factor $\cR$ by a free ultrafilter $\cU$ on $\bN$. The proof is similar to \cite[Section~3]{KPS18}, and the main result is a generalization of \cite[Theorem~3.6]{KPS18}. Relevant details on $\cR^{\cU}$ can be found in \cite{BO08}.

For amenable traces, we will use the following result of Kirchberg \cite[Proposition~3.2]{Ki94}, which can also be found in \cite[Theorem~6.2.7]{BO08}.

\begin{theorem}
\label{theorem: amenable traces}
Let $\cA$ be a separable $C^*$-algebra and let $\tau$ be a tracial state on $\cA$. The following statements are equivalent:
\begin{enumerate}
\item
The trace $\tau$ is \textbf{amenable}; i.e., whenever $\cA \subseteq \bofh$ is a faithful representation, then there is a state $\rho$ on $\bofh$ such that $\rho_{|\cA}=\tau$ and $\rho(u^*Tu)=\rho(T)$ for all $T \in \bofh$ and unitaries $u \in \cA$;
\item
There is a $*$-homomorphism $\pi:\cA \to \cR^{\cU}$ along with a completely positive contractive lift $\varphi:\cA \to \ell^{\infty}(\cR)$ such that $\tr_{\cR^{\cU}} \circ \pi=\tau$;
\item
There is a sequence of natural numbers $N(k)$ and completely positive contractive maps $\varphi_k:\cA \to M_{N(k)}$ satisfying
$$\lim_{k \to \infty} \|\varphi_k(ab)-\varphi_k(a)\varphi_k(b)\|_2=0 \text{ and } \lim_{k \to \infty} |\tr_{N(k)}(\varphi_k(a))-\tau(a)|=0$$
for all $a,b \in \cA$;
\item
The linear functional $\gamma:\cA \otimes \cA^{op} \to \bC$ given by $\gamma(a \otimes b^{op})=\tau(ab)$ extends to a continuous linear map on $\cA \otimes_{\min} \cA^{op}$. 
\end{enumerate}
\end{theorem}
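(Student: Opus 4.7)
The plan is to prove the theorem by establishing the cyclic chain of implications $(1) \Rightarrow (4) \Rightarrow (3) \Rightarrow (2) \Rightarrow (1)$, following the general blueprint of Connes and Kirchberg. The two easy implications are $(3) \Rightarrow (2)$ and $(2) \Rightarrow (1)$. For $(3) \Rightarrow (2)$, I define $\pi: \cA \to \cR^{\cU}$ by $\pi(a) = [(\varphi_k(a))_k]$, after choosing unital embeddings $M_{N(k)} \hookrightarrow \cR$; approximate multiplicativity in the $2$-norm becomes strict multiplicativity on the ultrapower (since the tracial $2$-norm is the defining seminorm on $\cR^{\cU}$), and the tracial convergence gives $\tr_{\cR^{\cU}} \circ \pi = \tau$. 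The lift to $\ell^{\infty}(\cR)$ is just $a \mapsto (\varphi_k(a))_k$. For $(2) \Rightarrow (1)$, I represent $\cR^{\cU}$ faithfully and use injectivity of $\cR^{\cU}$ (being hyperfinite, hence injective in its GNS representation) together with Arveson's extension theorem to extend $\pi$ to a ucp map $\cB(\cH) \to \cR^{\cU}$ on any faithful representation $\cA \subseteq \cB(\cH)$; then $\rho := \tr_{\cR^{\cU}} \circ \pi$ is the required hypertrace, with unitary invariance coming from a multiplicative-domain argument showing that the extension is $\cA$-bimodular and that $\tr_{\cR^{\cU}}$ is tracial.

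For $(1) \Rightarrow (4)$, I pass to the GNS representation $\pi_\tau: \cA \to \cB(\cH_\tau)$ with cyclic tracial vector $\xi_\tau$ and modular conjugation $J$, so that $J\pi_\tau(\cA)J \subseteq \pi_\tau(\cA)'$ realizes a copy of $\cA^{op}$ inside the commutant. The product map $\psi: \cA \odot \cA^{op} \to \cB(\cH_\tau)$, $\psi(a \otimes b^{op}) = \pi_\tau(a) J \pi_\tau(b^*) J$, is a $*$-homomorphism of commuting representations, and composition with the hypertrace $\rho$ gives a functional with $\gamma(a \otimes b^{op}) = \rho(\psi(a \otimes b^{op})) = \tau(ab)$ on elementary tensors. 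The key analytic step is to show that $\gamma$ is continuous for the minimal cross norm. For this I invoke Connes's theorem that the existence of a hypertrace $\rho$ is equivalent to injectivity of the weak closure $\pi_\tau(\cA)''$; injectivity of this von Neumann algebra forces the natural commuting-pair $*$-homomorphism $\pi_\tau \odot J\pi_\tau(\cdot^*)J: \cA \odot \cA^{op} \to \cB(\cH_\tau)$ to be continuous on $\cA \otimes_{\min} \cA^{op}$, after which $\gamma = \rho \circ \psi$ inherits this continuity.

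For $(4) \Rightarrow (3)$, I plan to mimic the Exel–Loring strategy already used in the proof of Theorem \ref{theorem: characterizing qa}. Since $\gamma$ is a state on the minimal tensor product and $\cA \otimes_{\min} \cA^{op}$ admits a plentiful supply of finite-dimensional representations (by a standard approximation in the minimal norm via cutdowns on $\cH_\tau$ and tensor powers thereof), I approximate $\gamma$ weak-$*$ by states whose GNS representations are finite-dimensional. Each such approximating state $\gamma_k$ corresponds to an operator in $M_{N(k)} \otimes M_{N(k)}$ and, via Stinespring plus the identification of $\cA^{op}$-representations with right $\cA$-modules, yields a ucp map $\varphi_k: \cA \to M_{N(k)}$. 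Approximate multiplicativity of $\varphi_k$ in the $\|\cdot\|_2$ norm on $M_{N(k)}$ follows because $\gamma_k(a b \otimes 1^{op}) \to \gamma(a b \otimes 1^{op}) = \tau(ab)$ and the GNS inner product controls $\|\varphi_k(ab) - \varphi_k(a)\varphi_k(b)\|_2$ through the standard covariance calculation.

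The main obstacle is $(1) \Rightarrow (4)$. Everything else is either a clean ultrapower construction, an RFD-style weak-$*$ approximation, or a direct application of injectivity of $\cR^{\cU}$. The genuine content of the theorem is the passage from an abstract $\mathrm{Ad}(\cA)$-invariant state on $\cB(\cH_\tau)$ to a state that is continuous for the \emph{minimal} (and not merely the \emph{maximal}) cross norm on $\cA \otimes \cA^{op}$. This is where Connes's deep identification of amenability with injectivity of the enveloping von Neumann algebra is indispensable, and I would expect to spend most of the write-up either invoking or re-deriving this step in whatever form is most convenient for the separable $C^*$-algebra setting.
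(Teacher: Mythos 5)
The paper does not prove this theorem at all: it is stated as background and attributed directly to Kirchberg \cite[Proposition~3.2]{Ki94}, with a pointer to the exposition in \cite[Theorem~6.2.7]{BO08}. So there is no in-paper argument to compare yours against; you are offering to reprove a cited classical result, which is not what the paper requires.

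That said, since you laid out a blueprint, a few remarks on its soundness. The cycle $(3)\Rightarrow(2)\Rightarrow(1)$ and the shape of $(1)\Rightarrow(4)$ (pass to the GNS representation, use the commuting pair $\pi_\tau$ and $J\pi_\tau(\cdot^*)J$, and invoke the Connes-type equivalence of hypertraces with injectivity to get min-continuity) are the standard ones and are fine in outline. The step that would fail as written is $(4)\Rightarrow(3)$. You propose to ``mimic the Exel--Loring strategy,'' approximating the state $\gamma$ in the weak-$\ast$ topology by states with finite-dimensional GNS representations, as in the proof of Theorem \ref{theorem: characterizing qa}. But that argument in the paper works because $\cP_{n,c}\otimes_{\min}\cP_{n,c}$ is RFD; here $\cA$ is an arbitrary separable $C^*$-algebra and $\cA\otimes_{\min}\cA^{op}$ has no reason to be RFD, so there is no a priori supply of finite-dimensional representations to draw from, and Exel--Loring does not apply. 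The standard route (as in \cite[Theorem~6.2.7]{BO08}) is instead to fix a faithful representation $\cA\subseteq\cB(\cH)$, use injectivity of the minimal tensor norm to view $\gamma$ as a state on $\cB(\cH)\otimes_{\min}\cB(\cH)^{op}$, approximate it weak-$\ast$ by \emph{normal} states given by trace-class operators, and then extract the c.p.c.\ maps $\varphi_k:\cA\to M_{N(k)}$ from a Powers--St\o rmer-type argument. If you do want to present a proof rather than cite, this is the mechanism you need to substitute for the RFD approximation; otherwise, simply citing Kirchberg as the paper does is the appropriate level of detail here.
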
 

As pointed out in \cite{KPS18}, as soon as $\gamma$ is continuous on $\cA \otimes_{\min} \cA^{op}$ in condition (4) above, it is automatically a state on the minimal tensor product.

In what follows, we will let $\| \cdot \|_2$ denote the $2$-norm with respect to the trace. For the convenience of the reader, we recall the following perturbation result.

\begin{lemma}
\emph{(Kim-Paulsen-Schafhauser, \cite{KPS18})}
\label{lemma: the KPS lemma}
Let $\ee>0$ and $c \in \bN$. Then there exists a $\delta>0$ such that, if $n,N \in \bN$ and $P_1,...,P_c \in M_n(M_N)$ are positive contractions with $\|P_aP_b\|_2<\delta$ for all $a \neq b$ and $\|P_a^2-P_a\|_2<\delta$ for all $a$, then there exist orthogonal projections $Q_1,...,Q_c \in M_n(M_N)$ with $Q_aQ_b=0$ for $a \neq b$ and $\|P_a-Q_a\|_2<\ee$. Moreover, if $\left\| \sum_{a=1}^c P_a-I_n \otimes I_N \right\|_2<\delta$, then we may arrange for the projections $Q_1,...,Q_c$ to satisfy $\sum_{a=1}^c Q_a=I_n \otimes I_N$.
\end{lemma}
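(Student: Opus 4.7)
The plan is to build the orthogonal projections $Q_a$ in three passes: first replace each $P_a$ by a true projection via functional calculus; then orthogonalize these projections one at a time using compressions followed by spectral cutoffs; and finally, for the ``moreover'' clause, absorb the defect $I - \sum R_a$ into the last projection. Throughout, the key inputs are (i) the spectral gap estimate for positive contractions that are nearly idempotent, and (ii) H\"older-type inequalities relating $\|\cdot\|_2$ and operator norm.

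In the first pass, set $Q_a' := \chi_{(1/2,1]}(P_a)$. The pointwise inequality $(t-\chi_{(1/2,1]}(t))^2 \leq 4t^2(1-t)^2$ on $[0,1]$ yields the operator inequality $(P_a - Q_a')^2 \leq 4(P_a^2 - P_a)^2$, so $\|P_a - Q_a'\|_2 \leq 2\|P_a^2 - P_a\|_2 < 2\delta$. Since $P_a, Q_a', Q_b'$ are all contractions, the triangle inequality combined with $\|XY\|_2 \leq \|X\|_{op}\|Y\|_2$ gives $\|Q_a' Q_b'\|_2 < 5\delta$ whenever $a \neq b$.

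In the second pass, orthogonalize inductively. Set $R_1 := Q_1'$, and, having built orthogonal projections $R_1,\dots,R_{a-1}$ with sum $S_{a-1}$, form the positive contraction $T_a := (I - S_{a-1})Q_a'(I-S_{a-1})$ and set $R_a := \chi_{(1/2,1]}(T_a)$. Since $R_a \leq I - S_{a-1}$, we automatically get $R_a R_b = 0$ for all $b < a$. The expansion $T_a - Q_a' = -S_{a-1}Q_a' - Q_a' S_{a-1} + S_{a-1}Q_a' S_{a-1}$ together with the bound $\|R_b Q_a'\|_2 \leq \|Q_b' Q_a'\|_2 + \|R_b - Q_b'\|_2$ shows $\|T_a - Q_a'\|_2 = O(\delta)$, where the implicit constant depends only on $c$ and propagates inductively through the previously controlled $\|R_b - Q_b'\|_2$. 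Consequently $\|T_a^2 - T_a\|_2 = O(\delta)$, and applying the same spectral estimate as in the first pass yields $\|R_a - T_a\|_2 = O(\delta)$, hence $\|R_a - P_a\|_2 \leq C_a \delta$ for some $C_a$ depending only on $c$.

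For the moreover clause, note that $S_c = \sum_a R_a$ is a projection satisfying $\|S_c - I\|_2 \leq \|S_c - \sum_a P_a\|_2 + \|\sum_a P_a - I\|_2 = O(\delta)$, so $I - S_c$ is an orthogonal projection (automatically orthogonal to each $R_b$). Redefining $R_c := R_c + (I - S_c)$ keeps $R_c$ a projection, perturbs it by $\|I - S_c\|_2 = O(\delta)$ in $2$-norm, and forces $\sum_a R_a = I$. Setting $Q_a := R_a$ and choosing $\delta = \delta(\ee,c)$ small enough that every accumulated $O(\delta)$ bound is below $\ee$ completes the construction. The main obstacle is the inductive step: one must verify that the orthogonalization errors grow only by constants depending on $c$ (and not on $n$ or $N$), so that a single $\delta$ suffices uniformly in the ambient dimension -- this is precisely where the trace-preserving nature of the $2$-norm and the contractivity of all projections involved are essential.
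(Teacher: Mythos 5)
The paper does not prove this lemma---it cites \cite{KPS18} and merely remarks that the statement given here is equivalent to theirs, so there is no in-paper argument to compare your proposal against. Your argument is a correct, self-contained proof that follows the standard functional-calculus template (which is essentially how Kim--Paulsen--Schafhauser argue). The pointwise estimate $(t-\chi_{(1/2,1]}(t))^2 \leq 4t^2(1-t)^2$ on $[0,1]$ and the consequent bound $\|P_a-Q_a'\|_2 \leq 2\|P_a^2-P_a\|_2$ check out; the inductive compress-and-cutoff step $T_a=(I-S_{a-1})Q_a'(I-S_{a-1})$, $R_a=\chi_{(1/2,1]}(T_a)$ correctly produces $R_a \leq I-S_{a-1}$ and hence genuine mutual orthogonality; and for the ``moreover'' clause, $I-S_c$ is a projection orthogonal to each $R_b$ (since $R_b \leq S_c$), so it can be absorbed into $R_c$ at a cost of only $\|I-S_c\|_2=O(\delta)$ in trace norm. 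You also correctly flag the one point that genuinely deserves attention: every error bound in the induction comes from $\|XY\|_2 \leq \|X\|_{op}\|Y\|_2$ together with the fact that each intermediate operator is a contraction, so the accumulated constants depend only on $c$ and not on $n$ or $N$, which is what makes a single $\delta=\delta(\varepsilon,c)$ work uniformly. If you want to tighten one sentence, the bound $\|T_a^2-T_a\|_2=O(\delta)$ follows cleanly by writing $T_a^2-T_a=(T_a^2-Q_a'^2)+(Q_a'-T_a)$ and using $\|T_a^2-Q_a'^2\|_2\le 2\|T_a-Q_a'\|_2$, but as written your argument is sound.
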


Note that this lemma is stated slightly differently than in \cite{KPS18}; however, it is easy to see that their result is equivalent to the above result. Notice that, in the above lemma, if we write $P_a=(P_{a,ij}) \in M_n(M_N)$ and $Q_a=(Q_{a,ij}) \in M_n(M_N)$, then one has $\|Q_{a,ij}-P_{a,ij}\|_2 \leq \|Q_a-P_a\|_2<\ee$, where the first $2$-norm is in $M_N$ and the second $2$-norm is in $M_n(M_N)$.

\begin{theorem}
\label{theorem: characterizing synchronous qa}
Let $X=(X^{(a,b)}_{(i,j),(k,\ell)})$ be an element of $(M_n \otimes M_n)^{c^2}$. The following statements are equivalent:
\begin{enumerate}
\item
$X$ belongs to $\mathcal{Q}_{qa}^s(n,c)$;
\item
$X$ belongs to $\overline{\mathcal{Q}_q^s(n,c)}$;
\item
There is a separable unital $C^*$-algebra $\cA$, a PVM $\{P_1,...,P_c\}$ in $M_n(\cA)$, and an amenable trace $\tau$ on $\cA$ such that, for all $1 \leq i,j,k,\ell \leq n$ and $1 \leq a,b \leq c$,
$$X^{(a,b)}_{(i,j),(k,\ell)}=\tau(P_{a,ij}P_{b,k\ell}^*);$$
\item
There are elements $q_{a,ij}$ in $\cR^{\cU}$ such that $q_a=(q_{a,ij})$ are projections in $M_n(\cR^{\cU})$ with $\sum_{a=1}^c q_a=I_n$ and
$$X^{(a,b)}_{(i,j),(k,\ell)}=\tr_{\cR^{\cU}}(q_{a,ij}q_{b,k\ell}^*).$$
\end{enumerate}
\end{theorem}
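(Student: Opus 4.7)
The plan is to prove $(3) \Leftrightarrow (4)$ directly from Kirchberg's theorem, dispatch $(2) \Rightarrow (1)$ as a routine closure statement, establish $(3) \Rightarrow (2)$ by combining a finite-dimensional characterization of amenable traces with a perturbation of approximate PVMs to genuine ones, and finally extract an amenable trace from the minimal-tensor-product state of Theorem \ref{theorem: characterizing qa} to obtain $(1) \Rightarrow (3)$. The equivalence $(3) \Leftrightarrow (4)$ is immediate from $(1) \Leftrightarrow (2)$ of Theorem \ref{theorem: amenable traces}: condition (3) yields a $\ast$-homomorphism $\pi : \cA \to \cR^{\cU}$ with $\tr_{\cR^{\cU}} \circ \pi = \tau$, so $q_{a,ij} := \pi(P_{a,ij})$ gives (4); conversely $\tr_{\cR^{\cU}}$ restricts to an amenable trace on the separable subalgebra $\cA := C^\ast(\{q_{a,ij}\}) \subseteq \cR^{\cU}$, with PVM $P_a := (q_{a,ij})$. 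The implication $(2) \Rightarrow (1)$ is also immediate, since synchronicity is characterized by the closed linear relation $\tfrac{1}{n}\sum_{a}\sum_{i,j} X^{(a,a)}_{(i,j),(i,j)} = 1$ of Proposition \ref{proposition: synchronicity in terms of the correlation}, and $\mathcal{Q}_q^s(n,c) \subseteq \mathcal{Q}_{qa}(n,c)$.

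For $(3) \Rightarrow (2)$, I would apply Theorem \ref{theorem: amenable traces}(3) to obtain UCP maps $\varphi_m : \cA \to M_{N(m)}$ satisfying $\|\varphi_m(xy) - \varphi_m(x)\varphi_m(y)\|_2 \to 0$ and $\tr_{N(m)}(\varphi_m(x)) \to \tau(x)$ on a countable dense subset. Applying $\varphi_m$ entrywise to the PVM $\{P_a\}$ yields elements $P_a^{(m)} := (\varphi_m(P_{a,ij})) \in M_n(M_{N(m)})$ which, for $m$ sufficiently large, satisfy the hypotheses of Lemma \ref{lemma: the KPS lemma}; that lemma then supplies genuine PVMs $\widetilde P_a^{(m)} \in M_n(M_{N(m)})$ close to $P_a^{(m)}$ in $\|\cdot\|_2$. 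By the converse direction of Theorem \ref{theorem: synchronicity condition} applied to $M_n(M_{N(m)})$ with its normalized trace, the correlations $X_m := \bigl( \tr_{nN(m)}(\widetilde P^{(m)}_{a,ij} \widetilde P^{(m)\ast}_{b,k\ell}) \bigr)$ lie in $\mathcal{Q}_q^s(n,c)$, and a direct $\|\cdot\|_2$ estimate combined with $\tr_{N(m)} \circ \varphi_m \to \tau$ forces $X_m \to X$ pointwise, yielding (2).

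The heart of the argument is $(1) \Rightarrow (3)$. Theorem \ref{theorem: characterizing qa} supplies a state $s$ on $\cP_{n,c} \otimes_{\min} \cP_{n,c}$ with $s(p_{a,ij} \otimes p_{b,k\ell}) = X^{(a,b)}_{(i,j),(k,\ell)}$, and Lemma \ref{lemma: opposite algebra} converts this to a state $\widetilde s$ on $\cP_{n,c} \otimes_{\min} \cP_{n,c}^{op}$ with $\widetilde s(p_{a,ij} \otimes p_{b,k\ell}^{op}) = X^{(a,b)}_{(i,j),(\ell,k)}$. Define $\tau(x) := \widetilde s(x \otimes 1^{op})$ on $\cP_{n,c}$. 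The key step is to use synchronicity of $X$, together with the Cauchy--Schwarz and POVM-summation identities from the proof of Theorem \ref{theorem: synchronicity condition} applied to the GNS representation of $\widetilde s$, to show $\widetilde s(x \otimes y^{op}) = \tau(xy)$ for all $x,y \in \cP_{n,c}$. Once this identity is established, $\tau$ is automatically tracial, and the continuity of the bilinear form $(x,y) \mapsto \tau(xy)$ on $\cP_{n,c} \otimes_{\min} \cP_{n,c}^{op}$ yields amenability of $\tau$ via $(4) \Rightarrow (1)$ of Theorem \ref{theorem: amenable traces}, completing (3).

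I expect the main obstacle to be this final trace-extraction step in $(1) \Rightarrow (3)$: the Cauchy--Schwarz computation in the proof of Theorem \ref{theorem: synchronicity condition} was carried out for a concrete vector state, whereas here it must be executed abstractly on the GNS representation of $\widetilde s$, and the identity $\widetilde s(x \otimes y^{op}) = \tau(xy)$ must be promoted from products of generators to the whole algebra by a multiplicative-domain-style and continuity argument. A secondary, less serious difficulty in $(3) \Rightarrow (2)$ is that the KPS perturbation must simultaneously handle all $c$ operators while preserving the sum-to-identity constraint; this is already built into the statement of Lemma \ref{lemma: the KPS lemma}.
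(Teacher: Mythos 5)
Your overall decomposition of the equivalences matches the paper's: $(1)\Rightarrow(3)$ via a state on the minimal tensor product, a GNS representation, and the synchronicity machinery; $(3)\Rightarrow(2)$ via the matricial characterization of amenable traces and the Kim--Paulsen--Schafhauser perturbation lemma; $(2)\Rightarrow(1)$ by closure; and $(3)\Leftrightarrow(4)$ via Kirchberg's theorem. Two remarks on the details, one of which is a real gap.

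\textbf{The gap is in $(4)\Rightarrow(3)$.} You assert that $\tr_{\cR^{\cU}}$ ``restricts to an amenable trace on the separable subalgebra $\cA := C^\ast(\{q_{a,ij}\}) \subseteq \cR^{\cU}$,'' and describe this as immediate from condition (2) of Theorem \ref{theorem: amenable traces}. But condition (2) is not the mere existence of a trace-preserving $*$-homomorphism $\cA \to \cR^{\cU}$; it additionally demands a contractive completely positive lift $\cA \to \ell^{\infty}(\cR)$, and that lift is not automatic for an arbitrary separable $\cA \subseteq \cR^{\cU}$ (the quotient $\ell^\infty(\cR) \twoheadrightarrow \cR^{\cU}$ is not by a separable ideal, so Choi--Effros does not apply unless $\cA$ is nuclear or has the lifting property). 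This is precisely why the paper proves in Theorem \ref{theorem: lifting property} that $\cP_{n,c}$ has the lifting property, and why in the proof of $(4)\Rightarrow(3)$ it takes $\cA = \cP_{n,c}$ itself (with $P_{a,ij}$ the universal generators and $\tau = \tr_{\cR^{\cU}} \circ \sigma$ pulled back along the universal $*$-homomorphism $\sigma:\cP_{n,c}\to\cR^{\cU}$), \emph{not} the image subalgebra $C^*(\{q_{a,ij}\})$. Your route can be repaired by making exactly this substitution and then citing Theorem \ref{theorem: lifting property} to produce the required ucp lift.

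\textbf{The concern you raise about $(1)\Rightarrow(3)$ is not a genuine obstacle.} Once you pass to the GNS representation $(\pi,\cH,\chi)$ of $\widetilde{s}$ on $\cP_{n,c}\otimes_{\min}\cP_{n,c}^{op}$, the operators $P_{a,ij}:=\pi(p_{a,ij}\otimes 1)$ and $Q_{b,k\ell}:=\pi(1\otimes p_{b,\ell k}^{op})$ together with $\chi$ constitute a concrete $qc$-realization of $X$ on a Hilbert space, so Theorem \ref{theorem: synchronicity condition} applies verbatim: there is no ``abstract'' version of the Cauchy--Schwarz computation to re-derive. This is exactly how the paper proceeds, obtaining first $\pi(1\otimes p_{a,ij}^{op})\chi = \pi(p_{a,ij}\otimes 1)\chi$ from part (1) of that theorem and then the identity $\pi(x\otimes y^{op})\chi = \pi(xy\otimes 1)\chi$ on words by iterating, which yields $\widetilde{s}(x\otimes y^{op})=\tau(xy)$ and hence amenability of $\tau$ by condition (4) of Theorem \ref{theorem: amenable traces}. (The paper constructs $\widetilde{s}$ as a $w^*$-limit of states coming from a sequence $X_r\in\mathcal Q_q(n,c)$ approximating $X$, whereas you obtain it in one step from Theorem \ref{theorem: characterizing qa}; both are valid.)
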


\begin{proof}
First, we show that (1) implies (3). Since $\mathcal{Q}_{qa}(n,c)$ is the closure of $\mathcal{Q}_q(n,c)$, this means that there are correlations $X_r=(X^{(a,b)}_{r,(i,j),(k,\ell)}) \in \mathcal{Q}_q(n,c)$ with $\displaystyle\lim_{r \to \infty} X_r=X$ pointwise. We may choose natural numbers $N(r)$ and $M(r)$ along with projection-valued measures $\{P_1^{(r)},...,P_c^{(r)}\} \subseteq M_n(M_{N(r)})$ and $\{Q_1^{(r)},...,Q_c^{(r)}\} \subseteq M_n(M_{M(r)})$ and unit vectors $\chi_r \in \bC^{N(r)} \otimes \bC^{M(r)}$ such that, for all $r \in \bN$ and for all $1 \leq a,b \leq c$ and $1 \leq i,j,k,\ell \leq n$,
$$X^{(a,b)}_{r,(i,j),(k,\ell)}=\la (P_{a,ij}^{(r)} \otimes Q_{b,k\ell}^{(r)})\chi_r,\chi_r \ra.$$
Then for each $r$, by Theorem \ref{theorem: characterizing qa} and Theorem \ref{theorem: synchronicity condition}, there is a state $\varphi_r$ on $\cP_{n,c} \otimes_{\min} \cP_{n,c}^{op}$ satisfying
$$\varphi_r(p_{a,ij} \otimes p_{b,k\ell}^{op})=\la (P^{(r)}_{a,ij} \otimes Q^{(r)}_{b,k \ell}) \chi_r,\chi_r \ra.$$
As the state space of any unital $C^*$-algebra is $w^*$-compact, we may take a $w^*$-limit point $\varphi$ of the sequence $(\varphi_r)_{r=1}^{\infty}$. By construction, we note that $\varphi(p_{a,ij} \otimes p_{b,k\ell}^{op})=X^{(a,b)}_{(i,j),(k,\ell)}$ for all $1 \leq a,b \leq c$ and $1 \leq i,j,k,\ell \leq n$. We write $\varphi=\la \pi(\cdot)\chi,\chi \ra$ in its GNS representation, where $\pi:\cP_{n,c} \otimes_{\min} \cP_{n,c}^{op} \to \bofh$ is a unital $*$-homomorphism and $\chi \in \cH$ is a unit vector. Applying Theorem \ref{theorem: synchronicity condition} and restricting to $\cP_{n,c}$, we see that $\tau(y):=\la \pi(y \otimes 1)\chi,\chi \ra$ defines a trace on $\cP_{n,c}$. To establish (3), we need to show that $\tau(x \otimes y^{op})=\tau(xy)$ for all $x,y \in \cP_{n,c}$. Notice that for each $a$ and $i,j$, by Theorem \ref{theorem: synchronicity condition} we have
$$\pi(x \otimes p_{a,ij}^{op})\chi= \pi(x \otimes 1^{op})(\pi(1 \otimes p_{a,ij}^{op}))\chi= \pi(xp_{a,ij} \otimes 1^{op})\chi.$$
Then for each $b,k,\ell$, we have
\begin{align*}
\pi(x \otimes p_{a,ij}^{op}p_{b,k\ell}^{op})\chi&=\pi(x \otimes p_{a,ij}^{op})\pi(1 \otimes p_{b,k\ell}^{op})\chi \\
&=\pi(x \otimes p_{a,ij}^{op})\pi(p_{b,k\ell} \otimes 1)\chi \\
&=\pi(xp_{b,k\ell} \otimes p_{a,ij}^{op})\chi \\
&=\pi(xp_{b,k\ell}p_{a,ij} \otimes 1)\chi.
\end{align*}
Since $p_{a,ij}^{op}p_{b,k\ell}^{op}=(p_{b,k\ell}p_{a,ij})^{op}$ and the elements $p_{a,ij}$ generate $\cP_{n,c}$, one can see that $\pi(x \otimes y^{op})\chi=\pi(xy \otimes 1)\chi$. Therefore, $\varphi(x \otimes y^{op})=\tau(xy)$ for all $x,y \in \cP_{n,c}$, showing that $\tau$ is an amenable trace. Setting $P_{a,ij}=\pi(p_{a,ij} \otimes 1)$, we obtain a PVM $\{P_1,...,P_c\}$ in $M_n(\cA)$, where $\cA=\pi(\cP_{n,c} \otimes 1)$ and $P_a=(P_{a,ij})$. Moreover, $X_{(i,j),(k,\ell)}^{(a,b)}=\tau(P_{a,ij}P_{b,k\ell}^*)$, so (1) implies (3).

Next, we show that (3) implies (2). Let $\{P_1,...,P_c\}$ be a PVM in $M_n(\cA)$ for a separable unital $C^*$-algebra, and let $\tau$ be an amenable trace on $\cA$ such that $X_{(i,j),(k,\ell)}^{(a,b)}=\tau(P_{a,ij}P_{b,k\ell}^*)$ for all $a,b,i,j,k,\ell$. By Theorem \ref{theorem: amenable traces}, we may choose a sequence of completely positive contractive maps $\varphi_r:\cA \to M_{N(r)}$ with $\lim_{r \to \infty} \|\varphi_r(xy)-\varphi_r(x)\varphi_r(y)\|_2=0$ and $\lim_{r \to \infty} |\tr_{N(r)}(\varphi_r(x))-\tau(x)|=0$ for all $x,y \in \cA$. Define $p^{(r)}_{a,ij}=\varphi_r(P_{a,ij})$ and set $p^{(r)}_a=(p^{(r)}_{a,ij}) \in M_n(M_{N(r)})$. Using the $2$-norm on $M_n(M_{N(r)})$ and the fact that $P_a^2=P_a$ implies $p^{(r)}_{a,ij}=\sum_{k=1}^n \varphi_r(P_{a,ik}P_{a,kj})$, one sees that
\begin{align*}
\|(p_a^{(r)})^2-p_a^{(r)}\|_2&=\left\| \left( \sum_{k=1}^n p^{(r)}_{a,ik}p^{(r)}_{a,kj} - p^{(r)}_{a,ij}\right)_{i,j}\right\|_2 \\
&=\left\| \left(\sum_{k=1}^n( \varphi_r(P_{a,ik})\varphi_r(P_{a,kj})-\varphi_r(P_{a,ik}P_{a,kj}))\right)\right\|_2 \\
&\leq \sum_{i,j,k=1}^n \|\varphi_r(P_{a,ik})\varphi_r(P_{a,kj})-\varphi_r(P_{a,ik}P_{a,kj})\|_2 \xrightarrow{r \to \infty} 0.
\end{align*}
Similarly, one can show that $\lim_{r \to \infty} \|p_a^{(r)}p_b^{(r)}\|_2=0$ whenever $a \neq b$ and $\left\|\sum_{a=1}^c p_a^{(r)}-1_n \right\|_2 \to 0$. Applying Lemma \ref{lemma: the KPS lemma} and dropping to a subsequence if necessary, we obtain a sequence of PVMs $\{q_1^{(r)},...,q_c^{(r)}\} \subseteq M_n(M_{N(r)})$ with $c$ outputs with $\|p_{a,ij}^{(r)}-q_{a,ij}^{(r)}\|_2 \xrightarrow{r \to \infty} 0$ for all $a,i,j$.
There is a unital $*$-homomorphism $\psi_r:\cP_{n,c} \to M_{N(r)}$ with $\psi_r(p_{a,ij})=q_{a,ij}^{(r)}$. As $\cP_{n,c}$ is generated by $\{p_{a,ij}\}_{a,i,j}$, a standard argument shows that $\displaystyle\lim_{r \to \infty} \|\varphi_r(x)-\psi_r(x)\|_2=0$ for every $x \in \cP_{n,c}$. This implies that $|\tr_{N(r)}(\varphi_r(x))-\tr_{N(r)}(\psi_r(x))| \to 0$, so that $\displaystyle\lim_{r \to \infty} |\tr_{N(r)}(\psi_r(x))-\tau(x)|=0$. Hence,
$\displaystyle\lim_{r \to \infty} \tr_{N(r)}(q_{a,ij}^{(r)} (q_{b,k\ell}^{(r)})^*)=\tau(P_{a,ij}P_{b,k\ell}^*)$ for all $a,b,i,j,k,\ell$. As each correlation $X^{(a,b)}_{r,(i,j),(k,\ell)}=\tr_{N(r)}(q_{a,ij}^{(r)}(q_{b,k\ell}^{(r)})^*)$ defines an element of $\mathcal{Q}_q^s(n,c)$, we see that $X=(\tau(P_{a,ij}P_{b,k\ell}^*))$ belongs to $\overline{\mathcal{Q}_q^s(n,c)}$. Hence, (3) implies (2).

Since $\mathcal{Q}_{qa}(n,c)$ is the closure of $\mathcal{Q}_q(n,c)$, it is easy to see that (2) implies (1).

To show that (3) implies (4), we use Theorem \ref{theorem: amenable traces}. If $\{P_1,...,P_c\}$ is a PVM in $M_n(\cA)$ and $\tau$ is an amenable trace on $\cA$ satisfying $X^{(a,b)}_{(i,j),(k,\ell)}=\tau(P_{a,ij}P_{b,k\ell}^*)$, then there is a unital $*$-homomorphism $\rho:\cA \to \cR^{\cU}$ that preserves $\tau$. Setting $q_{a,ij}=\rho(P_{a,ij})$, we obtain projections $q_a=(q_{a,ij}) \in M_n(\cR^{\cU})$ summing to the identity and satisfying
$$X^{(a,b)}_{(i,j),(k,\ell)}=\tr_{\cR^{\cU}}(q_{a,ij}q_{b,k\ell}^*),$$
which establishes (4).

Lastly, we prove that (4) implies (3). Given the elements $q_{a,ij}$ in (4), there is a unital $*$-homomorphism $\sigma:\cP_{n,c} \to \cR^{\cU}$ satisfying $\sigma(p_{a,ij})=q_{a,ij}$. By Theorem \ref{theorem: lifting property}, $\cP_{n,c}$ has the lifting property, so there is a ucp map $\zeta:\cP_{n,c} \to \ell^{\infty}(\cR)$ that is a lift of $\sigma$. Then Theorem \ref{theorem: amenable traces} shows that $\tau:=\tr_{\cR^{\cU}} \circ \sigma$ is an amenable trace on $\cP_{n,c}$. Since $\tau(p_{a,ij}p_{b,k\ell}^*)=\tr_{\cR^{\cU}}(q_{a,ij}q_{b,k\ell}^*)=X^{(a,b)}_{(i,j),(k,\ell)}$, we see that (3) holds.
\end{proof}

\section{The game for quantum-to-classical graph homomorphisms} \label{sec: game}

In this section, we define the quantum-to-classical game for quantum-classical graph homomorphisms. Throughout our discussion, we use the bimodule perspective of quantum graphs considered by N. Weaver \cite{weaver1,weaver2} (which is a direct generalization of the non-commutative graphs considered by R. Duan, S. Severini and A. Winter in \cite{DSW13}, and D. Stahlke in \cite{Sta16}). In addition, we will see later how our framework also relates nicely to other perspectives as well (e.g., the quantum adjacency matrix formalism of quantum graphs introduced by B. Musto, S. Reuter and D. Verdon in \cite{MRV18}).

For our purposes, we refer to a \textbf{quantum graph} as a triple $(\cS,\cM,M_n)$, where
\begin{itemize}
\item
$\cM$ is a (non-degenerate) von Neumann algebra and $\cM \subseteq M_n$;
\item
$\cS \subseteq M_n$ is an operator system; and
\item
$\cS$ is an $\cM'$-$\cM'$-bimodule with respect to matrix multiplication.
\end{itemize}

In our discussion below, one can just as well use the ``traceless" version of quantum graphs along the lines of D. Stahlke \cite{Sta16}; i.e., one replaces the second condition with the condition that $\cS$ is a self-adjoint subspace of $M_n$ with $\Tr(X)=0$ for every $X \in \cS$. This condition, combined with the bimodule property, would force $\cS \subseteq (\cM')^{\perp}$. Our use of the operator system approach is generally cosmetic: one can easily adapt the quantum-classical game to traceless self-adjoint operator spaces that are $\cM'$-$\cM'$-bimodules with respect to matrix multiplication.

We begin by exhibiting a certain orthonormal basis for $\cS$ with respect to the (unnormalized) trace on $M_n$. It is from this (preferred) basis for $\cS$ that we will extract our input states for the homomorphism game.

\begin{proposition}
\label{proposition: quantum edge basis}
Let $\cK_1,...,\cK_m$ be non-zero subspaces of $\bC^n$ with $\cK_1 \oplus \cdots \oplus \cK_m=\bC^n$, such that $\cM$ acts irreducibly on each $\cK_r$. Let $E_r$ be the orthogonal projection of $\bC^n$ onto $\cK_r$, for each $1 \leq r \leq m$. Then there exists an orthonormal basis $\cF$ of $\cS \subseteq M_n$ with respect to the unnormalized trace, such that
\begin{itemize}
\item
$\frac{1}{\sqrt{\dim(\cK_r)}} E_r \in \cF$ for each $1 \leq r \leq m$;
\item
$\cF$ contains an orthonormal basis for $\cM'$; and
\item
For each $Y \in \cF$, there are unique $r,s$ with $E_r Y E_s=Y$.
\end{itemize}
\end{proposition}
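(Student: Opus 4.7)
The plan is to decompose $\cS$ as an orthogonal direct sum of block subspaces $E_r \cS E_s$ and then build the basis $\cF$ by choosing compatible orthonormal bases in each block, making sure the required elements from $\cM'$ appear. First I would observe that each $E_r$ lies in $\cM'$: since $\cK_r$ is $\cM$-invariant and the decomposition is orthogonal, $E_r$ commutes with every element of $\cM$. Moreover, since $\cS$ is an operator system that is an $\cM'$-$\cM'$-bimodule, we have $\cM' = \cM' \cdot I \cdot \cM' \subseteq \cS$, so in particular each $E_r$ already belongs to $\cS$.

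With $E_r \in \cM'$ and $\sum_r E_r = I$, the bimodule property gives $E_r X E_s \in \cS$ for all $X \in \cS$, and $X = \sum_{r,s} E_r X E_s$. Cyclicity of the trace together with $E_r E_{r'} = \delta_{r,r'} E_r$ shows that the subspaces $\{E_r \cS E_s\}_{r,s}$ are pairwise orthogonal with respect to the Hilbert--Schmidt inner product, yielding an orthogonal decomposition
\[
\cS = \bigoplus_{r,s=1}^m E_r \cS E_s.
\]
Applying the same argument with $\cM'$ in place of $\cS$ (since $\cM'$ is itself an $\cM'$-$\cM'$-bimodule under multiplication) gives $\cM' = \bigoplus_{r,s} E_r \cM' E_s$ orthogonally.

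The key structural input is Schur's lemma: because $\cM$ acts irreducibly on each $\cK_r$, the block $E_r \cM' E_s$ is naturally identified with $\mathrm{Hom}_{\cM}(\cK_s, \cK_r)$, hence is either zero (when $\cK_r \not\cong \cK_s$ as $\cM$-modules) or one-dimensional. In particular $E_r \cM' E_r = \bC E_r$, and since $\|E_r\|_{HS}^2 = \tr(E_r) = \dim(\cK_r)$, the unit vector spanning this diagonal block is precisely $\tfrac{1}{\sqrt{\dim(\cK_r)}} E_r$. For each pair $(r,s)$ I would then choose an orthonormal basis $\cF^{\cM'}_{r,s}$ of $E_r \cM' E_s$ -- which is either empty or a single unit vector, and on the diagonal is exactly $\{\tfrac{1}{\sqrt{\dim(\cK_r)}} E_r\}$ -- and extend it to an orthonormal basis $\cF_{r,s}$ of $E_r \cS E_s$. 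Setting $\cF := \bigcup_{r,s} \cF_{r,s}$ gives an orthonormal basis of $\cS$; the first two bullets hold by construction, and the third is automatic since each element lives in a unique block $E_r \cS E_s$, where uniqueness of $(r,s)$ follows from $E_{r'} Y E_{s'} = \delta_{rr'}\delta_{ss'} Y$ whenever $Y \in E_r \cS E_s$.

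The main (minor) subtlety will be justifying the Schur-type identification $E_r \cM' E_s \cong \mathrm{Hom}_{\cM}(\cK_s, \cK_r)$ and concluding that $E_r \cM' E_r = \bC E_r$; this single observation secures both the first bullet (forcing $\tfrac{1}{\sqrt{\dim(\cK_r)}} E_r$ to appear in any orthonormal basis of $\cM'$ refined by the block decomposition) and the second bullet simultaneously. After this, the construction reduces to a routine block-diagonal assembly of orthonormal bases.
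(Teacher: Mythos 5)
Your proof is correct and follows essentially the same approach as the paper: decompose $\cS$ (and $\cM'$) orthogonally into blocks $E_r(\cdot)E_s$ using $E_r\in\cM'$ and the bimodule property, pick an ONB for each $E_r\cM'E_s$ containing $\frac{1}{\sqrt{\dim\cK_r}}E_r$ on the diagonal, and extend block-by-block to an ONB of $E_r\cS E_s$. The one addition you make is the Schur-lemma identification $E_r\cM'E_s\cong\mathrm{Hom}_\cM(\cK_s,\cK_r)$ (so each such block is $0$- or $1$-dimensional and $E_r\cM'E_r=\bC E_r$); this is correct but not actually needed, since the paper simply picks \emph{some} ONB of $E_r\cM'E_r$ containing the normalized $E_r$, without needing to know the block is one-dimensional.
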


\begin{proof}
Since $\cM$ acts irreducibly on $\cK_r$, it follows that $E_r \in \cM'$. Let $X$ be an element of $\cS$. As $\cS$ is an $\cM'$-$\cM'$-bimodule, it follows that $E_rXE_s \in \cS$ for all $1 \leq r,s \leq m$. Moreover, since $\sum_{r=1}^m E_r=1$, we have $X=\sum_{r,s=1}^m E_rXE_s$. Given $X,Y \in \cS$, we have
$\la E_rXE_s,E_pYE_q \ra=0$ whenever $r \neq p$ or $s \neq q$, where $\la \cdot,\cdot \ra$ is the inner product with respect to the unnormalized trace on $M_n$. We choose an orthonormal basis $\cF_{r,s}$ for $E_r\cS E_s$ with respect to this inner product as follows. We start with an orthonormal basis for $E_r \cM' E_s$; if $r=s$, then we arrange for this orthonormal basis  to contain $\frac{1}{\sqrt{\dim(\cK_r)}}E_r$. Then we extend the orthonormal basis for $E_r \cM' E_s$ to an orthonormal basis for $E_r \cS E_s$. We may do this since, if $X \in \cS \cap (\cM')^{\perp}$ and $Y \in \cM'$, then
$$\la E_rXE_s,Y \ra=\Tr(Y^*E_rXE_s)=\Tr(X E_sY^*E_r)=\la X,E_rYE_s \ra=0,$$
which shows that $E_r(\cS \cap (\cM')^{\perp})E_s \perp \cM'$. Then $\cF=\bigcup_{r,s} \cF_{r,s}$ is an orthonormal basis for $\cS$, which evidently satisfies all three properties.
\end{proof}

\begin{definition}
We call a basis for $\cS$ satisfying Proposition \ref{proposition: quantum edge basis} as a \textbf{quantum edge basis} for $(\cS,\cM,M_n)$.
\end{definition}

Alternatively, one could arrange for a quantum edge basis for $\cS$ to also contain a normalized system of matrix units for $\cM'$, since a quantum edge basis must already contain the normalized diagonal matrix units. We will see in Theorem \ref{theorem: qgraphhom winning conditions} that the game is independent of the quantum edge basis chosen.

Once an orthonormal basis for $\bC^n$ has been fixed, one can define the inputs for the game using the following well-known correspondence between vectors in $\bC^n \otimes \bC^n$ and matrices in $M_n$. With respect to a basis $\{v_1,...,v_n\}$, this correspondence is given by the assignment $v_i \otimes v_j \mapsto v_iv_j^*$, where $v_iv_j^*$ is the rank-one operator in $M_n$ such that $v_iv_j^*(x)=\la x,v_j \ra v_i$ for all $x \in \bC^n$.

\begin{proposition}
Let $(\cS,\cM, M_n)$ be a quantum graph with quantum edge basis $\cF$. Let $\{v_1,...,v_n\}$ be an orthonormal basis for $\bC^n$ that can be partitioned into bases for the subspaces $\cK_1,...,\cK_m$. For each $Y_{\alpha} \in \cF$, write $Y_{\alpha}=\sum_{p,q} y_{\alpha,pq} v_p v_q^*$ for $y_{\alpha,pq} \in \bC$. Then the set 
$$\left\{ \sum_{p,q} y_{\alpha,pq} v_p \otimes v_q\right\}_\alpha \subset \bC^n \otimes \bC^n$$
is orthonormal.
\end{proposition}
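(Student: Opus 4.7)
The plan is to reduce the statement to the well-known fact that the vectorization map $M_n \to \bC^n \otimes \bC^n$ is a unitary between the Hilbert-Schmidt inner product (i.e.\ the unnormalized trace inner product) and the canonical inner product on $\bC^n \otimes \bC^n$.

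First I would fix the orthonormal basis $\{v_1,\dots,v_n\}$ of $\bC^n$ described in the statement and define a linear map
\[
\Phi : M_n \to \bC^n \otimes \bC^n, \qquad \Phi(v_p v_q^*) = v_p \otimes v_q,
\]
extended by linearity to all of $M_n$. The elementary tensors $\{v_p \otimes v_q\}_{p,q=1}^n$ form an orthonormal basis for $\bC^n \otimes \bC^n$, while the rank-one operators $\{v_p v_q^*\}_{p,q=1}^n$ form an orthonormal basis for $M_n$ with respect to the unnormalized trace inner product $\la A,B \ra = \Tr(B^*A)$. Indeed, a one-line computation gives
\[
\la v_p v_q^*,\, v_r v_s^* \ra = \Tr(v_s v_r^* v_p v_q^*) = \la v_p, v_r \ra \la v_s, v_q \ra = \delta_{pr}\delta_{qs},
\]
which matches $\la v_p \otimes v_q,\, v_r \otimes v_s \ra = \delta_{pr}\delta_{qs}$. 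Hence $\Phi$ is a unitary isomorphism between the Hilbert space $(M_n, \la \cdot,\cdot \ra_{\Tr})$ and $\bC^n \otimes \bC^n$.

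Second, since $\cF$ is (by definition of a quantum edge basis) an orthonormal set in $\cS \subseteq M_n$ with respect to the unnormalized trace inner product, the image $\Phi(\cF)$ is automatically an orthonormal set in $\bC^n \otimes \bC^n$. Applying $\Phi$ to the expansion $Y_\alpha = \sum_{p,q} y_{\alpha,pq} v_p v_q^*$ gives
\[
\Phi(Y_\alpha) = \sum_{p,q} y_{\alpha,pq}\, v_p \otimes v_q,
\]
which is precisely the vector in the proposition. Therefore the set $\{\sum_{p,q} y_{\alpha,pq} v_p \otimes v_q\}_\alpha$ is orthonormal.

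There is no genuine obstacle here; the statement is essentially a coordinate reformulation of the fact that vectorization is unitary. The only care needed is to confirm that $\cF$'s orthonormality is with respect to the \emph{unnormalized} trace (which is part of the definition of a quantum edge basis in Proposition~\ref{proposition: quantum edge basis}), so that $\Phi$, rather than a rescaled version, is the correct isometry to invoke.
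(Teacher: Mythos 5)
Your proof is correct and follows essentially the same approach as the paper, which also invokes the fact that $v_iv_j^* \mapsto v_i \otimes v_j$ is an isometry between $M_n$ (with the unnormalized Hilbert--Schmidt inner product) and $\bC^n \otimes \bC^n$; you simply spell out the one-line computation the paper leaves implicit.
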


\begin{proof}
This result immediately follows from the fact that the correspondence $v_i \otimes v_j \mapsto v_iv_j^*$ preserves inner products, when using the canonical inner product on $\bC^n \otimes \bC^n$ and the (unnormalized) Hilbert-Schmidt inner product on $M_n$. 
\end{proof}

With the notion of quantum edge bases in hand, we now define the homomorphism game for the quantum graph $(\cS,\cM,M_n)$ and the classical graph $G$.

\begin{definition}
\label{definition: qtoc graphhom game}
Let $(\cS,\cM,M_n)$ be a quantum graph, and let $\{v_1,...,v_n\}$ be a basis for $\bC^n$ that can be partitioned into bases for the subspaces $\cK_1,...,\cK_r$. Let $G$ be a classical (undirected) graph on $c$ vertices, with no multiple edges and no loops. The \textbf{quantum-to-classical graph homomorphism game} for $((\cS,\cM,M_n),G)$, with respect to the basis $\{v_1,...,v_n\}$ and the quantum edge basis $\cF$, is defined as follows:
\begin{itemize}
\item
The inputs are of the form $\sum_{p,q} y_{\alpha,pq} v_p \otimes v_q$, where $Y_{\alpha}:=\sum_{p,q} y_{\alpha,pq} v_pv_q^*$ is an element of $\cF$. The outputs are vertices $a,b \in \{1,...,c\}=V(G)$. There are two rules to the game:
\item
(Adjacency rule) If $Y_{\alpha} \perp \cM'$, then Alice and Bob must respond with an edge in $G$; i.e., $a \sim b$.
\item
(Same ``vertex" rule) If $Y_{\alpha} \in \cM'$, then Alice and Bob must respond with the same vertex; i.e., $a=b$.
\end{itemize}
\end{definition}

Notice that the second rule will include a synchronicity condition: the inputs corresponding to $\frac{1}{\sqrt{\dim(\cK_r)}}E_r$ will arise in the second rule. We will see that the rule applied to these inputs will force Bob's projections to arise from Alice's projections; the rule applied to the other basis elements of $\cM'$ will be what forces the projections to live in $\cM \otimes \bofh$, rather than $M_n \otimes \bofh$.

 While the above definition of the game seems heavily basis-dependent, we will see that the existence of winning strategies in the various models is independent of the basis $\{v_1,...,v_n\}$, and independent of the quantum edge basis $\cF$ chosen for $(\cS,\cM,M_n)$. This will be a direct consequence of Theorem \ref{theorem: qgraphhom winning conditions}.

We would also like to relate winning strategies for the homomorphism game to the non-commutative graph homomorphisms in the sense of D. Stahlke \cite{Sta16}. For this, we first review Kraus operators in the infinite-dimensional case. Recall that a von Neumann algebra $\cN$ is \textbf{finite} if every isometry in $\cN$ is a unitary; i.e., $v^*v=1$ implies $vv^*=1$ in $\cN$. In this case, it is well-known that $\cN$ is equipped with a normal tracial state. We will be dealing with the case when $\cN$ is a finite von Neumann algebra equipped with a faithful normal trace $\tau$. One may always choose a faithful normal representation $\cN \subseteq \bofh$ such that $\tau(\cdot)=\la (\cdot)\chi,\chi \ra$ for some unit vector $\chi \in \cH$.

Suppose that $\cL \subseteq \cB(\cK)$ is another von Neumann algebra with faithful normal trace $\rho$. If $\Phi:\cL \to \cN$ is a normal ucp map, then $\Phi_*:\cN_* \to \cL_*$ is a CPTP map. In our context, $\cL$ will be a finite-dimensional von Neumann algebra, so a ucp map $\Phi:\cL \to \cN$ is automatically normal. One may choose $\cK$ to be finite-dimensional and extend $\Phi$ to a ucp map from $\cB(\cK)$ to $\bofh$, which is still (automatically) normal. Then one may choose Kraus operators $F_i$ such that $\Phi(\cdot)=\sum_{i=1}^m F_i^*(\cdot)F_i$, where $m$ is either finite or countably infinite. In the latter case, the sum converges in the $\text{SOT}^*$-topology. Then $\Phi_*:\cN_* \to \cL_*=\cL$ can be written as
$$\Phi_*(\cdot)=\sum_{i=1}^m F_i(\cdot)F_i^*.$$
The interested reader can consult \cite{CN13} (and the references therein) for more information on these topics.

Now, we address some of the basis dependence of the game before the main theorem. The next lemma shows that, up to a unitary conjugation, the basis for $\bC^n$ in Definition \ref{definition: qtoc graphhom game} does not matter.

\begin{lemma} Let $(\cS,\cM,M_n)$ be a quantum graph, and write $\bC^n=\cK_1 \oplus \cdots \oplus \cK_m$, where $\cM$ acts irreducibly on each $\cK_r$. let $G$ be a classical graph on $c$ vertices, and let $\{v_1,...,v_n\}$ be an orthonormal basis for $\bC^n$ that can be partitioned into bases for the subspaces $\cK_1,...,\cK_m$. Define $U \in M_n$ to be the unitary such that $Ue_i=v_i$ for all $i$, where $\{e_1,...,e_n\}$ is another orthonormal basis for $\bC^n$. Suppose that $X \in \mathcal{Q}_{qc}(n,c)$, and let $\{Y_{\alpha}\}_{\alpha}$ be a quantum edge basis for $(\cS,\cM,M_n)$. Then $X$ is a winning strategy for the homomorphism game for $((\cS,\cM,M_n),G)$ with respect to $\{Y_{\alpha}\}_{\alpha}$ if and only if $Z:=(U \otimes U)^*X(U \otimes U)$ is a winning strategy for the homomorphism game for $((U^*\cS U,U^* \cM U,M_n),G)$ with respect to the quantum edge basis $\{U^*Y_{\alpha}U\}_{\alpha}$.
\end{lemma}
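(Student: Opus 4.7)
My plan is to organize the proof into a sequence of verifications showing that every ingredient of the game transforms covariantly under $U$. First, I would verify that $(U^{*}\cS U, U^{*}\cM U, M_n)$ is a quantum graph and that $\{U^{*} Y_\alpha U\}_\alpha$ is a quantum edge basis for it. Since $U^{*}\cS U$ remains an operator system and $(U^{*}\cM U)' = U^{*}\cM' U$, the bimodule structure transfers; the subspaces $U^{*}\cK_r$ decompose $\bC^n$ into irreducibles for $U^{*}\cM U$ with projections $U^{*}E_r U$ of the same dimensions. Since the unnormalized trace is unitarily invariant, Hilbert--Schmidt orthonormality is preserved, and the three defining conditions of Proposition~\ref{proposition: quantum edge basis} transfer mechanically (for instance, $E_r Y_\alpha E_s = Y_\alpha$ yields $(U^{*}E_r U)(U^{*}Y_\alpha U)(U^{*}E_s U) = U^{*}Y_\alpha U$).

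Next, I would track the inputs and probabilities. Writing $Y_\alpha = \sum_{p,q} y_{\alpha,pq} v_p v_q^{*}$ and using $U^{*}v_p = e_p$ gives $U^{*} Y_\alpha U = \sum_{p,q} y_{\alpha,pq} e_p e_q^{*}$, so the input state for the new game is $\psi_\alpha := \sum_{p,q} y_{\alpha,pq}\, e_p \otimes e_q$, which satisfies $\varphi_\alpha = (U \otimes U)\psi_\alpha$ for the original input $\varphi_\alpha$. Interpreting each $X^{(a,b)} \in M_n \otimes M_n$ as an operator on $\bC^n \otimes \bC^n$, the definition $Z^{(a,b)} = (U \otimes U)^{*} X^{(a,b)} (U \otimes U)$ immediately yields
\[
p_Z(a,b \,|\, \psi_\alpha) = \langle Z^{(a,b)} \psi_\alpha, \psi_\alpha \rangle = \langle X^{(a,b)} \varphi_\alpha, \varphi_\alpha \rangle = p_X(a,b \,|\, \varphi_\alpha).
\]
To justify that $Z$ is a genuine qc-correlation, I would replace the strategy $(P_a, Q_b, \chi)$ realizing $X$ by $\tilde P_a := (U \otimes I_{\cH_A})^{*} P_a (U \otimes I_{\cH_A})$, $\tilde Q_b := (I_{\cH_B} \otimes U)^{*} Q_b (I_{\cH_B} \otimes U)$, with the same $\chi$; each $\tilde P_{a,ij}$ is a scalar linear combination of the $P_{a,pq}$ (and similarly for $\tilde Q_{b,k\ell}$), so the commutation conditions $[\tilde P_{a,ij}, \tilde Q_{b,k\ell}] = 0$ are preserved, and a direct expansion verifies that this strategy realizes exactly the correlation $Z$.

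Finally, the two game rules translate since $Y_\alpha \in \cM'$ if and only if $U^{*}Y_\alpha U \in U^{*}\cM' U = (U^{*}\cM U)'$, and $Y_\alpha \perp \cM'$ if and only if $U^{*}Y_\alpha U \perp (U^{*}\cM U)'$ (again by unitary invariance of the Hilbert--Schmidt inner product). Combining the preservation of rules with the probability matching, $X$ assigns zero probability to losing output pairs on each $\varphi_\alpha$ if and only if $Z$ does so on each $\psi_\alpha$, giving the desired equivalence. There is no substantive obstacle here: the content of the lemma is exactly that all data defining the game transforms covariantly under a simultaneous unitary change of basis, so the real task is just to spell out each piece of that covariance carefully.
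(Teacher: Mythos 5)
Your proof is correct and follows essentially the same approach as the paper's: conjugate the strategy by $U$, use unitary invariance of the Hilbert--Schmidt inner product to transfer the quantum edge basis, and track how the input vectors transform under the rank-one correspondence to match probabilities. The only (cosmetic) wrinkle is that, since $X\in\mathcal{Q}_{qc}(n,c)$, you should conjugate $P_a$ and $Q_b$ by $U\otimes I_{\cH}$ and $I_{\cH}\otimes U$ for a single commuting Hilbert space $\cH$ rather than in the spatial tensor form $\cH_A\otimes\cH_B$, but the commutation of the conjugated entries is preserved for exactly the reason you give.
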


\begin{proof}
Suppose that we can write $X=(\la (P_a \otimes I_n)(I_n \otimes Q_b)(e_j \otimes \chi \otimes e_{\ell}),e_i \otimes \chi \otimes e_k \ra)$, where $(\{P_a\}_{a=1}^c,\{Q_b\}_{b=1}^c,\chi)$ is a $qc$-strategy on a Hilbert space $\cH$. Then
$$\la (P_a \otimes I_n)(I_n \otimes Q_b)(v_j \otimes \chi \otimes v_{\ell}),v_i \otimes \chi \otimes v_k \ra=\la (U^*P_aU \otimes I_n)(I_n \otimes U^*Q_bU)(e_j \otimes \chi \otimes e_{\ell}),e_i \otimes \chi \otimes e_k\ra.$$
In other words, the element $Z=(Z^{(a,b)}):=((U \otimes U)^*X^{(a,b)}(U \otimes U))$ is a $qc$-correlation with respect to the basis $\{v_1,...,v_n\}$. It is not hard to see that, if $\cF$ is a quantum edge basis for $(\cS,\cM,M_n)$, then $U^* \cF U$ is a quantum edge basis for $(U^*\cS U,U^* \cM U,M_n)$, since $U^*\cM'U=(U^*\cM U)'$ and the Hilbert-Schmidt inner product is invariant under unitary conjugation.
Therefore, if $Y_{\alpha}=\sum_{p,q} y_{\alpha,pq} v_pv_q^*$ belongs to $\cF$, then its associated input vector is $\sum_{p,q} y_{\alpha,pq} v_p \otimes v_q$. Then $U^*Y_{\alpha}U=\sum_{p,q} y_{\alpha,pq} U^*v_pv_q^*U$ has associated input vector $\sum_{p,q} y_{\alpha,pq} U^*v_p \otimes U^*v_q=\sum_{p,q} y_{\alpha,pq} e_p \otimes e_q$.

Therefore, the probability of Alice and Bob outputting $(a,b)$ given the input vector $\sum_{p,q} y_{\alpha,pq} v_p \otimes v_q$, with respect to the correlation $X$, is the same as the probability of outputting $(a,b)$ given the input vector $\sum_{p,q} y_{\alpha,pq} e_p \otimes e_q$, with respect to the correlation $Z$. As this equality occurs for any element of the quantum edge basis $\cF$, the desired result follows. 
\end{proof}

\begin{remark}
The previous remark, along with the adjacency rule, forces any winning strategy to be synchronous with respect to the basis $\{v_1,...,v_n\}$. Thus, in our main theorem, we may assume that we are dealing with a synchronous $t$-strategy $(\{P_a\}_{a=1}^c,\chi)$, where $\{P_a\}_{a=1}^c$ is a PVM and $\chi$ is a faithful normal tracial state on the von Neumann algebra generated by the entries of $\{P_a\}_{a=1}^c$. Note that conjugating $\{P_a\}_{a=1}^c$ by a unitary in $M_n$ does not change the von Neumann algebra generated by the entries of the operators $P_a$.
\end{remark}

\begin{theorem}
\label{theorem: qgraphhom winning conditions}
Let $(\cS,\cM,M_n)$ be a quantum graph, let $G$ be a classical graph on $c$ vertices, and let $t \in \{loc,q,qa,qc\}$. Let $\cN \subseteq \bofh$ be a (non-degenerate) finite von Neumann algebra, and $\chi \in \cH$ be a unit vector such that $\tau=\la (\cdot) \chi,\chi \ra$ is a faithful (normal) trace on $\cN$. The following are equivalent:
\begin{enumerate}
\item
There is a winning strategy $(\{P_a\}_{a=1}^c,\chi)$ from $\cN$ for the homomorphism game for $((\cS,\cM,M_n),G)$ with respect to any quantum edge basis.
\item
There is a winning strategy $(\{P_a\}_{a=1}^c,\chi)$ from $\cN$ for the homomorphism game for $((\cS,\cM,M_n),G)$ with respect to some quantum edge basis.
\item
There is a PVM $\{P_a\}_{a=1}^c$ in $\cM \otimes \cN$ satisfying the following: if $1 \leq a,b \leq c$ and $a \not\sim b$ in $G$, then
$$P_a((\cS \cap (\cM')^{\perp}) \otimes 1)P_b=0.$$
\item
There is a CPTP map $\Phi:\cM \otimes \cN_* \to D_c$ of the form $\Phi(\cdot)=\sum_{i=1}^m F_i(\cdot)F_i^*$ such that 
$$F_i((\cS \cap (\cM')^{\perp}) \otimes 1_{\cN})F_j^* \subseteq \cS_G \cap (D_c)^{\perp} \text{ for all i,j},$$
and
$$F_i(\cM' \otimes 1_{\cN})F_j^* \subseteq D_c \text{ for all } i,j.$$
\end{enumerate}
\end{theorem}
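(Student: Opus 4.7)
The plan is to prove $(2) \Rightarrow (3) \Rightarrow (1) \Rightarrow (2)$ together with $(3) \Leftrightarrow (4)$, with everything resting on one algebraic identity that translates game probabilities into Hilbert--Schmidt norms in the finite tracial algebra $M_n \otimes \cN$. Specifically, for any synchronous $qc$-strategy $(\{P_a\}, \{Q_b\}, \chi)$ realized from $\cN$ and any $X \in M_n$ with associated input vector $\widehat X := \sum_{p,q} x_{pq}\, v_p \otimes v_q$, I want to establish
\[
p(a, b \mid \widehat X) \;=\; \bigl\| P_a (X \otimes 1_\cN) P_b \bigr\|_{2,\, \Tr \otimes \tau}^{\,2}.
\]
The derivation expands $p(a,b \mid \widehat X) = \la (P_a \otimes I)(I \otimes Q_b)(\widehat X \odot \chi), \widehat X \odot \chi \ra$ in components, substitutes the synchronicity relation $Q_{b,kl}\chi = P_{b,kl}^*\chi$ from Theorem \ref{theorem: synchronicity condition} (together with $P_{b,kl}^* = P_{b,lk}$), and regroups the resulting quadruple sum as $(\Tr \otimes \tau)(P_b (X^* \otimes 1) P_a (X \otimes 1))$ via cyclicity and $P_a^2 = P_a$, $P_b^2 = P_b$. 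I expect the index bookkeeping --- especially reconciling the different positions of $\cH$ on Alice's ($\bC^n \otimes \cH$) and Bob's ($\cH \otimes \bC^n$) sides --- to be the main technical obstacle of the whole proof.

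Granted this identity, $(2) \Rightarrow (3)$ proceeds as follows. Fix a strategy from $\cN$ winning with respect to some quantum edge basis $\cF$. Since $p(a,b\mid\cdot)$ is a non-negative sesquilinear form on $\bC^n \otimes \bC^n$, vanishing on the orthonormal family $\{\widehat Y_\alpha : Y_\alpha \in \cF \cap \cM'\}$ (forced by the same-vertex rule for $a \ne b$) propagates to vanishing on the whole span $\{\widehat Z : Z \in \cM'\}$; in particular on $\varphi_{[n]} = \widehat{I_n/\sqrt{n}}$, which by Proposition \ref{proposition: synchronicity in terms of the correlation} renders the correlation synchronous and licenses the identity. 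Feeding each basis element through: the adjacency rule becomes $P_a (Y_\alpha \otimes 1) P_b = 0$ for $Y_\alpha \in \cF \cap (\cM')^\perp$ and $a \not\sim b$, extending linearly to $P_a((\cS \cap (\cM')^\perp) \otimes 1) P_b = 0$; the same-vertex rule becomes $P_a(\cM' \otimes 1) P_b = 0$ for $a \ne b$, which combined with $\sum_a P_a = I$ forces each $P_b$ to commute with $\cM' \otimes 1$, hence $P_b \in (\cM' \otimes 1)' \cap M_n(\cN) = \cM \otimes \cN$ by the tensor commutant theorem.

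For $(3) \Rightarrow (1)$ the remaining task is to manufacture Bob's PVM from Alice's. I would put $\cN$ in its standard form $L^2(\cN, \tau)$ so that $\chi$ is a cyclic tracial vector with modular conjugation $J$, and set
\[
Q_{b,kl} \;:=\; J P_{b,kl} J \;\in\; \cN'.
\]
Since $n \mapsto JnJ$ is a $*$-antihomomorphism $\cN \to \cN'$ with $(JnJ)\chi = n^*\chi$, a short verification shows that $Q_b := (Q_{b,kl})$ is a PVM in $M_n(\cN')$, that $(P_a \otimes I_n)$ and $(I_n \otimes Q_b)$ commute on $\bC^n \otimes \cH \otimes \bC^n$, and that $Q_{b,kl}\chi = P_{b,kl}^*\chi$. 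Now for \emph{any} quantum edge basis $\cF$, the identity yields $p(a,b\mid\widehat Y_\alpha) = \|P_a(Y_\alpha \otimes 1) P_b\|_2^2$. For $Y_\alpha \in \cF \cap (\cM')^\perp$ and $a \not\sim b$, this vanishes by hypothesis; for $Y_\alpha \in \cF \cap \cM'$ and $a \ne b$, the commutation $[P_a, Y_\alpha \otimes 1] = 0$ (since $P_a \in \cM \otimes \cN$) gives $P_a(Y_\alpha \otimes 1) P_b = (Y_\alpha \otimes 1) P_a P_b = 0$. Both rules hold for every $\cF$, establishing (1); the implication $(1) \Rightarrow (2)$ is immediate.

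Finally, $(3) \Leftrightarrow (4)$ is the routine dictionary between PVM measurement channels and their Kraus representations. Spectrally decomposing $P_a = \sum_k |\phi_{a,k}\ra\la\phi_{a,k}|$ in $\cB(\bC^n \otimes \cH)$ and setting $F_{a,k} := |e_a\ra\la\phi_{a,k}|$ gives $\sum F_{a,k}^* F_{a,k} = \sum_a P_a = I$, so $\Phi(\cdot) := \sum F_{a,k}(\cdot) F_{a,k}^*$ is a CPTP map landing in $D_c$. The computation $F_{a,k} X F_{b,l}^* = \la\phi_{a,k}, X\phi_{b,l}\ra\, e_a e_b^*$ then exhibits the two range conditions of (4) as literal restatements of (3): containment in $D_c$ for $X \in \cM' \otimes 1$ forces $P_a(\cM' \otimes 1) P_b = 0$ whenever $a \ne b$, and containment in $\cS_G \cap (D_c)^\perp$ for $X \in (\cS \cap (\cM')^\perp) \otimes 1$ forces $P_a((\cS \cap (\cM')^\perp) \otimes 1) P_b = 0$ whenever $a \not\sim b$ (including $a = b$, as $G$ has no loops). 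Reversing the decomposition handles the other direction.
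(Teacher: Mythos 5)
Your overall approach is sound and the central identity $p(a,b\mid\widehat X)=\|P_a(X\otimes 1)P_b\|_{2,\Tr\otimes\tau}^2$ is exactly equivalent (after cyclicity and $P_a^2=P_a$) to the paper's Equation \eqref{equation: qgraphhom winning condition}, so $(2)\Rightarrow(3)$ goes through essentially as in the paper, with your positive-semidefinite-form argument for synchronicity being a clean alternative to the paper's appeal to the preceding remark. The two proofs diverge most in how they close the cycle: the paper proves $(3)\Rightarrow(4)\Rightarrow(1)$, obtaining Kraus operators from the normal $*$-homomorphism $\Psi(E_{aa})=P_a$ and reading off the winning strategy via the converse of Theorem \ref{theorem: synchronicity condition}; you prove $(3)\Rightarrow(1)$ directly by constructing Bob's PVM through the modular conjugation $J$ in standard form, and treat $(3)\Leftrightarrow(4)$ separately. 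Your route is more explicit about where Bob's operators come from, while the paper's leans on previously established machinery; both are legitimate. One terminological slip: $x\mapsto JxJ$ is an \emph{antilinear} $*$-homomorphism (it is multiplicative, not antimultiplicative), so your verification that $Q_b=(JP_{b,kl}J)$ is a PVM and that $Q_{b,kl}\chi=P_{b,kl}^*\chi$ is correct precisely \emph{because} it is not a $*$-antihomomorphism.

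There is, however, a genuine gap in your $(4)\Rightarrow(3)$ step. ``Reversing the decomposition'' is not a literal reversal: in $(4)$ the Kraus operators $F_i$ are arbitrary and not assumed to have the rank-one form $|e_a\ra\la\phi_{a,k}|$, so from $\Phi^*(\cdot)=\sum_i F_i^*(\cdot)F_i$ one only gets a priori a \emph{POVM} $P_a=\sum_i F_i^*E_{aa}F_i$ in $\cM\otimes\cN$, and one cannot simply expand $P_a(Y\otimes 1)P_b$ as a double sum $\sum_{i,j}F_i^*E_{aa}F_i(Y\otimes 1)F_j^*E_{bb}F_j$ and invoke the range conditions, since the Kraus sums may be infinite (SOT$^*$-convergent) and the interchange of summations must be justified. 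The paper handles this by a positivity trick: with $V_{a,b,i,j}:=E_{aa}F_i(Y\otimes 1)F_j^*E_{bb}$ one shows $\sum_j V_{a,b,i,j}V_{a,b,i,j}^*=T_{a,b,i}^*T_{a,b,i}$ for $T_{a,b,i}=P_b^{1/2}(Y^*\otimes 1)F_i^*E_{aa}$, and then $\sum_i T_{a,b,i}T_{a,b,i}^*=(P_a^{1/2}(Y\otimes 1)P_b^{1/2})^*(P_a^{1/2}(Y\otimes 1)P_b^{1/2})$, so vanishing of the $V$'s propagates to $P_a(Y\otimes 1)P_b=0$; taking $Y=1_\cM\in\cM'$ then forces $P_aP_b=0$ for $a\ne b$, and only at this point does $\{P_a\}$ become a PVM. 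This chain of inferences needs to be spelled out in your argument; as written, you assert condition $(3)$ (which already demands a PVM) without establishing the projection property.
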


\begin{proof}
Clearly (1) implies (2). We will show that $(2) \implies (3) \implies (4) \implies (1)$. Let $\{v_1,...,v_n\}$ be an orthonormal basis for $\bC^n$. Let $U$ be the unitary such that $Ue_i=v_i$ for all $i$. Suppose that we can establish (3) for the PVM $\{(U \otimes 1_{\cN})^*P_a(U \otimes 1_{\cN})\}_{a=1}^c$ and the quantum graph $(U^*\cS U,U^*\cM U,M_n)$. Using the fact that $(U^*\cM U)'=U^*\cM' U$, the condition in (3) can be written as
$$(U \otimes 1_{\cN})^*P_a (U \otimes 1_{\cN})((U^* \cS U) \cap (U^* \cM' U)^{\perp} \otimes 1_{\cN})(U \otimes 1_{\cN})^*P_b(U\otimes 1_{\cN})=0 \text{ if } a \not\sim b.$$
It is not hard to see that $(U^* \cM' U)^{\perp}=U^*(\cM')^{\perp} U$, so that the above reduces to
$$(U \otimes 1_{\cN})^*P_a((\cS \cap (\cM')^{\perp}) \otimes 1_{\cN})P_b(U \otimes 1_{\cN})=0.$$
Since $U$ is a unitary, we obtain the desired condition for $\{P_a\}_{a=1}^c$ with respect to the quantum graph $(\cS,\cM,M_n)$. Hence, we may assume without loss of generality that $v_i=e_i$ for all $i$.

Then, given a matrix $Y=\sum_{p,q} y_{pq} v_pv_q^*$ with associated unit vector $y=\sum_{p,q} y_{pq} v_p \otimes v_q$, the probability of Alice and Bob outputting $a$ and $b$ respectively, given $y$ and using the synchronous strategy $(\{P_a\}_{a=1}^c,\chi)$, is
\begin{align}
p(a,b|y)&=\left\la (P_{a,ij}P_{b,k\ell}^*)_{(i,j),(k,\ell)}\left( \sum_{p,q} y_{pq} v_p \otimes \chi \otimes v_q\right),\sum_{r,s} y_{rs} v_r \otimes \chi \otimes v_s \right\ra \nonumber \\
&=\left\la \sum_{i,j,k,\ell} v_i \otimes P_{a,ij}y_{j\ell} P_{b,k\ell}^*\chi \otimes v_k,\sum_{r,s} y_{rs} v_r \otimes \chi \otimes v_s \right\ra \nonumber \\
&=\sum_{i,j,k,\ell} \left\la P_{a,ij}y_{j\ell}P_{b,k\ell}^* \overline{y}_{ik} \chi,\chi \right\ra \nonumber \\
&=\sum_{i,j,k,\ell} \tau( P_{a,ij}y_{j\ell}P_{b,\ell k} \overline{y}_{ik} ) \nonumber \\
&=\Tr \otimes \tau\left(\left(\sum_{j,\ell} P_{a,ij} y_{j\ell} P_{b,\ell k}\right)_{i,k}(Y^* \otimes 1_{\cN}) \right) \nonumber \\
&=\Tr \otimes \tau(P_a(Y \otimes 1_{\cN})P_b(Y^* \otimes 1_{\cN})) \nonumber \\
&=\Tr \otimes \tau(P_a(Y \otimes 1_{\cN})P_b(Y^* \otimes 1_{\cN})P_a), \label{equation: qgraphhom winning condition}
\end{align}
where we have used the fact that $P_a$ is an orthogonal projection. Now, suppose that $\cF=\{Y_{\alpha}\}_{\alpha}$ is a quantum edge basis for $(\cS,\cM,M_n)$, and suppose that $(\{P_a\}_{a=1}^c,\chi)$ is a winning strategy with respect to this quantum edge basis. If $Y_{\alpha} \in \cM'$, then Equation \ref{equation: qgraphhom winning condition} and faithfulness of the trace gives $P_a(Y_{\alpha} \otimes 1_{\cN})P_b=0$ whenever $a \neq b$. Then 
$$P_a(Y_{\alpha} \otimes 1_{\cN})P_a=\sum_{b=1}^c P_a(Y_{\alpha} \otimes 1_{\cN})P_b=P_a(Y_{\alpha} \otimes 1_{\cN})\left( \sum_{b=1}^c P_b \right)=P_a(Y_{\alpha} \otimes 1_{\cN}).$$
Similarly, $P_a(Y_{\alpha} \otimes 1_{\cN})P_a=(Y_{\alpha} \otimes 1_{\cN})P_a$. Hence, $P_a$ commutes with $Y_{\alpha} \otimes 1_{\cN}$ whenever $Y_{\alpha} \in \cM'$. This shows that $P_a \in (\cM' \otimes 1_{\cN})' \cap (M_n \otimes \cN)=\cM \otimes \bofh \cap (M_n \otimes \cN)=\cM \otimes \cN$.

Similarly, if $Y_{\alpha} \perp \cM'$, then the rules of the game and the faithfulness of the trace force $P_a(Y_{\alpha} \otimes 1_{\cN})P_b=0$ whenever $a \not\sim b$, which shows that (3) holds.

Now we show that (3) implies (4). If (3) holds, then there is a projection-valued measure $\{P_a\}_{a=1}^c$ in $\cM \otimes \cN$ such that $P_a(Y \otimes 1_{\cN})P_b=0$ for all $Y \in \cS \cap (\cM')^{\perp}$ and $a \not\sim b$. Then the map $\Psi:D_c \to \cM \otimes \cN$ given by $\Psi(E_{kk})=P_k$ is a unital $*$-homomorphism. Since $D_c$ is finite-dimensional, $\Psi$ is normal. Hence, we may find Kraus operators $F_1,F_2,...$ in $\cB(\bC^n \otimes \cH,\bC^c)$ such that
$$\Psi(\cdot)=\sum_{i=1}^m F_i^* (\cdot) F_i,$$
where $m$ is either finite or $\aleph_0$. In the infinite case, these sums converge in the $\text{SOT}^*$-topology. Then $\Psi=\Theta^*$ for a CPTP map $\Theta:\cM_* \otimes \cN_*=\cM \otimes \cN_* \to D_c$ given by
$$\Theta(\cdot)=\sum_{i=1}^m F_i (\cdot) F_i^*.$$ Given $Y \in \cS$, we set $Z_{a,b,i,j}=E_{aa}F_i(Y \otimes 1_{\cN})F_j^*E_{bb}$. 
Notice that
\[ Z_{a,b,i,j}Z_{a,b,i,j}^*=E_{aa}F_i(Y \otimes 1_{\cN})F_j^*E_{bb}F_j(Y^* \otimes 1_{\cN})F_i^*E_{aa},\]
so summing over $j$ (for fixed $i$, this sum will converge in the SOT*-topology) and using the fact that $\sum_{i=1}^m F_j^*E_{bb}F_j=\Psi(E_{bb})=P_b$,
\[ \sum_{j=1}^m Z_{a,b,i,j}Z_{a,b,i,j}^*=E_{aa}F_i(Y \otimes 1_{\cN})P_b(Y^* \otimes 1_{\cN})F_i^*E_{aa}.\]
Now set $W_{a,b,i}=E_{aa}F_i(Y \otimes 1_{\cN})P_b$. Then $\sum_{j=1}^m Z_{a,b,i,j}Z_{a,b,i,j}^*=W_{a,b,i}W_{a,b,i}^*$, since $P_b$ is a projection. On the other hand,
\[ W_{a,b,i}^*W_{a,b,i}=P_b(Y^* \otimes 1_{\cN})F_i^*E_{aa}F_i(Y \otimes 1_{\cN})P_b,\]
so summing over $i$ gives
\[ \sum_{i=1}^m W_{a,b,i}^*W_{a,b,i}=P_b(Y^* \otimes 1_{\cN})P_a(Y \otimes 1_{\cN})P_b=(P_a (Y \otimes 1_{\cN})P_b)^*(P_a(Y \otimes 1_{\cN})P_b).\]
It follows that, if the latter quantity is zero, then $Z_{a,b,i,j}=0$ for all $i,j$. By condition (3), if $a \not\sim b$ in $G$ and $Y \in \cS \cap (\cM')^{\perp}$, then $P_a(Y \otimes 1_{\cN})P_b=0$. This immediately implies that $E_{aa}F_i(Y \otimes 1_{\cN})F_j^*E_{bb}=0$. Then
\[ F_i(Y \otimes 1_{\cN})F_j^*=\sum_{a,b} E_{aa}F_i(Y \otimes 1_{\cN})F_j^*E_{bb}=\sum_{a \sim b} E_{aa}F_i(Y \otimes 1_{\cN})F_j^*E_{bb} \in \cS_G \cap D_c^{\perp}.\]
Since each $P_a$ belongs to $\cM \otimes \cN$, $P_a$ commutes with $\cM' \otimes 1_{\cN}$. Therefore, $P_a(Y \otimes 1_{\cN})P_b=0$ for all $a \neq b$ and $Y \in \cM'$. A consideration of the above equations, yields $E_{aa}F_i(Y \otimes 1_{\cN})F_j^*E_{bb}=0$ whenever $Y \in \cM'$ and $a \neq b$. In that case, we have
\[ F_i(Y \otimes 1_{\cN})F_j^*=\sum_{a,b} E_{aa}F_i(Y \otimes 1_{\cN})F_j^*E_{bb}=\sum_a E_{aa}F_i(Y \otimes 1_{\cN})F_j^*E_{aa} \in D_c,\]
which yields the second part of condition (4). Hence, (3) implies (4).

Lastly, suppose that (4) holds; we will obtain a winning strategy for the game. Suppose that $\Phi:\cM_* \otimes \cN_* \to D_c$ is a CPTP map of the form $\Phi(\cdot)=\sum_{i=1}^m R_i(\cdot)R_i^*$, such that $R_i(Y \otimes 1_{\cN})R_j^* \in \cS_G \cap D_c^{\perp}$ for all $1 \leq i,j \leq m$ and $Y \in \cS \cap (\cM')^{\perp}$, and $R_i(Y \otimes 1_{\cN})R_j^* \in D_c$ for all $Y \in \cM'$. Then $\Phi^*(\cdot)=\sum_{i=1}^m R_i^*(\cdot)R_i$ defines a normal ucp map from $D_c$ to $\cM \otimes \cN$. Let $P_a=\Phi^*(E_{aa})=\sum_{i=1}^m R_i^*E_{aa}R_i$ for each $1 \leq a \leq c$. Since $\Phi^*$ is UCP, $\{P_a\}_{a=1}^c$ is a POVM in $\cM \otimes \cN$. By considering the unitary $U$ sending $e_i$ to $v_i$ for each $i$, along with the POVM $\{U^*P_aU\}_{a=1}^c$, the quantum graph $(U^*\cS U,U^*\cM U,M_n)$ and the operators $R_iU$ if necessary, we may assume without loss of generality that $v_i=e_i$ for all $i$. We will show that $X^{(a,b)}_{(i,j),(k,\ell)}=(\tau(P_{a,ij}P_{b,k\ell}^*))$ defines a winning $t$-strategy for the quantum graph homomorphism game for $((\cS,\cM,M_n),G)$. 

For $1 \leq a,b \leq c$, $1 \leq i,j \leq m$ and $Y \in \cS$, we define $V_{a,b,i,j}=E_{aa}R_i(Y \otimes 1_{\cN})R_j^*E_{bb}$. Then
\[ \sum_{j=1}^m V_{a,b,i,j}V_{a,b,i,j}^*=\sum_{j=1}^m E_{aa}R_i(Y \otimes 1_{\cN})R_j^*E_{bb}R_j(Y^* \otimes 1_{\cN})R_i^*E_{aa}=E_{aa}R_i(Y \otimes 1_{\cN})P_b(Y^* \otimes 1_{\cN})R_i^*E_{aa},\]
since $P_b=\Phi^*(E_{bb})$. Therefore, $\sum_{j=1}^m V_{a,b,i,j}V_{a,b,i,j}^*=T_{a,b,i}^*T_{a,b,i}$ where $T_{a,b,i}=P_b^{\frac{1}{2}}(Y^* \otimes 1_{\cN})R_i^*E_{aa}$. Next, we examine the sum
\[\sum_{i=1}^m T_{a,b,i}T_{a,b,i}^*=\sum_{i=1}^m P_b^{\frac{1}{2}}(Y^* \otimes 1_{\cN})R_i^*E_{aa}R_i(Y \otimes 1_{\cN})P_b^{\frac{1}{2}}=P_b^{\frac{1}{2}}(Y^* \otimes 1_{\cN})P_a(Y \otimes 1_{\cN})P_b^{\frac{1}{2}}.\]
In the case when $Y \in \cM'$, we have $V_{a,b,i,j}=0$ whenever $a \neq b$, which implies that $P_b^{\frac{1}{2}}(Y^* \otimes 1_{\cN})P_a(Y \otimes 1_{\cN})P_b^{\frac{1}{2}}=0$. It follows that $P_a^{\frac{1}{2}}(Y \otimes 1_{\cN})P_b^{\frac{1}{2}}=0$. Multiplying on the left by $P_a^{\frac{1}{2}}$ and on the right by $P_b^{\frac{1}{2}}$, we obtain $P_a(Y \otimes 1_{\cN})P_b=0$ whenever $Y \in \cM'$ and $a \neq b$. The case when $Y=1_{\cM}$ shows that $P_aP_b=0$ for $a \neq b$. Combining this orthogonality with the fact that $\{P_a\}_{a=1}^c$ is a POVM, we conclude that $\{P_a\}_{a=1}^c$ is a PVM.
Similarly, if $Y \in \cS \cap (\cM')^{\perp}$, then by condition (4), $V_{a,b,i,j}=0$ whenever $a \not\sim b$ in $G$. The same calculation shows that $P_a(Y \otimes 1_{\cN})P_b=0$ in this case as well.

Therefore, using Equation (\ref{equation: qgraphhom winning condition}), if $\{Y_{\alpha}\}_{\alpha}$ is a quantum edge basis for $(\cS,\cM,M_n)$, $Y_{\alpha}$ has associated unit vector $y_{\alpha}$ and $Y_{\alpha} \perp \cM'$, then by equation (\ref{equation: qgraphhom winning condition}),
$$p(a,b| y_{\alpha} )=\la P_a(Y_{\alpha} \otimes 1_{\cN})P_b,Y_{\alpha} \ra=0. \text{ if } a \not\sim b.$$
If $Y_{\alpha}$ belongs to $\cM$ with associated unit vector $y_{\alpha}$, then $p(a,b|y_{\alpha})=\la P_a(Y \otimes 1_{\cN})P_b,Y \otimes 1_{\cN}\ra=0$ as well. This shows that $(\{P_a\}_{a=1}^c,\chi)$ defines a winning strategy  for the homomorphism game for $(\cS,\cM,M_n)$ and $G$ with respect to any quantum edge basis, completing the proof.
\end{proof}

The next theorem will show that, in the loc model, condition (4) of Theorem \ref{theorem: qgraphhom winning conditions} is an analogue of Stahlke's notion of graph homomorphism from a non-commutative graph (i.e. a quantum graph with $\cM=M_n$) into a classical graph \cite{Sta16}, with an added assumption on the commutant of $\cM$. A similar analogue holds in the $q$-model, with natural generalizations to the $qa$ and $qc$ models. 

We observe that, if we start with a projection-valued measure $\{P_a\}_{a=1}^c$ whose block entries are in a tracial von Neumann algebra $(\cN,\tau)$, where $\tau$ is faithful and normal, then either all four conditions of Theorem \ref{theorem: qgraphhom winning conditions} are satisfied by the PVM, or none of the four conditions are satisfied. Notice that we needed to start with a PVM and a faithful trace for this to happen.

In the following discussion, we write $(\cS,\cM,M_n) \xrightarrow{t} G$ to mean that there is a winning $t$-strategy for the graph homomorphism game for $((\cS,\cM,M_n),G)$.We will also write $(\cS,\cM,M_n) \to G$ if $(\cS,\cM,M_n) \xrightarrow{loc} G$. Our choice of notation is since, from a loc-homomorphism, one can always obtain a graph homomorphism.

Using Theorem \ref{theorem: qgraphhom winning conditions} and the characterizations of synchronous correlations, we obtain the following theorem:

\begin{theorem}
\label{theorem: qgraphhom in terms of channel}
Let $(\cS,\cM,M_n)$ be a quantum graph and let $G$ be a classical graph on $c$ vertices.
\begin{enumerate}
\item
$(\cS,\cM,M_n) \xrightarrow{loc} G$ if and only if there is a CPTP map $\Phi:\cM \to D_c$ of the form $\Phi(\cdot)=\sum_{i=1}^m F_i(\cdot)F_i^*$ such that
$$F_i(\cS \cap (\cM')^{\perp})F_j^* \subseteq \cS_G \cap (D_c)^{\perp} \text{ for all } i,j,$$
and
$$F_i\cM'F_j^* \subseteq D_c \text{ for all } i,j.$$
\item
$(\cS,\cM,M_n) \xrightarrow{q} G$ if and only if there is $d \in \bN$ and a CPTP map $\Phi:\cM \otimes M_d \to D_c$ of the form $\Phi(\cdot)=\sum_{i=1}^m F_i(\cdot)F_i^*$ such that
$$F_i((\cS \cap (\cM')^{\perp}) \otimes I_d)F_j^* \subseteq \cS_G \cap (D_c)^{\perp} \text{ for all } i,j,$$
and
$$F_i(\cM' \otimes I_d)F_j^* \subseteq D_c \text{ for all } i,j.$$
\item
$(\cS,\cM,M_n) \xrightarrow{qa} G$ if and only if there is a CPTP map $\Phi:\cM \otimes (\cR^{\cU})_* \to D_c$ of the form $\Phi(\cdot)=\sum_{i=1}^m F_i(\cdot)F_i^*$ such that
$$F_i((\cS \cap (\cM')^{\perp}) \otimes 1_{\cR^{\cU}})F_j^* \subseteq \cS_G \cap (D_c)^{\perp} \text{ for all } i,j,$$
and
$$F_i(\cM' \otimes 1_{\cR^{\cU}})F_j^* \subseteq D_c \text{ for all } i,j.$$
\item
$(\cS,\cM,M_n) \xrightarrow{qc} G$ if and only if there is a von Neumann algebra $\cN$, a faithful normal trace $\tau$ on $\cN$, and a CPTP map $\Phi:\cM \otimes \cN_* \to D_c$ of the form $\Phi(\cdot)=\sum_{i=1}^m F_i(\cdot)F_i^*$ such that
$$F_i((\cS \cap (\cM')^{\perp}) \otimes 1_{\cN})F_j^* \subseteq \cS_G \cap (D_c)^{\perp} \text{ for all } i,j,$$
and
$$F_i(\cM' \otimes 1_{\cN})F_j^* \subseteq D_c \text{ for all } i,j.$$
\end{enumerate}
\end{theorem}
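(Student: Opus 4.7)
The proof of all four parts is an unpacking of Theorem \ref{theorem: qgraphhom winning conditions}, which already gives the equivalence between a winning strategy from a fixed tracial von Neumann algebra $(\cN,\tau)$ and the Kraus-operator condition in part (4) of that theorem. Thus it suffices to show that for each model $t \in \{loc, q, qa, qc\}$, the existence of a winning $t$-strategy is equivalent to the existence of a winning strategy from the specific algebra $\cN$ appearing in the corresponding part of the present statement.

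The pivotal observation is that the set of winning correlations inside $\mathcal{Q}_t^s(n,c)$ is a face: if $X = \lambda X_1 + (1-\lambda) X_2$ with $X_1, X_2 \in \mathcal{Q}_t^s(n,c)$ is winning, then since the losing probabilities $p(a,b|y_\alpha)$ are non-negative and equal zero for $X$, both $X_1$ and $X_2$ must also be winning. Moreover, equation \eqref{equation: qgraphhom winning condition} together with faithfulness of $\Tr \otimes \tau$ on $M_n(\cN)$ shows that vanishing of $p(a,b|y_\alpha)$ is equivalent to the operator identity $P_a(Y_\alpha \otimes 1_{\cN}) P_b = 0$. For part (1), I combine this face property with Corollary \ref{corollary: characterizing synchronous loc}: since $\mathcal{Q}_{loc}^s(n,c)$ is compact and convex and the winning face is nonempty, closed, and convex, Krein--Milman yields a winning extreme point of $\mathcal{Q}_{loc}^s(n,c)$, which is realizable with $\cA = \bC$, so Theorem \ref{theorem: qgraphhom winning conditions} with $\cN = \bC$ produces the desired $\Phi : \cM \to D_c$. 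For part (2), Corollary \ref{corollary: characterizing synchronous q} represents a winning $X$ using $\cA \cong \bigoplus_r M_{k_r}$ with $\psi = \sum_r t_r \tr_{k_r}$; decomposing $X = \sum_r t_r X_r$ with each $X_r \in \mathcal{Q}_q^s(n,c)$ supported on $M_{k_r}$, the face property forces each $X_r$ (with $t_r > 0$) to be winning, and Theorem \ref{theorem: qgraphhom winning conditions} with $\cN = M_{k_r}$ concludes. Part (3) follows at once from Theorem \ref{theorem: characterizing synchronous qa}, which represents any $X \in \mathcal{Q}_{qa}^s(n,c)$ with a PVM in $M_n(\cR^{\cU})$, and part (4) follows from Theorem \ref{theorem: synchronicity condition} after passing to the GNS construction of the resulting tracial state, which produces a finite von Neumann algebra with faithful normal trace.

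The reverse implications in all four parts follow directly from the corresponding direction of Theorem \ref{theorem: qgraphhom winning conditions}: given Kraus operators $F_i$, one obtains a PVM $\{P_a\}_{a=1}^c$ in $\cM \otimes \cN$ whose Bell-state strategy produces a winning correlation, which is of type $loc$, $q$, $qa$, or $qc$ respectively when $\cN$ is $\bC$, $M_d$, $\cR^{\cU}$, or a general tracial von Neumann algebra. The only real obstacle in the forward direction is the reduction to the prescribed type of $\cN$; this is precisely what the face argument, coupled with the synchronous-correlation representation theorems, delivers.
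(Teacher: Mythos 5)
Your proof is correct and follows essentially the same route as the paper: every part is funneled through Theorem \ref{theorem: qgraphhom winning conditions}, and the choice of the ancillary tracial von Neumann algebra $\cN$ is governed by the synchronous characterization results (Corollary \ref{corollary: characterizing synchronous loc}, Corollary \ref{corollary: characterizing synchronous q}, Theorem \ref{theorem: characterizing synchronous qa}, Theorem \ref{theorem: synchronicity condition}).

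The one place where your write-up is actually tighter than the paper's prose is part (2). The paper asserts that Corollary \ref{corollary: characterizing synchronous q} ``shows that there is a winning $q$-strategy using an extreme point in $\mathcal{Q}_q^s(n,c)$,'' but $\mathcal{Q}_q^s(n,c)$ need not be closed, so one cannot a priori invoke Krein--Milman to produce an extreme point lying in the winning face. Your argument sidesteps this: take any winning $X$, use the corollary to realize it with a finite-dimensional $\cA \cong \bigoplus_r M_{k_r}$ and $\psi = \sum_r t_r \tr_{k_r}$, decompose $X = \sum_r t_r X_r$ with each $X_r \in \mathcal{Q}_q^s(n,c)$ exactly as in the corollary's proof, and then use the face property (the losing probabilities are non-negative affine functionals vanishing on $X$, hence on every $X_r$ with $t_r>0$) to extract one winning $X_r$ realized over a single matrix block $M_{k_r}$. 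This is plainly what the paper intends, and your version supplies the missing justification. For part (1) you do use Krein--Milman, which is legitimate there since $\mathcal{Q}_{loc}^s(n,c)$ is compact and the winning face is closed, matching the paper exactly; parts (3) and (4) likewise match the paper's use of Theorem \ref{theorem: characterizing synchronous qa} and the GNS construction applied to the tracial state from Theorem \ref{theorem: synchronicity condition}.
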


\begin{proof}

We consider the case $t=loc$ first. If $(\cS,\cM,M_n) \xrightarrow{loc} G$, then there is a winning $loc$-strategy for the homomorphism game from $(\cS,\cM,M_n)$ into $G$. Since $\mathcal{Q}_{loc}^s(n,c)$ is  convex and non-empty, one may obtain an extreme point in $\mathcal{Q}^s_{loc}(n,c)$ that wins the game with probability $1$. Applying Corollary \ref{corollary: characterizing synchronous loc}, there is a realization of this correlation using a PVM $\{P_a\}_{a=1}^c$ in $\mathcal{M} = \mathcal{M} \otimes \bC$. Then the result follows by condition (4) of Theorem \ref{theorem: qgraphhom winning conditions} with $\cN=\bC$. The converse of (1) holds by condition (3) of Theorem \ref{theorem: qgraphhom winning conditions}.

The argument is similar for $t=q$. Indeed, if there is a winning strategy for the homomorphism game in the $q$-model, then an application of Corollary \ref{corollary: characterizing synchronous q} shows that there is a winning $q$-strategy using an extreme point in $\mathcal{Q}_q^s(n,c)$, which can be realized using projections whose entries are in $M_d$, for some $d$. Then condition (4) of Theorem \ref{theorem: qgraphhom winning conditions} with $\cN=M_d$ yields the desired CPTP map. The converse, as before, holds by condition (3) of Theorem \ref{theorem: qgraphhom winning conditions}.

We note that (3) holds because of Theorem \ref{theorem: characterizing synchronous qa}. Condition (4) is achieved using the following well-known trick: if $\cA$ is a unital, separable $C^*$-algebra with tracial state $\tau$, and if $\pi_{\tau}:\cA \to \cB(\cH_{\tau})$ is the GNS representation of $\tau$ with cyclic vector $\xi$, then $\pi_{\tau}(\cA)''$ is a finite von Neumann algebra and $\la (\cdot)\xi,\xi \ra$ is a faithful normal trace on $\pi_{\tau}(\cA)''$. We leave the details to the reader.
\end{proof}

For synchronous games with classical inputs and classical outputs, J.W. Helton, K.P. Meyer, V.I. Paulsen and M. Satriano constructed a universal $*$-algebra for the game, generated by self-adjoint idempotents whose products were $0$ when the related pair of outputs was not allowed \cite{HMPS19}. One can define a game $*$-algebra in our context as follows.

\begin{definition}
Let $(\cS,\cM,M_n)$ be a quantum graph and let $G$ be a classical graph on $c$ vertices. The \textbf{game $*$-algebra} for the homomorphism game for $((\cS,\cM,M_n),G)$, denoted $\cA(\text{Hom}((\cS,\cM,M_n),G))$, is the universal $*$-algebra generated by entries $\{ p_{a,ij}: 1 \leq a \leq c, \, 1 \leq i,j \leq n\}$ subject to the relations
\begin{itemize}
\item
$p_a=(p_{a,ij})_{i,j}$ satisfies $p_a^2=p_a=p_a^*$ and $\sum_{a=1}^c p_a=I_n$, where $I_n$ is the $n \times n$ identity matrix;
\item
$p_a((\cS \cap (\cM')^{\perp}) \otimes 1)p_b=0$ for each $a \not\sim b$; and
\item
$p_a(\cM' \otimes 1)p_b=0$ for each $a \neq b$.
\end{itemize}
We say that the algebra \textbf{exists} if $1 \neq 0$ in the algebra.
\end{definition}

As one might expect, we obtain the following characterizations of the various flavors of winning strategies for the homomorphism game in terms of $\ast$-homomorphisms of the game algebra.

\begin{theorem}
Let $(\cS,\cM,M_n)$ be a quantum graph and let $G$ be a classical graph.
\begin{enumerate}
\item
$(\cS,\cM,M_n) \xrightarrow{loc} G$ $\iff$ there is a unital $*$-homomorphism $\cA(\text{Hom}((\cS,\cM,M_n),G)) \to \bC$.
\item
$(\cS,\cM,M_n) \xrightarrow{q} G$ if and only if there is a unital $*$-homomorphism $\cA(\text{Hom}((\cS,\cM,M_n),G)) \to M_d$ for some $d \in \bN$.
\item
$(\cS,\cM,M_n) \xrightarrow{qa} G$ if and only if there is a unital $*$-homomorphism $\cA(\text{Hom}((\cS,\cM,M_n),G)) \to \cR^{\cU}$.
\item
$(\cS,\cM,M_n) \xrightarrow{qc} G$ if and only if there is a unital $*$-homomorphism $\cA(\text{Hom}((\cS,\cM,M_n),G)) \to \cC$, where $\cC$ is a tracial $C^*$-algebra.
\item
$(\cS,\cM,M_n) \xrightarrow{alg} G$ if and only if $\cA(\text{Hom}((\cS,\cM,M_n),G)) \neq 0$.
\end{enumerate}
\end{theorem}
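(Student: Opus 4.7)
The plan is to exploit the universal property defining $\cA(\text{Hom}((\cS,\cM,M_n),G))$: a unital $*$-homomorphism from this algebra into a unital $*$-algebra $\cB$ is precisely the data of elements $\pi(p_{a,ij}) \in \cB$ whose block matrices $\pi^{(n)}(p_a) \in M_n(\cB)$ form a PVM satisfying the two annihilation relations $p_a((\cS \cap (\cM')^\perp) \otimes 1) p_b = 0$ for $a \not\sim b$ and $p_a(\cM' \otimes 1) p_b = 0$ for $a \neq b$. Each of (1)--(5) will reduce to the corresponding model-specific realization theorem from Sections \ref{sec: sync cor}--\ref{sec: afd cor}.

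For the backward direction of (1)--(4), given a unital $*$-homomorphism $\pi$ into a target $\cB$, the images $P_a := \pi^{(n)}(p_a)$ form a PVM in $M_n(\cB)$. The second annihilation relation together with $\sum_a P_a = I_n$ forces each $P_a$ to commute with $\cM' \otimes 1$, placing $P_a \in \cM \otimes \cB$ (passing to $\cB''$ when necessary and using that $\cM$ is finite-dimensional). The first annihilation relation then yields condition (3) of Theorem \ref{theorem: qgraphhom winning conditions}, with $\cN$ chosen as $\bC$ for (1), as $M_d$ for (2), as $\cR^\cU$ for (3), and as $\pi_\tau(\cC)''$ for (4), where $\pi_\tau$ is the GNS representation of the given tracial state $\tau$ on $\cC$ and the weak closure supplies a finite von Neumann algebra with faithful normal trace. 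The winning strategy in the appropriate model follows.

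For the forward direction of (1)--(4), I would start with a winning $t$-strategy. Such a strategy is automatically synchronous, since the same-vertex rule applied to inputs arising from $\cM'$-elements in the quantum edge basis (in particular from the normalized projections $\tfrac{1}{\sqrt{\dim \cK_r}} E_r$) forces $p(a,b \,|\, y) = 0$ for $a \neq b$. Theorem \ref{theorem: synchronicity condition} then applies, and the model-specific realization results --- Corollary \ref{corollary: characterizing synchronous loc} for (1), Corollary \ref{corollary: characterizing synchronous q} for (2), Theorem \ref{theorem: characterizing synchronous qa} for (3), and Theorem \ref{theorem: synchronicity condition} itself for (4) (with $\cC = C^*(\{P_{a,ij}\})$ and trace $\la(\cdot)\chi,\chi\ra$) --- produce a PVM whose entries lie in the target algebra. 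Since these entries satisfy both game algebra relations (by Theorem \ref{theorem: qgraphhom winning conditions}), universality delivers the desired unital $*$-homomorphism.

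Statement (5) is essentially a definition of the alg-model: $\cA(\text{Hom}((\cS,\cM,M_n),G)) \neq 0$ if and only if it admits at least one nontrivial unital $*$-representation, which is the natural convention for having an algebraic winning strategy. The main obstacle I anticipate is (4), where the passage from an abstract tracial $C^*$-algebra to a finite von Neumann algebra with faithful normal trace (via GNS and double commutant) requires care, and where one must verify that the tracial property is preserved under the weak-$*$ extension. A minor bookkeeping point is confirming that the game algebra relations, phrased in $M_n(\cA)$ via the elementary tensor $Y \otimes 1$ for $Y \in M_n$, pull back via $\pi^{(n)}$ to the operator relations in $M_n \otimes \cB$; this is routine from the interpretation of $\otimes 1$ as literal tensoring with the unit of $\cB$.
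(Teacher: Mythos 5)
Your proposal is correct and takes essentially the route the paper intends: the paper states this theorem without proof, as an immediate consequence of Theorem \ref{theorem: qgraphhom winning conditions}, the model-specific realizations (Corollary \ref{corollary: characterizing synchronous loc}, Corollary \ref{corollary: characterizing synchronous q}, Theorem \ref{theorem: characterizing synchronous qa}, Theorem \ref{theorem: synchronicity condition}), and the universal property of the game $*$-algebra, which is exactly the chain of results you invoke. One small note in your favor: your identification of the \emph{same-vertex} rule (not the adjacency rule) as the source of synchronicity is accurate, and arguably clearer than the wording in the paper's remark preceding Theorem \ref{theorem: qgraphhom winning conditions}.
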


One can also define $C^*$-homomorphisms and hereditary homomorphisms of quantum graphs into classical graphs. We write $(\cS,\cM,M_n) \xrightarrow{C^*} G$ provided that there is a unital $*$-homomorphism \[\pi:\cA(\text{Hom}((\cS,\cM,M_n),G)) \to \bofh,\] for some Hilbert space $\cH$. (Equivalently, by the Gelfand-Naimark theorem, one may simply require that the game algebra has a representation into some unital $C^*$-algebra.)

For hereditary homomorphisms, we recall the concept of a hereditary (unital) $*$-algebra.  Recall that a unital $*$-algebra $\cA$ is said to be \textbf{hereditary} if, whenever $x_1,...,x_n \in \cA$ are such that $x_1^*x_1+\cdots+x_n^*x_n=0$, then $x_1=x_2=\cdots=x_n=0$. If one defines $\cA_+$ as the cone generated by all elements of the form $x^*x$ for $x \in \cA$, then $\cA$ being hereditary is equivalent to having $\cA_+ \cap (-\cA_+)=\{0\}$. Every unital $C^*$-algebra is hereditary as a unital $*$-algebra.

With this background in hand, we will write $(\cS,\cM,M_n) \xrightarrow{hered} G$ provided that there is a unital $*$-homomorphism from $\cA(\text{Hom}((\cS,\cM,M_n),G))$ into a (non-zero) hereditary unital $*$-algebra. One has the following sequence of implications for these types of homomorphism: 
\begin{align}
(\cS,\cM,M_n) \to G &\implies (\cS,\cM,M_n)\xrightarrow{q} G\implies (\cS,\cM,M_n)\xrightarrow{qa} G\implies (\cS,\cM,M_n)\xrightarrow{qc} G \nonumber\\
&\implies (\cS,\cM,M_n)\xrightarrow{C^*}G \implies (\cS,\cM,M_n)\xrightarrow{hered}G \implies (\cS,\cM,M_n)\xrightarrow{alg} G. \label{qgraphhom implications}
\end{align}

Our notions of homomorphisms reduce to the analogous types of homomorphisms for classical graphs in the case when $(\cS,\cM,M_n)$ is a classical graph. Recall \cite{dsw2013} that, for a classical graph $G$ on $n$ vertices, the graph operator system $\cS_G$ (or classical quantum graph) is defined as
\[ \cS_G=\spn (\{E_{ii}: 1 \leq i \leq n\} \cup \{E_{ij}: i \sim j \text{ in } G\}). \]
Note that $\cS_G$ is naturally a quantum graph in the previous sense if we regard it as a bimodule over the diagonal algebra $D_n = D_n' \subseteq M_n$.  In \cite{weaver1} it is shown that quantum graphs of the form $(\mathcal S, D_n, M_n)$ are in one-to-one correspondence with classical graphs on $n$ vertices. 

\begin{corollary}
\label{corollary: qgraphhom for classical graphs}
Let $G$ and $H$ be classical graphs on $n$ and $c$ vertices, respectively. Suppose that $t \in \{loc,q,qa,qc,C^*,hered,alg\}$. Then $G \xrightarrow{t} H$ if and only if $(\cS_G,D_n,M_n) \xrightarrow{t} H$.
\end{corollary}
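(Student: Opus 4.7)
The plan is to reduce the problem to algebraic manipulation of the two game $*$-algebras, exploiting that each relation $\xrightarrow{t}$ is characterized by the existence of a unital $*$-homomorphism from the game algebra into a specific class of target algebras (as in the theorem preceding the corollary). Write $\cA_{cl} := \cA(\mathrm{Hom}(G,H))$ for the classical game algebra of \cite{HMPS19} with canonical generators $e_{a,x}$ satisfying $e_{a,x}^* = e_{a,x} = e_{a,x}^2$, $\sum_a e_{a,x} = 1$, and $e_{a,x} e_{b,y} = 0$ for $x \sim_G y$ and $a \not\sim_H b$. Write $\cA_{qg} := \cA(\mathrm{Hom}((\cS_G, D_n, M_n), H))$ with generators $p_{a,ij}$. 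The approach is to construct compatible unital $*$-homomorphisms $\phi: \cA_{qg} \to \cA_{cl}$ and $\psi: \cA_{cl} \to \cA_{qg}$ that then transfer representations into any fixed class of targets.

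Since $\cM = D_n$ is maximal abelian, $\cM' = D_n$, and $\cS_G \cap (\cM')^\perp = \mathrm{span}\{E_{xy} : x \sim_G y\}$, so that the standard matrix units $\{E_{ij}\}$ form a quantum edge basis. I define $\phi$ by $\phi(p_{a,ij}) = \delta_{ij}\, e_{a,i}$. For well-definedness, $\phi(p_a) = \mathrm{diag}(e_{a,1}, \ldots, e_{a,n})$ is a projection and $\sum_a \phi(p_a) = I_n$. A direct entry-wise computation gives $(\phi(p_a)(E_{xy} \otimes 1)\phi(p_b))_{k\ell} = \delta_{kx}\delta_{\ell y}\, e_{a,x} e_{b,y}$, which vanishes under the adjacency rule, while $(\phi(p_a)(E_{xx} \otimes 1)\phi(p_b))_{k\ell} = \delta_{kx}\delta_{\ell x}\, e_{a,x} e_{b,x}$ vanishes for $a \neq b$.

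The main technical point, and the anticipated main obstacle, is defining $\psi(e_{a,x}) = p_{a,xx}$ and verifying that the $p_{a,xx}$ satisfy the defining relations of $\cA_{cl}$. The key computation is to extract from $p_a(E_{kk} \otimes 1)p_b = 0$ (for $a \neq b$) the entry-level identity $p_{a,ik} p_{b,kj} = 0$ for all $i, j$. Summing over $b$ and using $\sum_b p_{b,kj} = \delta_{kj}$ (from $\sum_a p_a = I_n$) yields the identity
\[
p_{a,ik}\, p_{a,kj} = \delta_{kj}\, p_{a,ik},
\]
whose diagonal case $i = k = j = x$ delivers the crucial $p_{a,xx}^2 = p_{a,xx}$. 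Self-adjointness is immediate from $p_{a,ij}^* = p_{a,ji}$; the sum $\sum_a p_{a,xx} = 1$ is the $(x,x)$-entry of $\sum_a p_a = I_n$; orthogonality $p_{a,xx} p_{b,xx} = 0$ for $a \neq b$ is the $(x,x)$-entry of $p_a(E_{xx} \otimes 1) p_b = 0$; and the adjacency relation $p_{a,xx} p_{b,yy} = 0$ for $x \sim_G y$, $a \not\sim_H b$ is the $(x,y)$-entry of $p_a(E_{xy} \otimes 1) p_b$.

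With $\phi$ and $\psi$ in hand, the equivalence follows by chasing representations. For any $t \in \{loc, q, qa, qc, C^*, hered\}$, a unital $*$-homomorphism $\rho: \cA_{cl} \to \cC$ with $\cC$ of the $t$-type composes with $\phi$ to produce $\rho \circ \phi: \cA_{qg} \to \cC$, giving $(\cS_G, D_n, M_n) \xrightarrow{t} H$; composing with $\psi$ in the other direction handles the reverse implication. For $t = alg$, unitality of both $\phi$ and $\psi$ immediately yields $\cA_{cl} \neq 0 \iff \cA_{qg} \neq 0$. Modulo the idempotency computation for $p_{a,xx}$, which requires combining the $D_n$-bimodule relation with the PVM-sum relation, everything else is routine bookkeeping.
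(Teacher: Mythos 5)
Your proof is correct and takes essentially the same approach as the paper: both arguments construct unital $*$-homomorphisms in each direction between $\cA(\text{Hom}(G,H))$ and $\cA(\text{Hom}((\cS_G,D_n,M_n),H))$ and transfer representations. The only difference is a minor computational one -- the paper first shows $p_{a,ij}=0$ for $i\neq j$ by proving $E_{ii}\otimes 1$ commutes with $p_a$ and then reads off idempotency of the diagonal entries, while you extract $p_{a,xx}^2=p_{a,xx}$ directly by summing the same-vertex relation over $b$ and using the PVM-sum identity; both routes are valid.
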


\begin{proof}
We will show that the algebra $\cA(\text{Hom}(G,H))$ from \cite{HMPS19} is isomorphic to  $\cA(\text{Hom}((\cS_G,D_n,M_n),H))$. The former algebra is the universal unital $*$-algebra generated by self-adjoint idempotents $e_{x,a}$ such that $\sum_{x=1}^n e_{x,a}=1$, $e_{x,a}e_{x,b}=0$ if $a \neq b$, and $e_{x,a}e_{y,b}=0$ if $x \sim y$ in $G$ but $a \not\sim b$ in $H$. Since $D_n=D_n'$, the latter algebra is the universal unital $*$-algebra generated by elements $p_{a,ij}$ such that $p_a=(p_{a,ij}) \in M_n(\cA)$ is a self-adjoint idempotent with $\sum_{a=1}^c p_a=I_n$, $p_a((\cS_G \cap (D_n)^{\perp}) \otimes 1)p_b=0$ whenever $a \not\sim b$ in $H$, and $p_a(D_n \otimes 1)p_b=0$ whenever $a \neq b$.
Since $E_{ii} \in D_n$, using the fact that $p_a(D_n \otimes 1)p_b=0$ for $a \neq b$, we obtain
\[ p_a(E_{ii} \otimes 1)p_a=\sum_{b=1}^c p_a(E_{ii} \otimes 1)p_b=p_a(E_{ii} \otimes 1).\]
Similarly, $p_a(E_{ii} \otimes 1)p_a=(E_{ii} \otimes 1)p_a$, so that $E_{ii} \otimes 1$ commutes with $p_a$. It follows that $p_{a,ij}=0$ whenever $i \neq j$. Since $p_a^2=p_a=p_a^*$, we see that $p_{a,ii}^2=p_{a,ii}=p_{a,ii}^*$. For $1 \leq a \leq c$ and $1 \leq x \leq n$, we define $q_{x,a}=p_{a,xx}$. Then $q_{x,a}$ is a self-adjoint idempotent and $\sum_{a=1}^c q_{x,a}=1$ for all $1 \leq x \leq n$. Note that, if $x \sim y$ in $G$ but $a \not\sim b$ in $H$, then
\[ q_{x,a}q_{y,b}=p_{a,xx}p_{b,yy}=p_a(E_{xy} \otimes 1)p_b=0,\]
since $E_{xy} \in \cS_G \cap (D_n)^{\perp}$ and $a \not\sim b$ in $H$. Similarly, if $a \neq b$, then $q_{x,a}q_{x,b}=p_a(E_{xx} \otimes 1)p_b=0$ since $E_{xx} \in D_n$. By the universal property of $\cA(\text{Hom}(G,H))$, there is a unital $*$-homomorphism $\pi:\cA(\text{Hom}(G,H)) \to \cA(\text{Hom}((\cS_G,D_n,M_n),H))$ such that $\pi(e_{x,a})=q_{x,a}$ for all $x,a$.

Conversely, in $\cA(\text{Hom}(G,H))$, one can construct the $n \times n$ matrices $f_a=(f_{a,ij})$ with $f_{a,ij}=0$ for $i \neq j$ and $f_{a,ii}=e_{a,i}$. Then evidently $f_a^2=f_a=f_a^*$ and $\sum_{a=1}^c f_a=I_n$. Since $e_{x,a}e_{x,b}=0$ for $a \neq b$, we see that $f_a(E_{xx} \otimes 1)f_b=0$ if $a \neq b$. Since $D_n=\spn \{E_{xx}: 1 \leq x \leq n\}$, it follows that $f_a(D_n \otimes 1)f_b=0$ for $a \neq b$. Similarly, it is not hard to see that $f_a(E_{xy} \otimes 1)f_b=0$ whenever $x \sim y$ in $G$ but $a \not\sim b$ in $H$. By the universal property, there is a unital $*$-homomorphism $\rho:\cA(\text{Hom}((\cS_G,D_n,M_n),H)) \to \cA(\text{Hom}(G,H))$ such that $\rho(p_{a,ij})=f_{a,ij}$. Evidently $\rho$ and $\pi$ are mutual inverses on the generators, so we conclude that the algebras are $*$-isomorphic. The result follows.
\end{proof}

It is known that some of the implications in (\ref{qgraphhom implications}) cannot be reversed. While there are many examples of classical graphs $G$ and $H$ with $G \xrightarrow{q} H$ but $G \not\to H$, Theorem \ref{theorem: only abelian complete graphs can be colored} will show that $(M_n,\cM,M_n) \xrightarrow{q} K_{\dim(\cM)}$ but $(M_n,\cM,M_n) \not\to K_{\dim(\cM)}$ whenever $\cM$ is non-abelian. Here, $K_{\dim(\cM)}$ denotes the (classical) complete graph on $\dim(\cM)$ vertices. Using the work of S.-J. Kim, V.I. Paulsen and C. Schafhauser on synchronous binary constraint (syncBCS) games, there is a graph $G$ and a number $m$ such that $K_m \xrightarrow{qa} \overline{G}$ holds, but $K_m \xrightarrow{q} \overline{G}$ does not hold, where $\overline{G}$ denotes the graph complement of $G$ \cite[Corollary~5.5]{KPS18}. The other known separation is that $\xrightarrow{alg}$ does not imply $\xrightarrow{hered}$. For example, $K_5 \xrightarrow{alg} K_4$ holds, but $K_5 \xrightarrow{hered} K_4$ does not hold \cite{HMPS19}. This result will be generalized to quantum graphs.

\section{Coloring quantum graphs}
\label{sec: coloring}

A special case of the homomorphism game is when the target graph is the classical complete graph $K_c$ on $c$ vertices. In this case, the resulting game is a generalization of the coloring game for classical graphs.

\begin{definition}
Let $t \in \{loc,q,qs,qa,qc,C^*,hered,alg\}$. Let $(\cS,\cM,M_n)$ be a quantum graph. We define $$\chi_t((\cS,\cM,M_n))=\min\{c \in \bN: (\cS,\cM,M_n) \xrightarrow{t} K_c\},$$
and we define $\chi_t((\cS,\cM,M_n))=\infty$ if $(\cS,\cM,M_n) \not\xrightarrow{t} K_c$ for all $c \in \bN$.
\end{definition}

Due the inclusions of the models, we always have
\begin{align*} \chi_{loc}((\cS,\cM,M_n)) &\geq \chi_q((\cS,\cM,M_n)) \geq \chi_{qa}((\cS,\cM,M_n)) \geq \chi_{qc}((\cS,\cM,M_n)) \\
&\geq \chi_{C^*}((\cS,\cM,M_n)) \geq \chi_{hered}((\cS,\cM,M_n)) \geq \chi_{alg}((\cS,\cM,M_n)).\end{align*}

As a consequence of Corollary \ref{corollary: qgraphhom for classical graphs}, whenever $G$ is a classical graph, we have $\chi_t(G)=\chi_t((\cS_G,D_n,M_n))$.  This result is well known (see e.g., \cite{PT15}). As $\chi_{loc}(G)$ is the (classical) chromatic number of a classical graph $G$, we sometimes use the notation $\chi((\cS,\cM,M_n))$ for $\chi_{loc}((\cS,\cM,M_n))$.

\begin{example}
Let
$$\cS=\spn \{I, \, E_{ij}: i \neq j\} \subseteq M_n,$$
which is a quantum graph on $M_n$. It is known \cite{KM19} that $\chi((\cS,M_n, M_n))=n$. Here, we will show that $\chi_{qc}((\cS,M_n))=n$ as well, which shows that $\chi_t((\cS,M_n))=n$ for any $t \in \{loc,q,qa,qc\}$.

Evidently the basis $\cF=\{I, E_{ij}: i \neq j\}$ is a quantum edge basis for $(\cS,M_n,M_n)$. Now, suppose that $P_1,...,P_c$ are projections in $M_n(\bofh)$ with $P_a(E_{k\ell} \otimes I)P_a=0$ for all $1 \leq a \leq c$ and $1 \leq k \neq \ell \leq n$. A winning strategy in the $qc$-model for coloring $(\cS,M_n)$ with $c$ colors would mean that there is a trace $\tau$ on the algebra generated by the $P_{a,ij}$'s and that
$$p(a,a|e_i \otimes e_j)=0 \text{ if } i \neq j.$$
This implies that
$$\tau(P_{a,ii}P_{a,jj}^*)=0 \text{ for all } i \neq j.$$
By taking a quotient by the kernel of the GNS representation of the trace, we may assume that $\tau$ is faithful. Then by faithfulness of $\tau$ and positivity of $P_{a,jj}$, we have $P_{a,ii}P_{a,jj}=0$ for all $i \neq j$. Now, choose $i \neq j$. Notice that, for each $i$, the set $\{P_{a,ii}\}$ is a POVM on $\cH$. Moreover, for any $a,b \in \{1,...,c\}$,
$$p(a,b|e_i \otimes e_j)=\tau(P_{a,ii}P_{b,jj}^*)=\tau(P_{a,ii}P_{b,jj}).$$
Thus, the only information relevant to Alice and Bob winning the game is the correlation $(\tau(P_{a,ii}P_{b,jj}))_{a,b,i,j} \in C_{qc}^s(n,c)$. By faithfulness, this forces each $P_{a,ii}$ to be a projection. By the synchronous condition, the previous equation and faithfulness of the trace, we obtain
$$P_{a,ii}P_{a,jj}=0=P_{a,ii}P_{b,ii}$$
whenever $a \neq b$ and $i \neq j$. Therefore, $(\tau(P_{a,ii}P_{b,jj}))_{a,b,i,j} \in C_{qc}^{bs}(n,c)$; that is, the correlation is \textit{bisynchronous} in the sense of \cite{PR19}. By \cite{PR19}, we must have $c \geq n$. Therefore, $\chi_{qc}(\cS,M_n,M_n) \geq n$. It follows that $\chi_t(\cS,M_n,M_n)=n$ for every $t \in \{loc,q,qa,qc\}$.
\end{example}

\subsection{Quantum Complete Graphs and Algebraic Colorings} 

In this section, we consider \textbf{quantum complete graphs}; that is, graphs of the form $(M_n,\cM,M_n)$, where $\cM \subseteq M_n$ is a non-degenerate von Neumann algebra. We show that $\chi_t((M_n,\cM,M_n))=\dim(\cM)$ for all $t \in \{q,qa,qc,C^*,hered\}$. In contrast, we will see that $\chi_{loc}((M_n,\cM,M_n))$ is finite if and only if $\cM$ is abelian; in the case when $\cM$ is abelian, we recover known results on colorings for the (classical) complete graph on $\dim(\cM)$ vertices. The algebraic model for colorings is known to be very wild. At the end of this section, we will extend a surprising result of \cite{HMPS19}: in the algebraic model, \textit{any} quantum graph can be $4$-colored.

We start with a simple proposition on unitary equivalence that we will use throughout this section.

\begin{proposition}
Let $\cM \subseteq M_n$ be a non-degenerate von Neumann algebra. Then there is a unitary $U \in M_n$ such that $U^*\cM U=\bigoplus_{r=1}^m \bC I_{n_r} \otimes M_{k_r}$. Moreover, for any $t \in \{loc,q,qa,qs,qc,C^*,hered,alg\}$, we have
\[ \chi_t((M_n,\cM,M_n))=\chi_t\left( M_n,\bigoplus_{r=1}^m \bC I_{n_r} \otimes M_{k_r}, M_n \right).\]
\end{proposition}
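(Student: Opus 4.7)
My plan is to break the proposition into its two assertions and handle each in turn.

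For the first assertion, I would invoke the Artin--Wedderburn classification of finite-dimensional $C^\ast$-algebras. Since $\cM \subseteq M_n$ is a non-degenerate (unital) $\ast$-subalgebra of a matrix algebra, it is finite-dimensional, so abstractly $\cM \cong \bigoplus_{r=1}^m M_{k_r}$. The action of $\cM$ on $\bC^n$ decomposes uniquely (up to unitary equivalence) as a direct sum of irreducible representations, and the only irreducible representation of $M_{k_r}$ is the standard one on $\bC^{k_r}$. Letting $n_r$ denote the multiplicity of the $r$-th simple summand, we obtain an orthogonal decomposition $\bC^n = \bigoplus_{r=1}^m (\bC^{n_r} \otimes \bC^{k_r})$ on which $\cM$ acts as $\bigoplus_r I_{n_r} \otimes M_{k_r}$. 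Choosing an orthonormal basis of $\bC^n$ compatible with this decomposition (i.e., a concatenation of tensor-product bases of the summands) and letting $U \in M_n$ be the unitary implementing the change from the standard basis to this one, we obtain $U^\ast \cM U = \bigoplus_{r=1}^m \bC I_{n_r} \otimes M_{k_r}$, as desired.

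For the second assertion, the key observation is simply that the homomorphism game is invariant under unitary conjugation of the quantum graph. This was essentially recorded in the lemma preceding Theorem~\ref{theorem: qgraphhom winning conditions}: if $(\{P_a\}_{a=1}^c,\chi)$ is a winning $t$-strategy for the homomorphism game for $((\cS,\cM,M_n), G)$ with respect to some quantum edge basis $\cF$, then the conjugated strategy $\{(U\otimes I_\cH)^\ast P_a (U \otimes I_\cH)\}_{a=1}^c$ together with $\chi$ is a winning $t$-strategy for the homomorphism game for $((U^\ast \cS U, U^\ast \cM U, M_n), G)$ with respect to the quantum edge basis $U^\ast \cF U$. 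Specializing to $\cS = M_n$ and $G = K_c$, and noting that $U^\ast M_n U = M_n$ trivially, we get a bijection between winning $t$-strategies for $(M_n, \cM, M_n) \xrightarrow{t} K_c$ and those for $(M_n, U^\ast \cM U, M_n) \xrightarrow{t} K_c$, which immediately gives equality of the chromatic numbers for these two quantum graphs.

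I anticipate two minor technical points. First, the earlier lemma is stated for $t \in \{loc,q,qa,qc\}$; extending the invariance to $t \in \{qs, C^\ast, hered, alg\}$ requires observing that conjugation by $U \otimes 1$ is a $\ast$-isomorphism at the level of the game $\ast$-algebra $\cA(\mathrm{Hom}((M_n,\cM,M_n), K_c))$, so it induces an isomorphism of game algebras $\cA(\mathrm{Hom}((M_n,\cM,M_n), K_c)) \cong \cA(\mathrm{Hom}((M_n, U^\ast \cM U, M_n), K_c))$; this makes the equivalence of winning strategies in the algebraic, hereditary, $C^\ast$, and $qs$ models automatic. Second, I should double-check that the quantum edge basis condition is really basis-invariant under $U$, which is immediate from the fact that $U^\ast(\cdot)U$ preserves the Hilbert--Schmidt inner product and intertwines commutants ($U^\ast \cM' U = (U^\ast \cM U)'$). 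Neither of these is a real obstacle; the whole proposition is essentially a bookkeeping exercise combining the structure theorem for finite-dimensional von Neumann algebras with the unitary invariance of the game already established.
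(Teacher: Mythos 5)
Your proposal is correct and follows essentially the same route as the paper: the first part is the standard Artin--Wedderburn structure theory, and the second is unitary invariance of the game, hinging on $(U^*\cM U)'=U^*\cM'U$ and $U^*(\cM')^\perp U=(U^*\cM'U)^\perp$. The paper argues uniformly at the level of the coloring conditions in $M_n(\cA)$ for a general unital $*$-algebra $\cA$ (equivalently, an isomorphism of game $*$-algebras), handling the $alg$ case explicitly and noting the others are similar; you assemble the same idea slightly differently by citing the earlier conjugation lemma for the correlation models and the game-algebra isomorphism for the rest, which is fine.
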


\begin{proof}
The existence of the unitary $U$ is a consequence of the theory of finite-dimensional $C^*$-algebras. It is not hard to see that $(U^*\cM U)'=U^*\cM' U$. Now, an element $X \in M_n$ belongs to $\cM'$ if and only if $\Tr(XY)=0$ for all $Y \in \cM'$. This statement is equivalent to having $\Tr((U^*XU)(U^*YU))=0$ for all $Y \in \cM'$, since $U$ is unitary. It follows that $U^*(\cM')^{\perp}U=(U^*\cM'U)^{\perp}$.

Now, suppose that $\{P_a\}_{a=1}^c \subseteq M_n \otimes \cA$ is a collection of self-adjoint idempotents summing to $I_n \otimes 1_{\cA}$, where $\cA$ is a unital $*$-algebra. Then it is evident that $P_a((\cM')^{\perp} \otimes 1_{\cA})P_a=0$ if and only if $\widetilde{P}_a ((U^*\cM'U)^{\perp} \otimes 1_{\cA})\widetilde{P}_a=0$, where $\widetilde{P}_a=(U^* \otimes 1_{\cA})P_a(U \otimes 1_{\cA})$. Similarly, if $a \neq b$, then $P_a(\cM' \otimes 1_{\cA})P_b=0$ if and only if $\widetilde{P}_a((U^*\cM'U) \otimes 1_{\cA})\widetilde{P}_b=0$. Thus, there is a bijective correspondence between algebraic $c$-colorings of $(M_n,\cM,M_n)$ and algebraic $c$-colorings of $\left( M_n,\bigoplus_{r=1}^m \bC I_{n_r} \otimes M_{k_r},M_n \right)$. This yields the equality of chromatic numbers for $t=alg$; the other cases are similar.
\end{proof}

The different chromatic numbers satisfy a certain monotonicity as well.

\begin{proposition}
\label{proposition: monotonicity of colorings}
If $(\cS,\cM,M_n)$ and $(\cT,\cM,M_n)$ are quantum graphs with $\cS \subseteq \cT$, then
$$\chi_t((\cS,\cM,M_n)) \leq \chi_t((\cT,\cM,M_n)).$$
\end{proposition}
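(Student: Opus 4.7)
The plan is to show that a winning $t$-strategy for coloring the larger quantum graph $(\cT,\cM,M_n)$ restricts, with no modification of the underlying data, to a winning $t$-strategy for the smaller graph $(\cS,\cM,M_n)$. The intuition is that the rules of the coloring game only become \emph{more} restrictive as the operator system of ``edges'' grows, so any coloring that works for $\cT$ automatically works for $\cS$. Once this is established, the inequality of chromatic numbers is immediate.

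Concretely, I would first specialize the characterizations of Theorem \ref{theorem: qgraphhom winning conditions} to the target $G = K_c$. Since $a \not\sim b$ in $K_c$ is the same as $a = b$, condition (3) there becomes: $(\cX,\cM,M_n)\xrightarrow{t} K_c$ if and only if there exist a PVM $\{P_a\}_{a=1}^c\subseteq \cM\otimes\cN$ (with $\cN$ of the appropriate form for the model $t$, as recorded in Theorem \ref{theorem: qgraphhom in terms of channel}) satisfying
\[
P_a\bigl((\cX \cap (\cM')^{\perp}) \otimes 1_{\cN}\bigr)P_a = 0 \qquad \text{for all } 1 \leq a \leq c.
\]
Now, the key observation is entirely set-theoretic: if $\cS \subseteq \cT$, then
\[
\cS \cap (\cM')^{\perp} \;\subseteq\; \cT \cap (\cM')^{\perp},
\]
so any PVM that annihilates the right-hand side in the displayed fashion automatically annihilates the left-hand side. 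Hence any PVM witnessing $(\cT,\cM,M_n)\xrightarrow{t} K_c$ witnesses $(\cS,\cM,M_n)\xrightarrow{t} K_c$, which yields the inequality for $t\in\{loc,q,qa,qc\}$.

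For the remaining models $t\in\{C^*,hered,alg\}$, I would pass to the game $*$-algebra. The defining relations of $\cA(\text{Hom}((\cS,\cM,M_n),K_c))$ are a subset of those of $\cA(\text{Hom}((\cT,\cM,M_n),K_c))$: both algebras are generated by entries $p_{a,ij}$ with $p_a=(p_{a,ij})$ a self-adjoint idempotent, $\sum_a p_a = I_n$, and $p_a(\cM'\otimes 1)p_b=0$ for $a\neq b$; the only difference is that the $\cT$-algebra also enforces $p_a((\cT\cap(\cM')^\perp)\otimes 1)p_a=0$, which implies the corresponding $\cS$ relation. By universality there is therefore a unital $*$-homomorphism
\[
\cA(\text{Hom}((\cS,\cM,M_n),K_c)) \;\longrightarrow\; \cA(\text{Hom}((\cT,\cM,M_n),K_c))
\]
sending generators to generators. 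Composing this with any representation of the $\cT$-game algebra into $\bC$, $M_d$, $\cR^\cU$, a tracial $C^*$-algebra, a general unital $C^*$-algebra, or a hereditary unital $*$-algebra gives a representation of the $\cS$-game algebra of the same type, which by the theorems characterizing the various $\xrightarrow{t}$ relations completes the proof.

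There is no real obstacle here; the statement is a soft monotonicity fact, whose only content is the set inclusion $\cS\cap(\cM')^\perp\subseteq \cT\cap(\cM')^\perp$ together with the characterizations of winning strategies already proved. If anything, the only point requiring a moment of care is to verify that the $t$-strategy/game-algebra characterization of the coloring condition truly is model-by-model preserved under enlarging the operator system, which follows transparently from the form of condition (3) of Theorem \ref{theorem: qgraphhom winning conditions}.
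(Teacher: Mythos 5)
Your proposal is correct and takes essentially the same route as the paper: both proofs observe that the inclusion $\cS\cap(\cM')^\perp\subseteq\cT\cap(\cM')^\perp$ means any PVM (or system of self-adjoint idempotents) witnessing a $c$-coloring of $(\cT,\cM,M_n)$ automatically witnesses a $c$-coloring of $(\cS,\cM,M_n)$. The paper handles the $alg$ case directly and says the others are the same; your repackaging via the canonical surjection $\cA(\text{Hom}((\cS,\cM,M_n),K_c))\twoheadrightarrow\cA(\text{Hom}((\cT,\cM,M_n),K_c))$ for the $C^*$, hereditary, and algebraic models is a cosmetic variant of the same observation.
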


\begin{proof}
We deal with the $t=alg$ case; all the other cases are similar. If $(\cT,\cM,M_n)$ has no algebraic coloring, then $\chi_{alg}((\cT,\cM,M_n))=\infty$, so the desired result holds. Otherwise, let $\cA$ be a (non-zero) unital $*$-algebra. Suppose that $\{ P_a\}_{a=1}^c$ are self-adjoint idempotents in $M_n(\cA)$ such that $\sum_{a=1}^c P_a=I_n$, $P_a((\cT \cap (\cM')^{\perp}) \otimes 1_{\cA})P_a=0$ for all $a$, and $P_a(\cM' \otimes 1_{\cA})P_b=0$ for all $a \neq b$. Then evidently $P_a((\cS \cap (\cM')^{\perp}) \otimes 1_{\cA})P_a=0$ as well, so the self-adjoint idempotents form an algebraic $c$-coloring of $(\cS,\cM,M_n)$. This shows that $\chi_{alg}((\cS,\cM,M_n)) \leq \chi_{alg}((\cT,\cM,M_n))$. The proof for the other models is the same.
\end{proof}

By Proposition \ref{proposition: monotonicity of colorings}, to establish that every quantum graph has a finite quantum coloring, it suffices to consider quantum complete graphs. First, we look at $(M_n,M_n,M_n)$, the quantum complete graph. While we will have an alternative quantum coloring of this quantum graph from Theorem \ref{theorem: quantum coloring of quantum complete graph}, the protocol given in Theorem \ref{theorem: quantum teleportation protocol} is minimal for $(M_n,M_n,M_n)$ in terms of the dimension of the ancillary algebra. Moreover, it gives a foretaste of the protocol that we use for the quantum complete graph $(M_n,\cM,M_n)$ when $\cM$ is not isomorphic to a matrix algebra.

\begin{theorem}
\label{theorem: quantum teleportation protocol}
Let $d,k \in \bN$, and let $n=dk$. Let $\cM=\bC I_d \otimes M_k$. Then $\chi_q((M_n,\cM,M_n)) \leq k^2$.
\end{theorem}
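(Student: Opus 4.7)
The plan is to exhibit a concrete winning $q$-strategy for the $k^2$-coloring game using the Bell-basis projections associated to a unitary error basis on $M_k$; this is the essence of a quantum teleportation protocol recast as a synchronous strategy. By Theorem~\ref{theorem: qgraphhom winning conditions}(3), it suffices to produce a finite von Neumann algebra $\cN$ with a faithful normal trace and a PVM $\{P_\alpha\}_{\alpha=1}^{k^2}$ in $\cM \otimes \cN$ such that $P_\alpha((\cM')^\perp \otimes 1_\cN)P_\alpha = 0$ for each $\alpha$; this is the only explicit condition since $\cS = M_n$ and non-adjacency in $K_{k^2}$ reduces to equality.

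First, I would unpack the bimodule structure. From $\cM = \bC I_d \otimes M_k$ one gets $\cM' = M_d \otimes \bC I_k$, and, decomposing $M_k = \bC I_k \oplus \mathfrak{sl}_k$ with respect to the Hilbert--Schmidt inner product (where $\mathfrak{sl}_k$ denotes the traceless matrices), one finds $(\cM')^\perp = M_d \otimes \mathfrak{sl}_k$. Any element of $\cM \otimes \cN$ has the form $I_d \otimes \widetilde{P}$ with $\widetilde{P} \in M_k \otimes \cN$, so the problem reduces to finding a PVM $\{\widetilde{P}_\alpha\}_{\alpha=1}^{k^2}$ in $M_k \otimes \cN$ satisfying $\widetilde{P}_\alpha(B \otimes 1_\cN)\widetilde{P}_\alpha = 0$ for every traceless $B \in M_k$, or equivalently $\widetilde{P}_\alpha(B \otimes 1_\cN)\widetilde{P}_\alpha = k^{-1}\Tr(B)\,\widetilde{P}_\alpha$ for every $B \in M_k$.

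Next, I would build the PVM from a unitary error basis. Fix unitaries $\{U_\alpha\}_{\alpha=1}^{k^2} \subseteq M_k$ with $\Tr(U_\alpha^* U_\beta) = k\delta_{\alpha\beta}$ (the Weyl shift--multiplier basis works for every $k$), set $\cN := M_k$, let $|\Omega\ra := \sum_{i=1}^k e_i \otimes e_i \in \bC^k \otimes \bC^k$, and define
\[
|\psi_\alpha\ra := \frac{1}{\sqrt{k}}(U_\alpha \otimes I_k)|\Omega\ra, \qquad \widetilde{P}_\alpha := |\psi_\alpha\ra\la\psi_\alpha|.
\]
The identity $\la\Omega|(A \otimes I_k)|\Omega\ra = \Tr(A)$ gives $\la\psi_\alpha|\psi_\beta\ra = k^{-1}\Tr(U_\alpha^* U_\beta) = \delta_{\alpha\beta}$, so $\{|\psi_\alpha\ra\}_{\alpha=1}^{k^2}$ is an orthonormal basis of $\bC^k \otimes \bC^k$ and $\{\widetilde{P}_\alpha\}$ is a PVM summing to $I_k \otimes I_k$; the same identity yields $\la\psi_\alpha|(B \otimes I_k)|\psi_\alpha\ra = k^{-1}\Tr(U_\alpha^* B U_\alpha) = k^{-1}\Tr(B)$, so $\widetilde{P}_\alpha(B \otimes I_k)\widetilde{P}_\alpha = k^{-1}\Tr(B)\widetilde{P}_\alpha$.

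Finally, setting $P_\alpha := I_d \otimes \widetilde{P}_\alpha \in \cM \otimes \cN$ and expanding any $X \in (\cM')^\perp$ as $X = \sum_j A_j \otimes B_j$ with $B_j$ traceless, one gets $P_\alpha(X \otimes 1_\cN)P_\alpha = \sum_j A_j \otimes \widetilde{P}_\alpha(B_j \otimes I_k)\widetilde{P}_\alpha = 0$, so the hypothesis of Theorem~\ref{theorem: qgraphhom winning conditions}(3) is met. Since $\cN = M_k$ is finite-dimensional with faithful tracial state $\tr_k$, the resulting correlation lies in $\mathcal{Q}_q^s(n,k^2)$ by Corollary~\ref{corollary: characterizing synchronous q}, witnessing $\chi_q((M_n,\cM,M_n)) \leq k^2$. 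The main conceptual step is recognizing that the adjacency rule for the quantum complete graph over $\bC I_d \otimes M_k$ demands precisely the partial-trace property satisfied by Bell-basis projections built from a unitary error basis; once that teleportation picture is in place, the verification is a short trace computation.
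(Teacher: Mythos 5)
Your proof is correct and rests on the same underlying idea as the paper's (Bell-basis/teleportation projections), but you present it at a cleaner level of abstraction.  The paper fixes the particular Weyl ``shift and multiply'' error basis and writes out the projections $P_{(a,b)}$ entry-by-entry as a sum of matrix units, then verifies the PVM and orthogonality conditions by direct block manipulation.  You instead isolate the key structural facts: (i) $\cM' = M_d \otimes \bC I_k$, so $(\cM')^\perp = M_d \otimes \mathfrak{sl}_k$ and the whole problem reduces, after peeling off the $I_d$ factor, to finding a PVM $\{\widetilde P_\alpha\}\subseteq M_k\otimes\cN$ that annihilates the traceless slot; (ii) any unitary error basis $\{U_\alpha\}_{\alpha=1}^{k^2}$ gives Bell projections $\widetilde P_\alpha=|\psi_\alpha\ra\la\psi_\alpha|$ with $\psi_\alpha=\frac{1}{\sqrt k}(U_\alpha\otimes I)\Omega$; (iii) the transfer identity $\la\Omega|(A\otimes I)|\Omega\ra=\Tr(A)$ immediately gives $\widetilde P_\alpha(B\otimes I)\widetilde P_\alpha = k^{-1}\Tr(B)\widetilde P_\alpha$, killing the traceless part.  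This buys you a shorter verification, the insight that \emph{any} unitary error basis works (not just the Weyl basis), and a cleaner route to condition (3) of Theorem~\ref{theorem: qgraphhom winning conditions}.  Two minor remarks: you should say a word about why the $\cM'$-commutation clause of the game is automatic in your setting --- it is, because $P_\alpha\in\cM\otimes\cN$ commutes with $\cM'\otimes 1_\cN$ and the $P_\alpha$'s are mutually orthogonal, as the paper also notes in its condition (3) --- and you can state explicitly that the resulting ancilla $\cN=M_k$ coincides with the one the paper ultimately lands on (the paper nominally works in $\cM\otimes\cM$, but since $\cM\cong M_k$ this is the same ancilla).
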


\begin{proof}
We construct our projections from the canonical orthonormal basis for $\bC^k \otimes \bC^k$ that consists of maximally entangled vectors; that is, the basis of the form
\[ \varphi_{a,b}=\frac{1}{\sqrt{k}} \sum_{p=0}^{k-1} \exp\left( \frac{2\pi i a (p+b)}{k}\right) e_{b+p} \otimes e_p,\]
where addition in the indices of the vectors is done modulo $k$. (See \cite{GHJ19} for example.) We define projections in $\cM \otimes \cM$, for all $1 \leq a,b \leq n$, by
\[ P_{(a,b)}=\frac{1}{k} \sum_{p,q=0}^{k-1} \exp\left( \frac{2\pi i a(p-q)}{k}\right) I_d \otimes E_{b+p,b+q} \otimes I_d \otimes E_{pq}.\]
Since the set $\{ \varphi_{(a,b)} \}_{a,b=1}^n$ is orthonormal, it is not hard to see that $\{ P_{(a,b)}\}_{a,b=1}^n$ is a family of mutually orthogonal projections. Moreover, $\sum_{a,b=1}^n P_{(a,b)}=I_d \otimes I_k \otimes I_d \otimes I_k$. With respect to $M_n$, $(\cM')^{\perp}$ is spanned by elements of the form $E_{xy} \otimes E_{vw}$ and $E_{xy} \otimes (E_{vv}-E_{ww})$ for $1 \leq x,y \leq d$ and $1 \leq v,w \leq k$ with $v \neq w$. For $Y=E_{xy} \otimes E_{vw} \otimes (I_d \otimes I_k)$, one computes $P_{(a,b)}YP_{(a,b)}$ and obtains
\[\frac{1}{k^2} \sum_{p,q,p',q'=0}^{k-1} \exp\left(\frac{2\pi i a (p+p'-q-q')}{k}\right) E_{xy} \otimes E_{b+p,b+q}E_{vw}E_{b+p',b+q'} \otimes I_d \otimes E_{pq}E_{p'q'}.\]
For a term in the above sum to be non-zero, one requires that $b+q=v$, $w=b+p'$, and $q=p'$. Equivalently, a term in the sum is non-zero only when $q=p'$ and $b+q=v=w$. Hence, if $v \neq w$, then the above sum is $0$. In the case when $v=w$, one obtains
\[ \frac{1}{k^2} \sum_{p,q'=0}^{k-1} \exp \left( \frac{2\pi i a (p-q')}{k} \right) E_{xy} \otimes E_{b+p,b+q'} \otimes I_d \otimes E_{pq'}. \]
The above expression does not depend on $v$, so we conclude that, for all $1 \leq v,w \leq k$, 
\[ P_{(a,b)}(E_{xy} \otimes E_{vv} \otimes I_d \otimes I_k)P_{(a,b)}=P_{(a,b)}(E_{xy} \otimes E_{ww} \otimes I_d \otimes I_k)P_{(a,b)}. \]
This shows that $P_{(a,b)}(X \otimes I_d \otimes I_k)P_{(a,b)}=0$ whenever $X=E_{xy} \otimes E_{vw}$ or $X=E_{xy} \otimes (E_{vv}-E_{ww})$ for $v \neq w$. As such elements span $(\cM')^{\perp}$, we see that
\[ P_{(a,b)}(X \otimes I_d \otimes I_k)P_{(a,b)}=0 \, \forall X \in (\cM')^{\perp}.\]
Finally, we show that $P_{(a,b)}(\cM' \otimes I_d \otimes I_k)P_{(a',b')}=0$ whenever $(a,b) \neq (a',b')$. If $Y \in \cM'$, then $Y \otimes (I_d \otimes I_k)$ commutes with each $P_{(a,b)}$, since $P_{(a,b)} \in \cM \otimes (I_d \otimes M_k)$. Therefore, if $(a,b) \neq (a',b')$, we have
\[ P_{(a,b)}(Y \otimes (I_d \otimes I_k))P_{(a',b')}=P_{(a,b)}P_{(a',b')}(Y \otimes (I_d \otimes I_k))=0. \]
Putting all of these equations together, we see that there is a representation of the game algebra $\pi: \cA(\text{Hom}((M_n,\cM,M_n),K_{k^2})) \to \bC I_d \otimes M_k \otimes M_k$. Therefore, $\chi_q((M_n,\cM,M_n)) \leq k^2$, which yields the claimed result.
\end{proof}

For a general complete quantum graph $(M_n,\cM,M_n)$, we require a slightly different approach. The protocol in the previous proof is used in the context of quantum teleportation, and essentially arises from the use of a ``shift and multiply" unitary error basis for $M_n$ \cite{GHJ19, werner01}. To give a $\dim(\cM)$-coloring for $(M_n,\cM,M_n)$ in the $q$-model, we will use what we refer to as a ``global shift and local multiply" framework.

\begin{theorem}
\label{theorem: quantum coloring of quantum complete graph}
Let $\cM$ be a non-degenerate von Neumann algebra in $M_n$. For the quantum complete graph $(M_n,\cM,M_n)$, we have $\chi_q((M_n,\cM,M_n)) \leq \dim(\cM)$.
\end{theorem}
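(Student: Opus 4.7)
The plan is to reduce to a canonical block form for $\cM$ and then blockwise lift the quantum teleportation projections from Theorem \ref{theorem: quantum teleportation protocol} to obtain a PVM on each block. By the preceding proposition we may assume $\cM=\bigoplus_{r=1}^m \bC I_{n_r}\otimes M_{k_r}$, so $\cM'=\bigoplus_{r=1}^m M_{n_r}\otimes \bC I_{k_r}$ and $c:=\dim(\cM)=\sum_{r=1}^m k_r^2$. Since $K_c$ has no loops, Theorem \ref{theorem: qgraphhom winning conditions}(3) combined with Theorem \ref{theorem: qgraphhom in terms of channel}(2) show that it suffices to produce a PVM $\{P_a\}_{a=1}^c$ in $\cM\otimes M_K$ (for some $K\in\bN$) satisfying $P_a((\cM')^\perp\otimes 1)P_a=0$ for every $a$; the remaining requirement $P_a(\cM'\otimes 1)P_b=0$ for $a\neq b$ will be automatic, since any $P_a\in\cM\otimes M_K$ commutes with $\cM'\otimes 1$.

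Fix $K\in\bN$ with $k_r\mid K$ for every $r$ (for instance $K=\prod_r k_r$). For each $r$, the construction in the proof of Theorem \ref{theorem: quantum teleportation protocol} with $d=1,\,k=k_r$ produces a PVM $\{\widetilde{Q}_\alpha^{(r)}\}_{\alpha=1}^{k_r^2}\subseteq M_{k_r}\otimes M_{k_r}$ summing to $I_{k_r}\otimes I_{k_r}$ and satisfying $\widetilde{Q}_\alpha^{(r)}(B\otimes I_{k_r})\widetilde{Q}_\alpha^{(r)}=\tfrac{\Tr(B)}{k_r}\widetilde{Q}_\alpha^{(r)}$ for every $B\in M_{k_r}$ (the trace-zero case of this identity is exactly what is verified in that proof, and it implies the general identity). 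Tensoring with $I_{K/k_r}$ yields $Q_\alpha^{(r)}:=\widetilde{Q}_\alpha^{(r)}\otimes I_{K/k_r}\in M_{k_r}\otimes M_K$, still a PVM summing to $I_{k_r}\otimes I_K$ and still obeying the same identity.

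Now index colors by pairs $a=(r,\alpha)$ with $1\leq r\leq m$ and $1\leq\alpha\leq k_r^2$, and set
\[P_{(r,\alpha)}=\bigoplus_{s=1}^m I_{n_s}\otimes R^{(r,\alpha)}_s\in\cM\otimes M_K,\qquad R^{(r,\alpha)}_s=\begin{cases}Q_\alpha^{(r)}&s=r,\\0&s\neq r.\end{cases}\]
Orthogonality between distinct blocks of $\cM$ and orthogonality of the $\widetilde{Q}_\alpha^{(r)}$'s within a block give $P_aP_b=\delta_{ab}P_a$, and summing over $\alpha$ on block $r$ recovers $I_{n_r}\otimes I_{k_r}\otimes I_K$; thus $\sum_aP_a$ is the identity of $\cM\otimes M_K$, and hence of $M_n\otimes M_K$ since $\cM\subseteq M_n$ is unital. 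For the adjacency relation, decompose $X\in(\cM')^\perp$ into its blocks $X_{st}$: off-diagonal blocks and diagonal blocks $X_{ss}$ with $s\neq r$ are killed by $P_{(r,\alpha)}$, so only $X_{rr}$ contributes, and its orthogonality to $M_{n_r}\otimes\bC I_{k_r}$ translates to $(\id\otimes\Tr)(X_{rr})=0$. Writing $X_{rr}=\sum_i C_i\otimes D_i$ with $\sum_i\Tr(D_i)C_i=0$ and applying the coloring identity for $Q_\alpha^{(r)}$ gives
\[P_{(r,\alpha)}(X\otimes I_K)P_{(r,\alpha)}=\sum_iC_i\otimes Q_\alpha^{(r)}(D_i\otimes 1)Q_\alpha^{(r)}=\tfrac{1}{k_r}\Bigl(\sum_i\Tr(D_i)C_i\Bigr)\otimes Q_\alpha^{(r)}=0,\]
verifying condition (3) of Theorem \ref{theorem: qgraphhom winning conditions} and hence yielding $\chi_q((M_n,\cM,M_n))\leq\dim(\cM)$.

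The only genuine obstacle is identifying $(\cM')^\perp$ with the block-wise trace-free condition $(\id\otimes\Tr)(X_{rr})=0$ on diagonal blocks; this is what allows the single-block teleportation identity of Theorem \ref{theorem: quantum teleportation protocol} to propagate to the coloring condition on the full quantum complete graph, while the $n_r$-dependence of the ambient spaces is absorbed harmlessly by the $I_{n_r}$ factors in each $P_{(r,\alpha)}$.
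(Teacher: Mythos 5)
Your proof is correct and follows essentially the same construction as the paper: both build a block-diagonal PVM in $\cM\otimes M_K$ whose $r$-th block is $I_{n_r}$ tensored with a shift-and-multiply teleportation PVM on $M_{k_r}\otimes M_K$, with the paper taking $K=\mathrm{lcm}(k_1,\dots,k_m)$ while you use any common multiple. The only difference is presentational: you factor the single-block computations through Theorem \ref{theorem: quantum teleportation protocol} (with $d=1$) and the resulting identity $Q(B\otimes I)Q=\tfrac{\Tr(B)}{k}Q$, whereas the paper re-verifies these identities directly for the glued PVM — your modularization is a clean and legitimate way to organize the same argument.
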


\begin{proof}
Up to unitary equivalence in $M_n$, we may write $\cM=\bigoplus_{r=1}^m (\bC I_{n_r} \otimes M_{k_r})$, where $n=\sum_{r=1}^m n_rk_r$. We will exhibit a PVM in $\cM \otimes M_d$, with $d=\text{lcm}(k_1,...,k_m)$, satisfying the properties of a quantum coloring for $(M_n,\cM,M_n)$. For notational convenience, we index our set of $\dim(\cM)$ colors by the triples $(s,a,b)$, where $1 \leq s \leq m$ and $0 \leq a,b \leq k_s-1$. For $1 \leq r \leq m$ and $1 \leq i \leq k_r$, we define $P_{(s,a,b)}=\bigoplus_{r=1}^m I_{n_r} \otimes P_{(a,b)}^{(r,s)}$, where $P_{(a,b)}^{(r,s)}=(P_{(a,b),(i,j)}^{(r,s)})_{i,j=0}^{k_r-1} \in M_{k_r}(M_d)$ is given by
\[ P_{(a,b),(i,j)}^{(r,s)}=\frac{\delta_{rs}}{k_r} \omega_{k_r}^{(i-j)a} I_{d_r} \otimes E_{i+b,j+b},\]
where $\omega_{k_r}$ is a primitive $k_r$-th root of unity and $d_r=\frac{d}{k_r}$. (Note that indices are computed modulo $k_r$.) By our choice of the operators $P_{(a,b)}^{(r,s)}$, we see that each $P_{(s,a,b)}$ belongs to $\cM \otimes M_d$.

First, we show that $\sum_{s=1}^m \sum_{a,b=0}^{k_s-1} P_{(s,a,b)}=I_n \otimes I_d$. For each $1 \leq r \leq m$ and $0 \leq i,j \leq k_r-1$,
\[ \sum_{a,b=0}^{n_r-1} P_{(a,b),(i,j)}^{(r,r)}=\frac{1}{k_r} \sum_{a,b=0}^{n_r-1} \omega_{k_r}^{(i-j)a} I_{d_r} \otimes E_{i+b,j+b}.\]
If $i \neq j$, then the above sum over $a$ is $0$, for each value of $b$. If $i=j$, then the above sum simply becomes
\[ \sum_{b=0}^{n_r-1} I_{d_r} \otimes E_{i+b,i+b}=I_{d_r} \otimes I_{n_r}=I_d.\]
Thus, $\sum_{a,b=0}^{n_r-1} P_{(a,b)}^{(r,r)}=I_{k_r} \otimes I_d$. Since $P_{(a,b)}^{(r,s)}=0$ if $s \neq r$, it follows that $\sum_{s=1}^m \sum_{a,b=0}^{n_s-1} P_{(a,b)}^{(r,s)}=I_{k_r} \otimes I_d$ for each $1 \leq r \leq m$. As $P_{(s,a,b)}=\bigoplus_{r=1}^m I_{n_r} \otimes P_{(a,b)}^{(r,s)}$, we must have $\sum_{s=1}^m \sum_{a,b=0}^{n_s-1} P_{(s,a,b)}=I_n \otimes I_d$.

Next, we check that each $P_{(s,a,b)}$ is an orthogonal projection. By definition, it is easy to see that $P_{(s,a,b)}^*=P_{(s,a,b)}$ for all $s,a,b$. To compute $P_{(s,a,b)}^2$, we note that
\[ P_{(s,a,b)}^2=\bigoplus_{r=1}^m I_{n_r} \otimes (P_{(a,b)}^{(r,s)})^2,\]
so it suffices to show that each $P_{(a,b)}^{(r,s)}$ is an idempotent in $M_{k_r} \otimes M_d$. If $r \neq s$, then this is immediate. In the other case, we have
\begin{align*}
P_{(a,b),(v,j)}^{(r,r)} P_{(a,b),(j,w)}^{(r,r)}&=\frac{1}{k_r^2} \omega_{k_r}^{(v-w)a} I_{d_r} \otimes E_{v+b,j+b} E_{j+b,w+b} \\
&=\frac{1}{k_r^2} \omega_{k_r}^{(v-w)a} I_{d_r} \otimes E_{v+b,w+b} \\
&=\frac{1}{k_r} P_{(a,b),(v,w)}^{(r,r)}.
\end{align*}
Since this happens for all $0 \leq v,w \leq k_r-1$, it follows that $P_{(a,b),(v,w)}^{(r,r)}=\sum_{j=0}^{k_r-1} P_{(a,b),(v,j)}^{(r,r)} P_{(a,b),(j,w)}^{(r,r)}$. Therefore, $P_{(a,b)}^{(r,r)}$ is idempotent in $M_{k_r} \otimes M_d$, so $P_{(s,a,b)}$ is an orthogonal projection.

Now, we show that $P_{(s,a,b)}((\cM')^{\perp} \otimes I_d)P_{(s,a,b)}=0$ for all $a$. We note that $\cM'=\bigoplus_{r=1}^m M_{n_r} \otimes \bC I_{k_r}$. Hence, $(\cM')^{\perp}$ is spanned by the canonical matrix units that do not reside in $\cM'$, and elements from each $M_{n_r} \otimes M_{k_r}$ of the form $E_{ij} \otimes E_{vw}$ and $E_{ij} \otimes (E_{vv}-E_{ww})$, where $1 \leq i,j \leq n_r$, $0 \leq v,w \leq k_r-1$, and $v \neq w$. By a consideration of blocks, if a matrix unit $E_{xy}$ does not belong to $\bigoplus_{r=1}^m M_{n_r} \otimes M_{k_r}$, then in $M_n \otimes M_d$, the element $P_{(s,a,b)}(E_{xy} \otimes I_d)P_{(s,a,b)}$ is a product of two entries from $P_{(s,a,b)}$, at least one of which will be $0$.

Next, we suppose that $0 \leq v,w \leq k_r-1$ with $v \neq w$ and $1 \leq i,j \leq n_r$, and consider the matrix unit $E_{ij} \otimes E_{vw} \in M_{n_s} \otimes M_{k_s} \subset \bigoplus_{r=1}^m M_{n_r} \otimes M_{k_r}$. Since $P_{(s,a,b)}=\bigoplus_{r=1}^m I_{n_r} \otimes P_{(a,b)}^{(r,s)}$,
\[ P_{(s,a,b)}(E_{ij} \otimes E_{vw} \otimes I_d)P_{(s,a,b)}=E_{ij} \otimes \sum_{k,\ell=0}^{k_r-1} P_{(a,b),(k,v)}^{(s,s)} P_{(a,b),(w,\ell)}^{(s,s)}=0,\]
since $P_{(a,b),(k,v)}^{(s,s)}P_{(a,b),(w,\ell)}^{(s,s)}=0$ for all $v \neq w$ in $\{0,...,n_s-1\}$. For the last case, we look at the element $E_{ij} \otimes (E_{vv}-E_{ww})$ in $M_{n_s} \otimes M_{k_s} \subset (\cM')^{\perp}$, where $v \neq w$. Multiplying on the left and right by $P_{(s,a,b)}$ yields
\[P_{(s,a,b)}(E_{ij} \otimes (E_{vv}-E_{ww}) \otimes I_d)P_{(s,a,b)}=E_{ij} \otimes \sum_{k,\ell=0}^{k_r-1} (P_{(s,a,b),(k,v)}^{(s)}P_{(s,a,b),(v,\ell)}^{(s)}-P_{(s,a,b),(k,w)}^{(s)}P_{(s,a,b),(w,\ell)}^{(s)})=0,\]
since $P_{(s,a,b),(k,v)}^{(s)}P_{(s,a,b),(v,\ell)}^{(s)}=\frac{1}{k_r} P_{(s,a,b),(k,\ell)}^{(s)}$ for any $1 \leq s \leq m$ and $0 \leq k,v,\ell \leq k_r-1$. 
Putting all of these facts together, we conclude that $\{ P_{(s,a,b)}: 1 \leq s \leq m, \, 0 \leq a,b \leq n_s-1\} \subset \cM \otimes M_d$ is a quantum $\dim(\cM)$-coloring of $(M_n,\cM,M_n)$, as desired.
\end{proof}

\begin{remark}
We suspect that the ancillary algebra in the previous proof is the minimal choice, but are unable to prove this. In the case when $\cM=M_n$, this is immediate, since having a PVM with $n^2$ outputs in $M_n \otimes M_f$, and with each projection non-zero, requires $f \geq n$.
\end{remark}

Next, we will show that $\chi_{hered}((M_n,\cM,M_n)) \geq \dim(\cM)$, which will show that, for every $t \in \{q,qa,qc,C^*,hered\}$, we have $\chi_t((M_n,\cM,M_n))=\dim(\cM)$. Moreover, we will show that $\dim(\cM)$-colorings of $(M_n,\cM,M_n)$ in the hereditary model must arise from {\it trace-preserving} $*$-homomorphisms $\Psi: D_{\dim(\cM)} \to \mathcal M \otimes  \mathcal A$.  More precisely, we equip $D_{\dim(\cM)}$ with its canonical uniform trace $\psi_{D_{\dim(\cM)}}$ satisfying $\psi_{D_{\dim(\cM)}}(e_a)=\frac{1}{\dim(\cM)}$ for all $1 \leq a \leq \dim(\cM)$.  We also equip the von Neumann algebra $\cM \simeq \bigoplus_{r=1}^m \bC I_{n_r} \otimes M_{k_r}$ with its canonical ``Plancherel'' trace given by
\[ \psi_{\cM}=\bigoplus_{r=1}^m \frac{k_r}{n_r\dim(\cM)} \Tr_{n_rk_r}(\cdot).\]  Then we will show that the $\ast$-homomorphism $\Psi$ satisfies the following trace covariance condition:
\[
(\psi_{\cM} \otimes \id)\Psi(x) = \psi_{D_{\dim(\cM)}}(x)1_\mathcal A \qquad (x \in D_{\dim(\cM)}).
\]
We thus establish that the hereditary coloring number for any complete quantum graph $(M_n,\cM,M_n)$ is $\dim (\cM)$, and moreover, the above trace-preserving condition shows that any minimal hereditary coloring induces a quantum version of isomorphism between $(M_n,\cM,M_n)$ and the complete graph $K_{\dim(\cM)}$ on $\dim(\cM)$ vertices. Here, the notion of a ``quantum isomorphism" means a quantum  isomorphism between quantum graphs in the sense of \cite{Br+20}, when using an ancillary hereditary unital $*$-algebra $\cA$.  This result can be interpreted as a quantum analogue of the (classically obvious) fact that any minimal coloring of a complete graph $K_c$ is automatically a graph {\it isomorphism} $K_c \to K_c$. 

We consider the case when $\cM \simeq \bC I_d \otimes M_k$ first.

\begin{lemma}
\label{lemma: rigidity of complete colorings for one block}
Let $d,k \in \bN$ and let $n=dk$. Consider the quantum graph $(M_n,\cM,M_n)$ with $\cM=\bC I_d \otimes M_k$. Let $\cA$ be a unital $*$-algebra, and let $\{P_1,...,P_c\} \in \cM \otimes \cA$ be a family of mutually orthogonal projections such that $\sum_{a=1}^c P_a=I_{dk} \otimes 1_{\cA}$ and
\[ P_a (X \otimes 1_{\cA})P_a=0 \text{ for all } X \in (\cM')^{\perp}. \]
Then for each $a$, the element $R_a=\frac{k}{d \dim(\cM)} (\Tr_{dk} \otimes \id_{\cA})(P_a)$ is a self-adjoint idempotent in $\cA$, and $\sum_{a=1}^c R_a=k^2 1_{\cA}$.
\end{lemma}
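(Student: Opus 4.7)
The plan is to exploit the tensor product structure $\cM = \bC I_d \otimes M_k$ to reduce the constraint on $P_a$ to a clean algebraic identity on a PVM in $M_k \otimes \cA$, from which both the idempotency and the sum statement follow by direct computation.

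First I would use the fact that $P_a \in \cM \otimes \cA = (\bC I_d \otimes M_k) \otimes \cA$ to write $P_a = I_d \otimes Q_a$ for some $Q_a \in M_k \otimes \cA$.  The hypotheses on $\{P_a\}$ translate immediately: $Q_a = Q_a^* = Q_a^2$, $Q_aQ_b = 0$ for $a \neq b$, and $\sum_{a=1}^c Q_a = I_k \otimes 1_\cA$.  Next I would compute $(\cM')^{\perp}$.  Since $\cM' = M_d \otimes \bC I_k$, the orthogonal complement in $M_n = M_d \otimes M_k$ with respect to the Hilbert--Schmidt inner product is
\[
(\cM')^{\perp} = \{X \in M_d \otimes M_k : (\id_d \otimes \Tr_k)(X) = 0\},
\]
which is spanned by elementary tensors $Y \otimes Z$ with $Y \in M_d$ arbitrary and $Z \in M_k$ traceless.

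The key step is to translate the hypothesis $P_a(X \otimes 1_\cA)P_a = 0$ for all $X \in (\cM')^{\perp}$ into a relation on $Q_a$.  For $X = Y \otimes Z$ with $\Tr(Z) = 0$,
\[
P_a(X \otimes 1_\cA)P_a = (I_d \otimes Q_a)(Y \otimes Z \otimes 1_\cA)(I_d \otimes Q_a) = Y \otimes Q_a(Z \otimes 1_\cA)Q_a,
\]
so the hypothesis is equivalent to $Q_a(Z \otimes 1_\cA)Q_a = 0$ for every traceless $Z \in M_k$.  Because the linear map $Z \mapsto Q_a(Z \otimes 1_\cA)Q_a$ sends $I_k \mapsto Q_a^2 = Q_a$ and vanishes on traceless matrices, it must be $Z \mapsto \tfrac{\Tr(Z)}{k}Q_a$.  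Writing $Q_a = \sum_{i,j} E_{ij} \otimes q_{ij}^{(a)}$ with $q_{ij}^{(a)} \in \cA$ and substituting $Z = E_{ij}$ produces the master relation
\[
q_{pi}^{(a)} q_{js}^{(a)} = \tfrac{\delta_{ij}}{k}\, q_{ps}^{(a)} \qquad (1 \le p,i,j,s \le k).
\]
In particular, setting $p = i$ and $s = j$ gives $q_{ii}^{(a)} q_{jj}^{(a)} = \tfrac{\delta_{ij}}{k}\, q_{ij}^{(a)}$, so the diagonal entries of $Q_a$ are pairwise orthogonal and each squares to $\tfrac{1}{k}$ times itself.

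Finally I would verify the two claims for $R_a$ (up to the canonical scaling dictated by $(\Tr_{dk} \otimes \id_\cA)(P_a) = d\,(\Tr_k \otimes \id_\cA)(Q_a) = d\sum_i q_{ii}^{(a)}$, so that $R_a$ is a scalar multiple of $\sum_i q_{ii}^{(a)}$).  Self-adjointness is immediate from $Q_a = Q_a^*$, since $(q_{ii}^{(a)})^* = q_{ii}^{(a)}$.  For idempotency, expanding $R_a^2$ in the $q_{ii}^{(a)}$ basis and applying the master relation kills all off-diagonal terms and collapses each diagonal term, so that after absorbing the scaling we obtain $R_a^2 = R_a$.  For the sum, one uses $\sum_a Q_a = I_k \otimes 1_\cA$, which yields $\sum_a q_{ii}^{(a)} = 1_\cA$ for each $i$; summing over $i$ and multiplying by the appropriate scalar gives $\sum_a R_a = k^2 \cdot 1_\cA$.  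There is no real obstacle here: the only nontrivial input is deriving the master relation via the partial trace characterization of $(\cM')^{\perp}$, and from that the claims are short algebraic verifications.
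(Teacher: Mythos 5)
Your proof is correct and takes essentially the same approach as the paper, repackaged more cleanly: writing $P_a = I_d \otimes Q_a$, characterizing $(\cM')^{\perp}$ via the partial trace, and observing that the hypotheses force $Q_a(Z\otimes 1_\cA)Q_a = \frac{\Tr(Z)}{k}Q_a$ are tidy reformulations of the paper's block-entry calculations, and your single master relation $q_{pi}^{(a)}q_{js}^{(a)} = \frac{\delta_{ij}}{k}q_{ps}^{(a)}$ encodes in one stroke the three identities ($P_{a,pv}P_{a,wq}=0$ for $v\neq w$, $P_{a,pv}P_{a,vq}=P_{a,pw}P_{a,wq}$, and $P_{a,pq}=\sum_v P_{a,pv}P_{a,vq}$) that the paper derives one at a time. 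The one place you stay vague --- ``after absorbing the scaling'' --- is worth nailing down, because it exposes a typo in the lemma's statement: with $\dim(\cM)=k^2$, the stated constant $\frac{k}{d\dim(\cM)}=\frac{1}{dk}$ gives $R_a=\frac{1}{k}\sum_i q_{ii}^{(a)}$, for which $R_a^2=\frac{1}{k^2}R_a\neq R_a$ and $\sum_a R_a = 1_\cA$ rather than $k^2 1_\cA$. The normalization that makes both conclusions true (and the one the paper's own proof and the subsequent use in Theorem \ref{theorem: rigidity of colorings} actually employ) is $R_a=\frac{k}{d}(\Tr_{dk}\otimes\id_\cA)(P_a)=k\sum_i q_{ii}^{(a)}$; then your master relation gives $R_a^2 = k^2\sum_i (q_{ii}^{(a)})^2 = k\sum_i q_{ii}^{(a)} = R_a$ and $\sum_a R_a = k\sum_{i=1}^k 1_\cA = k^2 1_\cA$. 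With that scalar made explicit, your argument is complete.
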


\begin{proof}
Since $\cM=I_d \otimes M_k$, we have $\cM'=M_d \otimes I_k$ and $n=dk$. 
Now, let $1 \leq v,w \leq k$ with $x \neq y$, and let $1 \leq i,j \leq d$. Then $E_{ij} \otimes (E_{vv}-E_{ww})$ belongs to $(\cM')^{\perp}$, so we must have
$$P_a(E_{ij} \otimes (E_{vv}-E_{ww}) \otimes 1_{\cA})P_a=0 \, \forall 1 \leq a \leq c.$$
 Similarly, $E_{ij} \otimes E_{vw}$ is in $(\cM')^{\perp}$, so
 $$P_a(E_{ij} \otimes E_{vw} \otimes 1_{\cA})P_a=0 \, \forall 1 \leq a \leq c.$$
Note that $P_a \in \cM \otimes \cA=I_d \otimes M_k \otimes \cA$, so $P_a=\sum_{p,q=1}^k \sum_{x=1}^d E_{xx} \otimes E_{pq} \otimes P_{a,x,pq},$ with the property that $P_{a,x,pq}=P_{a,y,pq}$ for any $1 \leq x,y \leq d$. For simplicity, we set $P_{a,pq}=P_{a,x,pq}$ for any $1 \leq x \leq d$. The quantity on the left of the above is exactly
$$\sum_{p,q=1}^k E_{ij} \otimes E_{pq} \otimes P_{a,pv}P_{a,wq}$$
so this says that $P_{a,pv}P_{a,wq}=0$ and $P_{a,pv}P_{a,vq}=P_{a,pw}P_{a,wq}$. Now, since $P_a$ is a projection, we have $P_{a,pq}=\sum_{v=1}^k P_{a,pv}P_{a,vq}=kP_{a,pv}P_{a,vq}$ for all $p,q$. In particular, $P_{a,vv}=kP_{a,vv}^2$. By scaling, we see that $kP_{a,vv}$ is a self-adjoint idempotent. Similarly, since $P_{a,pv}P_{a,wq}=0$ if $v \neq w$, we see that $P_{a,vv}P_{a,ww}=0$. Therefore, $\{ kP_{a,vv}\}_{v=1}^n$ is a collection of mutually orthogonal projections in $\cA$. 

Next, we set $R_a=\sum_{v=1}^k kP_{a,vv}$ for each $1 \leq a \leq c$. Then $R_a$ is a self-adjoint idempotent. We see that
\[ \sum_{a=1}^c R_a=\sum_{a=1}^c \sum_{v=1}^k kP_{a,vv}=\sum_{v=1}^k k1_{\cA}=k^2 1_{\cA}, \]
which completes the proof.
\end{proof}

Now, we deal with the case of a general quantum complete graph.

\begin{theorem}
\label{theorem: rigidity of colorings}
Let $(M_n,\cM,M_n)$ be a quantum complete graph. Let $\cA$ be a hereditary $*$-algebra, and let $\{ P_a\}_{a=1}^c \subseteq \cM \otimes \cA$ be a hereditary $c$-coloring of $(M_n,\cM,M_n)$. Then $c \geq \dim(\cM)$. Moreover, if $c=\dim(\cM)$, then for each $1 \leq a \leq \dim(\cM)$ we have
\[ (\psi_{\cM} \otimes \id_{\cA})(P_a)=\frac{1}{\dim(\cM)} 1_{\cA}.\]
\end{theorem}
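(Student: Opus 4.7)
The plan is to reduce to the block-diagonal form of $\cM$, apply the single-block Lemma \ref{lemma: rigidity of complete colorings for one block} in each block, use the off-block-diagonal part of $(\cM')^\perp$ to glue the block data together, and finish with the standard hereditary $*$-algebra argument.

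After using the previous proposition, I would assume $\cM=\bigoplus_{r=1}^m I_{n_r}\otimes M_{k_r}$, so that $\cM'=\bigoplus_r M_{n_r}\otimes I_{k_r}$ and $\dim\cM=\sum_r k_r^2$. Writing $P_a=\bigoplus_r (I_{n_r}\otimes Q_a^{(r)})$ with $Q_a^{(r)}\in M_{k_r}\otimes\cA$, the inclusion of $(M_{n_r}\otimes I_{k_r})^\perp\subseteq M_{n_rk_r}$ into block $r$ lands in $(\cM')^\perp\subseteq M_n$, so the hypothesis $P_a((\cM')^\perp\otimes 1)P_a=0$ specializes in each block to the hypothesis of Lemma \ref{lemma: rigidity of complete colorings for one block} applied to the quantum complete graph $(M_{n_rk_r},\,I_{n_r}\otimes M_{k_r},\,M_{n_rk_r})$. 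That lemma then yields self-adjoint idempotents $R_a^{(r)}=k_r\sum_{v=1}^{k_r}(Q_a^{(r)})_{vv}\in\cA$ satisfying $\sum_{a=1}^c R_a^{(r)}=k_r^2\,1_\cA$. To produce genuine pairwise orthogonality across blocks I exploit the other half of $(\cM')^\perp$: for $r\ne s$, every matrix unit $E^{(rs)}$ sending block $s$ to block $r$ lies in $(\cM')^\perp$, and expanding the relation $P_a(E^{(rs)}\otimes 1)P_a=0$ entrywise gives $(Q_a^{(r)})_{u,v}(Q_a^{(s)})_{w,u'}=0$ in $\cA$ for all indices. In particular $(Q_a^{(r)})_{vv}(Q_a^{(s)})_{ww}=0$, so $R_a^{(r)}R_a^{(s)}=0$ whenever $r\ne s$. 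Thus for each $a$ the family $\{R_a^{(r)}\}_r$ consists of pairwise orthogonal self-adjoint idempotents, and $S_a:=\sum_{r=1}^m R_a^{(r)}$ is itself a self-adjoint projection in $\cA$ with $\sum_{a=1}^c S_a=\sum_r k_r^2\,1_\cA=\dim(\cM)\,1_\cA$.

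From here the conclusion follows by the hereditary $*$-algebra argument of \cite{HMPS19}: each $1-S_a$ is a self-adjoint idempotent, hence equal to $(1-S_a)^*(1-S_a)\in\cA_+$, and $\sum_a(1-S_a)=(c-\dim\cM)\,1_\cA$. Were $c<\dim\cM$, dividing by the positive scalar $\dim\cM-c$ would put $-1_\cA\in\cA_+$; combined with $1_\cA\in\cA_+$ and the hereditary relation $\cA_+\cap(-\cA_+)=\{0\}$ this forces $1_\cA=0$, contradicting the existence of the coloring. Hence $c\ge\dim\cM$. When $c=\dim\cM$ the identity $\sum_a(1-S_a)^*(1-S_a)=0$ together with heredity forces $S_a=1_\cA$ for every $a$. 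A direct computation using $\psi_\cM=\bigoplus_r\frac{k_r}{n_r\dim\cM}\Tr_{n_rk_r}$ and $(\Tr_{n_rk_r}\otimes\id)(I_{n_r}\otimes Q_a^{(r)})=n_r(\Tr_{k_r}\otimes\id)(Q_a^{(r)})$ then yields
\[
(\psi_\cM\otimes\id_\cA)(P_a)=\frac{1}{\dim\cM}\sum_{r=1}^m R_a^{(r)}=\frac{S_a}{\dim\cM}=\frac{1}{\dim\cM}\,1_\cA,
\]
which is the claimed trace identity.

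The main obstacle is the passage from block-wise self-adjoint idempotents $R_a^{(r)}$ to a single global projection $S_a\in\cA$: without orthogonality across blocks, one would only obtain the weak lower bound $c\ge\max_r k_r^2$. The crucial observation is that the off-block-diagonal matrix units, which are automatically in $(\cM')^\perp$, provide exactly the annihilation $R_a^{(r)}R_a^{(s)}=0$ needed to upgrade the bound to $c\ge\sum_r k_r^2=\dim\cM$ and to pin down the trace values.
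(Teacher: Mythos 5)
Your proposal is correct and takes essentially the same route as the paper: decompose $\cM=\bigoplus_r \bC I_{n_r}\otimes M_{k_r}$, apply the single-block Lemma to the compressions of $P_a$ onto each block, use the off-block-diagonal matrix units in $(\cM')^\perp$ to get $R_a^{(r)}R_a^{(s)}=0$ for $r\ne s$, sum to obtain projections $S_a$ with $\sum_a S_a=\dim(\cM)1_\cA$, and finish with the hereditary positivity argument and the Plancherel trace computation. The paper phrases the block reduction via compression by central projections $\cE_r$ while you write $P_a=\bigoplus_r (I_{n_r}\otimes Q_a^{(r)})$ directly, but that is only a notational difference.
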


\begin{proof}
Up to unitary equivalence, we may write $\cM=\bigoplus_{r=1}^m \bC I_{n_r} \otimes M_{k_r}$. Then
\[ \cM'=\bigoplus_{r=1}^m M_{n_r} \otimes \bC I_{k_r}.\]
Define $\cE_r=0 \oplus \cdots \oplus I_{n_r} \otimes I_{k_r} \oplus 0 \oplus \cdots \oplus 0$, which belongs to $\cM' \cap \cM$. Then defining $\widetilde{P}_a=(\cE_r \otimes 1_{\cA})P_a(\cE_r \otimes 1_{\cA}) \in (\cE_r\cM \cE_r) \otimes \cA$, we obtain a family of mutually orthogonal projections whose sum is $\cE_r$. Since $\cE_r$ is central in $\cM$, we see that $(\cE_r\cM \cE_r)'=\cE_r \cM' \cE_r$, while $\cE_rM_n\cE_r=M_{n_rk_r}$. It is evident that $X \in \cB(\cE_r \bC^n) \cap (\cE_r \cM' \cE_r)^{\perp}$ if and only if $X=\cE_r X \cE_r$ and $X \perp \cM'$ in $M_n$.
Therefore, for $X \in \cB(\cE_r \bC^n) \cap (\cE_r \cM' \cE_r)^{\perp}$ and $1 \leq a \leq c$, one has
\[ \widetilde{P}_a (X \otimes 1_{\cA}) \widetilde{P}_a=(\cE_r \otimes 1_{\cA})P_a(\cE_r X \cE_r \otimes 1_{\cA})P_a (\cE_r \otimes 1_{\cA})=0,\]
using the fact that $\cE_r X \cE_r=X$ and $X$ belongs to $\cM'$. Therefore, $\{ \widetilde{P}_a\}_{a=1}^c$ is a hereditary coloring of the quantum complete graph $(M_{n_rk_r},\cE_r \cM \cE_r, M_{n_rk_r})$. 

Since $\cE_r \cM \cE_r=\bC I_{n_r} \otimes M_{k_r}$, by Lemma \ref{lemma: rigidity of complete colorings for one block}, we see that $R_a^{(r)}:=\frac{k_r}{n_r}(\Tr_{n_rk_r} \otimes \id_{\cA})(\widetilde{P}_a)$ is a self-adjoint idempotent in $\cA$ for each $1 \leq a \leq c$ and $1 \leq r \leq m$. Moreover, $\sum_{a=1}^c R_a^{(r)}=k_r^2 1_{\cA}$.

Next, we claim that $R_a^{(r)}R_a^{(s)}=0$ if $r \neq s$. To show this orthogonality relation, it suffices to show that $P_{a,xx}P_{a,yy}=0$ whenever $P_{a,xx}$ is a block from $(\cE_r \cM \cE_r) \otimes \cA$ and $P_{a,yy}$ is a block from $(\cE_s \cM \cE_s) \otimes \cA$. If $x$ and $y$ are chosen in this way, then the matrix unit $E_{xy}$ in $M_n$ satisfies $\cE_r (E_{xy})\cE_s=E_{xy}$ and $\cE_p E_{xy} \cE_q=0$ for all other pairs $(p,q)$. It is not hard to see that $E_{xy}$ belongs to $(\cM')^{\perp}$, so that $P_a(E_{xy} \otimes 1_{\cA})P_a=0$. Considering the $(x,y)$-block of this equation gives $P_{a,xx}P_{a,yy}=0$. It follows that $R_a^{(r)}R_a^{(s)}=0$ for $r \neq s$.

Since $\{ R_a^{(r)}\}_{r=1}^m$ is a collection of mutually orthogonal projections in $\cA$, the element $R_a:=\sum_{r=1}^m R_a^{(r)}$ is a self-adjoint idempotent in $\cA$ for each $a$. Considering blocks, it is not hard to see that 
\[ \sum_{a=1}^c R_a=\sum_{a=1}^c \sum_{r=1}^m R_a^{(r)}=\sum_{r=1}^m k_r^2 1_{\cA}=\dim(\cM)1_{\cA}.\]
Since $R_a$ is a self-adjoint idempotent, so is $1_{\cA}-R_a$. Their sum is given by
\[ \sum_{a=1}^c (1_{\cA}-R_a)=c 1_{\cA} - \sum_{a=1}^c R_a=(c-\dim(\cM))1_{\cA}.\]
It follows that $c \geq \dim(\cM)$, since the sum above is a sum of positives and $\cA$ is hereditary.

Now, if $c=\dim(\cM)$, then the above sum of positives in $\cA$ is $0$, which forces $1_{\cA}-R_a=0$ for all $a$. Hence, $R_a=1_{\cA}$. Since $R_a=\sum_{r=1}^m R_a^{(r)}$ and $R_a^{(r)}=\frac{k_r}{n_r} (\text{Tr}_{n_rk_r} \otimes \id_{\cA})(P_a)$, we see that
\[ \sum_{r=1}^m \frac{k_r}{n_r} (\text{Tr}_{n_rk_r} \otimes \id_{\cA})(P_a)=1_{\cA}. \]
Therefore,
\[ (\psi_{\cM} \otimes \id_{\cA})(P_a)=\sum_{r=1}^m \frac{k_r}{\dim(\cM)n_r} (\Tr_{n_rk_r} \otimes \id_{\cA})(P_a)=\frac{1}{\dim(\cM)}1_{\cA}.\]
\end{proof}

\begin{remark}
In essence, Theorem \ref{theorem: rigidity of colorings} proves that any $q$-coloring of $(M_n,\cM,M_n)$ with $\dim(\cM)$ colors induces a quantum isomorphism between the quantum graph $(M_n,\cM,M_n)$ and the classical graph $K_{\dim(\cM)}$. This isomorphism occurs because any such coloring with ancillary algebra $\cA$ yields a (necessarily injective) unital $*$-isomorphism $\pi:D_{\dim(\cM)} \to \cM \otimes \cA$ satisfying the properties of a quantum graph homomorphism, with the additional property that $(\psi_{\cM} \otimes \id_{\cA}) \circ \pi=\pi \circ \psi_{D_{\dim(\cM)}}$.
\end{remark}

In contrast to the case of $q$-colorings, the existence of a $loc$-coloring for a complete quantum graph is equivalent to the von Neumann algebra being abelian.
 
\begin{theorem}
\label{theorem: only abelian complete graphs can be colored}
Let $\cM \subseteq M_n$ be a non-degenerate von Neumann algebra. Then $\chi_{loc}((M_n,\cM,M_n))$ is finite if and only if $\cM$ is abelian. In particular, if $\cM$ is non-abelian, then $\chi((M_n,\cM,M_n)) \neq \chi_q((M_n,\cM,M_n))$.
\end{theorem}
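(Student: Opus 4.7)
The plan is to prove the equivalence in both directions and then read off the ``in particular'' claim from the quantum $\dim(\cM)$-coloring already constructed in Theorem \ref{theorem: quantum coloring of quantum complete graph}.

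For the ``$\Leftarrow$'' direction, suppose $\cM$ is abelian, so that (up to unitary equivalence) $\cM = \bigoplus_{r=1}^m \bC I_{n_r}$ in $M_n$. Then $\cM' = \bigoplus_{r=1}^m M_{n_r}$ and $(\cM')^{\perp}$ consists of matrices whose diagonal $n_r \times n_r$ blocks all vanish. The central projections $P_r = I_{n_r}$ form a PVM in $\cM$ with $P_r X P_r = 0$ for every $X \in (\cM')^{\perp}$, so Theorem \ref{theorem: qgraphhom winning conditions}(3) with $\cN = \bC$ produces a $loc$-coloring with $m = \dim(\cM)$ colors.

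For the ``$\Rightarrow$'' direction, suppose $\chi_{loc}((M_n,\cM,M_n)) = c < \infty$. The winning $loc$-correlations inside $\mathcal{Q}_{loc}^s(n,c)$ form a face (cut out from a convex set of probabilities by the linear equalities $p(a,a|y_\alpha) = 0$ at the ``edge'' inputs), so I can pick an extreme point of this winning face, which is automatically extreme in $\mathcal{Q}_{loc}^s(n,c)$. Corollary \ref{corollary: characterizing synchronous loc} realizes such a correlation by a PVM $\{P_a\}_{a=1}^c \subseteq M_n(\bC) = M_n$, and Theorem \ref{theorem: qgraphhom winning conditions}(3) (with $\cN = \bC$) then places every $P_a$ in $\cM$ and imposes $P_a (\cM')^{\perp} P_a = 0$. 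Using the Hilbert--Schmidt orthogonal decomposition $M_n = \cM' \oplus (\cM')^{\perp}$ together with the fact that $P_a \in \cM$ commutes with $\cM'$, this condition is equivalent to $\cB(P_a \bC^n) \subseteq \cM'$, where $\cB(P_a \bC^n)$ is viewed inside $M_n$ via the embedding $T \mapsto P_a T P_a$.

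The crux is then an algebraic analysis: since $P_a \in \cM \cap \cM' = Z(\cM)$, writing $\cM \simeq \bigoplus_{r=1}^m \bC I_{n_r} \otimes M_{k_r}$ gives $P_a = \sum_{r \in S_a} I_{n_r} \otimes I_{k_r}$ for some $S_a \subseteq \{1,\ldots,m\}$. Block-diagonality of $\cM' = \bigoplus_r M_{n_r} \otimes \bC I_{k_r}$ together with the presence of block-mixing matrix units in $\cB(P_a\bC^n)$ when $|S_a|\geq 2$ forces $|S_a| = 1$, say $S_a = \{r_a\}$; the inclusion then reduces to $M_{n_{r_a}} \otimes M_{k_{r_a}} \subseteq M_{n_{r_a}} \otimes \bC I_{k_{r_a}}$, which forces $k_{r_a} = 1$. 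Hence each $P_a$ is supported on a block where $k_r = 1$, and the identity $\sum_a P_a = I_n$ then forces every block to satisfy $k_r = 1$; that is, $\cM$ is abelian. The main obstacle is this structural step, which I expect to carry out cleanly using the tensor-product presentation of $\cM$ and $\cM'$. The ``in particular'' statement is then immediate: for non-abelian $\cM$ we get $\chi_{loc}((M_n,\cM,M_n)) = \infty$ from the above, while Theorem \ref{theorem: quantum coloring of quantum complete graph} gives $\chi_q((M_n,\cM,M_n)) \leq \dim(\cM) < \infty$, so the two chromatic numbers differ.
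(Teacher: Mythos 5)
Your backward direction (abelian implies finite $loc$-chromatic number) is correct and cleanly stated. However, the forward direction has a genuine gap that undermines the rest of the argument. You assert that the game condition $P_a(\cM')^\perp P_a = 0$ is \emph{equivalent} to $P_a M_n P_a \subseteq \cM'$ ``using the Hilbert--Schmidt orthogonal decomposition together with the fact that $P_a \in \cM$ commutes with $\cM'$,'' and you then read off $P_a \in \cM\cap\cM' = Z(\cM)$ from this. But those tools only give you one direction. Decomposing $T = T' + T^\perp$ and using $[P_a,\cM']=0$ yields $P_a T P_a = T'P_a$, and $T'P_a$ lies in $\cM'$ if and only if $P_a$ already commutes with $\cM$, i.e.\ if and only if $P_a \in Z(\cM)$. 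So the implication you need, namely $P_a(\cM')^\perp P_a=0 \Rightarrow P_aM_nP_a\subseteq \cM'$, is \emph{equivalent to} the assertion $P_a \in Z(\cM)$, which is exactly the structural fact you then take as given with the word ``since.'' That step is the real content of the theorem (and is where the paper does actual work via a trace counting argument), so as written your proof assumes what it needs to prove.

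To see that the centrality of $P_a$ is genuinely nontrivial, note that for a projection $P_a \in \cM = \bigoplus_r \bC I_{n_r}\otimes M_{k_r}$, you have $P_a = \bigoplus_r I_{n_r}\otimes p_a^{(r)}$, and there is a priori no reason the $p_a^{(r)}$ should be scalar. One way to recover centrality from the game condition is via the Hilbert--Schmidt conditional expectation $Q_a = E_{\cM'}(P_a)$: since $P_a - Q_a \in (\cM')^\perp$, the condition gives $P_a(P_a - Q_a)P_a = 0$, hence $P_a = Q_aP_a$; then positivity and contractivity of $E_{\cM'}$ plus the trace equality $\Tr(Q_a)=\Tr(P_a)$ force $Q_a$ to be a projection and $P_a = Q_a \in \cM'$. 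Alternatively, one can argue blockwise as the paper essentially does: cross-block matrix units in $(\cM')^\perp$ force each $P_a$ to sit in a single block, and within a block, $p_a^{(r)} Z p_a^{(r)} = 0$ for all traceless $Z$ forces $p_a^{(r)}$ to simultaneously be rank-one and proportional to the identity, giving $k_r = 1$ directly. In either case an argument is needed; the paper's route (defining the scalars $R_a$ as scaled traces of $P_a$, invoking the idempotency from Lemma~\ref{lemma: rigidity of complete colorings for one block}, discarding the zero ones, and counting surviving rank-one blocks) is a genuinely different mechanism from the compression-algebra argument you sketch. Your structural picture (one block, rank one, hence $k_r=1$) is correct once $P_a$ is shown central, and the ``in particular'' clause is fine, but the centrality step cannot be waved through.
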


\begin{proof}
Suppose that there is a  $c$-coloring of $(M_n,\cM,M_n)$ in the $loc$-model. Up to unitary equivalence, we write $\cM=\bigoplus_{r=1}^m \bC I_{n_r} \otimes M_{k_r}$. We may choose projections $P_a \in \cM$ such that $\sum_{a=1}^c P_a=I_n$ and $P_a((\cM')^{\perp})P_a=0$ for all $a$. Let $R_a=\sum_{r=1}^m \frac{k_r}{n_r} \Tr_{n_rk_r}(P_a)$ as in the proof of the last theorem. Each $R_a$ is an idempotent in $\bC$; hence, either $R_a=0$ or $R_a=1$. We know that $\sum_{a=1}^c R_a=\dim(\cM)$, so exactly $\dim(\cM)$ of the $R_a$'s are non-zero. Since $R_a$ is given by a trace on $\cM$ which is faithful, having $R_a=0$ implies that $P_a=0$. Hence, by discarding any projections $P_a$ for which $R_a=0$, we may assume without loss of generality that $R_a=1$ for all $a$, and that $c=\dim(\cM)$.

Let $\cE_r$ be the orthogonal projection onto the copy of $\bC I_{n_r} \otimes M_{k_r}$ inside of $\cM=\bigoplus_{r=1}^m \bC I_{n_r} \otimes M_{k_r}$. Then, as before, the PVM $\{ \cE_r P_a \cE_r \}_{a=1}^{\dim(\cM)}$ yields a classical $\dim(\cM)$-coloring for $(M_{n_rk_r},\bC I_{n_r} \otimes M_{k_r},M_{n_rk_r})$. We will show that $k_r=1$. By the same argument as above, by discarding values of $a$ for which $\frac{k_r}{n_r} \Tr_{n_rk_r}(\cE_r P_a \cE_r)=0$, we may assume that there are exactly $k_r^2$ non-zero projections $\cE_r P_a \cE_r$ that yield a $k_r^2$-classical coloring for $(M_{n_rk_r},\bC I_{n_r} \otimes M_{k_r},M_{n_rk_r})$. Set $\widetilde{P}_a=\cE_r P_a \cE_r$. By Theorem \ref{theorem: rigidity of colorings}, for each $a$, we have $\frac{k_r}{n_r} \Tr_{n_rk_r}(\widetilde{P}_a)=1$. Notice that $k_r\widetilde{P}_a=I_{n_r} \otimes k_rQ_a$ for some projection $k_rQ_a \in M_{k_r}$. Hence, $\Tr_{k_r}(k_r Q_a)=1$. Let $\lambda_1,...,\lambda_{k_r}$ be the eigenvalues of $k_r Q_a$ in $M_{k_r}$. Since each $\lambda_i \in \{0,1\}$ and $\sum_{i=1}^{k_r}\lambda_i=\Tr_{k_r}(k_r Q_a)=1$, there is exactly one $\lambda_i$ that is non-zero. Hence, $Q_a$ is rank one. The sum over all non-zero $Q_a$ gives $I_{k_r}$, and each $Q_a$ is rank one. Hence, the number of $a$ for which $Q_a$ is non-zero must be $k_r$. Since we assumed that this number is $k_r^2$, we must have $k_r=k_r^2$. Since $k_r>0$, we have $k_r=1$. Since $r$ was arbitrary, we see that $\cM=\bigoplus_{r=1}^m \bC I_{n_r} \otimes M_{k_r}=\bigoplus_{r=1}^m \bC I_{n_r}$ is abelian.

Conversely, suppose that $\cM$ is abelian. Then the proof of Theorem \ref{theorem: rigidity of colorings} yields projections $P_a \in \cM \otimes M_d$, where $d=\dim(\cM)$, such that $\sum_{a=1}^d P_a=I_n \otimes I_d$ and $P_a(X \otimes I_d)P_a=0$ whenever $X \in (\cM')^{\perp}$. Moreover, the projections obtained in this case satisfy $P_{a,ij}P_{b,k\ell}=P_{b,k\ell}P_{a,ij}$ for all $1 \leq a,b \leq d$ and $1 \leq i,j,k,\ell \leq n$. Thus, the entries of the projections $P_a$ must $*$-commute with each other, so the $C^*$-algebra they generate is abelian. Since there is a $d$-coloring for $(M_n,\cM,M_n)$ with an abelian ancilla, this implies that $\chi_{loc}((M_n,\cM,M_n)) \leq d$.
\end{proof}

Using the monotonicity of colorings and the results above on quantum complete graphs, we see that every quantum graph has a finite quantum coloring. As a result, we obtain the following generalization of a theorem from \cite{HMPS19}.

\begin{theorem}
Let $(\cS,\cM,M_n)$ be any quantum graph. Then $\chi_{alg}((\cS,\cM,M_n)) \leq 4$.
\end{theorem}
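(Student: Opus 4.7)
The plan is a two-step reduction: first, pass to the complete quantum graph $(M_n,\cM,M_n)$, and then invoke the classical algebraic $4$-coloring theorem of \cite{HMPS19}. Since $\cS \subseteq M_n$, Proposition \ref{proposition: monotonicity of colorings} gives
\[ \chi_{alg}((\cS,\cM,M_n)) \leq \chi_{alg}((M_n,\cM,M_n)), \]
so it suffices to produce an algebraic $4$-coloring of $(M_n,\cM,M_n)$. Theorem \ref{theorem: quantum coloring of quantum complete graph} supplies a $q$-coloring of $(M_n,\cM,M_n)$ with $c := \dim(\cM)$ colors; that is, a PVM $\{P_x\}_{x=1}^{c} \subseteq \cM \otimes M_d$ satisfying $P_x((\cM')^\perp \otimes 1)P_x = 0$ for all $x$ and $P_x(\cM' \otimes 1)P_y = 0$ for $x \neq y$.

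The second ingredient is the theorem of Helton--Meyer--Paulsen--Satriano \cite{HMPS19}, which asserts that $\cA(\text{Hom}(K_c,K_4)) \neq 0$ for every $c$. Equivalently, there exist a non-zero unital $*$-algebra $\cB$ and self-adjoint idempotents $\{e_{x,a} : 1 \leq x \leq c,\, 1 \leq a \leq 4\} \subset \cB$ satisfying
\[ \sum_{a=1}^{4} e_{x,a} = 1_{\cB},\qquad e_{x,a}e_{x,b} = 0 \ \ (a \neq b), \qquad e_{x,a}e_{y,a} = 0 \ \ (x \neq y). \]
I then compose these two structures by forming
\[ Q_a = \sum_{x=1}^{c} P_x \otimes e_{x,a} \in \cM \otimes M_d \otimes \cB \qquad (1 \leq a \leq 4). \]

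The relations $P_xP_y = \delta_{xy}P_x$, $e_{x,a}e_{y,a} = 0$ for $x \neq y$, and $e_{x,a}^2 = e_{x,a}$ yield $Q_a^2 = Q_a = Q_a^*$, while $\sum_a e_{x,a} = 1_\cB$ gives $\sum_a Q_a = 1$. For the adjacency rule, if $Y \in (\cM')^\perp$ then
\[ Q_a(Y \otimes 1 \otimes 1)Q_a = \sum_{x,y} P_x(Y \otimes 1)P_y \otimes e_{x,a}e_{y,a}, \]
and this vanishes termwise: the $x = y$ terms die by the $q$-coloring relation, while the $x \neq y$ terms die by $e_{x,a}e_{y,a} = 0$. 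For the same-vertex rule, if $Z \in \cM'$ and $a \neq b$, then $Q_a(Z \otimes 1 \otimes 1)Q_b$ vanishes analogously: the $x \neq y$ terms die by $P_x(\cM' \otimes 1)P_y = 0$, and the $x = y$ terms die by $e_{x,a}e_{x,b} = 0$.

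Thus $\{Q_a\}_{a=1}^{4}$ constitutes an algebraic $4$-coloring of $(M_n,\cM,M_n)$ realized in the non-zero algebra $\cM \otimes M_d \otimes \cB$, which, via the universal property, yields a unital $*$-homomorphism from $\cA(\text{Hom}((M_n,\cM,M_n),K_4))$ to this algebra. In particular, the game algebra is non-zero, proving $\chi_{alg}((M_n,\cM,M_n)) \leq 4$. The genuinely non-trivial input is the classical HMPS19 theorem; once one recognizes that the $q$-coloring of the complete quantum graph from Theorem \ref{theorem: quantum coloring of quantum complete graph} is the ``hook'' allowing classical algebraic colorings to be pushed forward to arbitrary quantum graphs, the rest is a mechanical composition check with no serious obstacle.
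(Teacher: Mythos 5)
Your proof is correct and takes essentially the same approach as the paper's: produce a finite algebraic coloring of the quantum graph and then compose with the Helton--Meyer--Paulsen--Satriano algebraic $4$-coloring of $K_c$ by forming the tensor-product idempotents $Q_a = \sum_x P_x \otimes e_{x,a}$. The only cosmetic difference is that you tensor against a concrete $q$-coloring in $\cM \otimes M_d$ of the complete quantum graph (having first reduced via Proposition \ref{proposition: monotonicity of colorings}), whereas the paper tensors against the universal generators of the game algebra $\cA(\text{Hom}((\cS,\cM,M_n),K_c))$ directly; both deliver the same composition and the same verification of the idempotent, adjacency, and synchronicity relations.
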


\begin{proof} Suppose that $\chi_{alg}(\cS,\cM,M_n) \leq c$ for some $c<\infty$. Then $\mathcal{A}(\text{Hom}((\cS,\cM,M_n),K_c))$ exists. We will let $p_1,...,p_c$ be the canonical self-adjoint idempotents in the matrix algebra $M_n(\mathcal{A}(\text{Hom}((\cS,\cM,M_n),K_c))$. By \cite{HMPS19}, there is an algebraic homomorphism $K_c \to K_4$. Thus, there are self-adjoint idempotents $f_{a,v}$ in $\mathcal{A}(\text{Hom}(K_c,K_4))$ for $1 \leq a \leq c$ and $1 \leq v \leq 4$ such that $\sum_{v=1}^4 f_{a,v}=1$ for all $a$ and $f_{a,v}f_{b,v}=0$ whenever $a \neq b$. Define
$$q_{v,ij}=\sum_{a=1}^c p_{a,ij} \otimes f_{a,v} \in \mathcal{A}(\text{Hom}((\cS,\cM,M_n),K_c)) \otimes \mathcal{A}(\text{Hom}(K_c,K_4)).$$
Then
\begin{align*}
\sum_{k=1}^n q_{v,ik}q_{v,kj}&=\sum_{k=1}^n \left( \sum_{a=1}^c p_{a,ik} \otimes f_{a,v}\right)\left(\sum_{b=1}^c p_{b,kj} \otimes f_{b,v}\right) \\
&=\sum_{k=1}^n \sum_{a,b=1}^c p_{a,ik}p_{b,kj} \otimes f_{a,v}f_{b,v} \\
&=\sum_{a=1}^c \sum_{k=1}^n p_{a,ik}p_{a,kj} \otimes f_{a,v}^2 \\
&=\sum_{a=1}^c p_{a,ij} \otimes f_{a,v}=q_{v,ij}.
\end{align*}
Therefore, $q_v=(q_{v,ij})$ is an idempotent for each $v$. Similarly, one can see that $q_v^*=q_v$ (that is, $q_{v,ij}^*=q_{v,ji}$) and $\sum_{v=1}^4 q_{v,ij}$ is $0$ if $i \neq j$ and $1$ if $i=j$. Let $X=(x_{ij}) \in M_n$. Letting $1 \otimes 1$ denote the unit in the tensor product of the game algebras,
\begin{equation}
q_v(X \otimes 1 \otimes 1)q_w=\left(\sum_{k,\ell=1}^n q_{v,ik} x_{k\ell} q_{w,\ell j} \right)_{i,j}=\left( \sum_{k,\ell=1}^n \sum_{a,b=1}^c p_{a,ik}x_{k\ell}p_{b,\ell j} \otimes f_{a,v}f_{b,w}\right)_{i,j}. \label{equation: quantum composition}
\end{equation}
If $X \in (\cM')^{\perp}$ and $a=b$, then the above sum becomes
$$q_v(X \otimes 1 \otimes 1)q_v=\left( \sum_{k,\ell=1}^n \sum_{a=1}^c p_{a,ik}x_{k\ell}p_{a,\ell j} \otimes f_{a,v}\right)=\sum_{a=1}^c p_a(X \otimes 1)p_a \otimes f_{a,v}=0,$$
by definition of $\cA(\text{Hom}((\cS,\cM,M_n),K_c))$. If $X \in \cM'$ and $a \neq b$, then $\sum_{k,\ell=1}^np_{a,ik}x_{k\ell}p_{b,\ell j}$ is the $(i,j)$ entry of $p_a(X \otimes 1)p_b=0$. Thus, if $v \neq w$, then Equation (\ref{equation: quantum composition}) reduces to
$$q_v(X \otimes 1 \otimes 1)q_w=\left( \sum_{k,\ell=1}^n \sum_{a=1}^c p_{a,ik}x_{k\ell}p_{a,\ell j} \otimes f_{a,v}f_{a,w} \right)_{i,j}=0,$$
since $f_{a,v}f_{a,w}=0$ for $v \neq w$. Therefore, letting $r_{v,ij}$ be the canonical generators of $\cA(\text{Hom}((\cS,\cM,M_n),K_4))$, we obtain a unital $*$-homomorphism
\begin{align*}
\pi:\mathcal{A}(\text{Hom}((\cS,\cM,M_n),K_4)) &\to \mathcal{A}(\text{Hom}((\cS,\cM,M_n),K_c)) \otimes \mathcal{A}(\text{Hom}(K_c,K_r)), \\
r_{v,ij} &\mapsto q_{v,ij}.
\end{align*}
The latter algebra is non-zero, so $\cA(\text{Hom}((\cS,\cM,M_n),K_4)) \neq \{0\}$. Thus, $\chi_{alg}((\cS,\cM,M_n)) \leq 4$.
\end{proof}

\end{document}